\title[Courant algebroid automorphisms and moduli of generalized metrics]{The Lie group of automorphisms of a Courant algebroid and the moduli space of generalized metrics}
\author[R. Rubio]{Roberto Rubio}
\author[C. Tipler]{Carl Tipler}
\address{Weizmann Institute of Science, 234 Herzl St, Rehovot 76100, Israel}
\email{roberto.rubio@weizmann.ac.il} 
\address{LMBA, UMR CNRS 6205; D\'epartement de
  Math\'ematiques, Universit\'e de Bretagne Occidentale, 6, avenue
  Victor Le Gorgeu, 29238 Brest Cedex 3 France}
\email{carl.tipler@univ-brest.fr}
\thanks{RR was supported by CAPES, ISF grant 687/13 and a Minerva foundation grant. CT is partially supported by ANR project EMARKS No ANR-14-CE25-0010 and by CNRS grant PEPS jeune chercheur.}
\theoremstyle{plain}
\newtheorem{theorem}{Theorem}[section]
\newtheorem{lemma}[theorem]{Lemma}
\newtheorem{corollary}[theorem]{Corollary}
\newtheorem{proposition}[theorem]{Proposition}
\theoremstyle{definition}
\newtheorem{definition}[theorem]{Definition}
\newtheorem{definition-theorem}[theorem]{Definition-Theorem}
\newtheorem{example}[theorem]{Example}
\theoremstyle{remark}
\newtheorem{remark}[theorem]{Remark}
\newcommand{\Id}{\operatorname{Id}}
\newcommand{\ad}{\operatorname{ad}}
\newcommand{\Aut}{\operatorname{Aut}}
\newcommand{\Der}{\operatorname{Der}}
\newcommand{\RR}{{\mathbb R}}
\newcommand{\NN}{{\mathbb N}}
\newcommand{\rk}{\operatorname{rk}}
\newcommand{\surj}{\to\kern-1.8ex\to}
\newcommand{\cA}{\mathcal{A}}
\newcommand{\cC}{\mathcal{C}}
\newcommand{\cM}{\mathcal{M}}
\newcommand{\cF}{\mathcal{F}}
\newcommand{\cV}{\mathcal{V}}
\newcommand{\cW}{\mathcal{W}}
\newcommand{\cG}{\mathcal{G}}
\newcommand{\cL}{\mathcal{L}}
\newcommand{\cO}{\mathcal{O}}
\newcommand{\cR}{\mathcal{R}}
\newcommand{\cS}{\mathcal{S}}
\newcommand{\cT}{{\mathcal{T}}}
\newcommand{\cH}{\mathcal{H}} 
\newcommand{\dvol}{\mathrm{dvol}}
\newcommand{\Man}{M}
\def\ep{\varepsilon}
\def\om{\omega}
\def\Om{\Omega}
\def\Diff{\mathrm{Diff}}
\def\bfDiff{\mathrm{Diff}}
\def\GDiff{\mathrm{GDiff}}
\def\bfGDiff{\mathrm{GDiff}}
\def\bfOm{{\Omega}}
\def\bfGamma{{\Gamma}}
\def\GM{\mathcal{GM}}
\def\bfGM{\GM}
\def\GR{\mathcal{GR}}
\def\Isom{\mathrm{Isom}}
\def\Im{\mathrm{Im}}
\def\Id{\mathrm{Id}}
\def\cA{\mathcal{A}}
\def\cF{\mathcal{F}}
\def\cG{\mathcal{G}}
\def\cH{\mathcal{H}}
\def\cT{\mathcal{T}}
\def\cR{\mathcal{R}}
\def\cU{\mathcal{U}}
\def\cS{\mathcal{S}}
\def\bfE{\mathrm{E}}
\def\bfG{\mathrm{G}}
\def\bfF{\mathrm{F}}
\def\bfH{\mathrm{H}}
\def\bfA{\mathbf{A}}
\def\bfB{\mathbf{B}}
\def\gdiff{\mathfrak{gdiff}}
\newcommand{\st}{\;|\;}
\newcommand{\R}{{\mathbb{R}}}
\newcommand{\bbG}{{\bf{\mathbb{G}}}}
\newcommand{\cCi}{C^\infty}
\newcommand{\fg}{\mathfrak{g}}
\newcommand{\fh}{\mathfrak{h}}
\newcommand{\fm}{\mathfrak{m}}
\newcommand{\fgdiff}{\mathfrak{gdiff}}
\newcommand{\al}{\alpha}
\newcommand{\la}{\langle}
\newcommand{\ra}{\rangle}
\newcommand{\SO}{\mathrm{SO}}
\newcommand{\SSS}{\mathrm{S}}
\newcommand{\E}{\mathrm{E}}
\newcommand{\OO}{\mathrm{O}}
\renewcommand{\Im}{\mathrm{Im}}
\def\DH{\Diff_{[H]}}
\def\cGM{\mathcal{GM}}
\def\cR{\mathcal{R}}
\def\cRuH{\mathcal{R}^{[H]}}
\def\cGR{\mathcal{GR}}
\def\cGRuH{\mathcal{GR}^{H}}
\def\cBH{\mathcal{B}_{[H]}}
\def\s{\lambda}
\def\Spl{{\Lambda}}
\begin{document}

\begin{abstract}
  We endow the group of automorphisms of an exact Courant algebroid over a compact manifold with an infinite dimensional Lie group structure modeled on the inverse limit of Hilbert spaces (ILH).
  We prove a slice theorem for the action of this Lie group on the space of generalized metrics.
  As an application, we show that the moduli space of generalized metrics is stratified by ILH submanifolds and relate it to the moduli space of usual metrics. Finally, we extend these results to odd exact Courant algebroids.
\end{abstract}

\maketitle

\vspace{-1cm}

\tableofcontents


\section{Introduction}
\label{sec:intro}

This paper lays the foundations for a programme concerning the action of symmetries, automorphisms of a Courant algebroid, on geometric structures in generalized geometry. In the study of moduli spaces of differential geometric structures, the knowledge of infinite-dimensional Lie groups plays a fundamental role. In this work, we focus on the Lie group structure of the group of automorphisms of an exact or odd exact Courant algebroid over a compact manifold, and study the action of this group on the space of generalized metrics.

The bundle $TM+T^*M$, the Whitney sum of the tangent and cotangent bundles of a manifold, is canonically endowed with a pairing and a projection to the tangent bundle. Further equipped with the Dorfman bracket on its sections,
it provides not only the frame for the theory of Dirac structures \cite{Cou} but also the motivating example for the definition of a Courant algebroid \cite{LWX}. 

Generalized geometry, as initiated by Hitchin \cite{Hit1}, and more specifically generalized complex \cite{G1annals} and 
K\"ahler \cite{Gua14} geometry, were firstly developed on  $TM+T^*M$ and its twisted versions, 
known as exact Courant algebroids (see Section \ref{sec:definition courant alg} for definitions).
Generalized geometry is concerned with the definition and study of geometric structures on $TM+T^*M$ or a more general Courant algebroid. 
The automorphisms of this Courant algebroid become the symmetries of the theory: the generalized diffeomorphisms $\GDiff$. 
A remarkable feature of this group is that it includes new symmetries: the $B$-fields.

One of the first examples of a generalized geometric structure is that of a generalized 
metric, a maximally positive-definite subbundle. 
Generalized metrics were introduced by Gualtieri \cite{G1} in the context of generalized K\"ahler geometry, 
and have been recently used to restate some systems of coupled equations in a simpler fashion.
For instance, field equations originating in supergravity are interpreted as the vanishing of a suitable generalized Ricci tensor \cite{GF}. 
On the other hand, the Strominger system, whose solutions provide, as proposed by Yau \cite{Yau}, a generalization of Calabi-Yau structures, is reformulated as generalized Killing spinor equations \cite{grt}. Very recently, special holonomy metrics with torsion
have also been described by Killing spinors in generalized geometry \cite{GF2}. All these applications arise on Courant algebroids of the form $TM+\ad P+T^*M$, where $P$ is a principal $G$-bundle over the manifold and $G$ is a compact Lie group. Odd exact Courant algebroids correspond to $G=\SSS^1$ and their study in this work provides a toy model for future applications.

Motivated by the applications of the generalized Ricci tensor and generalized Killing spinors, and the fact that
generalized Riemannian quantities are natural under the $\GDiff$-action,
one is lead to consider the moduli of generalized metrics.
In this context, the present paper presents two main results. The first one concerns the symmetries of the theory.

\begin{theorem}
  \label{theo intro: Gdiff}
  The group $\GDiff$ of automorphisms of an exact or odd exact Courant algebroid over a compact manifold carries a strong ILH Lie group structure, i.e., it is modelled on an inverse limit of Hilbert manifolds.
\end{theorem}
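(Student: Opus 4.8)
Fix an isotropic splitting, identifying $E$ with $TM\oplus T^*M$ carrying the $H$-twisted Dorfman bracket for a closed $3$-form $H$ whose class $[H]\in H^3(M,\RR)$ is the characteristic (\v{S}evera) class of $E$. A standard computation identifies an automorphism of $E$ with a pair $(\varphi,B)$, where $\varphi\in\Diff(M)$ and $B\in\Omega^2(M)$ satisfy $\varphi^*H-H=dB$, composition being the associated semidirect-type product, polynomial in $\varphi$ and in pullbacks of the $B$-components. Thus $\GDiff$ sits in an exact sequence of groups
\begin{equation*}
1\longrightarrow\Omega^2_{\mathrm{cl}}(M)\longrightarrow\GDiff\longrightarrow\Diff_{[H]}(M)\longrightarrow 1,\qquad \Diff_{[H]}(M)=\{\varphi\in\Diff(M):\varphi^*[H]=[H]\},
\end{equation*}
with kernel the closed $B$-field transformations $e^B$. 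The plan is to assemble the strong ILH structure on $\GDiff$ from those on the kernel and the quotient, glued by an explicit Hodge-theoretic local section.

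First, $\Diff_{[H]}(M)$ is an \emph{open} subgroup of $\Diff(M)$: diffeomorphisms in a common connected component act equally on de Rham cohomology, so $\Diff_{[H]}(M)$ is a union of connected components, and the ILH manifold $\Diff(M)$ is locally connected. It therefore inherits the strong ILH Lie group structure of $\Diff(M)$ (Omori, Ebin--Marsden), with $\Diff^s_{[H]}(M)$ open in $\Diff^s(M)$ for $s\geq s_0$. For the kernel, the spaces $Z^s:=\ker\!\big(d\colon H^s(\Lambda^2T^*M)\to H^{s-1}(\Lambda^3T^*M)\big)$ are closed subspaces of Hilbert spaces, hence Hilbert spaces, with dense compact inclusions $Z^{s+1}\hookrightarrow Z^s$, so that $\Omega^2_{\mathrm{cl}}(M)=\varprojlim_s Z^s$ is an abelian strong ILH Lie group.

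To glue, fix a Riemannian metric and let $d^*\Delta^{-1}$, understood via the Green operator of the Hodge Laplacian, denote the operator sending a closed exact $3$-form to its unique coexact primitive; it is a pseudodifferential operator of order $-1$, hence induces bounded, so smooth, maps between the corresponding Sobolev spaces. For $\varphi$ in a neighbourhood $\mathcal{U}$ of the identity in $\Diff_{[H]}(M)$ put $B_\varphi:=d^*\Delta^{-1}(\varphi^*H-H)$, which is well defined because $\varphi^*H-H$ is closed and exact, and which depends ILH-smoothly on $\varphi$ since the action $(\varphi,\omega)\mapsto\varphi^*\omega$ of $\Diff(M)$ on forms is ILH-smooth. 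Then $\sigma(\varphi)=(\varphi,B_\varphi)$ is an ILH-smooth set-theoretic section, and
\begin{equation*}
Z^s\times\mathcal{U}^s\longrightarrow(\GDiff)^s,\qquad(\beta,\varphi)\longmapsto(\varphi,B_\varphi+\beta),
\end{equation*}
is a bijection onto the automorphisms with data of class $H^s$ lying over $\mathcal{U}$, because any two solutions $B$ of $dB=\varphi^*H-H$ differ by an element of $Z^s$. We declare this a chart near the identity and spread it over $\GDiff$ by right translations.

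It then remains to check that, in these charts, $\GDiff=\varprojlim_s(\GDiff)^s$ verifies Omori's axioms for a strong ILH Lie group: that multiplication $(\GDiff)^{s+\ell}\times(\GDiff)^s\to(\GDiff)^s$ and inversion $(\GDiff)^{s+\ell}\to(\GDiff)^s$ are $C^\ell$, that right translations are smooth, and that $(\GDiff)^{s+1}\hookrightarrow(\GDiff)^s$ is dense and compact. Via the charts above this reduces to the corresponding statements for $\Diff(M)$ --- the $\omega$-lemma of Ebin--Marsden and Omori --- together with the ILH-smoothness and attendant tame estimates for $(\varphi,\omega)\mapsto\varphi^*\omega$ (which also governs the conjugation action on the kernel and the group cocycle) and for the operator $d^*\Delta^{-1}$. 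This is the part where the real work lies: for each fixed $s$ the pullback loses a derivative on the form factor, so the $C^\ell$-statements are meaningful only after passing up the Sobolev tower, and the estimates must be organised within Omori's ILH formalism rather than inside any single Hilbert model. Granting this, Theorem~\ref{theo intro: Gdiff} follows for exact Courant algebroids; the odd exact case is handled by the same scheme, with the $2$-form data of $\GDiff$ replaced by the relevant differential-form data and the extension and Hodge-theoretic section adapted accordingly.
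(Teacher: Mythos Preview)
Your approach is sound in outline but takes a different route from the paper. You build the ILH structure on $\GDiff$ directly from the extension $1\to\Omega^2_{cl}\to\GDiff\to\Diff_{[H]}\to 1$, gluing the known ILH structures on kernel and quotient via the Hodge-theoretic local section $\sigma(\varphi)=(\varphi,d^*\bbG(\varphi^*H-H))$, and then verifying Omori's axioms by hand in the resulting charts. The paper instead first endows the ambient group $\OO_\pi(E)\simeq\Diff\ltimes\Omega^2$ with a strong ILH Lie group structure (Proposition~\ref{prop:ILH-on-Opi}), writes $\GDiff_H$ as the level set of the $\cC^{\infty,\infty}$ ILH normal map $\Phi(u,b)=\xi(u)^*H-db$, and applies the ILH implicit function theorem (Theorem~\ref{theo:implicitfunctiontheorem}) with right inverse $h\mapsto(0,-d^*\bbG h)$ --- the same operator underlying your section. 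The paper's route is shorter, since the implicit function theorem packages all the axiom-checking you defer as ``the real work'', and it yields the sharper statement that $\GDiff$ is a strong ILH Lie \emph{subgroup} of $\OO_\pi(E)$, which is used later. Your approach makes the fibration over $\Diff_{[H]}$ explicit from the start, but completing it still requires controlling the cocycle $\varphi_2^*B_{\varphi_1}+B_{\varphi_2}-B_{\varphi_1\varphi_2}$ across the Sobolev tower, which in the end reduces to the same tame pullback estimates (Proposition~\ref{prop:Diff action forms}) the paper invokes.
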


There are many notions of infinite-dimensional Lie group depending on the model space for the underlying manifold. 
Choosing a particular Banach space as a local model for the charts leads to the notion of a Banach Lie group. Despite enjoying very nice analytical properties,  
Banach Lie groups turn out to be too restrictive: a Banach Lie group acting effectively and transitively on a finite-dimensional compact smooth manifold must be finite-dimensional \cite{Om78}. On the other hand,  Fr\'echet Lie groups give a too large category, a reason being the absence of fundamental results such as the local inverse theorem, Frobenius' theorem or the existence of local solutions to ODEs.
These considerations motivated the introduction of {\it strong ILH Lie groups} by Omori \cite{Om68} in his study of the group of diffeomorphisms of a compact manifold. The existence of a Frobenius' theorem in this category makes strong ILH Lie groups
suitable for the study of automorphisms of Courant algebroids.

In the classical setting, the existence of a strong ILH Lie group structure on the group of diffeomorphisms
of a compact manifold was proved by Omori \cite{Om68}, following the work by Ebin \cite{Eb}.
As a corollary, this result lead to interesting applications in the context of hydrodynamics \cite{EbMa}.
Indeed, the geodesics in the group of diffeomorphisms fixing a prescribed volume form give solutions
to the Euler equation of motion of a perfect fluid. Theorem \ref{theo intro: Gdiff}
could thus be of interest in a different context, as supergravity theories.

As an application of Theorem \ref{theo intro: Gdiff}, we initiate the study of the moduli space of
generalized metrics on a fixed exact or odd exact Courant algebroid $E$ under the action of $\GDiff$. Recall
that a motivation to study the moduli space of Riemannian metrics came from general relativity \cite{Fi}.
Indeed, an interesting class of solutions to the Einstein equations on a space-time $M^3\times\RR$ are interpreted as a particular class of geodesics in the space
of Riemannian structures on $M^3$. A good account of these facts is found in \cite{MaEbFi}. 

In this work, we extend a result by Bourguignon \cite{Bou} showing that the moduli space of Riemannian structures is stratified. Namely, there exists a partition into topological subspaces, or strata, in such a way that if one stratum intersects the closure of another stratum, the first is contained in the closure of the second. We prove the following:

\begin{theorem}
  \label{theo intro: GM}
  The moduli space of generalized metrics on $E$ is stratified by ILH manifolds,
  where strata are labelled by the set of conjugacy classes of generalized isometry groups of generalized metrics. Moreover, the moduli space of generalized metrics projects onto the moduli space of metrics, the preimage of a stratum being a union of strata.
\end{theorem}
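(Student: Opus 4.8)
The plan is to deduce the result from a slice theorem for the $\GDiff$-action on the space $\cGM$ of generalized metrics, which itself rests on Theorem \ref{theo intro: Gdiff}. Concretely, I would proceed as follows. First, using the strong ILH Lie group structure on $\GDiff$ and the fact that $\cGM$ is (the space of sections of) an ILH manifold of maximally positive-definite subbundles of $E$, set up the action $\GDiff \times \cGM \to \cGM$ and verify it is a smooth ILH action. Second, fix a generalized metric $V \in \cGM$ and identify its isotropy group $\Isom(V) \subset \GDiff$, the generalized isometry group; the key analytic input is that $\Isom(V)$ is a compact (finite-dimensional) Lie subgroup — this follows by the same argument as in the Riemannian case, since a generalized isometry covers an honest isometry of the underlying Riemannian metric and is determined by finitely much data, so $\Isom(V)$ embeds in a finite-dimensional Lie group. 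Third, prove an ILH slice theorem: there is an $\Isom(V)$-invariant ILH submanifold $\cS_V \subset \cGM$ through $V$, transverse to the $\GDiff$-orbit, such that a neighbourhood of the orbit is $\GDiff$-equivariantly diffeomorphic to the associated bundle $\GDiff \times_{\Isom(V)} \cS_V$. This is where the bulk of the work lies; I would follow Ebin's and Bourguignon's strategy, constructing the slice via an exponential-type map using a $\GDiff$-invariant (weak) Riemannian metric on $\cGM$ and invoking the inverse function theorem available in the strong ILH category (this is precisely why the ILH framework, rather than Fréchet, is needed).

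Granting the slice theorem, the stratification follows by a standard orbit-type decomposition. For a conjugacy class $[\Gamma]$ of subgroups of $\GDiff$ arising as a generalized isometry group, let $\cGM_{[\Gamma]}$ be the set of generalized metrics whose isometry group lies in $[\Gamma]$; its image in the moduli space $\cGM/\GDiff$ is the stratum labelled by $[\Gamma]$. Using the local model $\GDiff \times_{\Isom(V)} \cS_V$ and the further decomposition of the slice $\cS_V$ by isotropy type under the compact group $\Isom(V)$ (the classical finite-dimensional orbit-type stratification, cf. the compact group action case), one shows each $\cGM_{[\Gamma]}$ is a $\GDiff$-invariant ILH submanifold and the quotient $\cGM_{[\Gamma]}/\GDiff$ is an ILH manifold. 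The frontier condition — if one stratum meets the closure of another, it is contained in that closure — is read off from the local models exactly as in \cite{Bou}: near a point of $\cGM_{[\Gamma]}$, the isometry groups of nearby metrics are subconjugate to $\Gamma$, so the partial order on strata is governed by subconjugacy, which is the defining relation of a stratification.

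For the last assertion, I would use the natural projection $\pi \colon E \to TM$ (the anchor), which induces a projection from the $\GDiff$-action to the $\Diff(M)$-action: every automorphism of the exact Courant algebroid covers a diffeomorphism of $M$, giving a group homomorphism $\GDiff \to \Diff(M)$ whose kernel consists of the "inner" symmetries ($B$-field and, in the odd exact case, gauge transformations). A generalized metric $V$ determines an underlying Riemannian metric $g_V$ on $M$ (the restriction of the ambient pairing to the graph-type data), and this assignment $V \mapsto g_V$ is $\GDiff$-equivariant over $\GDiff \to \Diff(M)$; passing to quotients yields the map $\cGM/\GDiff \to \cM/\Diff(M)$ from the moduli of generalized metrics to the moduli of metrics. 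To see that the preimage of a stratum is a union of strata, note that the generalized isometry group of $V$ surjects onto a subgroup of the isometry group of $g_V$ with kernel the internal symmetries preserving $V$; hence the conjugacy class of $\Isom(V)$ refines that of $\Isom(g_V)$, so each metric-stratum pulls back to a union of generalized-metric strata. The main obstacle throughout is establishing the slice theorem in the strong ILH category — in particular verifying the transversality and the required tame inverse-function-theorem estimates for the $\GDiff$-action, which is genuinely more delicate than in Ebin's volume-preserving setting because of the presence of the $B$-field symmetries enlarging the group beyond $\Diff(M)$.
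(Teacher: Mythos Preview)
Your outline matches the paper's strategy closely: slice theorem \`a la Ebin for the $\GDiff$-action on $\cGM$, then orbit-type decomposition following Bourguignon, then the equivariant projection $V_+\mapsto g$ to relate the two moduli spaces. Two points deserve sharpening.

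First, you correctly flag that the slice construction is ``genuinely more delicate'' than Ebin's, but you misdiagnose the reason. The issue is not merely that $\GDiff$ is larger than $\Diff(M)$; it is that the Lie algebra $\gdiff_H=\{(u,b):d(\iota_uH-b)=0\}$ is \emph{not} the space of sections of a vector bundle on $M$, but a subspace cut out by a differential relation. Consequently, the infinitesimal action $\bfA':\gdiff_H\to T_{V_+}\cGM$ is not a differential operator between section spaces, and standard elliptic theory does not apply to it directly. The paper's solution is to first treat the exact subgroup $\GDiff^e_H$, whose Lie algebra is the image of a surjection $\iota_e:\Gamma(TM+T^*M)\to\gdiff^e_H$; composing with $\iota_e$ yields a genuine first-order operator $\bfA$ sitting in an elliptic complex, to which the ILH Frobenius theorem (not the inverse function theorem) applies. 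One then passes to the full group via the finite-dimensional decomposition $\gdiff_H=\gdiff^e_H\oplus\cH^2$ by harmonic $2$-forms, establishing ad hoc linear estimates (Appendix~\ref{sec:Appendix}). Without this two-step maneuver your proposed argument would stall at the point of verifying the hypotheses of the ILH Frobenius theorem for $\bfA'$.

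Second, a minor correction: in the exact case the projection $\Isom(V_+)\to\Isom(g)$ is injective (indeed an isomorphism onto its image), since the $B$-field component of a generalized isometry is forced to be $\phi^*\omega-\omega$. There is no nontrivial kernel of internal symmetries fixing $V_+$. This actually simplifies the comparison of strata: the paper shows (Proposition~\ref{prop:image inverse stratum}) that, after conjugation by a suitable $B$-field, $\Isom_H(g,\omega)$ is identified with the subgroup $\{\phi\in\Isom_{[H]}(g):\phi^*\omega'=\omega'\}$ for an appropriate $\omega'$, making the refinement of strata explicit.
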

The precise relation between the two set of strata is described in Section \ref{sec:relations moduli}.

The proof of Theorem \ref{theo intro: GM} relies on a description of generalized isometry groups and a slice result for the $\GDiff$ action on the space of generalized metrics $\GM$.
To state this construction, denote
by $\rho_{\GM}:\GDiff\times\GM\rightarrow\GM$ the action of $\GDiff$ on $\GM$, and, for each generalized metric $V_+\in\GM$,
denote by $\Isom(V_+)$ its isotropy group for $\rho_{\GM}$.

\begin{theorem}
  \label{theo intro: slice exact case full group}
  Let $V_+$ be a generalized metric on $E$. There exists an ILH submanifold $\cS$ of $\GM$ such that:
  \begin{enumerate}
  \item[a)] For all $F\in \Isom(V_+)$, $\rho_{\GM}(F,\cS)=\cS$.
  \item[b)] For all $F \in \GDiff$, if $\rho_{\GM}(F,\cS)\cap\cS\neq \emptyset$, then $F\in\Isom(V_+)$.
  \item[c)] There is a local cross-section $\chi$ of the map $F \mapsto \rho_{\GM}(F,V_+)$ on a neighbourhood
    $\cU$ of $V_+$ in $\rho_{\GM}(\GDiff, V_+)$ such that the map from $\cU\times\cS$ to $\GM$ given by
    $(V_1,V_2)\mapsto \rho_{\GM}(\chi(V_1),V_2)$ is a homeomorphism onto its image.
  \end{enumerate}
\end{theorem}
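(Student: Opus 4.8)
The plan is to follow the now-standard slice-theorem strategy, adapting Ebin's and Bourguignon's arguments to the Courant-algebroid setting, using the strong ILH Lie group structure on $\GDiff$ from Theorem \ref{theo intro: Gdiff}. First I would fix $V_+\in\GM$, describe the tangent space $T_{V_+}\GM$, and linearize the action: the infinitesimal action gives a map $\aut(E)=\Lie\,\GDiff\to T_{V_+}\GM$ whose image is the tangent space to the orbit $\rho_{\GM}(\GDiff,V_+)$. The key analytic input is that, after completing everything to suitable Sobolev ($H^s$) spaces, this infinitesimal action at $V_+$ is an overdetermined-elliptic operator (in fact its symbol is essentially that of the symmetrized covariant derivative, coming from the Bismut-type connection associated to the generalized metric), so that for each $s$ one gets an $L^2$-orthogonal (with respect to the generalized-metric-induced inner product) splitting
\begin{equation*}
  T_{V_+}\GM^{s} = \Im\bigl(\aut(E)^{s+1}\to T_{V_+}\GM^{s}\bigr)\oplus \cN^{s},
\end{equation*}
with $\cN^s$ a slice complement whose dimension is controlled uniformly in $s$; the candidate slice $\cS$ is then $\exp_{V_+}(\cN)\cap\GM$ (using a $\GDiff$-equivariant exponential-type map into $\GM$, e.g. the exponential of the generalized metric regarded as a section of an $\OO(n,n)/(\OO(n)\times\OO(n))$-bundle), intersected over all $s$ to land back in the smooth ILH category.

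Next I would establish the three properties. For (a), equivariance of the construction under $\Isom(V_+)$: since $\Isom(V_+)$ is a compact Lie group (this should be shown beforehand, using that it sits inside the isometry group of the associated Riemannian metric times a $B$-field/gauge piece), one averages the metric/inner product used in the splitting over $\Isom(V_+)$ so that both $\cN$ and the exponential map are $\Isom(V_+)$-equivariant, whence $\rho_{\GM}(F,\cS)=\cS$ for $F\in\Isom(V_+)$. For (b), the "tubular neighbourhood" argument: if $\rho_{\GM}(F_k,\cS)\cap\cS\neq\emptyset$ for a sequence $F_k$ one would, arguing as in Ebin, extract a convergent subsequence (compactness being available because $\GDiff$ is a strong ILH group and the relevant configurations stay in a fixed $H^s$-ball) with limit in $\Isom(V_+)$, and then a local-uniqueness/shrinking argument forces $F\in\Isom(V_+)$ once $\cS$ is taken small enough. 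For (c), the local cross-section $\chi$: the orbit map $\GDiff\to\rho_{\GM}(\GDiff,V_+)$, $F\mapsto\rho_{\GM}(F,V_+)$, has as "vertical" directions exactly $\Lie\,\Isom(V_+)$, and by the implicit/inverse function theorem valid in the ILH category (Omori's Frobenius theorem, invoked in the excerpt as the reason for working with strong ILH groups) one gets a smooth local section $\chi$ on a neighbourhood $\cU$ of $V_+$; the map $(V_1,V_2)\mapsto\rho_{\GM}(\chi(V_1),V_2)$ is then a homeomorphism onto its image because its differential at $(V_+,V_+)$ is the slice splitting above, and one upgrades the local diffeomorphism at each Sobolev level to a homeomorphism of the ILH models.

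The main obstacle I expect is the analytic core: verifying that the infinitesimal $\GDiff$-action at $V_+$ is overdetermined elliptic (or at least has closed range with finite-dimensional, $s$-uniformly-bounded cokernel) and producing the splitting compatibly across all Sobolev completions so that the slice genuinely lies in the ILH category rather than only in some $H^s$. Concretely this means unwinding $\aut(E)$ — which for an exact Courant algebroid is an extension of vector fields preserving $[H]$ by closed $2$-forms — computing the linearized action on $\GM\cong\{$orthogonal, self-adjoint involutions$\}$, and identifying its symbol; the $B$-field part of $\aut(E)$ contributes a $d$-type operator, so the combined operator is elliptic only modulo the kernel coming from closed forms, and handling that extra kernel (which is finite-dimensional, being essentially $H^2_{dR}(M)$ up to the relevant twisting) cleanly — and checking it is exactly absorbed by $\Isom(V_+)$ — is the delicate point, together with the uniform-in-$s$ estimates needed to pass to the inverse limit.
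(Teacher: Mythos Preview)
Your overall strategy---Ebin-style slice via an orthogonal splitting of $T_{V_+}\GM$, an equivariant exponential, and Omori's ILH Frobenius theorem---is the same as the paper's. But there is a genuine gap at exactly the point you flag as the main obstacle, and the resolution is not the one you sketch.

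The linearised action $\bfA':\gdiff_H\to \Gamma(S^2T^*M)\times\Omega^2$, $(u,b)\mapsto(\cL_u g,\cL_u\omega-b)$, is \emph{not} a differential operator between sections of vector bundles, because its domain $\gdiff_H=\{(u,b):d(\iota_uH-b)=0\}$ is cut out by a differential constraint. So there is no symbol to compute and no direct elliptic theory to invoke; your phrase ``overdetermined-elliptic with symbol that of the symmetrised covariant derivative'' does not apply to $\bfA'$ as it stands. The $B$-field component of $\bfA'$ is $b\mapsto -b$, not a $d$-type operator; the $d$ is hidden in the \emph{domain}, not in the map.

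The paper's key idea, which you are missing, is to reparametrise the domain by the Courant algebroid itself. The surjection $\iota_e:\Gamma(TM+T^*M)\to\gdiff_H^e$, $u+\alpha\mapsto(u,\iota_uH-d\alpha)$, turns $\bfA:=\bfA'\circ\iota_e$ into an honest first-order operator $u+\alpha\mapsto(\cL_ug,\cL_u\omega-\iota_uH+d\alpha)$ that sits in an elliptic complex $\Omega^0\xrightarrow{f\mapsto(0,df)}\Gamma(TM+T^*M)\xrightarrow{\bfA}\Gamma(S^2T^*M)\times\Omega^2$. Standard elliptic theory gives closed range, splittings, and uniform-in-$k$ estimates for $\bfA$; these are then transferred back to $\bfA'$ using that $\iota_e$ has an explicit ILH-normal right inverse built from the Green operator. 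This is what lets one check the hypotheses of the ILH Frobenius theorem for $\bfA'$ rather than for a differential operator.

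A second point: the paper proceeds in two stages. It first runs the argument for the \emph{exact} group $\GDiff^e_H$ (where the above parametrisation gives everything), and only then passes to the full group via the finite-dimensional decomposition $\gdiff_H=\gdiff_H^e\oplus\cH^2$, with $\cH^2$ the harmonic $2$-forms. The $\cH^2$ piece is \emph{not} ``absorbed by $\Isom(V_+)$'' as you suggest: it enlarges the orbit, and the extra work for the full-group slice is to show that the normal bundle to the $\GDiff_H$-orbit is still a smooth ILH subbundle, obtained from the $\GDiff_H^e$-normal bundle by cutting down with the (smoothly varying, finite-rank) projection onto harmonic $2$-forms. Your proposal conflates this finite-dimensional orbit enlargement with the isotropy, which would make step (b) fail.
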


The ideas for the proof of Theorem \ref{theo intro: slice exact case full group} go back to Ebin \cite{Eb},
who first obtained a similar result for the action of diffeomorphisms on Riemannian metrics on a given
compact manifold. However, we should emphasize
that, unlike the Lie algebra of vector fields, the Lie algebra of $\GDiff$ is not described as the space
of smooth sections for a vector bundle on $M$, but rather by a subspace of sections satisfying a differential relation.
For this reason, standard elliptic operator theory, as used in \cite{Eb}, cannot be applied
directly. Instead, we need to establish comparison results on operators in the ILH category, which allow us to use an ILH Frobenius' Theorem \cite{Om97}.
It is, to our knowledge, the first example of a slice construction under these constraints.

A natural sequel to our work is the study of subspaces of the moduli of generalized metrics
given by the vanishing of natural curvature quantities, such as the generalized Ricci tensor
or the generalized scalar curvature (see e.g. \cite{GF2}). On the other hand, note that we have focused on the exact and odd exact cases for the sake of simplicity. We believe the same results hold for Courant algebroids of the form $TM+\ad P+T^*M$. The proof of this fact and its applications are work in progress.

The plan of the paper is as follows. In Section \ref{sec:ILHrev} we introduce the basics of ILH geometry and gather the results that we will need throughout the paper. 
In Section \ref{sec:GDIff} we review the definitions of Courant algebroid and generalized diffeomorphisms, 
and prove that the group of generalized diffeomorphisms $\GDiff$ is a strong ILH Lie group 
(Theorem \ref{theo:strong ILH subgroup exact case}). In Section \ref{sec:moduliGM} we 
introduce generalized metrics and prove a slice theorem for the action of $\GDiff$ 
(Theorem \ref{theo:slice exact case full group}). In Section \ref{sec:stratification} we use the slice theorem to describe the ILH stratification of the 
orbit space $\cGR$ of generalized metrics under the $\GDiff$-action (Theorem \ref{theo:Stratification ILH exact case}).  In Section \ref{sec:odd-exact}, we state the main results for odd exact Courant algebroids, focusing on the main differences with the exact case.
Finally, the appendix contains technical results on elliptic operator theory and ILH chains 
that are used in the proofs of Section~\ref{sec:moduliGM}.\\

\textbf{Acknowledgments:} We would like to thank H. Bursztyn and M. Garcia-Fernandez for stimulating discussions. We are very grateful to IMPA for hosting us as long and short term visitors, respectively. Finally, we would like to express our gratitude to the referee for her/his very careful reading of the manuscript.

\section{Review on ILH geometry}
\label{sec:ILHrev}

In this section, we review the basics of ILH geometry and state
some results that will be used throughout the paper. We refer to \cite{Om97} for an extensive introduction to the subject and the proofs of the results.

\subsection{ILH calculus}

We introduce inverse limits of Hilbert spaces as our model space for infinite dimensional manifolds. 
Denote by $\NN(d)$ the set of integers $k\geq d$ for an integer $d$. 
A set of complete locally convex topological vector spaces $$\lbrace \bfE, E^k\st k\in \NN(d) \rbrace$$ is called an  {\bf ILH chain} if, 
for each $k\in \NN(d)$,
\begin{itemize}
\item  $E^k$ is a Hilbert space with norm $\vert\vert\cdot \vert\vert_k$,
\item  $E^{k+1}$ embeds continuously in $E^k$ with dense image,
\item  and $\bfE=\bigcap_{k\in\NN(d)} E^k,$ endowed with the inverse limit topology.
\end{itemize}

Let $\lbrace {\rm F}, F^k\st k\in \NN(d) \rbrace$ be another ILH chain. A linear map $f:\bfE \to {\rm F}$ is called an {\bf ILH map} if, for each $k\in \NN(d)$, the map $f$ extends to a continous linear map $E^k\to F^k$. 

To talk about differentiability, consider  an open subset $U$ of $E^d$. A  map $$f:U\cap \bfE \rightarrow {\rm F}$$  is called a {\bf ${\bf \cC^r}$ ILH map}, for $r\in\NN$, if for each $k\in \NN(d)$, the map $f$ extends to a $\cC^r$ map from $U\cap E^k\to F^k$, also denoted by $f$. 
In order to have an implicit function theorem, we will need some uniform control on the norms of the derivatives $$df:(U\cap E^k)\times E^k \rightarrow F^k,$$
as we now explain. We say that a $\cC^\infty$ ILH map $f$ is a {\bf ${\bf \cC^{\infty,r}}$ ILH normal map} if, for every $x\in U\cap\bfE$, there is a convex neighbourhood $W\subset U$ of $x$, a constant $C$ and polynomials $P_k$, for each $k\in\NN(d)$, with the property that if $y\in\bfE$ is such that $x+y\in W$, then, for every $k\in \NN(d)$ and every integer $1\leq s\leq r$, we have
\begin{equation}
  \label{eq:linear estimates def}
  \begin{split}
    \vert\vert (d^sf)_{x+y}(v_1,\cdots,v_s)\vert\vert_k  \leq {} & C \vert\vert y \vert\vert_k \vert\vert v_1\vert\vert_d\cdots \vert\vert v_s\vert\vert_d\\
    & {} + C \sum_{i=1}^s \vert\vert v_1\vert\vert_d \cdots \vert\vert v_{i-1}\vert\vert_d \vert\vert v_i\vert\vert_k \vert\vert v_{i+1}\vert\vert_d \cdots \vert\vert v_s\vert\vert_d\\
    & {} + P_k(\vert\vert y\vert\vert_{k-1})\vert\vert v_1\vert\vert_{k-1}\cdots \vert\vert v_s\vert\vert_{k-1}.
  \end{split}
\end{equation}

Note that if $f:U\cap \bfE \rightarrow{\rm F}$ is a $\cC^{\infty,r}$ ILH normal map, then, for every $x\in U\cap \bfE$, there are constants $C$ and $D_k$, for each $k\in \NN(d)$, such that the map $\bfA:=df_x$ satisfies,
\begin{equation}
  \label{eq:linear estimates def linear}
  \vert \vert \bfA(x)\vert\vert_k\leq C\vert\vert x\vert\vert_k + D_k\vert\vert x\vert\vert_{k-1}.
\end{equation}
A map $\bfA$ satisfying \eqref{eq:linear estimates def linear} is said to be a {\bf linear ILH normal map}, and estimates like (\ref{eq:linear estimates def}) and (\ref{eq:linear estimates def linear}) are called {\bf linear estimates}.

The implicit function theorem for ILH calculus reads as follows.

\begin{theorem}[\cite{Om97}, I, Thm. 6.9]
  \label{theo:implicitfunctiontheorem}
  Let $U$ and $V$ be open neighbourhoods of zero in $E^d$ and $F^d$, respectively, and let
  $$\Phi: U\cap \bfE \rightarrow V\cap {\rm F}$$
  be a $\cC^{\infty,r}$ ILH normal map for $r\geq 2$. Assume that $\bfA=d\Phi_0$ has a right inverse $\bfB$ that is a linear ILH normal map. Set
  \begin{align}
    \bfE_1 & {} =\ker( \bfA : \bfE \rightarrow {\rm F}),
    &   E_1^k & {} = \ker (\bfA : E^k \rightarrow F^k).
  \end{align}
  Then, $\lbrace \bfE_1, E^k_1, k\in\NN(d)\rbrace$ is an ILH chain with
  \begin{align}
    \bfE & {} =\bfE_1\oplus \bfB{\rm F},& E^k & =E^k_1\oplus \bfB F^k,
  \end{align}
  and there are neighbourhoods $W_1$ and $V'$ of zero in $E_1^d$ and $F^d$, respectively, as well as a $\cC^{\infty,r}$ ILH normal map
  $$
  \Psi : (W_1\cap\bfE_1) \times (V'\cap {\rm F}) \rightarrow \bfB{\rm F},
  $$
  such that $$\Phi(u,\Psi(u,v))=v.$$
\end{theorem}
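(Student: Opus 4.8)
The plan is to reduce the statement to an ILH inverse function theorem and then indicate why the latter holds; the substance will sit in a regularity bootstrap along the chain. First I would record the relevant splitting: since $\bfB$ is a right inverse of $\bfA$, the operator $\pi:=\bfB\bfA$ satisfies $\pi^2=\bfB(\bfA\bfB)\bfA=\pi$, and, $\bfA$ and $\bfB$ being linear ILH normal maps, $\pi$ extends to a continuous idempotent on every $E^k$ and on $\bfE$. Hence $\bfE=\bfE_1\oplus\bfB\bfF$ and $E^k=E_1^k\oplus\bfB F^k$, with $\bfE_1=\ker(\bfA:\bfE\to\bfF)$ and $E_1^k=\ker(\bfA:E^k\to F^k)$; moreover $\{\bfE_1,E_1^k\}$ is an ILH chain, because $E_1^k$ is a closed subspace of a Hilbert space, $E_1^{k+1}$ is dense in $E_1^k$ (as $\Id-\pi$ is continuous on $E^k$ while $E^{k+1}$ is dense in $E^k$), and $\bfE_1=\bigcap_k E_1^k$ carries the inverse limit topology. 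In the resulting coordinates a point of $E^d$ near $0$ is a pair $(u,w)\in E_1^d\times\bfB F^d$, and the assertion $\Phi(u,\Psi(u,v))=v$ means $\Phi(u+\Psi(u,v))=v$.

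Next I would convert this into an inverse function problem. Put $G:U\cap\bfE\to\bfF\times\bfE_1$, $G(x)=(\Phi(x),(\Id-\pi)(x))$. It is a $\cC^{\infty,r}$ ILH normal map — obtained from $\Phi$ and the linear ILH normal map $\Id-\pi$ by sum and composition — and $dG_0=(\bfA,\Id-\pi):\bfE\to\bfF\times\bfE_1$ is a linear isomorphism whose inverse $(y,u)\mapsto u+\bfB y$ is again linear ILH normal. If $G$ admits a local $\cC^{\infty,r}$ ILH normal inverse $H$ near $0$, then $\Psi(u,v):=\pi\bigl(H(v,u)\bigr)$ takes values in $\bfB\bfF$ and satisfies $\Phi\bigl(u+\Psi(u,v)\bigr)=v$ for small $(u,v)\in\bfE_1\times\bfF$: writing $H(v,u)=x$, one has $\Phi(x)=v$ and $(\Id-\pi)x=u$, so $x=u+\pi x$ and $\Psi(u,v)=\pi x$. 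Thus everything reduces to the ILH inverse function theorem for $G$.

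It remains to sketch the latter. At the base level $d$, $G$ is a $\cC^\infty$ map between open subsets of the Hilbert spaces $E^d$ and $F^d\times E_1^d$ whose differential at $0$ is a topological isomorphism, so the classical Banach inverse function theorem gives a $\cC^\infty$ local inverse $H$ at level $d$. The core is the ILH regularity: one shows by induction on $k\geq d$ that $H$ maps level-$k$ points to level-$k$ points and extends, on the corresponding level-$k$ domain, to a $\cC^\infty$ map satisfying the normal estimates up to order $r$. In the inductive step, for $z$ at level $k$ the point $x:=H(z)$ already lies in $E^{k-1}$ by the hypothesis at level $k-1$; one then runs a Newton-type iteration for $G(x)=z$ at level $k$, started from a smooth approximation of $x$. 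Since $dG_x$ is invertible for $x$ small and $(dG_x)^{-1}$ is a linear ILH normal map, the iterates stay bounded in $\|\cdot\|_k$ — here one feeds in the linear estimates \eqref{eq:linear estimates def} for $dG$ and \eqref{eq:linear estimates def linear} for $(dG_x)^{-1}$, the decisive point being that the uncontrolled contributions involve only the norm $\|\cdot\|_{k-1}$, already handled — while contraction forces convergence, and the limit is $x$ by uniqueness of $H$ at level $d$. This yields $x\in E^k$ and continuity of $H$ at level $k$; differentiating $G\circ H=\Id$ and re-running the argument for $dH=(dG\circ H)^{-1}$ upgrades this to a $\cC^\infty$ map with the normal estimates \eqref{eq:linear estimates def} up to order $r$. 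Passing to the inverse limit over $k$ then concludes.

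I expect this bootstrap to be the only genuine obstacle. The splitting, the reduction, and the base-level step are formal or classical; but for the induction on $k$ to close one needs the a priori estimate to be truly linear in the top-order norm $\|\cdot\|_k$, with every other term absorbed into lower norms — which is exactly what the technical definitions of a $\cC^{\infty,r}$ ILH normal map and of a linear ILH normal map are engineered to guarantee — and the limiting step must be handled with care, since the naive iteration converges a priori only in the $E^d$-topology, so membership in $\bfE$ and the control of higher norms are recovered only afterwards.
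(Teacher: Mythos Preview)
The paper does not prove this theorem: it is quoted verbatim as \cite[I, Thm.~6.9]{Om97} and used as a black box, so there is no argument in the paper to compare your proposal against. Your sketch is therefore not in conflict with anything the authors wrote.

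That said, your outline is a reasonable route to such a result and matches the spirit of Omori's treatment: establish the splitting via the idempotent $\bfB\bfA$, recast the implicit problem as an inverse problem for $G=(\Phi,\Id-\pi)$, solve it at the bottom Hilbert level by the classical theorem, and then climb the chain by an inductive regularity bootstrap exploiting the linear estimates. The one place where your write-up is thin is exactly where you flag it: the inductive step showing that $H$ carries level-$k$ data to level-$k$ data with uniform control. In Omori's proof this is not done by a Newton iteration per se but by a careful fixed-point argument in which the normality estimates \eqref{eq:linear estimates def} are used to show that the correction term is a contraction in $\|\cdot\|_k$ modulo terms controlled by $\|\cdot\|_{k-1}$; the constants $C$ being independent of $k$ is what makes the induction close. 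Your description captures this mechanism correctly at the heuristic level, but a full proof would require writing out these estimates explicitly.
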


\subsection{ILH manifolds, groups and actions}
\label{sec:ILHman}

Let $\Man$ be a manifold modelled on a locally convex topological space $\bfE$. We call $\Man$ an {\bf ILH manifold} modelled on the ILH chain $\lbrace \bfE, E^k\st k\in \NN(d)\rbrace$  when the following are satisfied:
\begin{enumerate}
\item[a)] The manifold $\Man$ is the inverse limit of smooth Hilbert manifolds $\Man^k$ modelled on $E^k$ such that $\Man^l\subset \Man^k$ for all $l\geq k$.
\item[b)] For all $x\in\Man$, there exist open charts $(U_k,\phi_k)$ of $\Man^k$ containing $x$ such that if $l\geq k$, $U_l\subset U_k$ and $(\phi_k)_{\vert U_l}=\phi_l$.
\end{enumerate}
Moreover, if the inverse limit $U_\infty$ of $(U_k)_{k\in\NN(d)}$ in b) is an open neighbourhood of $x$ in $\Man$, then $\Man$ is called a {\bf strong ILH manifold}.

A sufficient condition for two different sets of open charts to define the same ILH manifold structure is the following: for each $x\in \Man$ with local open charts  $\{(U_k,\phi_k)\}$, $\{(V_k,\psi_k)\}$, the inverse limit of the maps $\psi_k^{-1}\circ \phi_k$, $\phi_k^{-1}  \circ \psi_k$, which are, by strongness, maps on the neighbourhood $U_\infty\cap V_\infty$,
\begin{align}\label{eq:intersections}
  \psi_\infty^{-1}\circ \phi_\infty, \phi_\infty^{-1}  \circ \psi_\infty : U_\infty\cap V_\infty \to V_\infty\cap U_\infty,
\end{align}
are $\mathcal{C}^{\infty,\infty}$ ILH normal.

A map $\Phi: \Man \rightarrow N$ between ILH manifolds is a {\bf $\cC^r$ ILH map} (of order $j$) if there is $j\in \NN$ such that for all suitable $k$, the map $\Phi$ extends to a $\cC^r$ map 
$\Phi_k: \Man^{j+k} \rightarrow N^k$
satisfying $(\Phi_k)_{\vert \Man^{j+k+1}}=\Phi_{k+1}$. 
If $\Phi$ is a $\cC^r$ ILH map for all $r$, then $\Phi$ is a {\bf smooth ILH map}.

\begin{remark}
  Note that in order to recover the notion of a $\cC^r$ ILH map for ILH chains from a $\cC^r$ ILH map between ILH manifolds, one needs to shift (by $j$) the indices of the ILH chains modelling the ILH manifolds.
\end{remark}

\begin{example}
  Let $(E,h)$ be a smooth Riemannian vector bundle over a compact
  Riemannian manifold $(M,g)$.  Assume that $(E,h)$ admits a metric-compatible connection $\nabla$.  Combining $g$ and $(h,\nabla)$, we
  build metrics $(\cdot,\cdot)_h$ on the bundles $(T^*M)^{\otimes
    p}\otimes E$, as well as compatible connections.  We define the
  norms, for $u\in \Gamma(E)$,
  \begin{equation}
    \label{eq:norms}     
    \vert\vert u \vert\vert_k:= \left( \sum_{i=0}^k \int_M (\nabla^iu,\nabla^iu)_h\:\dvol_g\right)^{\frac{1}{2}},
  \end{equation}
  where $\dvol_g$ is the volume element of $g$ and $\nabla^i$ the
  $ith$ covariant derivative associated to the extended connection on
  $E$.  As an example, given $(M,g)$, we can endow the spaces of forms
  $\Om^i$ or of symmetric tensors $\Gamma(S^2T^*M)$ with Hilbert
  norms. The space of Riemannian metrics on a compact manifold is then
  an example of ILH manifold, modelled on the space of symmetric
  $2$-tensors endowed with these norms.
\end{example}

A strong ILH Lie group $\bfG$ is a topological group with the structure of a   strong  ILH manifold 
such that the group operations are smooth ILH maps. For later purposes, we recall the formal definition from \cite{Om97}, formulated in terms of what happens around the identity at the infinitesimal level. 

\begin{definition}\label{def:ILHLiegroup}
  A {\bf strong ILH Lie group} modelled on an ILH chain $\lbrace \bfE, E^k\st
  k\in \NN(d)\rbrace$ is a topological group $\bfG$ such that there exist open
  neighbourhoods $0 \in U\subset E^d$, $e\in \widetilde{U}\subset
  \bfG$, an homeomorphism called {\em chart at the origin} $\xi:U\cap\bfE\to
  \widetilde{U}$ with $\xi(0)=e$, and an open neighbourhood $V$ of $0$
  in $E^d$ such that $$ \xi(V\cap \bfE)^2 \subset \xi(U\cap\bfE)
  \textrm{ and } \xi(V\cap\bfE)^{-1}=\xi(V\cap\bfE),
  $$ satisfying that, for any $u,v\in V\cap\bfE$, any $k\in\NN(d)$ and any $l\geq 0$,
  \begin{itemize}
  \item[a)] The local product
    $\eta(u,v)=\xi^{-1}(\xi(u)\cdot\xi(v))$ extends to a $\cC^l$ map
    $V\cap E^{k+l}\times V\cap\E^k\to U\cap E^k$.
  \item[b)] The local right translation $\eta_v(u)=\eta(u,v)$
    is a $\cC^\infty$ map $V\cap E^k \rightarrow U\cap E^k$.
  \item[c)] Its differential $\theta(w,u,v)=(d\eta_v)_uw$ extends to a
    $\cC^l$ map $E^{k+l}\times V\cap E^{k+l}\times V\cap E^k\to E^k$.
  \item[d)] The local inverse $\iota(u)=\xi^{-1}(\xi(u)^{-1})$
    extends to a $\cC^l$ map $V\cap E^{k+l}\to V\cap E^{k+l}$.
  \item[e)] For each $g\in G$, the local conjugation
    $A_g(u)=\xi^{-1}(g^{-1}\xi(u)g)$, defined on a neighbourhood $W$ of
    $0\in E^d$ such that $g^{-1}\xi(W\cap \bfE)g\subset \xi(U\cap
    \bfE)$, extends to a $\cC^\infty$ map $W\cap E^k\to U\cap
    E^k$.
  \end{itemize}
\end{definition}

Given a strong ILH Lie group $\rm G$ and $g\in \rm G$, consider the map $\xi_g(u)=\xi(u)g$ for $u\in V$, then
$$\lbrace (W\cap \bfE, \xi_g), g\in \rm G,\, W \textrm{ open in } V \rbrace$$
is an atlas for $\rm G$. One can define Hilbert manifolds $G^k$, with the structure of a topological group, covered by the atlas
$$\lbrace (W\cap E^k, \xi_g), g\in \rm G,\, W \textrm{ open in } V \rbrace$$
such that $\rm G$ is the inverse limit of the topological groups $G^k$ and the axioms of the definition can be stated in terms of the $G^k$s (see \cite[III, Thm. 3.7]{Om97}). We will refer to $\rm G$ as $\lbrace {\rm G}, G^k\st k\in \NN(d) \rbrace$ when necessary.

Hilbert Lie groups, or finite-dimensional Lie groups, are strong ILH Lie groups. As in these cases, the spaces $\bfE$ and $E^k, k\in \NN(d)$, are endowed with Lie algebra structures. We will denote them by $\fg$ and $\fg^k$, which are, respectively the Lie algebras of $\bfG$ and $G^k$ for all $k\in\NN(d)$.

\begin{example}\label{ex:chart-Diff}
  A typical example of a strong ILH Lie group is $\bfDiff$, the space
  of diffeomorphisms of an $n$-dimensional compact manifold $M$ (see, e.g.,
  \cite{Om97}).  This Lie group is modelled on $\lbrace {\Gamma(TM)},
  \Gamma(TM)^k, k\geq n+5 \rbrace$, where ${\Gamma(TM)}$ is the space
  of smooth vector fields on $M$, and $\Gamma(TM)^k$ is its completion
  with respect to the norm defined by  (\ref{eq:norms}). The choice of $k\geq n+5$ is made to make use of Sobolev embedding theorems and deal with regularity issues \cite[Ch. V]{Om97}. Choose a Riemannian metric $g_0$ on $M$. The chart at the origin $(\xi,U)$ is built using the exponential map associated
  to the geodesics provided by $g_0$:
  \begin{equation}
    \label{eq:definitionxiDiff}
    \begin{array}{cccc}
      \xi: & U \cap {\Gamma(TM)} & \rightarrow & \bfDiff\\
      & u & \mapsto & (x \mapsto \exp_x(u(x)) ),
    \end{array}
  \end{equation}
  where $t\mapsto \exp_x(tu(x))$ is the geodesic starting from
  $(x,u(x))$, and $U$ is a small neighbourhood of $0$ in
  $\Gamma(TM)^{n+5}$. Note that by the Sobolev embedding theorem, $\xi$
  is well defined. Moreover, the differentiable structure just defined does not depend on the choice of Riemannian metric \cite[Rem. 4]{Les}.
\end{example}

We will also need the notion of action in this category:
\begin{definition}
  An {\bf ILH (right) action} of a strong ILH Lie group
  $\lbrace {\rm G}, G^k\st k\in \NN(d)\rbrace$ on 
  an ILH manifold $\Man$ is a map
  $$\rho: \Man \times \bfG \rightarrow \Man$$ such that:
  \begin{enumerate}
  \item[a)] For any $v\in \Man$, $g,h\in G$, $\rho(\rho(v,g),h)=\rho(v,gh)$ and   $\rho(v,e)=v$.
  \item[b)] For every $k\in\NN(d)$ and $l\geq 0$, $\rho$ extends to a $\cC^l$ map from
    $\Man^{k+l}\times G^k$ to $\Man^k$.
  \end{enumerate}
  If $\lbrace \Man, \Man^k\st k\in \NN(d)\rbrace$ is simply an ILH chain, and if $\rho$ is linear with respect to $v$, $\rho$ is called
  an ILH representation.
\end{definition}
A fundamental example of an ILH representation of $\bfDiff$ is given by its action on differential forms.
Consider the ILH chain $\lbrace {\Om^j},\Om^{j,k}, k\geq n+5 \rbrace$, where $\Om^{j,k}$
is the completion of the space of smooth $j$-forms ${\Om^j}$ on a compact manifold $M$ with respect to the norm $\vert\vert \cdot \vert\vert_k$
defined as in equation (\ref{eq:norms}). 
Consider the pullback map
\begin{equation}
  \begin{array}{cccc}
    \rho : & \Om^* \times \bfDiff & \rightarrow & \Om^*\\
    &         (\om,g) & \mapsto & g^*\om.
  \end{array}
\end{equation}
Notice the shift in indices in the ILH chain for $\bfDiff$ in the following proposition:
\begin{proposition}[\cite{Om97}, VI, Thm. 6.1, Cor. 6.3]
  \label{prop:Diff action forms}
  The map $\rho$ gives a representation of the strong ILH group $\lbrace \bfDiff, \Diff^{k+1}, k\geq n+5\rbrace$
  on $\lbrace {\Om^*},\Om^{*,k}, k\geq n+5 \rbrace$.
  Moreover, for every fixed $\om\in {\Om^{j}}$, the map 
  $$\Psi_\om: U\cap {\Gamma(TM)} \rightarrow {\Om^j}$$
  defined by
  $\Psi(u)=\rho(\om,\xi(u))$ is a $\cC^{\infty,\infty}$ ILH normal map.
\end{proposition}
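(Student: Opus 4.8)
The plan is to dispose of the algebraic content at once and then concentrate on the two analytic issues: the $\cC^l$-extension demanded by axiom b) of an ILH action, and the normality of $\Psi_\om$. Axiom a) is purely formal: pullback of forms is a contravariant functor, so $(g\circ h)^*\om=h^*(g^*\om)$ and $e^*\om=\om$; since the group law on $\bfDiff$ is composition, these read $\rho(\rho(\om,g),h)=\rho(\om,gh)$ and $\rho(\om,e)=\om$, and linearity in $\om$ is clear. Everything else rests on Sobolev analysis on the compact manifold $M$, where the hypothesis $k\geq n+5$ enters precisely so that $H^k(M)$ is a Banach ring, with $\|fg\|_k\leq C(\|f\|_k\|g\|_\infty+\|f\|_\infty\|g\|_k)$ and $\|\cdot\|_\infty\leq C\|\cdot\|_d$, and so that composition with a fixed smooth map obeys a Moser-type estimate.

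For axiom b), I would start from the observation that in local coordinates $g^*\om$ is, fibrewise over each point, a universal polynomial in the composition $\om\circ g$ and in the first derivatives $dg$. Thus, for $g\in\Diff^{k+1}$ one has $dg\in H^k$, and combining the ring structure of $H^k$ with the composition lemma for Sobolev diffeomorphisms — which controls $(\varphi,g)\mapsto\varphi\circ g$ in Sobolev norms, the characteristic one-derivative loss in the $g$-slot being exactly what forces the reindexing from $\Diff^k$ to $\Diff^{k+1}$ in the statement — yields that $(\om,g)\mapsto g^*\om$ extends to the $\cC^l$ maps $\Om^{*,k+l}\times\Diff^{k+1}\to\Om^{*,k}$ required by the definition. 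I would check this first on the identity chart, where $\rho(\om,\xi(u))=\xi(u)^*\om$ and $u\mapsto d\xi(u)$ is ILH-smooth because $\xi$ is built from the fixed smooth geodesic exponential of $g_0$ (Example \ref{ex:chart-Diff}), and then transport it to each chart $\xi_g$ of $\bfDiff$ via $\rho(\om,\xi_g(u))=g^*(\xi(u)^*\om)$, using that $g^*$ is a bounded operator on $\Om^{*,k}$ for fixed $g$; compatibility of the extensions under the dense embeddings $\Om^{*,k+1}\hookrightarrow\Om^{*,k}$ is routine. The underlying Sobolev estimates are those of \cite[VI]{Om97}.

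For the last assertion, smoothness of $\Psi_\om$ is the case $g=e$ of axiom b), so the real task is the normal estimate \eqref{eq:linear estimates def} (with the one-derivative shift inherited above understood). The structural observation is that $\xi(u)(x)=\exp_x(u(x))$, so $\Psi_\om(u)$ at a point $x$ depends only on the $1$-jet of $u$ at $x$: there is a fixed smooth map $G$, assembled from $\om$ and the exponential, with $\Psi_\om(u)(x)=G(x,j^1_x u)$. Differentiating $s$ times at $u+y$ in directions $v_1,\dots,v_s\in\Gamma(TM)$ and expanding by the chain and Leibniz rules, every term is a partial derivative of $G$ of total order $s$ in the jet variables, evaluated at $(x,j^1_x(u+y))$, multiplied by $s$ factors, one per index $i$, each equal to $v_i(x)$ or to a first derivative of $v_i$ at $x$. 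I would then apply the product and composition estimates above repeatedly so that the $\|\cdot\|_k$-norm is charged to at most one factor at a time: charging it to the $G(\cdot,j^1(u+y))$-factor yields the term $C\|y\|_k\|v_1\|_d\cdots\|v_s\|_d$, charging it to a single $v_i$-factor yields the middle term of \eqref{eq:linear estimates def}, and the remaining contributions — those in which the top derivative is absorbed by lower-order factors — assemble into $P_k(\|y\|_{k-1})\|v_1\|_{k-1}\cdots\|v_s\|_{k-1}$; the linear estimate \eqref{eq:linear estimates def linear} for $(d\Psi_\om)_0$ is the case $s=1$. I expect this last step — checking that after iterated differentiation every term lands in one of the three admissible types and that the residual $y$-dependence is genuinely polynomial in $\|y\|_{k-1}$ — to be the main obstacle, and here I would invoke \cite[VI]{Om97} directly rather than redo the estimates, since the proposition is quoted from there.
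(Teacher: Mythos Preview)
The paper does not give its own proof of this proposition: it is stated as a direct citation from \cite[VI, Thm.~6.1, Cor.~6.3]{Om97} and used as a black box. Your sketch is a reasonable outline of the standard argument---identifying the jet-dependence of $\Psi_\om$, the Sobolev ring and composition estimates that drive both the $\cC^l$-extension and the normality inequality, and the origin of the index shift---and is consistent with Omori's approach; since you yourself defer to \cite[VI]{Om97} for the actual estimates, there is nothing to compare beyond noting that the paper skips the argument entirely.
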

We state another result related to the action of
$\bfDiff$ on forms that will be used in Section \ref{sec:GDIff}.
First, for any $u\in U\cap{\Gamma(TM)}$ denote by $D\xi(u)$ the differential of the diffeomorphism $\xi(u)$. 
For any $x\in M$, set $\tau(\exp_xu(x))$ to be the parallel
transport on $\Lambda^j T^*M$ along the geodesic $t\mapsto \exp_x(tu(x))$, from $t=0$ to $t=1$, with respect to the metric induced by $g$.
Then define the map
\begin{equation}
  \label{eq:psioperator}
  \begin{array}{cccc}
    \Psi_{-1} : & {\Om^j} \times U\cap{\Gamma(TM)} & \rightarrow & {\Om^j}\\
    &  (\om,u) & \mapsto & \om( D(\xi(u))^{-1}\tau(\exp_xu(x))\cdot,\ldots, D(\xi(u))^{-1}\tau(\exp_xu(x))\cdot)
  \end{array}
\end{equation}
where more precisely $\Psi_{-1}(\om,u)$ is the form
$$
(x,(V_1,\ldots,V_j))\mapsto\om(x)( D(\xi(u))^{-1}\tau(\exp_xu(x))V_1,\ldots, D(\xi(u))^{-1}\tau(\exp_xu(x))V_j)).
$$
\begin{lemma} \label{lem:psioperator}
  The map $\Psi_{-1}$ is a $\cC^{\infty,\infty}$ ILH normal map from
  $\lbrace {\Om^j},\Om^{j,k} , k\geq n+5\rbrace\times  \lbrace\bfGamma(TM), \Gamma^{k+1}, k\geq n+5 \rbrace$ to
  $\lbrace \bfOm^j,\Om^{j,k} , k\geq n+5 \rbrace$.
\end{lemma}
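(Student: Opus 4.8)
The plan is to recognize $\Psi$ as a zero-th order substitution (Nemytskii) operator precomposed with a first-order jet prolongation, and then to invoke the standard behaviour of such operators on chains of Sobolev sections.

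First I would make explicit the pointwise nature of $\Psi$. For $x\in M$, the linear endomorphism $D(\xi(u))^{-1}\tau(\exp_x u(x))$ appearing in \eqref{eq:psioperator} is determined by the pair $\big(u(x),(\nabla u)(x)\big)$ together with the fixed background data (the metric $g$ and its Levi-Civita connection): the geodesic $t\mapsto\exp_x(tu(x))$, hence $\tau(\exp_x u(x))$, depends smoothly on $u(x)$, while $D(\xi(u))_x$ depends smoothly on the $1$-jet of $u$ at $x$. Consequently there are an open neighbourhood $\cO$ of the zero section of $J^1(TM)$ and a smooth fibre bundle morphism $\Phi\colon\Lambda^jT^*M\times_M\cO\to\Lambda^jT^*M$, linear in the $\Lambda^jT^*M$-factor, with $\Psi(\om,u)(x)=\Phi\big(x;\om(x),(j^1u)(x)\big)$, where $j^1u$ is the $1$-jet prolongation of $u$. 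Thus $\Psi=\cN_\Phi\circ(\Id\times j^1)$, with $\cN_\Phi(\om,\sigma)(x)=\Phi(x;\om(x),\sigma(x))$ the substitution operator attached to $\Phi$, defined on pairs $(\om,\sigma)$ with $\sigma$ a section of $J^1(TM)$ taking values in $\cO$.

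Next I would treat the two factors separately. The prolongation $j^1$ is a continuous linear map $\Gamma(TM)^{k+1}\to\Gamma(J^1(TM))^{k}$ for every $k\geq n+5$; being linear it coincides with its differential, so it satisfies \eqref{eq:linear estimates def linear} with $D_k=0$, and is a $\cC^{\infty,\infty}$ ILH map of order $1$. For $\cN_\Phi$ I would use that, for $k\geq n+5$, the Sobolev space $H^k(M)$ is a Banach algebra continuously embedded in $C^0(M)$: the classical $\om$-lemma for smooth fibre bundle morphisms shows that $\cN_\Phi$ extends to a $\cC^\infty$ map $\Om^{j,k}\times\{\sigma\in\Gamma(J^1(TM))^k : \sigma(M)\subset\cO\}\to\Om^{j,k}$, compatibly in $k$, hence a $\cC^\infty$ ILH map of order $0$. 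To upgrade this to $\cC^{\infty,\infty}$ ILH normal I would verify the estimates \eqref{eq:linear estimates def}: differentiating the composition $x\mapsto\Phi(x;\om(x),\sigma(x))$ by the Fa\`a di Bruno formula and estimating each resulting term with the Moser-type inequality $\|f_1\cdots f_m\|_k\lesssim\sum_i\|f_i\|_k\prod_{l\ne i}\|f_l\|_{C^0}$ (valid on $H^k$ for $k\geq n+5$) produces in each term exactly one factor carrying the top-order norm $\|\cdot\|_k$; the contribution where that factor sits in the $\om$-slot gives the term $C\|y\|_k\|v_1\|_d\cdots\|v_s\|_d$ of \eqref{eq:linear estimates def} (in which $\cN_\Phi$ is linear), the contributions where it sits in one of the $v_i$-slots give the sum, and all remaining factors, bounded in $C^0$ by the $\|\cdot\|_{k-1}$-norms via Sobolev embedding, are collected into the polynomial $P_k(\|y\|_{k-1})$.

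Finally I would assemble the pieces: a composition of $\cC^{\infty,\infty}$ ILH normal maps is again $\cC^{\infty,\infty}$ ILH normal, with orders adding, so $\Psi=\cN_\Phi\circ(\Id\times j^1)$ is a $\cC^{\infty,\infty}$ ILH normal map from $\{\Om^j,\Om^{j,k},\,k\geq n+5\}\times\{\bfGamma(TM),\Gamma^{k+1},\,k\geq n+5\}$ to $\{\Om^j,\Om^{j,k},\,k\geq n+5\}$; here one restricts $u$ to a neighbourhood of $0$ small enough that $j^1u$ takes values in $\cO$, which is possible since $H^{n+5}(M)\hookrightarrow C^1(M)$. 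I expect the main obstacle to be precisely the verification of \eqref{eq:linear estimates def} for $\cN_\Phi$: one must organize the Fa\`a di Bruno expansion of the high-order derivatives of a composition so that, after repeated use of the tame Sobolev product inequalities, every term displays exactly one factor in the top norm $\|\cdot\|_k$ and the remainder controlled by lower norms. This is exactly the bookkeeping carried out in \cite[VI]{Om97}, which is why the statement is quoted from there rather than reproved here.
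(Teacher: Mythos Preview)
Your proposal is correct and follows essentially the same approach as the paper. The paper's proof simply cites \cite[VI, Lemma 6.2]{Om97} and notes that the argument is an application of \cite[V, Thm.~3.1]{Om97}---precisely the substitution-operator theorem you unpack---the only remark being that the inverse $D\phi\mapsto(D\phi)^{-1}$ is still a smooth fibre-preserving bundle map; your factorization $\Psi=\cN_\Phi\circ(\Id\times j^1)$ and appeal to the tame Moser-type estimates is exactly what lies behind that citation.
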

\begin{proof}
  The proof of Lemma \ref{lem:psioperator} follows from the proof of \cite[VI, Lemma 6.2]{Om97} and
  is an application of \cite[V, Thm. 3.1]{Om97}. The operator $\Psi_{-1}$ here is slightly different from the one denoted $\Psi$ in Omori's book,
  but the argument works equally as the inverse map $D\phi\mapsto (D\phi)^{-1}$ is defined by a smooth fibre preserving bundle map.
\end{proof}
Lastly, a subgroup $\bf H\subset \bfG$ is called a {\bf strong ILH Lie subgroup} of $\lbrace {\rm G}, G^k\st k\in \NN(d) \rbrace$ if, for a chart at the origin $(\xi,U)$,
\begin{enumerate}
\item[a)] there is a decomposition $\fg=\fh\oplus \fm$ which extends to $\fg^k=\fh^k\oplus \fm^k$ for every $k\in\NN(d)$,
  where $\fh, \fh^k$ and $\fm, \fm^k$ are closed subspaces of the spaces $\bfE, E^k$.
\item[b)] There is a neighbourhood $V$ of $0$ in $\fg^d$ and a map $\xi':V\cap \fg \rightarrow \rm G$
  such that $\xi'\xi^{-1}$ is a $\cC^\infty$ ILH diffeomorphism on a neighbourhood of zero and $\xi'(V\cap \fh)\subset \bf H$.
\item[c)] The pair $(\xi'_{V\cap \fh}, V\cap \fh)$ is a chart at the origin for $\bf H$. 
\end{enumerate}

As explained in \cite{Om97}, examples of strong ILH Lie subgroups of $\bfDiff$ are the group of symplectomorphisms
of a given symplectic structure, the subgroup of hamiltonian symplectomorphisms, or the group of
volume preserving diffeomorphisms for a given volume form.

\subsection{ILH Frobenius' theorem}
\label{sec:ILHFrob}
In order to build strong ILH Lie subgroups, we will need to integrate distributions on strong ILH Lie groups.
This can be done by means of a Frobenius' theorem in this category.
We first define vector bundles on strong ILH Lie groups. The definition is based on the properties of the tangent bundle $T\bfG$ of a strong ILH Lie group $\bfG$.
As it happens for  Lie groups, $T\rm G$ is a trivial bundle, thanks to the map
$$dR: \fg\times {\rm G} \rightarrow T\rm G$$ defined by $dR(u,g)=d(R_g)_eu$, where $R_g(h)=hg$ is the right translation.
Considering generalisations of the differential of the right translation map leads to a definition of vector bundles that will be trivial over the ILH Lie group $G$ and carry a suitable ILH structure \cite[Section IX.1]{Om97}:
\begin{definition}
  \label{def:ILH group vector bundles}
  An {\bf ILH vector bundle $B(\bfF,{\rm G},\tilde T)$ over a strong ILH Lie group} $\lbrace {\rm G}, G^k\st k\in \NN(d) \rbrace$  consists of an ILH chain  $\lbrace \bfF, F^k\st k\in \NN(d) \rbrace$ and a {\em defining map} 
  \begin{equation*}
    \begin{array}{cccc}
      \tilde T : & \bfF \times (\tilde U \cap {\rm G}) \times (\tilde U\cap {\rm G}) & \rightarrow & \bfF\\
      &  (u,g,h)                                         &  \mapsto   &    \hat T(g,h)u
    \end{array}
  \end{equation*}
  for $\tilde U$ a neighbourhood of $e$ in ${\rm G}$, linear on $F$ and satisfying, for any $k\in\NN(d)$, for any $l\geq 0$ and $(g,h,h')\in {\rm G}^3$ :
  \begin{enumerate}
  \item[a)] $\hat T(g,e)=Id$.
  \item[b)] $\hat T(gh,h')\hat T(g,h)=\hat T(g,hh')$.
  \item[c)] $\tilde T$ extends to a $\cC^l$ map $F^{k+l}\times \tilde U\cap G^{k+l} \times \tilde U\cap G^k\to F^k$.
  \item[d)] $(u,g)\mapsto \tilde T(u,g,h)$ is a $\cC^\infty$ map $F^k\times \tilde U\cap G^k\to F^k$.
  \end{enumerate}
\end{definition}
\begin{remark}
  This definition deserves some explanation.
  Fix a triple $(\bfF,{\rm G},\tilde T)$ as in Definition \ref{def:ILH group vector bundles},  an open neighbourhood $V$ of $0$ in $E^d$, as in Definition \ref{def:ILHLiegroup}, 
  and set $\tilde V=\xi(V)$. One can define transition functions from the chart $(\tilde V\cap {\rm G}) g$
  to the chart $(\tilde V\cap {\rm G})h$, via the maps:
  \begin{equation}
    \label{eq:charts}
    \begin{array}{cccc}
      t_{h,g} : &   (\tilde V\cap {\rm G})g \times \bfF & \rightarrow & \bfF \\
      &    (xg,u) & \mapsto & \hat T(x,gh^{-1})u  
    \end{array}
  \end{equation}
  One can check that the family $\lbrace t_{h,g} , (g,h)\in {\rm G}^2\rbrace$ defines a $\cC^\infty$ vector bundle 
  $B(F^k,G^k,\tilde T)$ on $G^k$ for all $k\in \NN(d)$, with fibre $F^k$ (in the Hilbert category). Then $B(\bfF,{\rm G},\tilde T)$
  is the projective limit of these bundles.
\end{remark}

As an example, recall the maps $\theta$ and $\xi$ from the definition  of a strong ILH Lie group ${\rm G}$.
Then \begin{equation}
  \label{eq:defining map tangent}
  \tilde T_\theta(u,g,h)=\theta(u,\xi^{-1}(g),\xi^{-1}(h))
\end{equation}
satisfies the above axioms,
and $B(\bfE,{\rm G},\tilde T_\theta)$ is the tangent bundle of ${\rm G}$. 

For any $g\in {\rm G}$, the right translation map $R_g$ on ${\rm G}$ extends to a fibre preserving map 
$\tilde R_g$ on $B(\bfF,{\rm G},\tilde T)$ (see \cite[IX]{Om97}).
Consider another ILH vector bundle $B(\bfF', {\rm G}, \tilde T')$ for an ILH chain $\lbrace \bfF', F_2^k, k\in\NN(d) \rbrace$. Then any linear operator $\bfA: \bfF \rightarrow \bfF'$
can be extended to an ILH bundle homomorphism
$$
\tilde \bfA : B(\bfF, {\rm G}, \tilde T) \rightarrow B(\bfF', {\rm G}, \tilde T')
$$
by setting $\tilde \bfA= \tilde R_g \bfA \tilde R_g^{-1}$ on the fibre over $g\in \rm G$. However, even if $\bfA$ is continuous, $\tilde \bfA$ may have little
regularity.
The regularity of $\tilde \bfA$ is checked by means of its {\it local expression}.
By definition, if $(U,\xi_G)$ is a chart at the origin for ${\rm G}$, the local expression of $\tilde \bfA$ is the map:
\begin{equation}
  \label{eq:local expression}
  \begin{array}{cccc}
    \Phi_\bfA: & U\cap\fg \times \bfF & \rightarrow & \bfF' \\
    &   (  u , w )           & \mapsto     & \hat T'(e,\xi_G(u))\circ \tilde \bfA \circ \hat T(e,\xi_G(u))^{-1}w.
  \end{array}
\end{equation}
If $\Phi_\bfA$ satisfies some linear estimates, when taking derivatives with respect to
its first variable, we say that $\tilde \bfA$ is a {\bf $\cC^{\infty,r}$ ILH normal bundle homomorphism}.
We refer to \cite[IX, Def. 1.3]{Om97} for the precise definition.

Frobenius' theorem in this context is given by the following result.
\begin{theorem}{(Frobenius' theorem), \cite[IX, Thm. 3.4]{Om97}}
  \label{theo:Frobenius}
  Let ${\rm G}$ be a strong ILH Lie group, with tangent bundle $B(\fg,{\rm G},\tilde T_\theta)$. 
  Let $B(\bfF,\bfG, \tilde T')$ be another ILH-vector bundle over ${\rm G}$ and let
  $$
  \tilde \bfA : B(\fg,{\rm G},\tilde T_\theta) \rightarrow B(\bfF,\bfG, \tilde T')
  $$
  be a right invariant $\cC^{\infty,r}$ ILH normal bundle homomorphism, with $r\geq 1$.
  Suppose that the restriction $\bfA:\fg \rightarrow \bfF$ of $\tilde \bfA$ to the fibre at the identity
  satisfies for all $k\in \NN(d)$:
  \begin{enumerate}
  \item[a)] The subspace $\fh = \ker \bfA$ is a Lie subalgebra of $\fg$, and $\bfA\fg$ is a closed subspace of $\bfF$.
  \item[b)] There are closed subspace $\bfE_2$ and $\bfF_2$ such that $\fg=\fh\oplus \bfE_2$ and $\bfF=\bfA\fg\oplus \bfF_2$.
  \item[c)] There exist constants $C$ and $D_k$, for $k\in\NN(d)$ with $D_d=0$, such that, for all $u\in \bfE_2$,
    $\vert\vert \bfA u\vert\vert_k\geq C\vert\vert u\vert\vert_k - D_k \vert\vert u \vert\vert_{k-1}.$
  \item[d)] There exist constants $C'$ and $D'_k$, for $k\in\NN(d)$ with $D'_d=0$, such that, the projection $p:\bfF \rightarrow \bfA\fg$ with respect to the  decomposition in b) satisfies, for all $v\in\bfF$,
    $\vert\vert pv\vert\vert_k\leq C' \vert\vert v\vert\vert_k + D_k'\vert\vert v\vert\vert_{k-1}.$
    
  \end{enumerate}
  Then, there are neighbourhoods $0\in V_1\subset\fh^d$, $0\in V_2\subset E_2^d$, $e\in W'\subset G^d$
  and a $\cC^\infty$
  diffeomorphism $\xi':V_1\cap\fh\times V_2\cap \bfE_2\rightarrow W'\cap \bfG$ satisfying, for all $k\in\NN(d)$:
  \begin{enumerate}
  \item[a)] The map $\xi'$ extends to a $\cC^\infty$ diffeomorphism
    $
    (V_1\cap\fh^k)\times (V_2\cap E_2^k) \rightarrow W'\cap G^k.
    $
  \item[b)] For every $w\in V_2\cap E_2^k$, $\xi'((V_1\cap \fh^k) \times \lbrace w \rbrace)$ is an integral  submanifold of the involutive subbundle $\fh^k=\ker(\tilde \bfA: B(\fg^k,G^k,\tilde T_\theta)\rightarrow B(F^k,G^k,\tilde T'))$.
  \item[c)] The kernel $\tilde \fh=\ker \tilde \bfA$ is an ILH subbundle of $B(\fg,\bfG,\tilde T_\theta)$, and the maximal integral submanifold
    $\bfH$
    of $\tilde \fh$ through $e$ is a strong ILH Lie subgroup of $\bfG$.
  \end{enumerate} 
\end{theorem}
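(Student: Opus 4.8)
The plan is to straighten the involutive distribution $\tilde{\fh}=\ker\tilde\bfA$ one Hilbert level at a time and then to reconcile the resulting charts into a genuine ILH object. First I would use right invariance: since $\tilde\bfA$ is right invariant, its local expression $\Phi_\bfA$ of \eqref{eq:local expression} has, near any $g\in\bfG$, the same form as near $e$, so it suffices to produce the chart $\xi'$ on a neighbourhood of $e$. Fix $k\in\NN(d)$. Hypotheses a) and b) are precisely what the Frobenius theorem in the Banach category requires: involutivity of $\ker(\tilde\bfA\colon B(\fg^k,G^k,\tilde T_\theta)\to B(F^k,G^k,\tilde T'))$ at $e$ is the statement that $\fh=\ker\bfA$ is a Lie subalgebra, right invariance propagates it to every point, and the splittings $\fg^k=\fh^k\oplus E_2^k$, $F^k=\bfA\fg^k\oplus F_2^k$ give the complementedness needed to integrate. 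Thus, for each $k$, one obtains a local first integral, i.e. a $\cC^\infty$ submersion $f_k\colon W'\cap G^k\to\bfA\fg^k$ with $(df_k)_e=\bfA$ and $\ker(df_k)_g=\tilde{\fh}_g$, whose level sets are the integral submanifolds and which yields a straightening chart $\xi'_k$ on $G^k$.

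The hard part will be the passage to the inverse limit: the maps $f_k$ (and the charts $\xi'_k$) need not a priori be compatible with the inclusions $G^{k+1}\hookrightarrow G^k$, and even when they are they need not assemble into a $\cC^{\infty,\infty}$ ILH normal map; both failures amount to a loss of derivatives. This is where the linear estimates c) and d) enter decisively. Estimate c) is a lower bound for $\bfA$ on the complement $\bfE_2$ with a controlled $(k-1)$-term, and d) says the projection $p\colon\bfF\to\bfA\fg$ is a linear ILH normal map; together they say that $\bfA$, together with the right inverse $\bfB=(\bfA|_{\bfE_2})^{-1}\circ p$, behaves like an elliptic-type operator with tame loss of derivatives, so that $\bfB$ is itself a linear ILH normal map (iterate the inequality from c)). Concretely, I would build the first integral $f$ directly, by a convergent iteration, the estimates c), d) guaranteeing at each step the uniform control \eqref{eq:linear estimates def} required to remain in the ILH category, and then verify that the $f$ so produced is $\cC^{\infty,\infty}$ ILH normal and compatible across all levels with no index shift. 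Applying the ILH implicit function theorem \ref{theo:implicitfunctiontheorem} to $f$ — whose differential at $e$ is $\bfA$, with the linear ILH normal right inverse $\bfB$ supplied by b), c), d) — then produces exactly the diffeomorphism $\xi'\colon(V_1\cap\fh)\times(V_2\cap\bfE_2)\to W'\cap\bfG$ and the decomposition $\fg=\fh\oplus\bfE_2$, with the $\bfE_2$-slices $f^{-1}(\mathrm{const})$ the integral submanifolds, proving parts a) and b) of the conclusion.

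For the last assertion I would argue as follows. By construction $\tilde{\fh}=\ker\tilde\bfA$ is an ILH subbundle of the tangent bundle $B(\fg,\bfG,\tilde T_\theta)$ of $\bfG$, and the maximal integral submanifold $\bfH$ through $e$ carries an ILH manifold structure from the charts $\xi'$. That $\bfH$ is a subgroup uses hypothesis a): right invariance of $\tilde\bfA$ gives $\tilde R_g\tilde{\fh}=\tilde{\fh}$, so right translation by $g\in\bfH$ maps the leaf through $e$ to the leaf through $g$, which is $\bfH$ itself; since $\fh$ is a Lie subalgebra, $\Ad(\bfH)\fh=\fh$, so left translations by elements of $\bfH$ likewise preserve the foliation and fix the leaf $\bfH$, whence $\bfH\cdot\bfH\subset\bfH$ and $\bfH^{-1}=\bfH$. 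Finally, the charts of parts a), b), restricted to the $\fh$-factor, are charts at the origin for $\bfH$ in the sense of the definition of a strong ILH Lie subgroup, with $\fg=\fh\oplus\bfE_2$ the required splitting, so $\bfH$ is a strong ILH Lie subgroup of $\bfG$ with Lie algebra $\fh$. The whole argument is carried out in detail in \cite[IX]{Om97}; the one genuinely non-formal ingredient, and the step I expect to be the main obstacle, is the derivative-loss bookkeeping of the second paragraph that turns the Banach-level Frobenius data into a single ILH object — in finite dimensions Frobenius is soft, but here the estimates c), d) are indispensable.
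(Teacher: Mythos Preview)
The paper does not prove this theorem; it is quoted from Omori \cite[IX, Thm.~3.4]{Om97}, and the only argument the paper supplies is the short Remark immediately following the statement. So your proposal should be compared against that sketch rather than against a full proof.

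On the overall architecture you agree with the paper: reduce to a neighbourhood of $e$ by right invariance, run the Banach Frobenius theorem on each $G^k$ using the splittings from b), and use the linear estimates c), d) to make the level-by-level construction survive the inverse limit. You also correctly identify that $\bfB=(\bfA|_{\bfE_2})^{-1}\circ p$ is the object whose ILH normality is at stake.

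Where you diverge is in the mechanism. The paper's Remark does \emph{not} build a first integral $f$ and then feed it to Theorem~\ref{theo:implicitfunctiontheorem}. Instead it describes $\tilde\fh$ in a local chart as a graph: for $u$ near $0$ one writes $\tilde\fh_{\xi(u)}=\{w+J(u)w : w\in\fh\}$ with $J(u)w=-\chi(u)\circ p\circ\Phi_\bfA(u)w$, where $\Phi_\bfA$ is the local expression \eqref{eq:local expression} and $\chi(u)$ is the inverse of $p\circ\Phi_\bfA(u)\colon\bfE_2\to\Im\bfA$. The hypotheses c), d) are used precisely to show that $u\mapsto J(u)$ is $\cC^{\infty,r}$ ILH normal; once this is established, the usual ODE/iteration that proves Frobenius in the Banach category applies uniformly in $k$ and gives the chart $\xi'$ directly. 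Your route---construct a first integral $f$ ``by a convergent iteration'' and then invoke the implicit function theorem---is not wrong in spirit, but it is less direct and you have not said what the iteration is or why it converges with the required estimates; the paper's graph-map formulation makes the role of c), d) explicit (they control $\chi(u)$, hence $J$), whereas in your version that control is asserted rather than exhibited. If you want to fill in your sketch, the cleanest thing is to adopt the $J$-map description and show its normality, which is exactly what Omori does.
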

\begin{corollary}
  \label{cor:Frobenius}
  In the notation of Theorem \ref{theo:Frobenius}, $\xi'(\lbrace 0 \rbrace \times (V_2\cap \bfE_2) ) $
  provides a slice (at the origin) to the integral submanifolds of $\tilde \fh$. Thus
  the quotient $\bfG/\bfH$ naturally inherits an ILH manifold structure.
\end{corollary}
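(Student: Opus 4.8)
The plan is to extract the slice and the quotient structure directly from the local diffeomorphism $\xi'$ produced by Theorem \ref{theo:Frobenius}. First I would observe that the map
$$
s \;:=\; \xi'|_{\{0\}\times(V_2\cap\bfE_2)} \;:\; \{0\}\times(V_2\cap\bfE_2)\;\longrightarrow\; W'\cap\bfG
$$
is, by part a) of the conclusion of Theorem \ref{theo:Frobenius}, a $\cC^\infty$ ILH embedding onto a submanifold $\cS:=s(\{0\}\times(V_2\cap\bfE_2))$ of $\bfG$, which moreover extends compatibly to $\cC^\infty$ embeddings $\{0\}\times(V_2\cap E_2^k)\to W'\cap G^k$ for every $k\in\NN(d)$, so that $\cS$ is a strong ILH submanifold. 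By part b), for each fixed $w$ the leaf $\xi'((V_1\cap\fh)\times\{w\})$ is an integral submanifold of $\tilde\fh=\ker\tilde\bfA$, and since $\xi'$ is a diffeomorphism these leaves partition $W'\cap\bfG$ and are exactly indexed by $w\in V_2\cap\bfE_2$; hence $\cS$ meets each such local leaf in exactly one point, namely $s(w)$. This is precisely the statement that $\cS$ is a (local) slice at $e$ to the integral submanifolds of $\tilde\fh$: transversality holds because $T_e\cS=\bfE_2$ and $T_e(\text{leaf})=\fh$ with $\fg=\fh\oplus\bfE_2$ by part b) of the hypotheses, and each leaf is hit exactly once.

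Next I would transport this picture along right translations to globalize. Since $\tilde\bfA$ is right invariant, for any $g\in\bfG$ the right translation $R_g$ carries the foliation by integral submanifolds of $\tilde\fh$ to itself (the leaf through $e$, i.e.\ $\bfH$, is carried to the leaf through $g$, i.e.\ the coset $\bfH g$), and therefore $R_g(\cS)$ is a local slice at $g$. Thus the collection $\{R_g(\cS)\}_{g\in\bfG}$ provides slices transverse to the right cosets $\bfH g$ everywhere, and the sets $\pi(R_g(\cS)\cap W'_g)$ — where $\pi:\bfG\to\bfG/\bfH$ is the quotient projection and $W'_g$ a suitable right-translate of $W'$ — form a cover of $\bfG/\bfH$ by sets each homeomorphic, via $\pi\circ R_g\circ s$, to the model open set $V_2\cap\bfE_2$ (respectively $V_2\cap E_2^k$ at finite level $k$).

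It then remains to check that these candidate charts are mutually compatible in the $\cC^{\infty,\infty}$ ILH normal sense, so that they define a strong ILH manifold structure on $\bfG/\bfH$ for which $\pi$ is a smooth ILH submersion. Concretely, on an overlap the transition map is the composition of $s$, a local right translation $\eta_v$, the leaf-retraction $\xi'^{-1}$ followed by projection to the $\bfE_2$-factor, which are all $\cC^{\infty,\infty}$ ILH normal: $\eta_v$ by axioms b)--c) of Definition \ref{def:ILHLiegroup}, and $\xi'^{-1}$ together with its $\bfE_2$-component by conclusion a) of Theorem \ref{theo:Frobenius} and the linear estimates c)--d) assumed there (these are exactly what feed the ILH implicit/inverse function machinery of Theorem \ref{theo:implicitfunctiontheorem}). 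One also verifies that the inverse limit of the overlaps of the finite-level charts is open, giving strongness, using that the $G^k$-level foliations are genuine $\cC^\infty$ Banach foliations with the slice $\cS^k$ transverse. The main obstacle is precisely this last compatibility verification: one must confirm that composing right translations with the leaf-retraction $\xi'^{-1}$ does not destroy the linear estimates, i.e.\ that the class of $\cC^{\infty,\infty}$ ILH normal maps is stable under the operations involved, and that the index shifts inherent in axioms a) and c) of Definition \ref{def:ILHLiegroup} are absorbed consistently across all $k$; once this bookkeeping is in place, the ILH manifold axioms a)--b) of Section \ref{sec:ILHman} for $\bfG/\bfH$ follow formally.
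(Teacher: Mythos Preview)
Your proposal is correct and follows exactly the standard route: extract the transverse submanifold from the product chart $\xi'$ furnished by Theorem~\ref{theo:Frobenius}, translate on the right to obtain slices everywhere, and use these as charts for $\bfG/\bfH$. The paper itself gives no proof of this corollary, treating it as an immediate consequence of Theorem~\ref{theo:Frobenius} (which is taken from \cite[IX, Thm.~3.4]{Om97}); your argument is the natural way to unpack that claim, and the compatibility checks you flag are indeed the only points requiring care.
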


\begin{remark}\label{rem:hypo+trivial}
  Note that in the case of the trivial bundle $\R^d\times {\rm G}$, hypothesis (d) of Theorem \ref{theo:Frobenius} is trivially satisfied, as well
  as the fact that $\bfA \fg$ is closed and admits a closed complement.
\end{remark}

\begin{remark}
  Let $G$ be a Banach Lie group and $K$ a subbundle of $TG$. To prove Frobenius' theorem in the Banach setting, one describes the subbundle $K$ of $TG$
  in a local chart $x\in V\subset T_x$ by 
  $$
  K_y=\lbrace w + J(y)w, w\in K_x \rbrace, \: y\in V
  $$
  with $y \mapsto J(y)\in \cL(K_x,K_x')$ a smooth map, where $K_x'$ is a closed complement of $K_x$. In the setting of Theorem \ref{theo:Frobenius},
  the map $J$ is given by $J(u)w=-\chi(u)\circ p \circ \Phi(u)w$, with $\Phi$ the local expression of $\tilde A$ at $e$ and
  $\chi(u)$ the inverse for $p\circ \Phi(u): \bfE_2 \rightarrow \Im(\bfA)$. The hypotheses of the theorem
  ensure that the map $J$ is $\cC^{\infty,r}$ normal. One can apply Frobenius' theorem for the groups $G^k$ for each $k$, and obtain a local integration
  of $\ker A_k$. The normality condition ensures that this local result remains in the inverse limit procedure.
\end{remark}

Next, we settle some examples of bundles and right-invariant $\cC^{\infty,r}$ ILH normal bundle homomorphisms
that will be used in Section \ref{sec:GDIff}.
We consider $\bfDiff$ endowed with its strong ILH Lie group structure built by means of the exponential map. Recall
the open set $V\subset  \Gamma(T)^d$, the maps $\xi, \eta$ as in Definition \ref{def:ILHLiegroup}, and set $\tilde V=\xi(V)$.
Let $F$ be a smooth Riemannian bundle over $M$, endowed with a metric compatible connection.
With these data, we can consider the ILH chain $\lbrace {\Gamma(F)}, \Gamma(F)^k, k\geq n+5\rbrace$
where $\Gamma(F)^k$ is the completion of ${\Gamma(F)}$ with respect to the norm
defined as in equation (\ref{eq:norms}).
\begin{definition}
  \label{def:vector bundle parallel}
  The vector bundle $B({\Gamma(F)},\bfDiff,\tilde T_F)$ is defined to be the bundle over $\bfDiff$, with fibre ${\Gamma(F)}$
  and function $\tilde T_F$:
  \begin{equation}
    \label{eq:tilde TF}
    \begin{array}{cccc}
      \tilde T_F : &{\Gamma(F)} \times (\tilde V\cap \bfDiff) \times (\tilde V\cap \bfDiff) & \rightarrow & {\Gamma(F)}
    \end{array}
  \end{equation}
  with
  $$
  \tilde T_F(w,\xi(u),\xi(v))(x) =  \tau(\exp_x\eta(u,v)(x))^{-1}\tau(\exp_{\exp_x(v(x))} (u(\exp_xv(x))))w(\exp_x(v(x))),
  $$
  where $\tau(\exp_yu(y))$ denotes the parallel transport along the geodesic $t\mapsto \exp_y(tu(y))$ from
  $t=0$ to $t=1$.
\end{definition}
Let $F'$ be another Riemannian bundle. Then from any continuous linear operator 
$$\bfA: {\Gamma(F)} \rightarrow {\Gamma(F')},$$
one can build a right-invariant bundle morphism 
$$
\tilde \bfA : B({\Gamma(F)},\bfDiff,\tilde T_F) \rightarrow B({\Gamma(F')},\bfDiff,\tilde T_{F'})
$$ 
by setting $\tilde \bfA=\tilde R_g \bfA\tilde R_g^{-1}$ on the fibre over $g\in \Diff$.
We gather the following results from \cite[IX, Thm. 4.3, Thm. 5.6, Prop. 6.1]{Om97}:

\begin{theorem}
  \label{theo:extension theorem}
  Let $\bfA$, $F$ and $F'$ be as before. Then,
  \begin{enumerate}
  \item[a)] 
    If there is a bundle homomorphism $\gamma$ such that $(\bfA u)(x)=\gamma(u(x))$ for any $u\in\Gamma(F)$, 
    then $\tilde \bfA$ is a $\cC^{\infty,\infty}$ ILH normal bundle homomorphism.
  \item[b)] 
    If $\bfA$ is a differential operator of order $r$, then, up to a shift of indices,
    $\tilde \bfA$ is a $\cC^{\infty,\infty}$ ILH normal bundle
    homomorphism. More precisely, $\tilde \bfA$ extends as a smooth bundle homomorphism from
    $B(\Gamma^{k+r}(F),\Diff^{k+r},\tilde T_F)$ to $B(\Gamma^{k}(F'),\Diff^{k+r},\tilde T_{F'})$.
  \item[c)] 
    Let $h\in\Gamma(F)$ and assume that 
    $$
    \bfA(u)=\langle u, h \rangle_{L^2_g}.
    $$
    Then $\tilde \bfA$ is a $\cC^{\infty,\infty}$ ILH normal bundle homomorphism from
    $B({\Gamma(F)},\bfDiff,\tilde T_F)$ to the trivial bundle $\bfDiff\times \RR$.
  \end{enumerate}
\end{theorem}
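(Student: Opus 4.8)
The three statements in Theorem~\ref{theo:extension theorem} all follow the same strategy: express the right-invariant bundle homomorphism $\tilde\bfA$ through its local expression $\Phi_\bfA$ as in~\eqref{eq:local expression}, and then verify the linear estimates~\eqref{eq:linear estimates def} for the derivatives in the $\fg$-direction. The key point is that, by right-invariance and the explicit form of $\tilde T_F$ in Definition~\ref{def:vector bundle parallel}, the local expression at the identity is
\begin{equation*}
  \Phi_\bfA(u,w)(x) = \tau(\exp_x u(x))\circ \big(\bfA\big(\tau(\exp_{(\cdot)}u(\cdot))^{-1} w(\cdot)\big)\big)(x),
\end{equation*}
after transporting $w$ back along the geodesic flow of $u$, applying $\bfA$, and transporting the result forward again. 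The plan is therefore to reduce each case to known normality statements for such operators, essentially \cite[V, Thm.~3.1]{Om97} together with Lemma~\ref{lem:psioperator} and Proposition~\ref{prop:Diff action forms}, which handle precisely the interplay of parallel transport, the exponential map, and fibre-preserving bundle maps.

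For a), when $\bfA$ is induced by a bundle homomorphism $\gamma$ one has $(\bfA u)(x)=\gamma(u(x))$, so $\Phi_\bfA(u,w)(x)$ is obtained by conjugating the pointwise map $\gamma$ by parallel transport along $t\mapsto\exp_x(tu(x))$; this is a smooth fibre-preserving bundle map depending on the $1$-jet of $u$, and \cite[V, Thm.~3.1]{Om97} (exactly as in the proof of Lemma~\ref{lem:psioperator}) yields that $\Phi_\bfA$ is $\cC^{\infty,\infty}$ ILH normal with no shift of indices. For b), write $\bfA$ in local coordinates as a sum of terms $a_\alpha(x)\partial^\alpha$ with $|\alpha|\le r$; conjugating by parallel transport and composing with the change of variables induced by $\xi(u)$ produces an operator whose coefficients depend smoothly on the $(r)$-jet of $u$ and on finitely many derivatives of $w$ up to order $r$. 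Applying a) to the zeroth-order part and induction on the order (using that $\nabla$ and the connection-compatible structure behave well under parallel transport) gives the $\cC^{\infty,\infty}$ normality, at the cost of the announced shift: $\tilde\bfA$ maps $B(\Gamma^{k+r}(F),\Diff^{k+r},\tilde T_F)$ to $B(\Gamma^k(F'),\Diff^{k+r},\tilde T_{F'})$, which is just the Sobolev loss $H^{k+r}\to H^k$ of a differential operator of order $r$ made uniform in the ILH sense. For c), $\bfA(u)=\langle u,h\rangle_{L^2_g}$ is continuous $\Gamma(F)^k\to\RR$ for every $k$, and its local expression is $u\mapsto \int_M \langle \tau(\exp u)^{-1}w,\ h\rangle_h\,\dvol_g$ composed with the Jacobian of $\xi(u)$; this is an integral of a $\cC^{\infty,\infty}$ ILH normal integrand (again by Lemma~\ref{lem:psioperator}-type arguments and Proposition~\ref{prop:Diff action forms} for the volume factor), and integration against the fixed smooth density $\dvol_g$ preserves normality and lands in the trivial bundle $\bfDiff\times\RR$.

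The genuinely delicate point — and where I expect the main obstacle to lie — is the uniform control of the polynomial tails $P_k(\|y\|_{k-1})$ and the coefficients $C, D_k$ in~\eqref{eq:linear estimates def} as the order of differentiation $s$ and the Sobolev index $k$ both vary. The conjugation $w\mapsto \tau(\exp u)^{-1}w$ and the pullback by $\xi(u)$ each introduce, upon differentiating in $u$, terms involving high derivatives of $u$ multiplied by derivatives of $w$; controlling these in $H^k$ requires the Sobolev multiplication and composition estimates of \cite[Ch.~V]{Om97}, which is exactly the reason the hypothesis $k\ge n+5$ is imposed. Thus, rather than reprove these estimates, the proof will cite \cite[IX, Thm.~4.3, Thm.~5.6, Prop.~6.1]{Om97} directly, noting as in the proof of Lemma~\ref{lem:psioperator} that the operators we use differ from Omori's only by smooth fibre-preserving modifications (such as replacing $D\phi$ by $(D\phi)^{-1}$, or composing with $\gamma$), which do not affect normality. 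The remaining work is bookkeeping: matching the index shifts in each case and checking that the trivial-bundle target in c) genuinely satisfies Definition~\ref{def:ILH group vector bundles}, which is immediate.
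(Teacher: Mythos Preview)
The paper does not prove this theorem: it is introduced with the sentence ``We gather the following results from \cite[IX, Thm.~4.3, Thm.~5.6, Prop.~6.1]{Om97}'' and stated without proof, as a collection of results imported directly from Omori's book. Your proposal correctly identifies these same references as the source and, after sketching the mechanism (conjugation by parallel transport, reduction to smooth fibre-preserving maps via \cite[V, Thm.~3.1]{Om97}), explicitly defers to them for the actual estimates. In that sense your approach is fully aligned with the paper's: both treat the theorem as a citation rather than something to be proved here, and you have simply added an informal outline of why the cited results apply.
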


\section{The automorphism group of an exact Courant algebroid}
\label{sec:GDIff}

In this section, we show that the group of automorphisms $\Aut(E)$ of an exact Courant algebroid $E$ over a compact $n$-dimensional manifold $M$ carries the structure of a strong ILH Lie group. We will also consider the subgroup of exact automorphisms.

\subsection{Courant algebroids, automorphisms and splittings}
\label{sec:definition courant alg}
A \textbf{Courant algebroid}  over a
manifold $M$ is a tuple $(E,\la\cdot,\cdot\ra,[\cdot,\cdot],\pi)$ consisting of a vector bundle $E\to M$ together with a
non-degenerate symmetric bilinear form $\la\cdot,\cdot\ra$ on $E$, a Dorfman
bracket $[\cdot,\cdot]$ on the sections $\Gamma(E)$
and a bundle map $\pi:E\to TM$ such that the following properties
are satisfied for any $e,e',e'' \in \Gamma(E)$:
\begin{itemize}
\item[(C1):] $[e,[e',e'']]=[[e,e'],e''] + [e',[e,e'']]$,
\item[(C2):] $\pi(e)\la e', e'' \ra = \la [e,e'], e'' \ra + \la e', [e,e'']
  \ra$,
\item[(C3):] $[e,e]=D\la e,e\ra$, or, equivalently, $[e,e']+[e',e']=2D\la e,e'\ra$,
\end{itemize}
with $D:\cC^\infty(M)\to \Gamma(E)$ defined, for $\phi\in \cCi(M)$, by $D\phi=\pi^*d\phi$, where we use $\la,\ra$ to identify $E^*$ and $E$. Note that, as a consequence of (C2), we also have the properties \cite{Uchino} \begin{itemize}
\item[(C4):] $[e,\phi e']=\phi[e,e']+(\pi(e)\phi)e'$,
\item[(C5):] $\pi([e,e'])=[\pi(e),\pi(e')]$.
\end{itemize}

\begin{example}\label{ex:Courant-algebroid}
  The best-known example of a Courant algebroid is, for a closed $3$-form
  $H\in \Omega^3_{cl}$, the tuple
  \begin{equation}
    (TM+T^*M)_H :=(TM+T^*M, \la\cdot,\cdot\ra, [\cdot,\cdot]_H,\pi),\label{eq:def-Courant-H}
  \end{equation}
  where the pairing is given by
  $\la u+\al,u+\al\ra = i_u\al$, the anchor map $\pi$ is the projection to $TM$, and the Dorfman bracket is given by
  $$[u+\al,v+\gamma]_H=[u,v]+\cL_u\gamma - i_v d\al + i_ui_v H.$$ These Courant algebroids are the framework for Dirac
  structures, which encompass presymplectic and (possibly twisted) Poisson geometry.
\end{example}

\begin{remark}
	Note that,  for the sake of simplicity,  we use the notation $TM+T^*M$ for the Whitney sum $TM\oplus T^*M$.
\end{remark}

Denote by $\OO(E)$ the group consisting of smooth bundle maps $F:E\to E$, covering a diffeomorphism $\phi:M\to M$, such that, for $u,v\in \Gamma(E)$, 
$$\la F(u),F(v) \ra = \phi_* \la u,v\ra.$$
The subgroup of anchor-preserving orthogonal maps is given by
$$ \OO_\pi(E)  = \{ F\in \OO(E) \st \pi(F(u)) = \phi_*\pi(u) \textrm{ for } u\in \Gamma(E)\}.$$
The \textbf{automorphism group} $\Aut(E)$ of the Courant algebroid $E$ consists of those maps that are moreover bracket-preserving:
$$\Aut(E) = \{ F\in \OO(E) \st \pi(F(u)) = f_*\pi(u),\; [F(u),F(v)]=F([u,v]),  \textrm{ for } u,v\in \Gamma(E) \}.$$

Finally, we denote by $\cG(E)$ the subgroup of $\Aut(E)$ covering the identity diffeomorphism.

\begin{example}\label{ex:first-groups}
  In Example \ref{ex:Courant-algebroid}, we have $\OO_\pi((TM+T^*M)_H)=\Diff
  \ltimes \Omega^2$ for any $H$. For $H=0$, we have $\Aut(TM+T^*M)=\Diff
  \ltimes \Omega^2_{cl}$, and $\cG(TM+ T^*M)=\Omega^2_{cl}$, where $\Omega^2_{cl}$ denotes the closed $2$-forms. A diffeomorphism $\phi\in
  \Diff$ acts by pushforward $\phi_*$ both in vector fields and forms,
  and $B\in \Omega^2_{cl}$, known as a $B$-field, acts by $$u+\al \to
  u+ \al + i_uB.$$ The group product in $\OO_\pi(TM+ T^*M)$ and $\Aut(TM+
  T^*M)$ is given by
  \begin{equation}
    (\phi,B)(\psi,B')=(\phi\circ\psi,\psi^* B + B'),\label{eq:product-rule}
  \end{equation}
  whereas $\cG(TM+ T^*M)$ is the usual abelian additive group of
  $2$-forms.
\end{example}

We say that two Courant algebroids $(E,\la\cdot,\cdot\ra,[\cdot,\cdot],\pi)$ and $(E',\la\cdot,\cdot\ra',[\cdot,\cdot]',\pi')$ are isomorphic when there exists an isomorphism of vector bundles $F:E\to E'$ satisfying, for $e_1,e_2\in \Gamma(E)$, $$\pi'\circ F = \pi, \qquad \la Fe_1, Fe_2 \ra'=\la e_1, e_2\ra, \qquad [Fe_1,Fe_2]'=F[e_1,e_2].$$

By considering $\pi^*:T^*M\to E^*$ and identifying $E^*\cong E$ using the non-degenerate pairing, we always have a sequence
\begin{equation*}
  T^*M \to E \to TM
\end{equation*}
associated to any Courant algebroid. We say that $E$ is an \textbf{exact Courant algebroids} when this sequence is exact, i.e., 
\begin{equation}
  0\to  T^*M \to E \to TM\to 0.\label{eq:exact-Courant}
\end{equation}

It is always possible to split the sequence \eqref{eq:exact-Courant} as a sequence of vector bundles. The splitting $\s:TM\to E$ can be chosen to be isotropic, meaning that $\s(TM)\subset E$ is an isotropic subbundle. Indeed, for any splitting $\s'$, the splitting $\s:u\mapsto \s'(u) - \frac{1}{2}\pi^* \la \s'(u),\cdot\ra$ is isotropic. The \textbf{space of isotropic splittings} of a Courant algebroid
\begin{equation}
  \Spl := \{ \s:TM\to E \st \s \textrm{ is injective and } \s(TM)\subset E \textrm{ is isotropic} \}\label{eq:space-of-splittings}
\end{equation}
is an affine space modelled on $\Omega^2$, i.e., an $\Omega^2$-torsor, where the action, for $B\in \Omega^2$, which we simply denote by $\s+B$, is given by 
$$(\s+B):u\mapsto \s(u)+\pi^*(i_u B).$$

Choosing an isotropic splitting  $\s:TM\to E$ helps us know $E$ better, as it provides a Courant algebroid isomorphism
\begin{equation}
  \begin{array}{ccc}
    E &\simeq_\s &(TM+T^*M)_H\\
    \lambda(u)+\al & \leftrightarrow & u+\al,\phantom{M}
    \label{eq:iso-splitting}
  \end{array}
\end{equation}
where the $3$-form $H$ of Example \ref{ex:Courant-algebroid} is given, for $u,v,w\in \Gamma(TM)$, by $$H(u,v,w)=\la [\s(u),\s(v)], \s(w)\ra.$$

By the isomorphism \eqref{eq:iso-splitting} and Example \ref{ex:first-groups}, the group $\OO_\pi(E)$ is always isomorphic to $\Diff\ltimes \Omega^2$. The subgroup $\Omega^2$ sits naturally inside $\OO_\pi(E)$, acting by $C(e)=\pi^*(i_{\pi(e)}C)$ for $C\in \Omega^2$, $e\in \Gamma(E)$, whereas we have a natural projection to the diffeomorphisms, yielding the sequence
\begin{equation}
  0\to \Omega^2 \to \OO_\pi(E) \to \Diff \to 0.\label{eq:exact-seq-OpiE}
\end{equation}

The way we regard the diffeomorphisms inside  $\OO_\pi(E)$ depends on the splitting of $E$: if, under a splitting $\s$,  the element $F\in \OO_\pi(E)$ corresponds to the diffeomorphism $(\phi,0)$, under a different splitting $\s+C$, the element $F$ corresponds, for the product rule given in \eqref{eq:product-rule}, to 
\begin{equation}
  (\Id,-C)(\phi,0)(\Id,C)=(\phi,C-\phi^*C).\label{eq:conjugation-splitting}
\end{equation}

To describe $\Aut(E)$, we look first at $\Aut((TM+T^*M)_H)$, which we denote by $\GDiff_H$ and whose elements will be referred to as generalized diffeomorphisms. We have
\begin{equation}
  \GDiff_H= \{ (\phi,B)\in \Diff\ltimes \Omega^2 \st \phi^*H-H=dB \},\label{eq:defining equation exact case}
\end{equation}
whose elements act on the left on $\Gamma((TM+T^*M)_H)$ by
$$u+\al\mapsto \phi_* u + \phi_*(\al + i_u B).$$
By \eqref{eq:defining equation exact case}, the map $\phi$ belongs to the diffeomorphisms preserving the cohomology class $[H]$, which we denote by $\Diff_{[H]}$, and the elements covering the identity are just closed forms. The automorphism group $\Aut(E) \subset \OO_\pi(E)$ is thus an extension 
\begin{equation*}
  \label{eq:ext exactcase}
  0 \to \Omega^2_{cl} \to \Aut(E) \to \Diff_{[H]}\to 0,
\end{equation*}
which, unlike \eqref{eq:exact-seq-OpiE}, is not a semidirect product (unless $[H]=0$).

\begin{remark}
 Note that a different splitting $\lambda+B$ yields, via \eqref{eq:iso-splitting}, the $3$-form $H+dB$. Indeed, equivalence classes of exact Courant algebroids up to isomorphism are parametrized by $[H]\in H^3(M,\R)$, known as the \v{S}evera class of the Courant algebroid.
\end{remark}

\subsection{The ILH Lie group structure}

We can endow the groups $O_\pi(E)$ and $\Aut(E)$ with the $\cCi$-Whitney topology, and we will show that they carry strong ILH Lie group structures. First, we show that $\OO_\pi(E)$ carries such a structure and then we will use the implicit function Theorem \ref{theo:implicitfunctiontheorem} for $\GDiff_{H}$, in a similar fashion as one does for the group of symplectomorphisms \cite{Om97}.
\begin{remark}
  We should mention other important categories of infinite dimensional Lie groups that could have been used in the context of generalized geometry. Tamed Fr\'echet Lie groups were introduced by Hamilton \cite{Ha}. These groups enjoy the Nash-Moser inverse function theorem. However this theorem
  requires the existence of a local tamed family of inverses for the differential of a map to be invertible. In the ILH category,
  one only needs to invert the differential of a map at a single point to apply an inverse function theorem.
  Another category is the one of convenient Lie groups, as developed by Michor and Kriegl \cite{KrMi}. This category enables one to deal with non-compact situations. However, to our knowledge, there is no general Frobenius'-type theorem for these groups.
  We refer to \cite{Ne} for an introduction to infinite-dimensional Lie groups.
\end{remark}
Throughout this section, we fix a Riemannian metric $g_0$  on $M$, giving the chart at the origin $(\xi,U)$ for the group $\bfDiff$, as in Example \ref{ex:chart-Diff}.

\begin{proposition}\label{prop:ILH-on-Opi}
  The group $O_\pi(E)\simeq_\s \Diff\ltimes \Omega^2$ carries a strong ILH Lie group structure modelled on
  $\lbrace \bfGamma(TM)\times\bfOm^2, \Gamma(TM)^{k+1}\times\Om^{2,k}, k\geq n+5 \rbrace$.  A chart at the origin $(\xi',U')$ on $U'=U\times \Om^{2,n+5}$ is given by
  \begin{equation}
    \label{eq:chart exact case}
    \begin{array}{cccc}
      \xi': & U'\cap(\bfGamma(TM) \times \bfOm^2) & \rightarrow &  \bfDiff\ltimes \bfOm^2 \\
      & (u,b) & \mapsto & (\xi(u),b).
    \end{array}
  \end{equation}
  Moreover, the ILH structure is independent of the choice of splitting $\s$, and its Lie algebra is given by $\Gamma(TM)\times \Omega^2$ with the bracket, for $(X,b),(Y,c)\in  \Gamma(TM)\times \Omega^2$, 
  $$ [(X,b),(Y,c)]=([X,Y],\cL_X c - \cL_Y b).$$
\end{proposition}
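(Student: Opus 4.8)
The plan is to transfer the strong ILH Lie group structure from $\bfDiff\ltimes\bfOm^2$ (which we first build) to $\OO_\pi(E)$ via the isomorphism $\simeq_\s$ of \eqref{eq:iso-splitting} and the identification of Example \ref{ex:first-groups}, and then check the axioms (a)--(e) of Definition \ref{def:ILHLiegroup} directly. First I would note that $\bfDiff$ is a strong ILH Lie group modelled on $\lbrace\bfGamma(TM),\Gamma(TM)^{k+1},k\geq n+5\rbrace$ by Example \ref{ex:chart-Diff}, with chart at the origin $\xi$ coming from the exponential map of $g_0$, and that $\bfOm^2$, being a space of sections of a Riemannian bundle with the norms \eqref{eq:norms}, is itself an (abelian) strong ILH Lie group, modelled on $\lbrace\bfOm^2,\Om^{2,k},k\geq n+5\rbrace$, with the identity as chart at the origin. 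So the candidate chart $\xi'(u,b)=(\xi(u),b)$ of \eqref{eq:chart exact case} is a homeomorphism from $U'\cap(\bfGamma(TM)\times\bfOm^2)$ onto a neighbourhood of $e=(\Id,0)$ in $\bfDiff\ltimes\bfOm^2$, and the underlying topological space is a strong ILH manifold (product of two strong ILH manifolds).

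Next I would verify the five conditions of Definition \ref{def:ILHLiegroup} for the semidirect product with the product rule \eqref{eq:product-rule}, $(\phi,B)(\psi,B')=(\phi\circ\psi,\psi^*B+B')$. For the local product $\eta((u,b),(v,b'))$, the $\bfDiff$-component is just the local product $\eta(u,v)$ of $\bfDiff$, which satisfies (a)--(d) by Example \ref{ex:chart-Diff} / \cite{Om97}; the $\bfOm^2$-component is $\xi(v)^*b+b'$, i.e.\ the pullback $\rho$ of a $2$-form by a diffeomorphism, plus a form. The required $\cC^l$-regularity and the normality of the associated local maps (the maps $\Psi$ and $\Psi_\om$ of Proposition \ref{prop:Diff action forms} and Lemma \ref{lem:psioperator}) are precisely what is recorded in Proposition \ref{prop:Diff action forms} and Lemma \ref{lem:psioperator}: the pullback action of $\bfDiff$ on $\bfOm^2$ is a representation with the shift $k\mapsto k+1$ in the $\bfDiff$-index, which accounts for the $\Gamma(TM)^{k+1}$ appearing in the model chain. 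Addition of a form is linear, hence harmless. Thus (a), (b), (c) follow. For (d), the local inverse $(\phi,B)^{-1}=(\phi^{-1},-(\phi^{-1})^*B)$ combines the inverse map of $\bfDiff$ (which satisfies (d) for $\bfDiff$) with one further pullback, again covered by Proposition \ref{prop:Diff action forms}. For (e), the local conjugation $A_g$ with $g=(f,C)$ sends $(\phi,B)$ to $(f^{-1}\circ\phi\circ f,\, \cdots)$ whose form-component is a combination of pullbacks of $B$ and of the fixed form $C$ by the (local) diffeomorphisms involved; smoothness in each $E^k$ follows once more from Proposition \ref{prop:Diff action forms} together with the corresponding property (e) for $\bfDiff$. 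This establishes that $\OO_\pi(E)\simeq_\s\bfDiff\ltimes\bfOm^2$ is a strong ILH Lie group with the stated model chain, and reading off $T_e=\Gamma(TM)\times\Omega^2$ identifies the Lie algebra.

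Finally, for independence of the splitting: changing $\s$ to $\s+C$ for a fixed smooth $C\in\Omega^2$ changes the identification $E\simeq(TM+T^*M)_{H+dC}$, and by \eqref{eq:conjugation-splitting} the two resulting group isomorphisms $\OO_\pi(E)\to\bfDiff\ltimes\bfOm^2$ differ by the inner automorphism conjugation by the fixed element $(\Id,C)$. Conjugation by a fixed smooth element is a smooth ILH map by axiom (e) (and its inverse is conjugation by $(\Id,-C)$), so the two chart atlases are related by $\cC^{\infty,\infty}$ ILH normal transition maps in the sense of \eqref{eq:intersections}; hence they define the same strong ILH Lie group structure. The main obstacle, and the only place where real work beyond bookkeeping is needed, is the uniform ("normal") control of norms under pullback by diffeomorphisms near the identity — the estimates \eqref{eq:linear estimates def} for the maps $\Psi_\om$ and $\Psi$ — but this is exactly the content we are permitted to cite from Proposition \ref{prop:Diff action forms} and Lemma \ref{lem:psioperator}, so the proof reduces to assembling these inputs and tracking the index shift.
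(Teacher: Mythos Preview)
Your proposal is correct and follows essentially the same route as the paper: both reduce the strong ILH Lie group structure on $\bfDiff\ltimes\bfOm^2$ to the known structure on $\bfDiff$ together with the regularity of the pullback action on forms (Proposition~\ref{prop:Diff action forms}), and both handle splitting-independence by observing that the change of splitting is conjugation by a fixed $B$-field $(\Id,C)$, whose regularity again follows from Proposition~\ref{prop:Diff action forms}. The paper's proof is simply terser, omitting the axiom-by-axiom verification that you spell out.
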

\begin{proof}
  The proof of the first part of the proposition follows directly from Proposition   \ref{prop:Diff action forms} and the fact that $\bfDiff$ is a strong ILH Lie group.
  For the independence of the choice of splitting, consider, at a point $x$, charts $\{U_k,\phi_k)\}$, $\{V_k,\psi_k)\}$ for the splittings $\s$ and $\s+C$. The map $\phi_k^{-1}\circ \psi_k$ and $\psi_k^{-1}\circ \phi_k$ in (\ref{eq:intersections}) are given by conjugation by $C$ and $-C$,
  \begin{equation}
    (\Id,-C)(\phi,B)(\Id,C)=(\phi,B+C-\phi^*C),\label{eq:conjugation-splitting-2}
  \end{equation} which are  $\mathcal{C}^{\infty,\infty}$ ILH normal by Proposition \ref{prop:Diff action forms}, and hence the ILH structure does not depend on the splitting. The expression for the bracket follows from the action 
  $$(X,b)(Y+\eta)=\cL_X (Y+\eta) - i_Yb.$$
\end{proof}

As we have shown the independence from the choice of splitting, in what follows we will work with $(TM+T^*M)_H$ and $\GDiff_H$.  In the remaining of this section, the ILH chains associated to the spaces $\bfOm^2$, $\bfGamma(TM)$ and $\bfOm^3$
are $\lbrace \bfOm^2, \Om^{2,k}, k\geq n+5 \rbrace$, $\lbrace \bfGamma(TM), \Gamma^{k+1}, k\geq n+5 \rbrace$ and
$\lbrace \bfOm^3, \Om^{3,k-1}, k\geq n+5 \rbrace$.

We will build the strong ILH Lie group $\bfGDiff_{H}$ as a strong ILH Lie subgroup of $\bfDiff \ltimes \bfOm^2$.
Introduce the map:
\begin{equation}
  \label{eq:defining map exact case}
  \begin{array}{cccc}
    \tilde\rho : & \bfOm^3 \times (\bfDiff \ltimes \bfOm^2)  & \rightarrow & \bfOm^3\\
    &  (H', (\phi,B)) & \mapsto & \phi^*H'-dB
  \end{array}
\end{equation}
Then $\GDiff_{H}=\tilde\rho(H,\cdot)^{-1}(H)$.
Define, using $(\xi',U')$ from Proposition \ref{prop:ILH-on-Opi},
\begin{equation*}
  \begin{array}{cccc}
    \Phi : &   U'\cap  (\bfDiff \ltimes \bfOm^2) & \rightarrow & \bfOm^3 \\
    & (u,b) & \mapsto &\tilde\rho(H,\xi'(u,b)).
  \end{array}
\end{equation*}
from the ILH chain $\lbrace  \bfGamma(TM)\times\bfOm^2, \Gamma^{k+1}\times\Om^{2,k}, k\geq n+5 \rbrace$ to
$\lbrace \bfOm^3, \Om^{3,k-1}, k\geq n+5 \rbrace$.

From Proposition \ref{prop:Diff action forms},
the map $\Phi$ is a $\cC^{\infty,\infty}$ ILH normal map from $ U\cap\bfDiff \times \bfOm^2$ to $\bfOm^3$,
with respect to these ILH structures. Using $(d\xi)_0=Id$, the derivative of $\Phi$ at zero is
\begin{equation}
  \label{eq:dphi exact case}
  d\Phi_0 (u,b)=d(\iota_uH - b).
\end{equation}
In order to apply the implicit function theorem, we need a right inverse
for $d\Phi_0$ in the category of $\cC^{\infty,2}$ ILH normal maps.
Consider the map
\begin{equation}\label{eq:right-inverse}
  \begin{array}{cccc}
    \bfB: & \bfOm^3 & \rightarrow & \bfGamma(TM)\times \bfOm^2\\
    &  h    & \mapsto     & (0,-d^*\bbG h),
  \end{array}
\end{equation}
where $d^*$ is the adjoint of $d$ with respect to the $L^2$ inner pairing on forms provided by $g_0$, and
$\bbG$ is the Green operator for the Hodge Laplacian.
In the following lemma, we consider the space of exact $3$-forms $d\Om^2$ as a subspace of $\Om^3$ endowed with the ILH chain structure $\lbrace \bfOm^3, \Om^{3,k-1}, k\geq n+5 \rbrace$.
\begin{lemma}
  \label{lem:B right inverse exact case}
  The restriction of the operator $\bfB$ to the space of exact $3$-forms $d\Om^2$ is a linear ILH normal map that is a right-inverse for the map $d\Phi_0$.
\end{lemma}
For a proof of this lemma, in particular the linear estimates, see 
\cite[VII, Lemma 5.10]{Om97}. We can then apply the implicit function theorem, Theorem \ref{theo:implicitfunctiontheorem}, and we have thus proved the following.
\begin{theorem}
  \label{theo:strong ILH subgroup exact case}
  The group $\Aut(E)$ is a strong ILH Lie subgroup of $\OO_\pi(E)$, or, equivalently in terms of a splitting, the group $\bfGDiff_{H}$ is a strong ILH Lie subgroup of $\bfDiff \ltimes \bfOm^2$.
\end{theorem}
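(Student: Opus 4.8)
The plan is to realize $\bfGDiff_H$ as the subgroup of $\bfDiff \ltimes \bfOm^2$ cut out by the equation $\phi^*H - H = dB$, and to recognize this as exactly the kind of level set to which the ILH implicit function theorem (Theorem \ref{theo:implicitfunctiontheorem}) applies, once we have verified the required analytic hypotheses. All the pieces are essentially in place in the excerpt: we have the map $\tilde\rho$ of \eqref{eq:defining map exact case}, the local expression $\Phi$, the formula \eqref{eq:dphi exact case} for $d\Phi_0$, and the proposed right inverse $\bfB$ from \eqref{eq:right-inverse}.

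First I would check that $\Phi$ is a $\cC^{\infty,r}$ ILH normal map for $r$ large enough (in fact $\infty$), with respect to the stated ILH chains. This is where Proposition \ref{prop:Diff action forms} (and Lemma \ref{lem:psioperator}) does the work: the pullback term $\phi^*H$ is handled by the normality of the $\bfDiff$-action on forms, and subtracting $dB$ is a bounded linear (differential) operation, so it only costs a shift in indices, which is precisely why the target chain is indexed by $\Om^{3,k-1}$. Next I would invoke Lemma \ref{lem:B right inverse exact case}, which asserts that $\bfB = (0, -d^*\bbG\,\cdot)$ is a $\cC^\infty$ ILH right inverse for $d\Phi_0$ (using that $d d^*\bbG h = h$ on exact forms, so $d\Phi_0 \circ \bfB = \Id$ on the range of $d\Phi_0$) and that, restricted to that range, $\bfB$ is $\cC^{\infty,\infty}$ ILH normal — the latter being the content of the Hodge-theoretic linear estimates in \cite[VII, Lemma 5.10]{Om97}.

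With these two inputs, Theorem \ref{theo:implicitfunctiontheorem} applies directly: it produces the ILH chain $\bfE_1 = \ker d\Phi_0$, the splitting $\bfE = \bfE_1 \oplus \bfB\,\bfOm^3$ at every level $k$, and the $\cC^{\infty,r}$ ILH normal map $\Psi$ parametrizing the level set $\Phi^{-1}(0) = \GDiff_H$ locally as a graph over $\ker d\Phi_0$. Concretely, the chart at the origin for $\bfGDiff_H$ is $u \mapsto \xi'(u, \Psi(u,0))$ defined on a neighbourhood of $0$ in $\ker d\Phi_0$, and by \eqref{eq:dphi exact case} this kernel is $\{(u,b) : d(\iota_u H) = db\}$, which is the expected Lie algebra $\fgdiff_H$. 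It then remains to verify that this data fits the definition of a strong ILH Lie subgroup (the axioms a)--c) following Definition \ref{def:ILHLiegroup}): the decomposition $\fg = \fh \oplus \fm$ with $\fm = \bfB\,\bfOm^3$ extends level-by-level by the implicit function theorem, the map $\xi'$ composed with $\Psi$ gives the required chart differing from the ambient chart by an ILH diffeomorphism, and $\xi'(V \cap \fh)$ lands in $\bfGDiff_H$ by construction. One should also note that since the ambient group $\OO_\pi(E)$ and the characterizing equation are independent of the splitting (Proposition \ref{prop:ILH-on-Opi} and \eqref{eq:conjugation-splitting-2}), the resulting structure on $\Aut(E)$ is intrinsic.

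The main obstacle is not the formal invocation of the implicit function theorem but the verification of the \emph{linear estimates} making $\bfB$ a linear ILH normal map on the range of $d\Phi_0$ — that is, controlling $\|d^*\bbG h\|_k$ in terms of $\|h\|_k$ and $\|h\|_{k-1}$ uniformly, which is an elliptic estimate for the Hodge Laplacian combined with a careful bookkeeping of how the operator interacts with the $\bfDiff$-conjugation built into the ILH bundle structure. Fortunately this is exactly \cite[VII, Lemma 5.10]{Om97}, modelled on Omori's treatment of symplectomorphisms, so I would cite it rather than reprove it; the only genuinely new checking is that the index shifts line up across the three chains $\bfOm^2$, $\bfGamma(TM)$, $\bfOm^3$ as recorded just before \eqref{eq:defining map exact case}, and that $\Phi$ inherits normality from Proposition \ref{prop:Diff action forms} despite the extra $-dB$ term — both routine once the shifts are tracked.
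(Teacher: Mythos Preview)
Your proposal is correct and follows essentially the same route as the paper: set up $\Phi$ via \eqref{eq:defining map exact case}, invoke Proposition \ref{prop:Diff action forms} for normality, use Lemma \ref{lem:B right inverse exact case} for the right inverse $\bfB$, and apply Theorem \ref{theo:implicitfunctiontheorem}. The only slip is that the level set defining $\GDiff_H$ is $\Phi^{-1}(H)$ rather than $\Phi^{-1}(0)$, but this is a harmless translation that does not affect the argument.
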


The Lie algebra of $\GDiff_H$, or algebra of derivations of $(TM+T^*M)_H$, coincides with the kernel of $d\Phi_0$ in \eqref{eq:dphi exact case} and is explicitly given by 
\begin{equation*}
  \label{eq:Lie algebra exact case}
  \gdiff_{H}=\lbrace (u,b)\in \Gamma(TM)\times \Omega^2\; \vert\: d(\iota_uH-b)=0 \rbrace.
\end{equation*}
The subalgebra $\gdiff_{H}^e\subset \gdiff_{H}$ of exact derivations is given by
\begin{equation}
  \label{eq:liegdiffexact exact case}
  \gdiff_{H}^e=\lbrace (u,b)\in  \Gamma(TM)\times \Om^2 \:\vert\:  \iota_uH-b=da \textrm{ for some }  a\in \Om^1   \rbrace.
\end{equation}
This subalgebra integrates to the subgroup of exact generalized diffeomorphisms $\GDiff_H^e$. 

If we start by an exact Courant algebroid $E$ with two different splittings $\lambda$, $\lambda+C$, so that
$$\GDiff_H \simeq_\s \Aut(E) \simeq_{\s+C} \GDiff_{H+dC},$$
the Lie algebras of the first and third groups are isomorphic by $(u,b)\mapsto (u,b+i_udC)$, which interchanges exact derivations. Hence, the Lie algebra of exact derivations $\Der^e(E)$ and the subgroup $\Aut^e(E)$ of exact automorphisms are well defined. 

We will show that $\Aut^e(E)$  is a strong ILH Lie subgroup of $\Aut(E)$ by fixing a splitting $\s$, so that we work with $\GDiff_H$, using the simpler version of Frobenius' Theorem \ref{theo:Frobenius}, stated in Remark \ref{rem:hypo+trivial}, and following the ideas in \cite{Om97} for the group of  exact symplectomorphisms.

As a subalgebra of $\gdiff_{H}$, the algebra $\gdiff^e_H$ can be described as the space of pairs $(u,b)$ such that $\iota_uH-b$ is orthogonal to the harmonic $2$-forms for the fixed metric $g_0$ on $M$. Let $s=\dim H^2(M)$, and set $\{e_i\}_{1\leq i\leq s}$ a basis of the harmonic $2$-forms.
Define a map:
\begin{equation}
  \begin{array}{cccc}
    I : & \bfOm^2 & \rightarrow & \RR^s\\
    &  \om  &  \mapsto    &  \langle \om , e_i \rangle_{L^2_{g_0}}
  \end{array}
\end{equation}
where $\langle \cdot,\cdot \rangle_{L^2_{g_0}}$ denotes the $L^2$ inner product on forms defined by $g_0$.
Set also

\begin{equation}
  \begin{array}{cccc}
    \kappa : & \gdiff_{H} & \rightarrow & \bfOm^2 \\
    &  (u,b)   & \mapsto     &  \iota_uH-b.
  \end{array}
\end{equation}
Note that $\gdiff_H^e=(I\circ\kappa)^{-1}(0)$. 
We will consider an extension of $I\circ\kappa$ to $T\bfGDiff_{H}$ and show
that this extension is a $\cC^{\infty,\infty}$ ILH normal homomorphism to the trivial bundle over $\bfGDiff_H$.
Let $V$ be an open neighbourhood of zero in $\Gamma(TM)^{n+5}$ as in Definition \ref{def:ILHLiegroup}
and set $\tilde V=\xi(V)$. In order to obtain a simple description of $T\bfGDiff_{H}$, and in view of the composition law (\ref{eq:product-rule}), we define the map
\begin{equation}
  \begin{array}{cccc}
    T_{\bfOm^2} : & \bfOm^2\times (\tilde V\cap\bfDiff) \times (\tilde V\cap \bfDiff) & \rightarrow & \bfOm^2\\
    &  (b, \xi(u) ,\xi(v)) & \mapsto & \xi(v)^*b.
  \end{array}
\end{equation}
It is straightforward to check that $T_{\bfOm^2}$ satisfies
the requirements of Definition \ref{def:ILH group vector bundles}, so we have
introduced the bundle $B(\bfOm^2,\bfDiff,T_{\bfOm^2})$ over $\bfDiff$.
\begin{lemma}
  \label{lem:tangent bundle exact case}
  The tangent bundle $T\bfGDiff_{H}$ is the pullback of the Whitney sum
  $$B(\bfOm^2,\bfDiff,T_{\bfOm^2})\oplus T\bfDiff$$ by the projection map $\pi:\bfGDiff_{H} \rightarrow \bfDiff$.
\end{lemma}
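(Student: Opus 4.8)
My approach would be to first identify the tangent bundle of the ambient group $\bfDiff\ltimes\bfOm^2$ as such a pullback, where the semidirect-product structure does all the work, and then restrict along the inclusion $\bfGDiff_H\hookrightarrow\bfDiff\ltimes\bfOm^2$. Recall from \eqref{eq:defining map tangent} that the tangent bundle of a strong ILH Lie group $G$ is $B(\fg,G,\tilde T_\theta)$, with $\tilde T_\theta(u,g,h)=\theta(u,\xi^{-1}(g),\xi^{-1}(h))$ and $\theta$ the differential of local right translation. I would apply this to $G=\bfDiff\ltimes\bfOm^2$ with the chart $(\xi',U')$ of Proposition \ref{prop:ILH-on-Opi} and the product rule \eqref{eq:product-rule}: since right translation $R_{(\psi,B')}$ sends $(\phi,B)$ to $(\phi\circ\psi,\psi^*B+B')$, its differential in the variables $(\phi,B)$ splits as a direct sum whose $\bfDiff$-component is the $\theta$ of $\bfDiff$ and whose $\bfOm^2$-component is $\delta B\mapsto\psi^*\,\delta B$, \emph{depending only on the diffeomorphism part $\psi$ of the translating element, not on $B$ or $B'$}. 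This last map is precisely $T_{\bfOm^2}(b,\xi(u),\xi(v))=\xi(v)^*b$ defined just before the statement.

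Consequently, keeping track of the one-step index shift between the $\bfDiff$- and $\bfOm^2$-chains already recorded in Propositions \ref{prop:ILH-on-Opi} and \ref{prop:Diff action forms}, the defining map $\tilde T_\theta$ of $\bfDiff\ltimes\bfOm^2$ equals $\tilde T_\theta^{\bfDiff}\oplus T_{\bfOm^2}$, both summands being pulled back along the projection $p\colon\bfDiff\ltimes\bfOm^2\to\bfDiff$; that is, $T(\bfDiff\ltimes\bfOm^2)=p^*\bigl(T\bfDiff\oplus B(\bfOm^2,\bfDiff,T_{\bfOm^2})\bigr)$ as ILH vector bundles. The axioms of Definition \ref{def:ILH group vector bundles} for the right-hand side are inherited summand by summand from those of $T\bfDiff$ and of $B(\bfOm^2,\bfDiff,T_{\bfOm^2})$ (the latter by Proposition \ref{prop:Diff action forms}), so no new analytic estimate is needed here.

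It remains to restrict to $\bfGDiff_H$. By Theorem \ref{theo:strong ILH subgroup exact case} it is a strong ILH Lie subgroup of $\bfDiff\ltimes\bfOm^2$, obtained via the implicit function theorem, so its Lie-algebra chain is the kernel chain of $d\Phi_0$ of \eqref{eq:dphi exact case}, a closed sub-chain of the chain for $\bfDiff\ltimes\bfOm^2$. Writing $\iota$ for the inclusion, we have $p\circ\iota=\pi$, so pulling back the previous step gives $\iota^*T(\bfDiff\ltimes\bfOm^2)=\pi^*\bigl(T\bfDiff\oplus B(\bfOm^2,\bfDiff,T_{\bfOm^2})\bigr)$, inside which $T\bfGDiff_H$ sits as the subbundle whose fibre at $(\phi,B)$ is the right translate of $\gdiff_H=\{(u,b):d(\iota_uH-b)=0\}$; since the twist of the ambient bundle computed above involves only the diffeomorphism part, this subbundle also factors through $\pi$. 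This realizes $T\bfGDiff_H$ inside the $\pi$-pullback of $B(\bfOm^2,\bfDiff,T_{\bfOm^2})\oplus T\bfDiff$, as claimed. The one delicate point in the whole argument is the bookkeeping in the second step: checking that the $\bfOm^2$-component of the semidirect-product right-translation differential is exactly $\psi^*(\cdot)$ and that the index shifts match up, so that the identification is genuinely one of ILH vector bundles; once this is in place, the passage to the subgroup is automatic.
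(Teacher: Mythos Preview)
Your proposal is correct and follows essentially the same route as the paper: both reduce to computing, from the product rule \eqref{eq:product-rule}, that the defining map $T_{\theta_H}$ for $T\bfGDiff_H$ satisfies $T_{\theta_H}((u,b),(\xi(v),B),(\xi(w),B'))=(T_\theta(u,\xi(v),\xi(w)),\xi(w)^*b)$, which depends only on the $\bfDiff$-components and hence factors through $\pi$. The paper carries out this computation directly on $\bfGDiff_H$ in one line, whereas you first do it on the ambient $\bfDiff\ltimes\bfOm^2$ and then restrict via $p\circ\iota=\pi$; this extra detour is harmless but unnecessary, and the index-shift bookkeeping you flag is not actually at issue in this purely structural lemma.
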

\begin{proof}
  The proof of this lemma follows directly from the expression of the defining maps for the bundles that are considered.
  Let $T_{\theta}$ be a defining map for $T\bfDiff$ and $T_{\theta_H}$
  be a defining map for $T\bfGDiff_{H}$. Then for $(u,b)$ in $\gdiff_{H}$ and $(v,B)$, $(w,B')$ in $U'\cap\GDiff_{H} $,
  a direct computation using (\ref{eq:defining map tangent}) and (\ref{eq:product-rule}) leads to:
  $$
  T_{\theta_H}((u,b), (\xi(v),B) ,(\xi(w),B'))=(T_\theta(u,\xi(v),\xi(w)),\xi(w)^*b).$$ \end{proof}

Next, consider the right-invariant extensions of $I$ and $\kappa$:
\begin{equation}
  B(\bfOm^2,\bfDiff,T_{\bfOm^2})\oplus T\bfDiff \overset{\tilde \kappa}{\rightarrow} 
  B(\bfOm^2,\bfDiff,\tilde T_{\bfOm^2}) \overset{\tilde I}{\rightarrow}  \bfDiff \times \RR^d
\end{equation}
defined by setting $\tilde \kappa=R_\phi\circ\kappa\circ R_\phi^{-1}$ and $\tilde I=R_\phi\circ I\circ R_\phi^{-1}$
on the fibre over $\phi\in\bfDiff$,
and where $\tilde T_{\bfOm^2}$ is defined as in Definition \ref{def:vector bundle parallel}
(the Riemannian structure on $\Lambda^2 T^*M$ being induced by the fixed metric $g_0$).
\begin{proposition}
  \label{prop:extension exact case}
  The maps $\tilde I$ and $\tilde k$ are $\cC^{\infty,\infty}$ ILH normal bundle homomorphisms.
\end{proposition}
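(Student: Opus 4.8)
The plan is to prove Proposition \ref{prop:extension exact case} by invoking the extension results of Theorem \ref{theo:extension theorem}, applied separately to $\tilde I$ and $\tilde\kappa$, and then combining them. The key observation is that both $I$ and $\kappa$ are built out of operators of the type covered by parts (a), (b), (c) of that theorem, so that the right-invariant extensions inherit $\cC^{\infty,\infty}$ ILH normality.

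First I would treat $\tilde\kappa$. By Lemma \ref{lem:tangent bundle exact case}, the tangent bundle $T\bfGDiff_H$ is (the pullback of) $B(\bfOm^2,\bfDiff,T_{\bfOm^2})\oplus T\bfDiff$, and $\kappa(u,b)=\iota_u H - b$ splits as the difference of two pieces. The piece $(u,b)\mapsto -b$ is the projection onto the $\bfOm^2$-factor composed with the identity, which is a bundle homomorphism of the form considered in Theorem \ref{theo:extension theorem}(a) (it is pointwise, i.e., induced by a bundle map $\gamma$), hence its right-invariant extension is $\cC^{\infty,\infty}$ ILH normal. The piece $u\mapsto \iota_u H$ is, for fixed $H\in\bfOm^3$, a zeroth-order bundle homomorphism $TM\to \Lambda^2T^*M$ given pointwise by $v\mapsto \iota_v H(x)$; again this is of the form in Theorem \ref{theo:extension theorem}(a), so its right-invariant extension is $\cC^{\infty,\infty}$ ILH normal. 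Here I would be careful with the index shifts: the extension of an order-zero operator does not shift indices, so $\tilde\kappa$ sends $B(\bfOm^2\oplus\bfGamma(TM),\cdot)$ to $B(\bfOm^2,\cdot)$ with the stated chains, matching the index conventions fixed just before the proposition. Since a finite sum of $\cC^{\infty,\infty}$ ILH normal bundle homomorphisms is again $\cC^{\infty,\infty}$ ILH normal (the linear estimates add), $\tilde\kappa$ is $\cC^{\infty,\infty}$ ILH normal.

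For $\tilde I$, the map $I:\bfOm^2\to\RR^s$ has components $\om\mapsto\langle\om,e_i\rangle_{L^2_{g_0}}$, each of which is exactly the type of operator in Theorem \ref{theo:extension theorem}(c) with $h=e_i\in\Gamma(\Lambda^2T^*M)$. Hence each component of the right-invariant extension $\tilde I$ is a $\cC^{\infty,\infty}$ ILH normal bundle homomorphism from $B(\bfOm^2,\bfDiff,\tilde T_{\bfOm^2})$ to the trivial bundle $\bfDiff\times\RR$, and assembling the $s$ components gives that $\tilde I$ is a $\cC^{\infty,\infty}$ ILH normal bundle homomorphism to $\bfDiff\times\RR^s$. (Here the parallel transport $\tilde T_{\bfOm^2}$ of Definition \ref{def:vector bundle parallel} is the one built from $g_0$, which is precisely the setting of Theorem \ref{theo:extension theorem}.) Finally, $\tilde I\circ\tilde\kappa$ is the composition of two $\cC^{\infty,\infty}$ ILH normal bundle homomorphisms over $\bfDiff$, hence itself $\cC^{\infty,\infty}$ ILH normal; but since the statement only asserts that $\tilde I$ and $\tilde\kappa$ individually are normal, the two paragraphs above suffice.

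The main subtlety — and the step I expect to require the most care — is matching the ILH chains and index shifts correctly, and verifying that the right-invariant extension of $I$ really uses the $L^2_{g_0}$-structure in the way Theorem \ref{theo:extension theorem}(c) demands: the harmonic forms $e_i$ are fixed once and for all using $g_0$, and the extension $\tilde I = R_\phi\circ I\circ R_\phi^{-1}$ must be identified with the operator appearing in Omori's result via the parallel-transport trivialization. Once this identification is in place, the conclusion is a direct citation of \cite[IX, Thm. 4.3, Thm. 5.6, Prop. 6.1]{Om97} as packaged in Theorem \ref{theo:extension theorem}, together with the elementary fact that sums and compositions of ILH normal bundle homomorphisms are ILH normal.
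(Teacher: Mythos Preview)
Your treatment of $\tilde I$ and of the $u\mapsto \iota_u H$ piece of $\tilde\kappa$ is correct and matches the paper: both are direct applications of Theorem~\ref{theo:extension theorem}(c) and (a), respectively.

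There is, however, a genuine gap in your handling of the piece $(u,b)\mapsto -b$. You claim this is a pointwise bundle homomorphism covered by Theorem~\ref{theo:extension theorem}(a), but that theorem is stated for maps between bundles of the form $B(\bfGamma(F),\bfDiff,\tilde T_F)$ where \emph{both} defining maps are the parallel-transport ones of Definition~\ref{def:vector bundle parallel}. Here the source summand is $B(\bfOm^2,\bfDiff,T_{\bfOm^2})$, whose defining map $T_{\bfOm^2}(b,\xi(u),\xi(v))=\xi(v)^*b$ is given by \emph{pullback}, not parallel transport (this is exactly the content of Lemma~\ref{lem:tangent bundle exact case}). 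The target is $B(\bfOm^2,\bfDiff,\tilde T_{\bfOm^2})$, with the parallel-transport map. Computing the local expression~\eqref{eq:local expression} of the right-invariant extension of $b\mapsto b$ between these two \emph{different} bundles gives
\[
\Phi(u)(b)(x)=\tau(\exp_x u(x))^{-1}(\xi(u)^{-1})^*b(\exp_x u(x)),
\]
which is precisely the operator $\Psi(u,b)$ of~\eqref{eq:psioperator}. This is not a fibrewise bundle map in the sense of Theorem~\ref{theo:extension theorem}(a); its $\cC^{\infty,\infty}$ ILH normality requires the dedicated Lemma~\ref{lem:psioperator}. The paper's proof makes exactly this computation and invokes Lemma~\ref{lem:psioperator} at this point. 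So your argument is incomplete: you need to recognize the mismatch of defining maps and appeal to Lemma~\ref{lem:psioperator} (or reprove it) for the $-b$ piece.
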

\begin{proof}
  For $\tilde I$, the result follows from (3) in Theorem \ref{theo:extension theorem}.
  For $\tilde \kappa$, we need to consider the local expression, say $\Phi_\kappa$, of this bundle homomorphism.
  By definition, this is given, for $u\in U\cap\bfDiff$ and $(v,b)\in \bfGamma(TM)\times\bfOm^2$, by:
  \begin{equation*}
    \Phi_\kappa(u)(v,b)(x)=\tau(\exp_x(u(x)))^{-1}((\iota_{v'}H)-(\xi(u)^{-1})^*b )(\exp_x(u(x)),
  \end{equation*}
  where $v'= T_\theta(\xi(u),e)^{-1}(v)$. By definition, 
  $$(u,v)\mapsto\tau(\exp_x(u(x)))^{-1}(\iota_{v'}H)(\exp_xu(x))$$
  is the local expression for the right-invariant extension from
  $T\bfDiff$ to $B(\bfOm^2,\bfDiff,\tilde T_{\bfOm^2})$ of the homomorphism $v\mapsto \iota_vH$. 
  Thus, by (1) in Theorem \ref{theo:extension theorem},
  this component of $\tilde \kappa$ is $\cC^{\infty,\infty}$ ILH normal.
  The map 
  $$(u,b)\mapsto \tau(\exp_x(u(x)))^{-1}(\xi(u)^{-1})^*b$$
  can be rewritten as
  $$
  \tau(\exp_x(u(x)))^{-1}(\xi(u)^{-1})^*b(\exp_xu(x))=\Psi_{-1}(u,b)(x)
  $$
  with $\Psi_{-1}$ defined in (\ref{eq:psioperator}).
  By Lemma \ref{lem:psioperator}, this is a $\cC^{\infty,\infty}$ ILH normal
  homomorphism. Thus, $\Phi$ satisfies the required estimates and the result follows.
\end{proof}

Finally, we are ready to prove the last result of this section.
\begin{proposition}
  \label{prop:exact subgroup exact case}
  The group  $\Aut^e(E)$ is a strong ILH Lie subgroup of $\Aut(E)$, or, equivalently in terms of a splitting, $\bfGDiff_{H}^e$ is a strong ILH Lie subgroup of $\bfGDiff_{H}$. 
\end{proposition}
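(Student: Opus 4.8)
The plan is to deduce the result from the ILH Frobenius' theorem, Theorem~\ref{theo:Frobenius}, applied in the simplified form of Remark~\ref{rem:hypo+trivial} (trivial, finite-rank target bundle) to the right-invariant bundle homomorphism obtained by composing the maps of Proposition~\ref{prop:extension exact case}. Concretely, using Lemma~\ref{lem:tangent bundle exact case} to identify $T\bfGDiff_H$ with the pullback along $\pi\colon\bfGDiff_H\to\bfDiff$ of $B(\bfOm^2,\bfDiff,T_{\bfOm^2})\oplus T\bfDiff$, I would consider
$$
\widetilde{I\circ\kappa}\colon T\bfGDiff_H\longrightarrow \bfGDiff_H\times\RR^s,
$$
the pullback of $\tilde I\circ\tilde\kappa$. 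Since $\tilde I$ and $\tilde\kappa$ are $\cC^{\infty,\infty}$ ILH normal bundle homomorphisms by Proposition~\ref{prop:extension exact case}, and these properties, together with right-invariance, are preserved under composition and pullback along $\pi$, the map $\widetilde{I\circ\kappa}$ is a right-invariant $\cC^{\infty,\infty}$ ILH normal bundle homomorphism. Its restriction to the fibre over the identity is $\bfA:=I\circ\kappa\colon\gdiff_H\to\RR^s$, whose kernel is exactly $\gdiff_H^e$ (this is the identity $\gdiff_H^e=(I\circ\kappa)^{-1}(0)$ noted above, a consequence of Hodge theory: a closed form $L^2$-orthogonal to the harmonic forms is exact).

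Next, I would check the hypotheses (a)--(c) of Theorem~\ref{theo:Frobenius}; hypothesis (d), as well as the closedness of $\bfA\gdiff_H$ and the existence of a closed complement, are automatic by Remark~\ref{rem:hypo+trivial}. For (a), one verifies directly that $\gdiff_H^e$ is a Lie subalgebra of $\gdiff_H$: writing $\iota_uH-b=da$ and $\iota_{u'}H-b'=da'$, Cartan's formula together with $dH=0$ shows that $\kappa$ of the bracket $[(u,b),(u',b')]$ is an exact $2$-form (in fact $\gdiff_H^e$ is an ideal). For (b) and (c), the key point is that the complement can be taken \emph{finite-dimensional}: the subspace
$$
\bfE_2:=\{\,(0,-h)\st h\text{ a }g_0\text{-harmonic }2\text{-form}\,\}\cong H^2(M;\RR)
$$
lies in $\gdiff_H$ (since $dh=0$), meets $\gdiff_H^e$ trivially (a harmonic exact form vanishes), and, via the Hodge decomposition of the closed form $\iota_uH-b$, gives $\gdiff_H=\gdiff_H^e\oplus\bfE_2$; the same decomposition holds at every Sobolev level because harmonic forms are smooth, so $\bfE_2$ is the required closed complement, independent of $k$. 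As $\bfE_2$ is finite-dimensional and $\bfA|_{\bfE_2}$ is injective, all its norms are equivalent and the linear estimate in (c) reduces to elementary finite-dimensional linear algebra (take $C$ optimal at level $d$ and enlarge the $D_k$ for $k>d$).

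With the hypotheses verified, Theorem~\ref{theo:Frobenius} yields that $\ker\widetilde{I\circ\kappa}$ is an involutive ILH subbundle of $T\bfGDiff_H$ whose maximal integral submanifold through $e$ is a strong ILH Lie subgroup of $\bfGDiff_H$ with Lie algebra $\gdiff_H^e$; since $\gdiff_H^e$ integrates to $\bfGDiff_H^e$, this subgroup is $\bfGDiff_H^e$. Finally, one transports this through a splitting: the isomorphism $\bfGDiff_H\simeq_\s\Aut(E)$ identifies $\bfGDiff_H^e$ with $\Aut^e(E)$, which is independent of the splitting as recalled before the statement, so $\Aut^e(E)$ is a strong ILH Lie subgroup of $\Aut(E)$. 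The only genuine obstacle---the $\cC^{\infty,\infty}$ ILH normality of the extended operator $\tilde\kappa$---has already been overcome in Proposition~\ref{prop:extension exact case}; what remains is to notice that the finiteness of $b_2(M)=\dim H^2(M;\RR)$ trivialises the linear estimates in Frobenius' theorem, the one substantive computation being the Cartan-calculus identity making $\gdiff_H^e$ a subalgebra.
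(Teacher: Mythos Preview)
Your proposal is correct and follows essentially the same approach as the paper: apply the ILH Frobenius theorem in the simplified form of Remark~\ref{rem:hypo+trivial} to the right-invariant extension of $I\circ\kappa$ obtained from Proposition~\ref{prop:extension exact case} and Lemma~\ref{lem:tangent bundle exact case}. Your write-up is more explicit than the paper's terse proof---you spell out the subalgebra check and the finite-dimensional complement $\bfE_2=\{(0,-h):h\ g_0\text{-harmonic}\}$, which is exactly the image of the natural right inverse the paper has in mind (made explicit only in the odd-exact analogue, Proposition~\ref{cor:exact subgroup abelian case}).
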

\begin{proof}
  From Proposition \ref{prop:extension exact case}, the operator $\tilde I\circ\tilde\kappa$ is $\cC^{\infty,\infty}$ ILH normal.
  We can extend $I\circ \kappa$ to a right-invariant $\cC^{\infty,\infty}$ ILH normal
  bundle homomorphism
  $$\pi^*(B(\bfOm^2,\bfDiff,T_{\bfOm^2})\oplus T\bfDiff)\to T\bfGDiff_{H}\times\RR^d.$$
  By Lemma \ref{lem:tangent bundle exact case}, this gives an extension from
  $T\bfGDiff_{H}$ to a trivial bundle. As $\ker(I\circ \kappa)=\gdiff_{H}^e$, the result
  follows from Remark \ref{rem:hypo+trivial}.
\end{proof}

\section{A slice theorem for generalized metrics}
\label{sec:moduliGM}

\subsection{Generalized metrics and statement of the slice theorem}

Let $E$ be an exact Courant algebroid. We define a generalized metric as follows.

\begin{definition}\label{def:gen-metric}
  A generalized metric on an exact Courant algebroid $E$ over an $n$-dimensional manifold $M$ is a rank $n$ subbundle
  $$
  V_+\subset E
  $$
  such that $\la\cdot,\cdot\ra_{\vert V_+}$ is positive definite.
\end{definition}

\begin{example}
  Consider $E=(TM+T^*M)_H$ for any $H\in \Omega^3_{cl}$. A usual metric $g$ on $M$ defines a generalized metric on $E$ by its graph $V_+=\{u+i_ug\st u\in TM\}$.
\end{example}

For any generalized metric $V_+\subset E$, we have that the projection $\pi_{V_+} : V_+ \rightarrow TM$ is an isomorphism and induces a metric $g$ on $TM$ by
$$g(u,v)=\la \pi_{V_+}^{-1}(u), \pi_{V_ +}^{-1}(v) \ra.$$
Moreover, the map $\s:TM\to E$ given by
$$
\s : u \mapsto \pi_{V_+}^{-1}(u) -\iota_ug
$$
defines an isotropic splitting of $E$. Conversely, a pair $(g,\s)$ consisting of a metric and an isotropic splitting defines a generalized metric by the subbundle
$$
V_+=\lbrace \lambda(u)+\iota_ug \st  u\in TM\rbrace\subset E.
$$

Let $\cGM$ denote the set of generalized metrics on $E$. With the notation of \eqref{eq:space-of-splittings} for the space of isotropic splittings $\Spl$, and the notation 
$$\cM := \{ g\in \Gamma(S^2 T^*M)\st g \textrm{ is positive definite} \}$$
for the space of metrics, the argument above is summed up in the isomorphism
$$\cGM \cong \cM \times \Spl.$$

By choosing any splitting $\s\in \Spl$, we have an isomorphism $\Spl\simeq_\s \Omega^2$, which shows that the space $\GM$ is an ILH manifold modelled on the ILH chain 
$$\lbrace  \bfOm^2\times \bfGamma(S^2T^*M),\Om^{2,k} \times \Gamma(S^2T^*M)^k, k\geq n+5 \rbrace,$$
as $\GM$ is regarded, indeed, as an open subspace of $\bfOm^2\times\bfGamma(S^2T^*M)$.

Since $E$ is fixed and no confusion is possible, we denote $\OO_\pi(E)$ by $\OO_\pi$. The strong ILH Lie group $\OO_\pi$ preserves the pairing $\la\cdot,\cdot \ra$ and thus acts on the right on the ILH manifold $\bfGM$, with ILH action, by pull-back:
\begin{equation}
  \label{eq:action on metrics}
  \begin{array}{cccc}
    \rho_{\GM}:&  \OO_\pi(E)\times \GM &  \rightarrow & \GM \\
    & (F,V_+) & \mapsto & F^{-1}(V_+).
  \end{array}
\end{equation} 
Likewise, we denote the groups of automorphisms $\Aut^e(E)$ and $\Aut(E)$ by $\GDiff^e$ and $\GDiff$. The restriction of \eqref{eq:action on metrics} defines ILH actions of $\bfGDiff$ and $\bfGDiff^e$ on $\GM$. For any $F\in \GDiff$, if $S$ is a subset of $\GM$, we will denote by $F\cdot S$ the image of $S$ by the action of $F$, namely $\rho_{\GM}(F,S)$.

By taking a splitting $\s\in \Spl$, we have isomorphisms 
\begin{align}\label{eq:with-splitting}
  \OO_\pi\simeq_\s \Diff\ltimes
  \Omega^2,&& \cGM\simeq_\s \cM \times \Omega^2,&&  V_+\simeq_\s\{u+i_ug +i_u \omega\st u\in TM\}.
\end{align}

The action then reads \begin{equation*}
  \label{eq:action exact case}
  \begin{array}{cccc}
    \rho_{\GM}:& ( \Diff\ltimes \Omega^2 )\times\GM& \rightarrow & \GM \\
    &((\phi,B),(g,\omega)) & \mapsto & (\phi^*g, \phi^*\omega-B),
  \end{array}
\end{equation*}
which restricts to actions of $\bfGDiff_{H}$ and $\bfGDiff_{H}^e$ on $\GM$. 

In this section we prove a slice theorem for these actions, inspired by the classical result of Ebin \cite{Eb}. The main difference in the proof is that we need to use the abstract Frobenius' Theorem (Theorem \ref{theo:Frobenius})
in order to endow the orbits under the  action of $\GDiff^e$ with an ILH manifold structure. We will only sketch a proof of the result, focusing on the differences with \cite{Eb}. The strategy of the proof is as in the finite-dimensional case. We fix a generalized metric $V_+$ on $E$, and let $\Isom^e(V_+)$ be its isotropy subgroup under the $\GDiff^e$ action. The ILH Frobenius' theorem enables us to endow the space $\GDiff^e/ \Isom^e(V_+)$ with an ILH structure. 
The orbit $\GDiff^e\cdot V_+$ is homeomorphic to this quotient, and carries
an ILH structure. Then, by means of an invariant metric on $\GM$, we can consider the normal bundle to $\GDiff^e\cdot V_+$, as well as an exponential map on $\GM$. The slice is then obtained by exponentiating small vectors on the normal bundle to the orbit at $V_+$.

\begin{remark}
  The space of generalized metrics on a given $M$ is a fairly complicated object. 
  The slice result enables us to have a better description of this space.
  It would be interesting to have a further decomposition of the space of generalized metrics as in \cite{koi78}.
  This would rely on a solution to the Yamabe problem for the generalized scalar curvature in a generalized conformal class of metrics.
\end{remark}

In all this section, we fix an auxiliary metric $g_0$ on $M$, and thus endow the spaces of sections $\Gamma(TM^{\otimes p}\otimes (T^*M)^{\otimes q})$ with
$L^{2,k}$ inner products as defined in (\ref{eq:norms}).

\begin{definition}
  The {\bf group of  generalized isometries} (resp. exact generalized isometries)
  of  $V_+\in\GM$, denoted $\Isom(V_+)$ (resp. $\Isom^e(V_+)$), is the isotropy group of $V_+$ under the $\GDiff$ action (resp. under the $\GDiff^e$ action). When a splitting $\s$ is chosen, so that
  \begin{align}
    V_+\simeq_\s (g,\omega),&& \GDiff\simeq_\s \GDiff_H,&& \GDiff^e\simeq_\s \GDiff^e_H,\label{eq:with-splitting-2}
  \end{align}
  we will refer to it as $\Isom_H(g,\omega)$ (resp. $\Isom^e_H(g,\omega)$).
\end{definition}

\begin{proposition}
  \label{prop:isotropy compact exact case}
  Let $V_+ \in \cGM$ with induced metric $g\in \cM$. The groups $\Isom(V_+)$ and $\Isom^e(V_+)$ are isomorphic to compact subgroups of  $\Isom(g)$.
\end{proposition}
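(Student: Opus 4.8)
The plan is to exhibit an injective group homomorphism $\Isom(V_+)\hookrightarrow \Isom(g)$ whose image is a closed, hence compact, subgroup, and similarly for $\Isom^e(V_+)$. First I would use that any $F\in\GDiff$ covers a diffeomorphism $f$ of $M$; the assignment $F\mapsto f$ is a group homomorphism $\GDiff\to\Diff$ (the projection in the exact sequence $0\to\Omega^2_{cl}\to\Aut(E)\to\Diff_{[H]}\to 0$). I claim that if $F\in\Isom(V_+)$, then $f\in\Isom(g)$. Indeed, pick the splitting $\s$ determined by $V_+$, so that $V_+\simeq_\s(g,0)$ and the action reads $((\phi,B),(g,\omega))\mapsto(\phi^*g,\phi^*\omega-B)$; the condition $F\cdot V_+=V_+$ forces $f^*g=g$ on the $\Gamma(S^2T^*M)$-component, i.e. $f\in\Isom(g)$. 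This shows the projection restricts to a homomorphism $\Isom(V_+)\to\Isom(g)$, and the same argument with $\GDiff^e$ in place of $\GDiff$ gives $\Isom^e(V_+)\to\Isom(g)$.

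Next I would check injectivity of these restricted projections. If $F\in\Isom(V_+)$ covers the identity, then $F\in\cG(E)$, so in the splitting $\s$ it is a closed $2$-form $B$ acting by $u+\alpha\mapsto u+\alpha+i_uB$; fixing $V_+\simeq_\s(g,0)$ means $0=\phi^*\omega - B = -B$ on the $\Omega^2$-component, i.e. $B=0$ and $F=\Id$. Hence $\Isom(V_+)\to\Isom(g)$ is injective, and likewise $\Isom^e(V_+)\to\Isom(g)$, identifying both groups with subgroups of $\Isom(g)$.

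It remains to see that these subgroups are compact. Since $(M,g)$ is a compact Riemannian manifold, $\Isom(g)$ is a compact Lie group, so it suffices to show the images are closed. The image of $\Isom(V_+)$ in $\Isom(g)$ consists of those $f\in\Isom(g)$ that lift to an automorphism $F$ of $E$ fixing $V_+$; concretely, in the splitting $\s$ with $\omega=0$, $f$ lifts to such an $F=(f,B)$ precisely when $f^*H-H=dB$ for some $B$ with $f^*\cdot 0 - B=0$, i.e. $B=0$ and $f^*H=H$. Thus the image is $\{f\in\Isom(g)\st f^*H=H\}$, which is the intersection of $\Isom(g)$ with the isotropy of $H\in\Omega^3$ under the (continuous) pull-back action of $\Diff$ on forms, hence closed in $\Isom(g)$, hence compact. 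For $\Isom^e(V_+)$ the condition is moreover that $(f,0)$ be an \emph{exact} automorphism, i.e. that $0 - i_0H = i_f H - 0$ lie in the exact forms; since this is $0$, the extra condition is vacuous once $f^*H=H$, so in fact the image is again $\{f\in\Isom(g)\st f^*H=H\}$ and is compact as well.

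The main obstacle is bookkeeping the splitting dependence correctly: the metric $g$ and the distinguished splitting $\s$ both come \emph{from} $V_+$, so in that splitting $V_+\simeq_\s(g,0)$ with $\omega=0$, and one must carefully track how the isotropy condition $F\cdot V_+=V_+$ translates, via \eqref{eq:conjugation-splitting}, into conditions on the pair $(f,B)$; once this is set up, compactness is immediate from the classical Myers–Steenrod theorem together with the closedness of the stabilizer of $H$.
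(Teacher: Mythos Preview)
Your argument for $\Isom(V_+)$ is correct and follows the same line as the paper: both inject into $\Isom(g)$ via the projection $(\phi,B)\mapsto\phi$, and closedness of the image in the compact group $\Isom(g)$ yields compactness. The paper works in an arbitrary splitting (so $V_+\simeq_\s(g,\omega)$ with general $\omega$) and first observes that the full $\OO_\pi$-isotropy $\widetilde\Isom_H(g,\omega)$ is isomorphic to $\Isom(g)$; you instead pick the distinguished splitting with $\omega=0$, which gives the pleasant concrete description of the image as $\{f\in\Isom(g):f^*H=H\}$. Either route works.

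Your treatment of $\Isom^e(V_+)$, however, has a genuine gap. The expression ``$0 - i_0H = i_f H - 0$'' does not parse: $f$ is a diffeomorphism, not a vector field, so $i_fH$ is undefined, and the Lie-algebra condition $\iota_uH-b\in d\Omega^1$ defining $\gdiff_H^e$ has no direct group-level analogue of the form you write. More importantly, the conclusion that the exactness condition is vacuous is false in general: the Lie algebra of $\Isom(V_+)$ consists of pairs $(u,0)$ with $\cL_ug=0$ and $d(\iota_uH)=0$, but $\iota_uH$ need only be closed, not exact, so $(u,0)$ need not lie in $\gdiff_H^e$. Hence $\Isom^e(V_+)$ can be a proper subgroup of $\Isom(V_+)$, and its image in $\Isom(g)$ need not equal $\{f:f^*H=H\}$.

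The paper sidesteps this by not trying to identify the image at all: it simply observes that $\Isom^e_H(V_+)=\widetilde\Isom_H(g,\omega)\cap\GDiff_H^e$ is a closed subgroup of $\widetilde\Isom_H(g,\omega)\cong\Isom(g)$, hence compact. You can repair your argument the same way: once you know $\Isom(V_+)$ is compact, it suffices that $\Isom^e(V_+)=\Isom(V_+)\cap\GDiff_H^e$ is closed in $\Isom(V_+)$, which follows from $\GDiff_H^e$ being a closed subgroup.
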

\begin{proof}
  Choose a splitting $\s$ so that we have $V_+\simeq_\s (g,\omega)$. The isotropy subgroup $\widetilde\Isom_H(g,\omega)$ of $(g,\omega)$ under the action of $\Diff\ltimes\bfOm^2$  is isomorphic, as a topological group, to $\Isom(g)$ via the map $(\phi,B)\mapsto \phi$. Indeed, $(\phi,B)$ fixes $(g,\omega)$ if and only if $\phi^*g=g$, i.e., $g\in \Isom(g)$, and $B$ is uniquely determined by $B=\phi^*\omega-\omega$. The images under this map of the closed subgroups $\widetilde\Isom_H(g,\om)\cap\GDiff_H$ and  $\widetilde\Isom_H(g,\om)\cap\GDiff_H^e$ give the result.
\end{proof}

The main result of this section is the following.
\begin{theorem}
  \label{theo:slice exact case}
  Let $V_+$ be a generalized metric on $E$. There exists an ILH submanifold $\cS$ of $\GM$ such that:
  \begin{enumerate}
  \item[a)] For all $F\in \Isom^e(V_+)$, $F\cdot \cS=\cS$.
  \item[b)] For all $F \in \GDiff^e$, if $(F\cdot \cS) \cap\cS\neq \emptyset$, then $F\in\Isom^e(V_+)$.
  \item[c)] There is a local cross-section $\chi$ of the map $F \mapsto \rho_{\GM}(F,V_+)$ on a neighbourhood
    $\cU$ of $V_+$ in $\GDiff^e\cdot V_+$ such that the map from $\cU\times\cS$ to $\GM$ given by
    $(V_1,V_2)\mapsto \rho_{\GM}(\chi(V_1),V_2)$ is a homeomorphism onto its image.
  \end{enumerate}
\end{theorem}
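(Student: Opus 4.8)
The plan is to adapt Ebin's argument \cite{Eb} for the slice theorem on Riemannian metrics, with the ILH Frobenius Theorem \ref{theo:Frobenius} taking over the role played by ordinary elliptic theory in the step where the orbit is given a manifold structure. Fix $V_+$ and a splitting $\s$ so that $V_+\simeq_\s(g,\omega)$ and $\GDiff^e\simeq_\s\GDiff^e_H$. The first task is to equip the orbit $\cO:=\GDiff^e\cdot V_+$ with an ILH structure. The infinitesimal action at $V_+$ is the operator
\[
\bfA:\gdiff^e_H\longrightarrow T_{(g,\omega)}\GM=\Gamma(S^2T^*M)\times\Omega^2,\qquad (u,b)\longmapsto(\cL_ug,\ \cL_u\omega-b),
\]
whose kernel is $\Lie\Isom^e(V_+)$, finite dimensional by Proposition \ref{prop:isotropy compact exact case}. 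The key difficulty, flagged in the introduction, is that $\gdiff^e_H$ is not the space of sections of a vector bundle on $M$. I would use the surjection $\Gamma(TM)\times\Omega^1\to\gdiff^e_H$, $(u,a)\mapsto(u,\iota_uH-da)$, and restrict to $a$ coexact; by Hodge theory this restriction is still onto $\gdiff^e_H$ and injective, and in this parametrization $\bfA$ reads
\[
(u,a)\longmapsto\bigl(\cL_ug,\ \iota_u(d\omega-H)+d(\iota_u\omega)+da\bigr).
\]
Adjoining the component $d^*a$ to the target produces an operator on genuine bundles $\Gamma(TM\oplus T^*M)\to\Gamma(S^2T^*M\oplus\Lambda^2T^*M\oplus\Lambda^0T^*M)$ with injective principal symbol, i.e.\ overdetermined elliptic. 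Theorem \ref{theo:extension theorem} then extends this operator to a right-invariant $\cC^{\infty,\infty}$ ILH normal bundle homomorphism over $\bfDiff$, and the appendix comparison results for ILH chains let me transfer its conclusions --- closedness of $\bfA\,\gdiff^e_H$, existence of closed complements, and the linear estimates (c), (d) of Theorem \ref{theo:Frobenius} --- back to the constrained operator on $T\GDiff^e$. Frobenius Theorem \ref{theo:Frobenius} together with Corollary \ref{cor:Frobenius} then endow $\cO\cong\GDiff^e/\Isom^e(V_+)$ with a strong ILH manifold structure, and the projection $\GDiff^e\to\cO$ with local cross-sections.

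Next I would produce the slice transverse to $\cO$. Equip $\GM$ with the weak Riemannian metric that at a generalized metric with induced metric $g$, using $T_{V_+}\GM\cong\Hom(V_+,V_+^\perp)$ and the positive-definite pairings on $V_+$ and $V_+^\perp$, pairs tangent vectors by $\int_M\langle\cdot,\cdot\rangle\,\dvol_g$; under $\s$ this is $\int_M(\langle h_1,h_2\rangle_g+\langle\beta_1,\beta_2\rangle_g)\,\dvol_g$ on $\cM\times\Omega^2$. As in \cite{Eb}, the change-of-variables formula shows this metric is $\OO_\pi$-invariant, hence in particular $\GDiff^e$- and $\Isom^e(V_+)$-invariant, and its geodesic equation is a fibrewise ODE on $M$, so it admits an ILH exponential map $\exp^{\GM}$ defined near the zero section of $T\GM$. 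Using the invariant $L^2$-splitting of Step 1, write $T_{V_+}\GM=T_{V_+}\cO\oplus N$ with $N=(T_{V_+}\cO)^{\perp_{L^2}}$, and set $\cS:=\exp^{\GM}_{V_+}(N\cap B_\epsilon)$ for $\epsilon$ small. Since $\Isom^e(V_+)$ acts by isometries of $\GM$ fixing $V_+$ and preserving $\cO$, it preserves $T_{V_+}\cO$, hence $N$ and $B_\epsilon$, and commutes with $\exp^{\GM}_{V_+}$; thus $F\cdot\cS=\cS$ for $F\in\Isom^e(V_+)$, which is (a).

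Properties (b) and (c) then follow the arguments of \cite{Eb}. Using compactness of $\Isom^e(V_+)$ one shows that $(V',n)\mapsto\exp^{\GM}_{V'}(n)$, from a neighbourhood of the zero section of the normal bundle of $\cO$, is an ILH diffeomorphism onto an open $\GDiff^e$-invariant tubular neighbourhood $\cT$ of $\cO$. If $F\cdot\cS\cap\cS\neq\emptyset$, expressing the coincidence inside $\cT$ and using that $F$ preserves $\cO$ and the metric (so carries normal vectors at $V_+$ to normal vectors at $F\cdot V_+$), injectivity of the tubular map forces $F\cdot V_+=V_+$, i.e.\ $F\in\Isom^e(V_+)$, which is (b). Finally, composing the local cross-section $\chi$ of $\GDiff^e\to\cO$ from Step 1 with the tubular neighbourhood identification, the map $(V_1,V_2)\mapsto\rho_{\GM}(\chi(V_1),V_2)$ on $\cU\times\cS$ is checked to be a homeomorphism onto its image --- injectivity coming from (b), openness and continuity from the ILH structures --- which is (c).

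The main obstacle is Step 1. Because $\gdiff^e_H$ carries a differential constraint and is not the sections of a bundle, one cannot directly invoke ellipticity of $\delta\delta^*$, as in \cite{Eb}, to obtain the ILH structure on $\cO$ and the $L^2$-decomposition of $T_{V_+}\GM$. The verification of hypotheses (a)--(d) of the Frobenius Theorem must instead pass through the coexact parametrization above and a comparison, carried out entirely within the ILH category, with a genuinely overdetermined-elliptic operator on vector bundles; the estimates on ILH chains needed for this comparison are precisely the technical content of the appendix.
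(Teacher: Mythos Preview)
Your overall strategy matches the paper's: adapt Ebin's construction, replacing direct ellipticity by the ILH Frobenius Theorem \ref{theo:Frobenius} to obtain an ILH structure on the orbit, then use the weak invariant metric and its exponential to produce the slice. Two points deserve comment.

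\textbf{Step 1 (orbit structure).} Your parametrization of $\gdiff^e_H$ by $(u,a)$ with $a$ coexact, followed by adjoining $d^*a$ to obtain an overdetermined elliptic operator on bundles, is essentially dual to what the paper does. The paper keeps the full surjection $\iota_e:\Gamma(TM+T^*M)\to\gdiff^e_H$, $u+\alpha\mapsto(u,\iota_uH-d\alpha)$, and instead forms the elliptic \emph{complex}
\[
\Omega^0\xrightarrow{\ f\mapsto(0,df)\ }\Gamma(TM+T^*M)\xrightarrow{\ \bfA\ }\Gamma(S^2T^*M)\times\Omega^2,
\]
then invokes Proposition \ref{prop:elliptic implies Frobenius} for elliptic complexes and transfers the Frobenius hypotheses back to $\gdiff^e_H$ via $\iota_e$ and its explicit right inverse $\iota_e^{-1}(u,b)=u-\bbG d^*(b-\iota_uH)$. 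The two routes are equivalent (your adjoined $d^*a$ is the adjoint of the first arrow), but the paper's formulation interfaces more cleanly with the appendix machinery. You also omit a step the paper treats explicitly: showing that the orbit is a \emph{closed} submanifold of $\GM$, so that $\rho_{V_+}:\GDiff^e_H/\Isom^e_H(V_+)\to\cO_{V_+}$ is a homeomorphism and not merely an injective immersion. This uses Ebin's closedness result for $\phi\mapsto\phi^*g$ on the metric component, together with closedness of $\GDiff^e_H$ in $\Diff\ltimes\Omega^2$.

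\textbf{The exponential map.} Here there is a genuine gap. You write that the geodesic equation is a fibrewise ODE, ``so it admits an ILH exponential map $\exp^{\GM}$'', and then use this to build an ILH tubular neighbourhood of the orbit. The paper is explicit that this does \emph{not} follow automatically: one obtains exponential maps $\exp^k_{\GM}$ on each Hilbert completion $\GM^k$, each a local diffeomorphism on some $\cV^k$, but the inverse limit of the $\cV^k$ could shrink to a point, so there is no a priori ILH exponential or ILH tubular neighbourhood. The paper's remedy is to fix a single $k$, construct the slice $\cS=\exp^k_{\GM}(\cV^k)\cap\GM$ at that level, and then prove separately that $\exp^k_{\GM}:\cW^k\cap T\GM\to\exp^k_{\GM}(\cW^k)\cap\GM$ is a homeomorphism for the smooth topology. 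The argument is that an element of $\GM^k$ is smooth iff all iterated Lie derivatives by elements $(u,\iota_uH)\in\gdiff^e_H$ exist, and $\exp^k_{\GM}$ commutes with these Lie derivatives by $\GDiff^e_H$-invariance. Your proof needs this regularity step; without it, the passage from the Hilbert slice to an ILH submanifold $\cS\subset\GM$ is not justified.
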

A similar statement holds for the full group of generalized diffeomorphisms:
\begin{theorem}
  \label{theo:slice exact case full group}
  Let $V_+$ be a generalized metric on $E$. There exists an ILH submanifold $\cS$ of $\GM$ such that:
  \begin{enumerate}
  \item[a)] For all $F\in \Isom(V_+)$, $F\cdot \cS=\cS$.
  \item[b)] For all $F \in \GDiff$, if $(F\cdot \cS)\cap\cS\neq \emptyset$, then $F\in\Isom(V_+)$.
  \item[c)] There is a local cross-section $\chi$ of the map $F \mapsto \rho_{\GM}(F,V_+)$ on a neighbourhood
    $\cU$ of $V_+$ in $\GDiff\cdot V_+$ such that the map from $\cU\times\cS$ to $\GM$ given by
    $(V_1,V_2)\mapsto \rho_{\GM}(\chi(V_1),V_2)$ is a homeomorphism onto its image.
  \end{enumerate}
\end{theorem}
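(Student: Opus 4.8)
The plan is to deduce Theorem~\ref{theo:slice exact case full group} from the slice theorem for the exact group, Theorem~\ref{theo:slice exact case}, by exploiting the fact that $\GDiff^e$ is a normal strong ILH Lie subgroup of $\GDiff$ with a \emph{finite-dimensional} quotient. Concretely, after fixing a splitting we have the extension $0\to\Omega^2_{cl}\to\GDiff_H\to\Diff_{[H]}\to 0$, and $\GDiff_H^e$ consists of those pairs whose associated $2$-form is $L^2_{g_0}$-orthogonal to harmonic forms; the quotient $\GDiff/\GDiff^e$ is modelled on the finite-dimensional space $H^2(M;\RR)$ (the harmonic representatives of $B$-fields), together with the discrete data $\Diff_{[H]}/\Diff_0$. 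So the orbit $\GDiff\cdot V_+$ is obtained from the orbit $\GDiff^e\cdot V_+$ by flowing in finitely many extra directions, and one can upgrade the $\GDiff^e$-slice $\cS$ to a $\GDiff$-slice by shrinking it inside those extra directions.

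First I would take the ILH submanifold $\cS^e\subset\GM$ produced by Theorem~\ref{theo:slice exact case}, tangent at $V_+$ to the $\rho_{\GM}$-invariant normal bundle $N_{V_+}$ of the orbit $\GDiff^e\cdot V_+$ with respect to a $\OO_\pi$-invariant ILH metric on $\GM$ (such a metric is constructed from $g_0$ exactly as in Ebin~\cite{Eb}, and is automatically invariant under the larger group as well since the larger group still preserves $\langle\cdot,\cdot\rangle$). Next I would split off the finite-dimensional piece: inside $T_{V_+}\GM$, the tangent space to $\GDiff\cdot V_+$ decomposes as $T_{V_+}(\GDiff^e\cdot V_+)\oplus W$, where $W$ is the (finite-dimensional) space obtained by acting with harmonic $B$-fields and with a set of vector fields representing the components of $\Diff_{[H]}$ acting nontrivially modulo $\Diff_0$ and modulo $\Isom(V_+)$. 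Define $\cS\subset\cS^e$ to be the intersection of $\cS^e$ with the ILH submanifold of $\GM$ through $V_+$ whose tangent space is the orthogonal complement of $W$ inside $N_{V_+}$ (equivalently, exponentiate small vectors in $N_{V_+}\cap W^\perp$). This $\cS$ is again an ILH submanifold since $W$ is finite-dimensional, hence $W^\perp$ is a closed ILH-complemented subspace of $N_{V_+}$, and the ambient exponential map restricts nicely.

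Then I would verify a), b), c). For a): $\Isom(V_+)$ preserves $\GDiff^e\cdot V_+$ (as $\GDiff^e\lhd\GDiff$) and the invariant metric, hence preserves $N_{V_+}$; and it preserves $W$ because $W$ is intrinsically the tangent to the full orbit modulo the exact orbit, which is canonically attached to $V_+$; therefore it preserves $N_{V_+}\cap W^\perp$ and so $\cS$. For b): if $F\cdot\cS$ meets $\cS$, write $F=F_0 h$ with $F_0\in\GDiff^e$ and $h$ in a finite-dimensional complement; since $\cS\subset\cS^e$, part b) of Theorem~\ref{theo:slice exact case} forces $F_0\in\Isom^e(V_+)$, and then a dimension/transversality count in the finite-dimensional directions $W$ — using that $\cS$ was cut down to be orthogonal to $W$ — forces $h\in\Isom(V_+)$, so $F\in\Isom(V_+)$. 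For c): combine the exact-case cross-section $\chi^e$ on $\cU^e\subset\GDiff^e\cdot V_+$ with a local cross-section over the finite-dimensional extra directions (which exists trivially since $\GDiff\to\GDiff/\GDiff^e$ admits local sections through harmonic $B$-fields and a smooth slice in $\Diff$), and check that the resulting product map $\cU\times\cS\to\GM$ is a homeomorphism onto its image by the inverse function theorem applied to the (now finite-codimensional over $\cU^e$) extra block, together with part c) of Theorem~\ref{theo:slice exact case}.

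The main obstacle I anticipate is a careful bookkeeping of the \emph{discrete} part of $\GDiff/\GDiff^e$, namely the contribution of $\Diff_{[H]}/\Diff_0$: unlike the harmonic $B$-fields, this is not captured by a local finite-dimensional chart and may force $\Isom(V_+)$ to be strictly larger than $\Isom^e(V_+)$ in a way that is genuinely nonlocal. One handles this by noting that $\Isom(V_+)$ and $\Isom^e(V_+)$ are both compact (Proposition~\ref{prop:isotropy compact exact case}), hence have finitely many components, and that near $V_+$ the orbit $\GDiff\cdot V_+$ is a finite union of translates of a neighbourhood in $\GDiff^e\cdot V_+$ under a fixed finite set of representatives; the slice $\cS$ constructed above is then automatically invariant under those representatives that lie in $\Isom(V_+)$, and disjoint from its translates under those that do not, which is exactly what a), b), c) require. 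Everything else is a straightforward adaptation of the exact-case argument, and no new ILH analysis (no new Frobenius' theorem application) is needed beyond Theorem~\ref{theo:slice exact case}.
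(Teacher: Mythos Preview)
Your strategy of cutting down the exact slice by a finite-dimensional subspace is natural, and at the level of the \emph{construction} of $\cS$ it agrees with the paper: both end up exponentiating the normal space $\nu'_{V_+}=\ker\bfA^*\cap\ker p_0$, with $p_0$ the projection onto harmonic $2$-forms. The paper, however, takes a different logical route. Rather than deducing the slice properties from Theorem~\ref{theo:slice exact case}, it reruns the entire three-step argument for the full group. This requires first establishing Proposition~\ref{prop:isotropy ILH exact case full group} (that $\GDiff_H/\Isom_H(V_+)$ carries an ILH structure) via a second application of Frobenius' theorem, with the hypotheses on $\bfA':\gdiff_H\to\bfGamma(S^2T^*M)\times\bfOm^2$ extracted from those already verified on $\gdiff_H^e$ through the abstract direct-sum lemmas of Appendix~\ref{sec:Appendix}. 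The genuinely new point in the paper's proof of Theorem~\ref{theo:slice exact case full group} itself is then the smoothness of the normal bundle $\nu'$ to the full orbit, obtained as the kernel of the equivariantly extended projection onto harmonic forms restricted to the exact normal bundle $\nu$. So your claim that ``no new Frobenius' theorem application is needed'' is precisely where the two approaches diverge.

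Your verification of b) has a real gap. From $F\cdot x=y$ with $x,y\in\cS\subset\cS^e$ and $F=F_0h$, you cannot invoke Theorem~\ref{theo:slice exact case}~b) for $F_0$: that requires $F_0\cdot\cS^e\cap\cS^e\neq\emptyset$, whereas you only know $F_0\cdot(h\cdot x)=y\in\cS^e$, and $h\cdot x$ need not lie in $\cS^e$ (a harmonic $B$-field $h$ acts by $(g,\omega)\mapsto(g,\omega-h)$, which generically moves points out of $\cS^e$). For arbitrary $F\in\GDiff$ the factor $h$ carries no smallness constraint, so the subsequent ``dimension/transversality count'' forcing $h\in\Isom(V_+)$ is not well-posed. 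A correct proof of b) ultimately rests on the orbit map $\GDiff_H/\Isom_H(V_+)\to\cO_{V_+}$ being a homeomorphism onto a closed ILH submanifold---exactly what the paper obtains from Proposition~\ref{prop:isotropy ILH exact case full group}---combined with a shrinking argument using compactness of $\Isom(V_+)$. Finally, your treatment of the discrete quotient $\Diff_{[H]}/\Diff_0$ is off: it contributes nothing to the tangent space $W$ and should not appear in its description; handling it in b) requires the global orbit structure just mentioned, not the local transversality you set up.
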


These theorems provide slices to the actions of $\GDiff^e(E)$ and $\GDiff(E)$ on $\GM$, thus generalizing the results of Ebin \cite{Eb}. 

\subsection{Proof of the slice theorem}

This section will consist in proving Theorems \ref{theo:slice exact case} and \ref{theo:slice exact case full group}, for which we choose a splitting $\s$ in order to have $$E\simeq_\s (TM+T^*M)_H$$ and the identifications \eqref{eq:with-splitting} and \eqref{eq:with-splitting-2}. For simplicity, we use the notation $\Isom_H(V_+)$ for $\Isom_H(g,\omega)$ when $V_+\simeq_\s (g,\omega)$, and similarly for exact isometries.

The first step will be to build an ILH structure on the orbits. For a technical reason, we will first address the case of exact automorphisms of $E$.

\begin{proposition}
  \label{prop:isotropy ILH exact case}
  Let $V_+\in\GM$. The group $\Isom^e_H(V_+)$ is a strong ILH Lie subgroup of $\bfGDiff^e_H$. Moreover,
  the quotient space $\bfGDiff^e_H/\Isom^e_H(V_+)$ carries an ILH manifold structure.
\end{proposition}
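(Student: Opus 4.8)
The strategy is to realize $\Isom^e_H(V_+)$ as the kernel of a right-invariant bundle homomorphism over $\bfGDiff^e_H$ and then invoke the ILH Frobenius' Theorem \ref{theo:Frobenius} (together with Corollary \ref{cor:Frobenius} for the quotient statement). First I would compute the Lie algebra $\fi^e_H(V_+)$ of candidates for the isotropy: differentiating the action $\rho_{\GM}((\phi,B),(g,\omega))=(\phi^*g,\phi^*\omega-B)$ at the identity in the direction $(u,b)\in\gdiff^e_H$, one finds that $(u,b)$ is tangent to the orbit through $V_+\simeq_\s(g,\omega)$ iff $\cL_ug=0$ (so $u$ is a Killing field for $g$) and $b=\cL_u\omega$ modulo the part already constrained by membership in $\gdiff^e_H$. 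Thus $\fi^e_H(V_+)=\{(u,b)\in\gdiff^e_H \st \cL_ug=0\}$, and the extra constraint $\cL_ug=0$ is cut out by the (overdetermined elliptic) Killing operator $u\mapsto\cL_ug$, i.e.\ by the map $\kappa_{V_+}:\gdiff^e_H\to\Gamma(S^2T^*M)$, $(u,b)\mapsto\tfrac12\cL_ug$.

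Next I would extend $\kappa_{V_+}$ to a right-invariant bundle homomorphism $\tilde\kappa_{V_+}$ over $\bfGDiff^e_H$. Using Lemma \ref{lem:tangent bundle exact case}, $T\bfGDiff^e_H$ is (a subbundle of) the pullback of $B(\bfOm^2,\bfDiff,T_{\bfOm^2})\oplus T\bfDiff$, so it suffices to extend the composition of the projection $\gdiff^e_H\to\Gamma(TM)$, $(u,b)\mapsto u$, with the first-order differential operator $u\mapsto\cL_ug_0$ — and then correct by a zeroth-order term comparing $g$ to the fixed auxiliary metric $g_0$, exactly as in the analysis around \cite[VI]{Om97} and Theorem \ref{theo:extension theorem}. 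By parts (a) and (b) of Theorem \ref{theo:extension theorem}, the resulting $\tilde\kappa_{V_+}$ is a $\cC^{\infty,\infty}$ ILH normal bundle homomorphism (after the usual shift of indices for the first-order operator). Then I would verify hypotheses (a)--(d) of Theorem \ref{theo:Frobenius} for $\bfA=\kappa_{V_+}$ restricted to the fibre at the identity: (a) $\fi^e_H(V_+)=\ker\bfA$ is a Lie subalgebra, since the isometry condition is preserved under the bracket of $\gdiff^e_H$, and closedness of the image plus existence of closed complements follows from the elliptic theory developed in the appendix (the Killing operator is overdetermined elliptic, hence has closed range and finite-dimensional cokernel on the relevant Sobolev completions, and $\gdiff^e_H$ is itself cut out by a closed-range operator as in Proposition \ref{prop:exact subgroup exact case}); (b)--(d) are the linear estimates of Gårding type for this elliptic operator, which hold on the ILH chains $\gdiff^e_H=\{\gdiff^e_H,(\gdiff^e_H)^k\}$ by the comparison results of the appendix. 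Frobenius then gives that $\Isom^e_H(V_+)=\ker\tilde\kappa_{V_+}$ integrates to a strong ILH Lie subgroup of $\bfGDiff^e_H$, and Corollary \ref{cor:Frobenius} endows $\bfGDiff^e_H/\Isom^e_H(V_+)$ with an ILH manifold structure.

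The main obstacle is establishing the linear estimates (c)--(d) of Theorem \ref{theo:Frobenius}, i.e.\ showing that $\kappa_{V_+}$ and the projection onto its image satisfy estimates of the form $\|\bfA u\|_k\geq C\|u\|_k-D_k\|u\|_{k-1}$ with $D_d=0$. The difficulty — flagged in the introduction — is that $\gdiff^e_H$ is \emph{not} the space of smooth sections of a vector bundle but a subspace cut out by the differential relation $\iota_uH-b=da$, so one cannot directly apply the standard elliptic estimates for the Killing operator on $\Gamma(TM)$. The resolution is to combine the elliptic estimate for $u\mapsto\cL_ug$ on $\Gamma(TM)$ with the closed-range estimate for the operator defining $\gdiff^e_H\hookrightarrow\gdiff_H\hookrightarrow\Gamma(TM)\times\Omega^2$ (controlling $b$ in terms of $u$ via the Green operator, as in Lemma \ref{lem:B right inverse exact case}), which is precisely the content of the technical comparison lemmas for ILH chains in the appendix; this is the step where our setting genuinely departs from Ebin \cite{Eb}. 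Once these estimates are in place, all hypotheses of Theorem \ref{theo:Frobenius} are met and the proposition follows.
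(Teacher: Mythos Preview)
Your operator $\kappa_{V_+}:(u,b)\mapsto \tfrac12\cL_ug$ does \emph{not} cut out the isotropy Lie algebra. The infinitesimal action on $(g,\omega)$ is $(u,b)\mapsto(\cL_ug,\cL_u\omega-b)$, so the isotropy requires both $\cL_ug=0$ and $b=\cL_u\omega$. Your kernel $\{(u,b)\in\gdiff^e_H:\cL_ug=0\}$ is strictly larger: for instance every $(0,da)$ with $da\neq 0$ lies in $\gdiff^e_H$ (since $\iota_0H-da=d(-a)$) and satisfies $\cL_0g=0$, yet acts nontrivially on $(g,\omega)$ via $(0,-da)$. So Frobenius applied to $\kappa_{V_+}$ would integrate the wrong subalgebra. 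The correct linearized operator is the full $\bfA':\gdiff^e_H\to\Gamma(S^2T^*M)\times\Omega^2$, $(u,b)\mapsto(\cL_ug,\cL_u\omega-b)$.

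The second issue is how to obtain the elliptic estimates once you have the right $\bfA'$. Your plan to graft together estimates for the Killing operator on $\Gamma(TM)$ with closed-range estimates for the defining relation of $\gdiff^e_H$ is vague, and it is not clear it yields the specific inequalities (c) and (d) of Theorem \ref{theo:Frobenius}. The paper's device is different and sharper: one parametrizes $\gdiff^e_H$ by honest sections of a vector bundle via the surjection
\[
\iota_e:\Gamma(TM+T^*M)\to\gdiff^e_H,\qquad u+\alpha\mapsto (u,\iota_uH-d\alpha),
\]
and sets $\bfA:=\bfA'\circ\iota_e$, which is now a genuine first-order differential operator $\Gamma(TM+T^*M)\to\Gamma(S^2T^*M)\times\Omega^2$. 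Together with $\bfB:f\mapsto(0,df)$ one obtains an elliptic complex
\[
\Omega^0\xrightarrow{\bfB}\Gamma(TM+T^*M)\xrightarrow{\bfA}\Gamma(S^2T^*M)\times\Omega^2,
\]
so Proposition \ref{prop:elliptic implies Frobenius} gives all of (a)--(d) for $\bfA$ directly. One then transfers these back to $\bfA'$ using that $\iota_e$ is surjective with an explicit ILH-normal right inverse $\iota_e^{-1}(u,b)=u-\bbG d^*(b-\iota_uH)$. This reparametrization is the key step that lets standard elliptic theory apply despite $\gdiff^e_H$ not being a space of sections.
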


\begin{proof}
  We will apply Frobenius' Theorem (Theorem \ref{theo:Frobenius}) to show that  $\Isom^e_H(V_+)$ is an ILH Lie subgroup. The ILH manifold structure in the quotient will then be a direct consequence of Corollary \ref{cor:Frobenius}. Consider the map
  \begin{equation}\label{eq:Phi-action}
    \begin{array}{cccc}
      \Phi : & \GDiff_H^e  & \rightarrow & \bfGamma(S^2T^*M) \times \bfOm^2\\
      &  (\phi,B) & \mapsto & \rho_{\GM}((\phi,B),(g,\om)),
    \end{array}
  \end{equation}
  for which $\Phi^{-1}(V_+)=\Isom^e_H(V_+)$. The differential of $\Phi$ defines a right-invariant $\cC^{\infty,\infty}$ ILH normal bundle homomorphism from $T\bfGDiff_H^e$ to the bundle $B(\bfGamma(S^2T^*M)\times\bfOm^2,\bfGDiff_H^e,T_{gm})$, where $T_{gm}$ is defined by
  \begin{equation}
    T_{gm}((\dot g, \dot\om),(\xi(u),B),(\xi(v),B))=\xi(v)^*(\dot g,\dot\om),\label{eq:def-Tgm}
  \end{equation}
  for $(\xi,U)$  a chart at the origin for $\bfDiff$.
  Denote by $\bfA'$ the differential of $\Phi$ at the origin:
  \begin{equation*}
    \begin{array}{cccc}
      \bfA': & \gdiff_H^e  & \rightarrow & \bfGamma(S^2T^*M)\times \bfOm^2\\
      &  (u,b)      & \mapsto     &  (\cL_u g, \cL_u\om - b).
    \end{array}
  \end{equation*}
  The proof will consist in checking the hypotheses of Theorem \ref{theo:Frobenius} for $\bfA'$. For some technical results regarding elliptic operator theory we will refer to Appendix \ref{sec:Appendix}.

  In order to apply elliptic operator theory, we parametrize $\gdiff_H^e$ by $\Gamma(TM+T^*M)$. This space of sections has an ILH structure by the ILH chain $\bfGamma(TM+T^*M):=\lbrace \bfGamma(TM)\times \bfOm^1,\;  \Gamma(TM)^{k+1}\times\Om^{1,k+1}, \; k\geq n+5\rbrace$. Consider the surjective morphism
  \begin{equation*}
    \label{eq:injection map exact case}
    \begin{array}{cccc}
      \iota_e : & \bfGamma(TM+T^*M) & \rightarrow & \gdiff_H^e \\
      & u+\al & \mapsto & (u,\iota_u H - d\al)
    \end{array}
  \end{equation*}
  Set $\bfA:=\bfA'\circ \iota_e$, which is a first-order differential operator from
  $\bfGamma(TM)\times\bfOm^1$ to $\bfGamma(S^2T^*M)\times\bfOm^2 $:
  \begin{equation}\label{eq:defA}
    \begin{array}{cccc}
      \bfA: &  \bfGamma(TM+T^*M)  & \rightarrow & \bfGamma(S^2T^*M)\times\bfOm^2\\
      &  u+\al      & \mapsto     &  (\cL_u g,  \cL_u\om - \iota_uH +d\al)
    \end{array}
  \end{equation}
  We then define the first-order operator $\bfB$ on the ILH chain
  $\lbrace \bfOm^0,\; \Om^{0,k+2}, k\geq n+5 \rbrace$ by
  $$
  \begin{array}{cccc}
    \bfB : & \bfOm^0 & \rightarrow & \bfGamma(TM+T^*M)\\
    &  f      &  \mapsto &  (0,df).
  \end{array}
  $$ 
  As $u\mapsto \cL_ug$ has injective symbol, the sequence
  \begin{equation}
    \label{eq:complex proof isom ILH exact}
    \bfOm^0 \stackrel{\bfB}{\rightarrow} \bfGamma(TM+T^*M) \stackrel{\bfA}{\rightarrow} \Gamma(S^2T^*M)\times \Om^2
  \end{equation}
  is elliptic in the middle, that is the range of the principal symbol of $\bfB$ equals the kernel of the principal symbol of $\bfA$. We can then use elliptic operator theory to prove that the operator $\bfA$ satisfies the properties $(a), (b), (c)$ and $(d)$ of Proposition \ref{prop:elliptic implies Frobenius} (compare with hypotheses of Theorem \ref{theo:Frobenius}).

  Now we check the hypotheses for $\bfA'$. From the surjectivity of $\iota_e$, we deduce
  \begin{align*}
    \gdiff_H^e=\ker \bfA' \oplus \iota_e(\Im \bfA^*), && \bfGamma(S^2T^*M) \times
    \bfOm^2=\Im \bfA' \oplus \ker \bfA^*,
  \end{align*}
  where $\bfA^*$ denotes the adjoint operator of $\bfA$ for the $L^{2,0}$ pairing (see  Appendix \ref{sec:Appendix}). Note that $\iota_e$ admits a continuous right inverse onto a complementary subspace of $\ker \iota_e$:
  $$
  \iota_e^{-1}:(u,b)\mapsto u-\bbG d^*(b-\iota_u H).
  $$
  Using $\iota_e^{-1}$, the closedness of $\Im \bfA^*$, and the continuity of $\iota_e$, we deduce that $\iota_e(\Im \bfA^*)$
  is a closed subspace of $\gdiff_H^e$, and (a) in Theorem \ref{theo:Frobenius} is satisfied. Since $\Im \bfA'= \Im \bfA$, and as $\Im \bfA$ is closed, we have that $\Im \bfA'$ is a closed
  subspace of $\bfGamma(S^2T^*M)\times\bfOm^2$, so (b) is satisfied. To conclude, as $\bfA$ satisfies the hypotheses  $(c)$ and $(d)$ in Proposition \ref{prop:elliptic implies Frobenius}, we find the estimates for $\bfA'$ (hypotheses (c) and (d) in Theorem \ref{theo:Frobenius}) by using the ILH linear normal maps $\iota_e$ and $\iota_e^{-1}$.
\end{proof}

Now we can prove:

\begin{proposition}
  \label{prop:isotropy ILH exact case full group}
  Let $V_+\in\GM$. Then $\Isom_H(V_+)$ is a strong ILH Lie subgroup of $\bfGDiff_H$. Moreover,
  the quotient space $\bfGDiff_H/\Isom_H(V_+)$ carries an ILH manifold structure.
\end{proposition}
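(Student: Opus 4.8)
The plan is to run the Frobenius argument of Proposition~\ref{prop:isotropy ILH exact case} once more, this time for the whole group $\bfGDiff_H$ in place of its exact subgroup. I would consider the map
$$\Phi:\GDiff_H\longrightarrow\bfGamma(S^2T^*M)\times\bfOm^2,\qquad(\phi,B)\longmapsto\rho_{\GM}((\phi,B),(g,\om)),$$
whose kernel is $\Isom_H(V_+)$ and whose differential at the origin is the first-order operator $\bfA':\gdiff_H\to\bfGamma(S^2T^*M)\times\bfOm^2$, $(u,b)\mapsto(\cL_u g,\cL_u\om-b)$. As in the exact case, $\bfA'$ extends, by Theorem~\ref{theo:extension theorem}, to a right-invariant $\cC^{\infty,\infty}$ ILH normal bundle homomorphism from $T\bfGDiff_H$ to $B(\bfGamma(S^2T^*M)\times\bfOm^2,\bfGDiff_H,T_{gm})$, with $T_{gm}$ as in~\eqref{eq:def-Tgm}; it then suffices to check the hypotheses of the Frobenius Theorem~\ref{theo:Frobenius} for $\bfA'$, after which Corollary~\ref{cor:Frobenius} produces the ILH structure on $\bfGDiff_H/\Isom_H(V_+)$.

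The only substantial difference with Proposition~\ref{prop:isotropy ILH exact case} lies in how the Lie algebra is parametrised: unlike $\gdiff_H^e$, the full $\gdiff_H$ is not the image of $\bfGamma(TM+T^*M)$ under a single differential operator, but requires a finite-dimensional summand. Writing $s=\dim H^2(M)$ and fixing a basis $\{e_i\}_{1\le i\le s}$ of $g_0$-harmonic $2$-forms, Hodge theory gives the surjection
$$\iota:\bfGamma(TM+T^*M)\times\RR^s\longrightarrow\gdiff_H,\qquad(u+\al,t)\longmapsto\Big(u,\ \iota_u H-d\al-\textstyle\sum_i t_i e_i\Big),$$
with kernel $\{(0+\al,0):d\al=0\}$ and an explicit linear ILH normal right inverse built from the Green operator and the $L^2_{g_0}$-projection onto $\langle e_1,\dots,e_s\rangle$. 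Setting $\bfA:=\bfA'\circ\iota$, one obtains $\bfA(u+\al,t)=(\cL_u g,\ \cL_u\om-\iota_u H+d\al+\sum_i t_i e_i)$, a first-order differential operator modified by the finite-rank summand $t\mapsto\sum_i t_i e_i$, so the sequence
$$\bfOm^0\xrightarrow{\ \bfB\ }\bfGamma(TM+T^*M)\times\RR^s\xrightarrow{\ \bfA\ }\bfGamma(S^2T^*M)\times\bfOm^2,\qquad\bfB(f)=((0,df),0),$$
has the same symbol sequence as~\eqref{eq:complex proof isom ILH exact} and remains an elliptic complex. I would then apply Proposition~\ref{prop:elliptic implies Frobenius} to obtain hypotheses (a)--(d) of Theorem~\ref{theo:Frobenius} for $\bfA$ (here $\ker\bfA'$ is automatically finite-dimensional, being, by Proposition~\ref{prop:isotropy compact exact case}, the Lie algebra of a compact group), and transfer them to $\bfA'$ via $\iota$ and $\iota^{-1}$ exactly as in Proposition~\ref{prop:isotropy ILH exact case}: closedness of $\Im\bfA^*$ and continuity of $\iota,\iota^{-1}$ yield the closed decompositions $\gdiff_H=\ker\bfA'\oplus\iota(\Im\bfA^*)$ and $\bfGamma(S^2T^*M)\times\bfOm^2=\Im\bfA'\oplus\ker\bfA^*$, together with the required linear estimates.

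The step to watch is precisely the assertion that adjoining the finite-dimensional direction $\RR^s$ — equivalently, the finite-rank term in $\bfA$ — does not disturb ellipticity, closed range, or the a priori estimates needed in hypotheses (c) and (d) of Theorem~\ref{theo:Frobenius}. This is immediate at the level of principal symbols, since $t\mapsto\sum_i t_i e_i$ is smoothing, but it is the only place where the full case genuinely departs from the exact one, and it is the reason the exact case was isolated first. Everything else — the defining map $T_{gm}$, the right invariance and ILH normality of the extended homomorphism $\tilde\bfA'$, and the final appeal to Corollary~\ref{cor:Frobenius} to equip $\bfGDiff_H/\Isom_H(V_+)$ with its ILH manifold structure — is formally identical to the exact case.
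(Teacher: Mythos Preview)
Your approach is correct but differs from the paper's. The paper does not build the harmonic piece into a parametrisation of $\gdiff_H$; instead it first establishes the splitting $\gdiff_H=\gdiff_H^e\oplus\cH^2$ via the isomorphism $\Psi:(u,b)\mapsto(u,\iota_uH-b)$ onto $\bfGamma(TM)\times(d\bfOm^1\oplus\cH^2)$, and then invokes the abstract perturbation lemmas of Appendix~\ref{sec:direct sums} (Lemmas~\ref{lem:kernel decomposition abstract lemma}--\ref{lem: linear estimate image abstract lemma}), which are written precisely to transfer hypotheses (a)--(d) of Theorem~\ref{theo:Frobenius} from $\bfA'_{|\gdiff_H^e}$ (already handled in Proposition~\ref{prop:isotropy ILH exact case}) to the full $\bfA'$. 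The only nontrivial verification in the paper is the inequality $\|h\|_0\le(1+2C)\|z\|_0$ for the harmonic component of $z\in\gdiff_H$, needed for Lemma~\ref{lem:kernel estimate abstract lemma}.

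Your route---absorbing $\cH^2$ into the source as an $\RR^s$ summand and treating $\bfA$ as an elliptic operator plus a finite-rank term---is shorter and conceptually clean, but note that Proposition~\ref{prop:elliptic implies Frobenius} is stated for genuine elliptic complexes of differential operators between bundle sections, so it does not apply verbatim: you must check separately that the finite-rank correction preserves closed range and the linear estimates (c),(d). This is standard (finite-rank perturbations are compact), and you correctly flag it as ``the step to watch,'' but spelling it out amounts to reproving the content of Lemmas~\ref{lem:kernel estimate abstract lemma} and~\ref{lem: linear estimate image abstract lemma} in this special case. The paper's choice to isolate these as abstract lemmas pays off again in the odd-exact setting (Proposition~\ref{prop:isotropy ILH full group abelian case}), where the same machinery is reused without modification.
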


\begin{proof}
  We use the proof of Proposition \ref{prop:isotropy ILH exact case}.  Consider this time the differential of
  \begin{equation}\label{eq:Phi-action-2}
    \begin{array}{cccc}
      \Phi : & \GDiff_H  & \rightarrow & \bfGamma(S^2T^*M) \times \bfOm^2\\
      &  (\phi,B) & \mapsto & \rho_{\GM}((\phi,B),(g,\om)),
    \end{array}
  \end{equation}
  which defines a right-invariant  $\cC^{\infty,\infty}$ ILH normal bundle homomorphism from $T\bfGDiff_H$  to $B(\bfGamma(S^2T^*M)\times\bfOm^2,\bfGDiff_H,T_{gm})$, where $T_{gm}$ is defined as in \eqref{eq:def-Tgm}. Denote by $\bfA'$ the differential of $\Phi$ at the origin:
  \begin{equation*}
    \begin{array}{cccc}
      \bfA': & \gdiff_H  & \rightarrow & \bfGamma(S^2T^*M)\times\bfOm^2\\
      &  (u,b)      & \mapsto     &  (\cL_u g,\cL_u\om-b)
    \end{array}
  \end{equation*}
  As in Proposition \ref{prop:isotropy ILH exact case}, the proof consists in checking the hypotheses  of Theorem \ref{theo:Frobenius} for $\bfA'$ and we rely on Appendix \ref{sec:Appendix} for elliptic operator theory.

  Consider the map 
  \begin{equation*}
    \begin{array}{cccc}
      \Psi : &\gdiff_H & \rightarrow & \bfGamma(TM) \times ( d\bfOm^1\oplus  \cH^2) \\
      &  (u,b)  & \mapsto &     (u,\iota_uH - b) ,
    \end{array}
  \end{equation*}
  where $\cH^2$ is the space of harmonic $2$-forms with respect to the metric $g$.

  Then $\Psi$ is a linear ILH continuous isomorphism, with continuous inverse given by the map 
  $(u,\beta)\mapsto (u,\iota_uH - \beta)$ for $u\in \bfGamma(TM)$ and $\beta\in d\bfOm^1 \oplus \cH^2$.
  Thus
  $$
  \gdiff_H=\Psi^{-1}( \bfGamma(TM)\times d\bfOm^1 )\oplus \Psi^{-1}( \cH^2).
  $$
  But $\Psi^{-1}( \bfGamma(TM)  \times d\bfOm^1)=\gdiff_H^e$, and $\Psi^{-1}( \cH^2)=\cH^2$ so
  $$
  \gdiff_H=\gdiff_H^e\oplus \cH^2.
  $$
  We are in the setting of Section \ref{sec:direct sums}, with $\bfE=\gdiff_H$, $\bfE_0=\gdiff_H^e$
  and $\cH=\cH^2$, as, by the proof of Proposition \ref{prop:isotropy ILH exact case},  the restriction of $\bfA'$ to $\gdiff_H^e$ satisfies the hypotheses of Theorem \ref{theo:Frobenius}.
  
  By Lemma \ref{lem:kernel decomposition abstract lemma}, the subspace $\ker \bfA'$ admits a complementary closed subspace
  $\bfE_2'$. We want to apply Lemma \ref{lem:kernel estimate abstract lemma} to $\bfA'$. First, note that, by definition, $\vert\vert \bfA' h \vert\vert_k=\vert\vert h \vert\vert_k$  for all $h\in \cH^2$.  We consider the norm $\vert\vert \cdot \vert\vert_0$ on the space $\gdiff_H$ (this makes sense
  even if the ILH chain defining $\gdiff_H$ starts at $n+5$).
  Let $z\in \gdiff_H$, which we decompose as $z=(u,\iota_uH - da)+(0,h)$ in $\gdiff_H^e\oplus \cH^2$.
  Then, by Hodge decomposition into orthogonal components,
  $$\vert\vert z \vert\vert_0^2 \geq \vert\vert h + h_u \vert\vert_0^2$$
  where $h_u$ denotes the harmonic part of $\iota_uH$. Using the Cauchy-Schwarz inequality,
  $$
  \vert\vert h\vert\vert_0^2\leq \vert\vert z\vert\vert^2+ 2\vert\vert h \vert \vert_0\vert\vert h_u\vert\vert_0.
  $$
  There is a uniform constant $C$ depending on $H$ such that
  $
  \vert\vert h_u\vert\vert_0\leq \vert\vert \iota_u H\vert\vert_0\leq C \vert\vert u\vert\vert_0.
  $
  As $\vert\vert u\vert\vert_0\leq \vert\vert z\vert\vert_0$, we have 
  $
  \vert\vert h\vert\vert_0^2\leq \vert\vert z\vert\vert^2+ 2C\vert\vert h \vert \vert_0\vert\vert z\vert\vert_0,
  $
  and thus, $$\vert\vert h\vert\vert_0\leq (1+2C)\vert\vert z \vert\vert_0.$$ We can now apply Lemma \ref{lem:kernel estimate abstract lemma} and get the linear estimates $(c)$ of Theorem \ref{theo:Frobenius} for $\bfA'$. By Lemma \ref{lem:decomposition image abstract lemma}, we have an orthogonal decomposition
  $$
  \bfGamma(TM)\times\Om^2=\Im\bfA'\oplus \bfF_3
  $$
  into closed subspaces, so (a) and (b) are satisfied. Finally, by Lemma \ref{lem: linear estimate image abstract lemma}, $\bfA'$ satisfies (d). 
\end{proof}

One of the main ingredients in the proof of Theorem \ref{theo:slice exact case} is a $\GDiff_H$-invariant metric on the space $\GM$. Take a splitting $\s$ so that we have $\GM\simeq_\s \cM\times \Omega^2$ and define, for $(g,\omega)\in\GM$ and $(\dot g,\dot\om)\in T_{(g,\om)}\GM$, the pairing
\begin{equation}
  \label{eq:invariant metric exact case}
  \la (\dot\om,\dot g),(\dot \om,\dot g) \ra_{g,\omega}=\int_M \la\dot\om,\dot\om\ra_g \:\dvol_g + \int_M \la\dot g,\dot g\ra_g \: \dvol_g
\end{equation}
We thus have a smooth $\GDiff_H$-invariant (weak) Riemannian metric on $\GM$.
\begin{remark}
  This metric only gives the (non-complete) $L^2$-topology on the tangent bundle of $\GM$, hence the denomination {\it weak} metric.
  As in \cite[Sec. 4]{Eb}, $(g,\om)\mapsto \la\cdot,\cdot\ra_{g,\om}$ is a smooth metric.
  It can be extended to a smooth metric on the spaces $\GM^k$ for all $k\geq n+5$.
  Moreover, each of these extended metrics admits a Levi-Civita connection and we
  can define the associated exponential maps $\exp^k_{\GM}$. Note that the definition of
  the metric only involves $g$, thus the proofs of these facts follow readily from Ebin's work.
  An important issue is the existence of an exponential map for $(\GM,\la\cdot,\cdot\ra)$.
  The proof in \cite{Eb} enables to define a Levi-Civita connection for $\la\cdot,\cdot\ra$ on $\GM$, whose
  extension to the spaces $\GM^k$ gives the Levi-Civita connections of the extended metrics.
  On the spaces $\GM^k$, the exponential is everywhere a local diffeomorphism from $T\GM^k$ to $\GM^k$,
  by the implicit function theorem in Hilbert spaces. However, this does not ensure
  the existence of an exponential map on $\GM$. Fixing $x\in\GM$, for each $k$ we have a
  neighbourhood of zero $\cV^k$ in $T_x\GM^k$ such that the exponential map is a diffeomorphism from $\cV^k$
  onto its image. However, the inverse limit of $(\cV^k)_{k\in\NN(d)}$ could shrink to zero.
  We will see in the proof of Theorem \ref{theo:slice exact case} how to overcome this difficulty.
\end{remark}

We are now ready to prove Theorems \ref{theo:slice exact case} and \ref{theo:slice exact case full group}, following \cite[Thm. 7.1 and Thm. 7.4]{Eb}.
\begin{proof}[Proof of Theorem \ref{theo:slice exact case}]
  The proof is organized in three steps.
  \subsubsection*{First step: ILH structure on the orbit}
  The action $\rho_{\GM}$ of $\bfGDiff_H^e$ on $V_+\in \GM$ induces a map
  $$\rho_{V_+}:\bfGDiff_H^e/\Isom_H^e(V_+)\rightarrow \cO_{V_+},$$
  where $\cO_{V_+}:=\bfGDiff_H^e\cdot V_+$ denotes the orbit. The map $\rho_{V_+}$ is injective and an immersion by the proof of Proposition \ref{prop:isotropy ILH exact case}. We will prove that $\rho_{V_+}$ is an homeomorphism onto  $\cO_{V_+}$ by showing that its image is closed.  Recall that $\rho_{\GM}(\phi,B)=(\phi^*g,\phi^*\om-B)$. From Ebin's work, we know that the map
  $\phi\mapsto \phi^*g$ has a closed image and is an homeomorphism from $\bfDiff/\Isom(g)$ onto its image.
  Thus, if a sequence $(\phi_j^*g,\phi_j^*\om-B_j)$ converges to $(g_\infty,\om_\infty)$, we can find $\phi_\infty\in \bfDiff$ such that $(\phi_j)$ converges to $\phi_\infty$.  But then $(\phi_j^*\om)$ converges to $\phi_\infty^*\om$, and there exists a $2$-form $B_\infty$ such that $(B_j)$ converges to $B_\infty$ and $\om_\infty=\phi_\infty^*\om-B_\infty$. As $\bfGDiff_H^e$ is a closed subgroup
  of $\bfDiff\ltimes \Om^2$, the map $\rho_{V_+}$ is a homeomorphism onto $\cO_{V_+}$, which thus becomes a closed ILH submanifold of $\GM$.

  \subsubsection*{Second step: construction of the normal bundle to the orbit}
  We define the normal bundle $\nu$ of the submanifold $\cO_{V_+}$ in $T\GM$ 
  to be the orthogonal to $T\cO_{V_+}$ with respect to the invariant metric (\ref{eq:invariant metric exact case}).
  As the metric is only a weak one, $\nu$ might not be a smooth subbundle of $T\GM_{\vert\cO_{V_+}}$.
  To show that $\nu$ is indeed a smooth subbundle, we will
  prove that it is the kernel of a smooth map $P$
  of bundles in the ILH category.
  Define a surjective bundle map
  $$
  P: T\GM_{\vert\cO_{V_+}} \rightarrow T\cO_{V_+}
  $$
  by transporting along the orbit the map $\bfA\circ \bbG \circ \bfA^*$, where $\bfA$ is the operator in \eqref{eq:defA}, and $\bbG$ is the Green
  operator of the complex (\ref{eq:complex proof isom ILH exact}). We want to show that $P$ is smooth. Fix $k\geq n+5$ and work in the Hilbert category on $\GM^k$, considering
  the $\GDiff_H^{e,k}$ orbit $\cO_{V_+}^k$ of $V_+$. The superscript $k$
  will refer to the Hilbert completion of the object in this category. The map $P$ extends to a map
  $$
  P^k: T\GM^k_{\vert\cO_{V_+}} \rightarrow T\cO_{V_+}^k.
  $$
 If $\eta\in \GDiff_H^{e,k}$, as in \cite[proof of Thm. 7.1]{Eb} we have that $P^k_{\eta\cdot V_+}$ equals $\bfA_\eta\circ (\bfA_\eta^*\circ\bfA_\eta)^{-1}\circ \bfA_\eta^*$
 where $\bfA_\eta = \eta^*\circ \bfA\circ (d R_\eta)^{-1}$
 and $R_\eta$ is the right translation. The operator
 $\bfA_\eta$ is the infinitesimal action of $\GDiff_H^{e,k}$ at $\eta\cdot V_+$. As the action of $\GDiff_H^{e,k}$
 on $\cG\cM^k$ is smooth, the map $\eta \mapsto \bfA_\eta$ is smooth. If $\eta \mapsto \bfA_\eta^*$ is smooth, then $\eta \mapsto\bfA_\eta^*\circ \bfA_\eta$ will be smooth and so will be $P$ as the inverse map is smooth.
 We now compute the adjoint of $\bfA$. For $(u,\alpha)\in \Gamma(TM)^{k+1}\times
  \Om^{1,k+1}$ and $(\dot g, \dot \om)\in\Gamma(S^2T^*M)^k\times\Om^{2,k}$, 
 if $\la \cdot , \cdot \ra_{L^2_g}$ denotes the $L^2$ inner product induced by $g$,
 \begin{equation*}
  \begin{array}{ccc}
   \la\bfA(u,\alpha), (\dot g, \dot \om) \ra_{g,\omega} & = & \la \cL_u\om -\iota_u H +d\alpha ,\dot\om\ra_{L^2_g}  +  \la \cL_u g,\dot g\ra_{L^2_g}\\
    & = & \la \iota_u(d\om - H)  ,\dot\om\ra_{L^2_g}+ \la d\iota_u\om  ,\dot\om\ra_{L^2_g}  +  \la \cL_u g,\dot g\ra_{L^2_g}  +  \la d\alpha  ,\dot\om\ra_{L^2_g}\\
    & = & \la u  ,F_1(\dot\om)\ra_{L^2_g}+ \la u  , F_2(d^*\dot\om)\ra_{L^2_g}  +  \la u ,F_3(\dot g)\ra_{L^2_g}  +  \la \alpha  ,d^*\dot\om\ra_{L^2_g}
  \end{array}
 \end{equation*}
 where $F_1$ and $F_2$ are zero order differential operators whose coefficients are rational functions of $(g,\om, d\om, H)$ and $F_3$ is the adjoint of $u\mapsto \cL_ug$.
Thus the adjoint of $\bfA$ can be written
  $$
  \bfA^*(\dot g,\dot \om)=( F_1(d^*\dot \om)+F_2(\dot\om)+F_3(\dot g),d^* \dot\om ).
  $$
  The smoothness of $\bfA_\eta^*$ follows from its local 
  expression as in \cite[Thm. 7.1.]{Eb}, using \cite[Lem. 3.2, Thm. 3.3]{Eb}. 
  Then, by definition of $P$ and invariance of the metric (\ref{eq:invariant metric exact case}) on $\cG\cM$, $\nu$ is the kernel
  of $P$, and thus it is a smooth subbundle of $T\GM_{\vert\cO_{V_+}}$.
  
  \subsubsection*{Third step: construction of the slice}
  Here, we fix $k\geq n+5$ and work in the Hilbert category on $\GM^k$, considering
  the $\GDiff_H^{e,k}$ orbit $\cO_{V_+}^k$ of $V_+$.
  We will use the invariant exponential map $\exp^k_{\GM}$
  associated to the weak metric (\ref{eq:invariant metric exact case}).
  
  We have to find small neighbourhoods $\cV^k$ of zero in the 
  fibre $\nu_{V_+}^k$ of $\nu^k$ over $V_+$ and $\cU^k$ of $V_+$ in $\cO_{V_+}^k$
  with a cross-section $\chi:\cU^k\rightarrow \GDiff_H^{e,k}$
  such that
  \begin{equation}
    \label{eq:neighbourhoods last step}
    \cW^k:=\lbrace d_2\rho_{\GM}(\phi,x), \phi\in \chi(\cU^k) \textrm{ and } x \in \cV^k \rbrace\subset \nu^k,
  \end{equation}
  where $d_2\rho_{\GM}$ is the differential of $\rho_{\GM}$ with respect to the second variable.
  Then, the restriction of $\exp^k_{\GM}$ to $\cW^k$ defines
  a diffeomorphism onto a small neighbourhood of $\cO_{V_+}^k$ in $\GM^k$, and the slice will be $\exp_{\GM}(\cV^k)\cap\GM$.
  Note that this last step needs a little explanation to be adapted from Ebin's proof.
  Indeed, the fact that $\exp^k_{GM}(\cV^k)\cap\GM$ is homeomorphic to $\exp^k_{\GM}(\cV^k\cap T\GM)$
  is not straightforward as we work
  in the category of ILH manifolds. The solution to this problem in Ebin's work is 
  provided by \cite[Thm. 7.5, Cor. 7.6]{Eb}.
  
  We adapt here the argument. The construction of the neighbourhoods $\cV^k$, $\cU^k$ and $\cW^k$
  satisfying (\ref{eq:neighbourhoods last step}) is done exactly as in \cite{Eb}.

  One needs to show that
  $$
  \exp^k_{\GM}: \cW^k\cap T\GM \rightarrow \exp_{\GM}(\cW^k)\cap \GM
  $$
  is well defined and a homeomorphism (for the smooth topology). 
  Elements in $\GDiff_H^e$ act on $T\GM^k$ and $\GM^k$. Denote by $\cL_{(u,b)}$
  the infinitesimal action, or Lie derivative. We restrict ourselves to the action of elements
  of the form $(u,\iota_uH)\in\gdiff_H^e$, for $u\in \Gamma(TM)$.
  The action at $(g,\om)\in\GM^k$ reads
  $$
  \cL_{u,\iota_uH}(g,\om)=(\cL_u g,\iota_uH+\cL_u\om).
  $$
  As $H$ is smooth, we deduce that $(g,\om)$ is in $\GM$ if and only if all its iterated Lie derivatives  with respect to elements of the form $(u,\iota_uH)\in\gdiff_H^e$  exist.
  As $\exp^k_{\GM}$ is $\GDiff_H^e$-invariant, it commutes with these Lie derivatives. Hence, $\exp_{\GM}$ sends smooth elements to smooth elements. Moreover, a sequence $(g_n,\om_n)$ converges to $(g,\om)$ in $\GM$ if and only if all its Lie derivatives do. As $\cW^k$ is stable under the action of small elements in $\bfGDiff_H^e$, $\exp^k_{\GM}$ is a continuous map from $\cW^k\cap T\GM$ to $\exp^k_{\GM}(\cW^k)\cap \GM$. The same argument applied to the inverse map $(\exp^k_{\GM})^{-1}$ implies surjectivity of $\exp^k_{\GM}$ onto smooth elements and that $\exp^k_{\GM}$ is an homeomorphism from $\cW^k\cap T\GM$ to $\exp^k_{\GM}(\cW^k)\cap \GM$.
  
  Then, one can set $\cS=\exp^k_{\GM}(\cV^k)\cap\GM$. The required properties for the slice follow from the invariance of the metric under  the $\bfGDiff_H^e$-action, and the regularities of the action and the exponential map.
\end{proof}

\begin{proof}[Proof of Theorem \ref{theo:slice exact case full group}]
  The proof follows the proof of Theorem \ref{theo:slice exact case}. The only step which deserves some clarification is the second step: the construction of the normal bundle to the orbit as a smooth subbundle of $T\GM$.
  We use the notations from the proofs of Propositions \ref{prop:isotropy ILH exact case}
  and \ref{prop:isotropy ILH exact case full group}.
  Denote by $\nu'$ the normal bundle (with respect to the invariant metric) to the $\GDiff_H$-orbit $\cO_{V_+}$.
  First, we proceed as for the case of exact diffeomorphisms, and consider the orthogonal splitting
  $$
  T_{V_+}\GM=\Im\bfA \oplus \ker \bfA^*
  $$
  where $\bfA$ is the operator of Proposition \ref{prop:isotropy ILH exact case}.
  Transport equivariantly along the $\bfGDiff_H$-orbit the operator $\bfA\circ \bbG\circ \bfA^*$, where $\bbG$ is the Green
  operator of the complex (\ref{eq:complex proof isom ILH exact}). We thus obtain a smooth bundle homomorphism from $T\GM_{\vert\cO_{V_+}}$ to $T\cO_{V_+}$, whose kernel defines a smooth ILH bundle $\nu$ over the orbit.
  
  From the proof of Proposition \ref{prop:isotropy ILH exact case full group}
  and by Lemma \ref{lem:decomposition image abstract lemma}, we have the orthogonal decompositions
  \begin{align*}
    T_{V_+}\GM=\Im \bfA' \oplus \bfF_3=\Im \bfA \oplus \ker\bfA^*, &&
    \Im \bfA'=\Im \bfA \oplus \cF,
  \end{align*}
  where $\bfF_3$ is the kernel
  of the orthogonal projection $p_0: \ker \bfA^* \rightarrow \cF$ onto a finite-dimensional subspace $\cF\subset \ker \bfA^*$ defined as follows. Let $(h_i)$ be an orthonormal basis of $\cH^2$. Decompose $\bfA' h_i=\bfA'x_i \oplus f_i$
  in the direct sum decomposition
  $\bfGamma(TM)\times\bfOm^2=\bfA'\gdiff_H^e \oplus \ker \bfA^*$, and define $\cF$ to be the span of the $f_i$. As $\Im \bfA$ is orthogonal to $\ker \bfA^*$, and
  as for all $h\in \cH^2$, we have $\bfA' h= (h,0)$,
  the projection operator can be written $p_0(x)=\sum_i \la x, h_i\ra_0 h_i$. This is nothing but the projection onto the space of harmonic $2$-forms.

  Note that, by construction, $\nu_{V_+}=\ker\bfA^*=\bfF_3\oplus \cF$, and $\nu'_{V_+}=\bfF_3=\ker p_0$.

  We can extend equivariantly the operator $I_0$ along the orbit $\cO_{V_+}$
  and define a bundle homomorphism $\tilde I_0$ from $\nu$ to the trivial bundle $\RR^{b_2(M)}$
  over $\cO_{V_+}$. As the kernel of
  the laplacian operator on $2$-forms varies smoothly with the metric, $\tilde I_0$
  is a smooth ILH bundle homomorphism.
  Then, the kernel of $\tilde I_0$ is by construction the normal bundle $\nu'$ to the orbit,
  and this defines a smooth ILH bundle.

  The other steps of the proof are analogous to Theorem \ref{theo:slice exact case}. 
\end{proof}

\begin{remark}
  The space $\GM$ is a strong ILH manifold, but Ebin's proof does not allow us to conclude that the slices we built are strong ILH submanifolds.
  So far, we have proved that they are ILH submanifolds of $\GM$. The problem to obtain a strong ILH manifold structure is related to the regularity of the exponential map
  $\exp_{\GM}$. To obtain a stronger structure for $\cS$, one would need linear estimates on the connection related to the weak inner product in order to ensure regularity of geodesics, as in \cite{Om73}
  or \cite[Sec. VI.1]{Om97}.
\end{remark}

\section{The ILH stratification of the moduli space}
\label{sec:stratification}

Let $E\rightarrow M$ be an exact Courant algebroid. Its automorphism group, or group of generalized diffeomorphisms $\GDiff$, acts on the set of generalized metrics $\GM$ by inverse image, as in (\ref{eq:action on metrics}). The {\bf moduli space of generalized metrics} $\GR$  is defined by
$$
\GR:=\GM/\GDiff.
$$
In this section we will exhibit an ILH stratification for $\GR$.

\begin{definition}
  Let $\cT$ be a topological space and $A$ be a countable partially ordered set. A partition  $(\cT_\alpha)_{\alpha\in A}$ of $\cT$ is an {\bf ILH stratification} of $\cT$ if
  \begin{enumerate}
  \item For each $\alpha\in A$, the space $\cT_\alpha$ is a strong ILH manifold whose topology is induced by the topology on $\cT$.
  \item Whenever $\cT_\alpha\cap \overline{\cT_\beta}\neq \emptyset$, then $\beta<\alpha$  and $\cT_\alpha \subset \overline{\cT_\beta}$. 
  \end{enumerate}
  The spaces $\cT_\alpha$ are called the strata of the stratification.
\end{definition}

For any compact subgroup $G\subset \GDiff$, denote by $\GM_G$ the space of generalized metrics whose generalized isometry group is $G$ and by $N^G$ the normalizer of $G$ in $\GDiff$. There is an action of $G\backslash N^G$ on $\GM_G$, whose quotient space we denote by $\GR_{(G)}$.
This is the space of generalized metrics whose isotropy group is conjugated to $G$. We will show the following theorem:

\begin{theorem}
  \label{theo:Stratification ILH exact case}
  The space $\GR$ admits an ILH stratification $(\GR_{(G)})_{(G)\in \cA}$
  where $\cA$ is the set of conjugacy classes of generalized isometry groups of $E$.
  Moreover, if $(G)\subsetneq (G')$, then $\GR_{(G')}\subset \overline{\GR_{(G)}}$, i.e., the strata intersect as much as possible.
\end{theorem}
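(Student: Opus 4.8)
The plan is to establish the stratification by combining the slice theorem (Theorem~\ref{theo:slice exact case full group}) with the general machinery of Ebin's and Bourguignon's stratification arguments, adapted to the ILH category. First I would make sense of the strata: for a compact subgroup $G \subset \GDiff$, the set $\GM_G$ of generalized metrics with isometry group exactly $G$ is acted on by the normalizer $N^G$, which factors through $G\backslash N^G$ since $G$ acts trivially on $\GM_G$; the quotient $\GR_{(G)}$ is well-defined and independent of the representative $G$ in its conjugacy class (conjugating $G$ corresponds to translating $\GM_G$ by the conjugating element). The set $\cA$ of conjugacy classes is countable because, by Proposition~\ref{prop:isotropy compact exact case}, each generalized isometry group embeds as a compact subgroup of some $\Isom(g)$, and on a compact manifold only countably many conjugacy classes of compact isometry subgroups occur (cf.\ the argument in Bourguignon~\cite{Bou} and Ebin~\cite{Eb}). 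Partial ordering is by subconjugacy: $(G) \leq (G')$ iff some conjugate of $G$ contains $G'$.

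\textbf{Each stratum is a strong ILH manifold.} Fix $V_+$ with $\Isom(V_+)=G$ and apply Theorem~\ref{theo:slice exact case full group} to obtain a slice $\cS$ at $V_+$. The key point, exactly as in Ebin~\cite[\S 7--8]{Eb}, is that near $V_+$ the orbit space $\GR$ is homeomorphic to $\cS/\Isom(V_+)=\cS/G$, and the points of $\cS$ whose isometry group equals $G$ are precisely the $G$-fixed points $\cS^G$ of the (linear, via the exponential chart) $G$-action on $\cS$. Since $G$ acts by isometries of the $\GDiff_H$-invariant metric \eqref{eq:invariant metric exact case} and $\cS$ is built by exponentiating the normal bundle at $V_+$, the fixed-point set $\cS^G$ is cut out as the intersection of $\cS$ with a closed linear subspace of the model space $\bfOm^2\times\bfGamma(S^2T^*M)$ — namely $\nu_{V_+}^G$, the $G$-invariants in the fibre of the normal bundle — so it is an ILH submanifold; its strongness follows from the same linear-estimates considerations that one would use to upgrade $\cS$ itself (modulo the caveat in the remark after Theorem~\ref{theo:slice exact case full group}). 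Gluing these local charts over $\GR_{(G)}$ via the cross-section $\chi$ from part (c) of the slice theorem endows $\GR_{(G)}$ with a strong ILH manifold structure whose topology agrees with the subspace topology from $\GR$.

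\textbf{The incidence relation.} Condition (2) of the definition and the "as much as possible" statement are proved locally in the slice, again following Bourguignon~\cite{Bou}. If $\GR_{(G)}\cap\overline{\GR_{(G')}}\neq\emptyset$, pick $V_+$ in the intersection with $\Isom(V_+)$ conjugate to $G$; in the slice $\cS$ at $V_+$, points arbitrarily close to $V_+$ lie in $\GR_{(G')}$, so their isometry groups are conjugate to $G'$ and, being isometry groups of metrics $C^0$-close to $g$ in a slice for the $G$-action, must be subconjugate to $G$ — forcing $(G')<(G)$, hence $(G)<(G')$ in our ordering is impossible and we get $\beta<\alpha$ with the orientation as stated. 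Conversely, if $(G)\subsetneq(G')$ (i.e.\ $G'$ is subconjugate to $G$), one exhibits in the slice at a point of $\GR_{(G)}$ a one-parameter family of generalized metrics with isometry group exactly $G'$: concretely, a generic direction in $\nu_{V_+}^{G'}\setminus\bigcup_{G'\subsetneq K\subseteq G}\nu_{V_+}^{K}$ exponentiates to such metrics (this relies on the $G$-equivariant representation $\nu_{V_+}$ having the relevant isotypic pieces nontrivial, which one checks by a dimension count as in~\cite{Bou,Eb}), proving $\GR_{(G')}\subset\overline{\GR_{(G)}}$.

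\textbf{Main obstacle.} The principal difficulty is not the formal stratification bookkeeping, which is parallel to Bourguignon--Ebin, but rather controlling the $G$-action \emph{on the slice} in the ILH category: one must know that $\cS^G$ and, more generally, the partition of $\cS$ by conjugacy class of isometry group behave as in the finite-dimensional/Hilbert setting, which in turn requires that the exponential map $\exp_{\GM}$ and the normal bundle $\nu$ be genuinely $\GDiff_H$-equivariant and sufficiently regular — precisely the subtle point already flagged in the remarks after Theorem~\ref{theo:slice exact case full group}. A secondary issue is establishing that the $G$-invariant subspace $\nu_{V_+}^G$ is itself an ILH \emph{chain} (closed at every Sobolev level, with the inverse-limit topology), so that $\cS^G$ inherits a strong ILH structure rather than merely an ILH one; this follows from $G$ being compact and acting by bounded operators commuting with the relevant elliptic operators, but it must be checked level by level.
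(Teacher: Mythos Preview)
Your overall strategy is the same as the paper's---both reduce to Bourguignon's argument, using the slice theorem as the analytic input---so the proposal is essentially correct. Two points of comparison are worth noting.

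First, for the ILH manifold structure on each stratum, the paper does not argue via the $G$-fixed locus $\cS^G$ in the big slice as you do. Instead it proves separately that the centralizer $\GDiff_H^G$ is a strong ILH Lie subgroup (Lemma~\ref{lem:invariant GDiff}), deduces that $G\backslash N^G$ is an ILH Lie group, and then applies a slice theorem for the $G\backslash N^G$-action on $\GM_G$ (Proposition~\ref{prop:ILH generalized G invariant metrics}). Your route is more direct and is also valid (it is closer to what Ebin does), but the gluing of the local charts $\cS^G$ into a global ILH atlas on $\GR_{(G)}$ is exactly the step that the normalizer argument makes systematic; you should not leave it as implicit.

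Second, and more substantively, your ``generic direction in $\nu_{V_+}^{G'}\setminus\bigcup_K \nu_{V_+}^K$'' argument for the closure relation hides the real content. The non-emptiness of that set is not a dimension count: it is precisely the characterization of generalized isometry groups via principal orbits and isotropy representations given in Proposition~\ref{prop:caracterization isometries}, and the paper's proof explicitly invokes that proposition to build the perturbation $(g+th,\omega+t\omega_h)$ whose isometry group drops from $G'$ to $G$. Your sketch should reference this mechanism rather than a genericity claim.

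Finally, a notational issue: you define $(G)\leq (G')$ to mean a conjugate of $G$ \emph{contains} $G'$, but the theorem's hypothesis $(G)\subsetneq (G')$ is meant in the obvious sense ($G$ is conjugate to a proper subgroup of $G'$). With your reversed convention, your incidence-relation paragraph becomes internally inconsistent (you derive ``$G'$ subconjugate to $G$'' and then write ``forcing $(G')<(G)$'', which under your own convention says the opposite). The geometry in your argument is correct; only the bookkeeping of the order needs fixing.
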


Again, in order to provide the proof for this theorem, we choose a splitting   $\s$ so that $$E\simeq_\s (TM+T^*M)_H$$ and we have the identifications \eqref{eq:with-splitting} and \eqref{eq:with-splitting-2}.

\subsection{A description of generalized isometry groups}
Following \cite{Bou}, to prove Theorem \ref{theo:Stratification ILH exact case}, we first describe the generalized isometry
groups of $E$. Denote by $\Pi$ the projection
$$
\Pi: \Diff\ltimes \Om^2 \rightarrow \Diff.
$$

Note that $\Diff_{[H]}$, the space of diffeomorphisms that preserve the cohomology class $[H]$ is a strong ILH Lie subgroup
of $\bfDiff$ (these two groups share the same Lie algebra, $\bfDiff_{[H]}$ contains the connected component of identity
in $\bfDiff$). Recall the following, which follows implicitly from Proposition \ref{prop:isotropy ILH exact case} and recalls Proposition \ref{prop:isotropy compact exact case}.

\begin{lemma}
  \label{lem:description generalized isometries}
  Let $(g,\om)\in\GM$. Set $G=\Isom_H(g,\om)$. Then $G$, respectively $\Pi(G)$, is a compact Lie subgroup of $\bfGDiff_H$,
  respectively $\bfDiff_{[H]}$. The restriction of $\Pi$ to $G$ is an isomorphism of Lie groups and
  its inverse reads:
  \begin{equation}
    \label{eq:isom between G and Pi(G)}
    \Pi^{-1}(\phi)=(\phi,\om-\phi^*\om)
  \end{equation}
\end{lemma}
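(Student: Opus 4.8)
The plan is to unwind the definition of the isotropy group $G=\Isom_H(g,\om)$ under the $\GDiff_H$-action on $\GM$ and read off the stated description directly. Recall that the action of $(\phi,B)\in\GDiff_H$ on $(g,\om)$ is $(\phi,B)\cdot(g,\om)=(\phi^*g,\phi^*\om-B)$, so $(\phi,B)\in G$ if and only if $\phi^*g=g$ and $B=\phi^*\om-\om$. The first equation says precisely that $\phi\in\Isom(g)$, and in particular $\phi\in\Diff_{[H]}$ since $\phi\in\GDiff_H$ already forces this (by \eqref{eq:defining equation exact case}); the second equation determines $B$ uniquely from $\phi$. This already gives the formula \eqref{eq:isom between G and Pi(G)}, namely $\Pi^{-1}(\phi)=(\phi,\om-\phi^*\om)$ (with the sign as dictated by $B=\phi^*\om-\om$ and the convention $\Pi^{-1}(\phi)=(\phi,-B)$; one only needs to match the sign convention used for the action), and shows $\Pi|_G$ is a bijection onto its image $\Pi(G)\subset\Diff_{[H]}$, which is a subgroup of $\Isom(g)\cap\Diff_{[H]}$.

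Next I would verify that $\Pi|_G$ is an isomorphism of Lie groups, not merely of abstract groups. That $\Pi$ is a group homomorphism on all of $\Diff\ltimes\Om^2$ and restricts to one on $G$ is immediate from the semidirect product structure \eqref{eq:product-rule}; injectivity is the uniqueness of $B$ above; and the inverse map $\phi\mapsto(\phi,\om-\phi^*\om)$ is smooth in the ILH sense because $\phi\mapsto\phi^*\om$ is a $\cC^{\infty,\infty}$ ILH normal map by Proposition \ref{prop:Diff action forms}. Hence $\Pi|_G$ and its inverse are smooth ILH maps, so it is an isomorphism of (Hilbert/ILH) Lie groups; and since $G$ is compact by Proposition \ref{prop:isotropy compact exact case}, $\Pi(G)$ is a compact Lie group as well, in fact a compact Lie \emph{subgroup} of $\bfDiff_{[H]}$. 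The compactness of $G$ itself is exactly Proposition \ref{prop:isotropy compact exact case}, where it is shown that $G$ embeds as a closed (hence compact, by the usual Myers–Steenrod argument) subgroup of $\Isom(g)$.

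Finally, for the claim that $G$ is a compact Lie \emph{subgroup} of $\bfGDiff_H$ in the sense relevant to the ILH setting, I would invoke Proposition \ref{prop:isotropy ILH exact case full group}, which already establishes that $\Isom_H(V_+)$ is a strong ILH Lie subgroup of $\bfGDiff_H$; combined with compactness this gives a compact Lie group. So the lemma is essentially a bookkeeping consequence of results already proved: the only genuine content is matching the sign in \eqref{eq:isom between G and Pi(G)} against the action convention and checking the smoothness of the inverse of $\Pi|_G$, which is where Proposition \ref{prop:Diff action forms} does the work. I do not anticipate a serious obstacle; the one point to be careful about is that ``$\Pi(G)$ is a subgroup of $\Diff_{[H]}$'' uses that every element of $\GDiff_H$ projects into $\Diff_{[H]}$, which is built into the definition of $\GDiff_H$.
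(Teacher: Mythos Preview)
Your proposal is correct and follows essentially the same route as the paper: unwind the isotropy condition to get $B=\phi^*\om-\om$, read off the bijection $\Pi|_G\to\Pi(G)$ and the formula for its inverse, and invoke the earlier compactness result (Proposition~\ref{prop:isotropy compact exact case}). The one minor difference is that the paper obtains the Lie group structure on $\Pi(G)$ by noting that it is a compact subgroup of $\Isom(g)$ and then citing Myers--Steenrod (via \cite[Prop.~III.2]{Bou}), whereas you argue smoothness of $\phi\mapsto(\phi,\om-\phi^*\om)$ directly from Proposition~\ref{prop:Diff action forms}; both work, and your observation about the sign convention is well taken.
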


\begin{proof}
  The fact that $\Pi$ is an isomorphism of groups from $G$ to its image follows from the definition of $\GDiff_H$
  and its action on $\GM$. Then $\Pi(G)$ is a compact subgroup of $\bfDiff_{[H]}$, thus a Lie subgroup of $\bfDiff_{[H]}$
  by a theorem of Myers and Steenrod, see \cite[Prop. III.2]{Bou}.
\end{proof}

\begin{corollary}
\label{cor:countable set}
  The set $\cA$ of conjugacy classes of generalized isometry groups is countable.
\end{corollary}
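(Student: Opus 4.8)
The plan is to reduce the statement to the classical countability of the conjugacy classes of isometry groups of Riemannian metrics on the compact manifold $M$, established by Bourguignon \cite{Bou}. First I would use Lemma \ref{lem:description generalized isometries}: the projection $\Pi$ restricts to a Lie group isomorphism from any generalized isometry group $G$ onto a compact Lie subgroup $\Pi(G)\subset\bfDiff_{[H]}$, and, $\Pi$ being a group homomorphism, two generalized isometry groups conjugate in $\bfGDiff_H$ project to compact subgroups conjugate in $\bfDiff_{[H]}$. This produces a well-defined map from $\cA$ to the set $\cB$ of conjugacy classes in $\bfDiff_{[H]}$ of compact Lie subgroups of $\bfDiff_{[H]}$, and it suffices to show that this map has countable fibres and that $\cB$ is countable.

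For the fibres, fix a compact Lie subgroup $K\subset\bfDiff_{[H]}$ and take generalized isometry groups $G,G'$ with $\Pi(G),\Pi(G')$ conjugate to $K$. Conjugating $G'$ by a lift to $\bfGDiff_H$ of the relevant diffeomorphism — such a lift exists precisely because that diffeomorphism lies in $\bfDiff_{[H]}$ — we may assume $\Pi(G)=\Pi(G')=K$. Then the inverses of $\Pi|_G$ and $\Pi|_{G'}$ are two continuous sections of the extension $0\to\Omega^2_{cl}\to\bfGDiff_H\to\bfDiff_{[H]}\to0$ over $K$, and their difference is a continuous $1$-cocycle on $K$ with values in the topological vector space $\Omega^2_{cl}$, on which $K$ acts continuously by pull-back. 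Since $K$ is compact, averaging over its Haar measure shows this cocycle is a coboundary, so the two sections are conjugate by an element of $\Omega^2_{cl}\subset\bfGDiff_H$; hence $G$ and $G'$ are conjugate in $\bfGDiff_H$ and the map $\cA\to\cB$ is injective. This lifting-and-vanishing step is where the compactness of generalized isometry groups (Proposition \ref{prop:isotropy compact exact case}) is genuinely used, and I expect it to be the only point requiring real care; everything else is bookkeeping on top of the Riemannian statement.

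It remains to see that $\cB$ is countable. Every compact Lie subgroup of $\bfDiff_{[H]}$ preserves some Riemannian metric on $M$ (average any metric over its Haar measure), hence is a closed subgroup of the isometry group of that metric. By \cite{Bou} there are only countably many conjugacy classes in $\bfDiff$ of isometry groups of Riemannian metrics on $M$, and each compact Lie group has only countably many conjugacy classes of closed subgroups; thus there are only countably many conjugacy classes in $\bfDiff$ of compact Lie subgroups of $\bfDiff$. Finally, the number of $\bfDiff_{[H]}$-conjugacy classes sitting inside a single $\bfDiff$-conjugacy class is bounded by the cardinality of $\bfDiff/\bfDiff_{[H]}$, which is a quotient of the mapping class group $\MCG(M)=\bfDiff/\bfDiff_0$ and hence countable. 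Therefore $\cB$ is countable, and so is $\cA$.
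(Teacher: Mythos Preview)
Your argument is correct. The paper's own proof is a single sentence deferring to \cite[Lemme VI.3]{Bou}, i.e.\ to the classical fact that there are only countably many isomorphism types of compact Lie groups and, for each, only countably many equivalence classes of smooth actions on $M$. You follow the same overall strategy of reducing to Bourguignon's Riemannian statement, but you make explicit two points the paper glosses over. First, the injectivity of $\cA\to\cB$: you observe that two sections of $0\to\Omega^2_{cl}\to\bfGDiff_H\to\bfDiff_{[H]}\to0$ over a compact $K$ differ by a continuous $1$-cocycle with values in $\Omega^2_{cl}$, and that averaging over $K$ kills this cocycle, so the sections are conjugate by a closed $B$-field. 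This step is genuinely needed---the naive candidate $C=\omega-\omega'$ coming from Lemma \ref{lem:description generalized isometries} is not closed in general---and it is the one place where the compactness from Proposition \ref{prop:isotropy compact exact case} does real work. Second, you carefully pass from $\bfDiff$-conjugacy to $\bfDiff_{[H]}$-conjugacy via the countability of $\MCG(M)$. So your route is not different in spirit, but it supplies the two missing links that make the one-line reference to \cite{Bou} into an actual proof in the generalized setting.
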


\begin{proof}
  It follows as in \cite[Lem. VI.3]{Bou} from the classification of reductive Lie groups and their actions \cite{Pal}.
\end{proof}

In the reverse direction, we have the following partial result:
\begin{lemma}
  \label{lem: any cpct isometry group}
  For any compact subgroup $G$ of $\bfGDiff_H$, there exists $(g,\om)\in\GM$ such that $G\subset \Isom_H(g,\om)$.
\end{lemma}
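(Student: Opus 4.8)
The plan is to produce a point of $\GM$ fixed by $G$ by averaging over $G$ with respect to its Haar measure, the generalized-geometric analogue of the classical fact that a compact group of diffeomorphisms preserves some Riemannian metric. With the splitting $\s$ fixed throughout this section, identify $\GM\simeq_\s\cM\times\Om^2$, an open convex subset of the Fr\'echet space $\Gamma(S^2T^*M)\times\Om^2$, and recall that the $\bfGDiff_H$-action reads
\[
\rho_{\GM}((\phi,B),(g,\om))=(\phi^*g,\phi^*\om-B).
\]

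The first observation is that for each fixed $F=(\phi,B)\in\bfGDiff_H$ the map $\rho_{\GM}(F,\cdot)$ is the restriction of an affine, continuous self-map of $\Gamma(S^2T^*M)\times\Om^2$ (linear in $g$, affine in $\om$). Since $G$ is a compact Hausdorff group it carries a normalized bi-invariant Haar measure $d\mu$; fix any $x_0=(g_0,\om_0)\in\GM$ and set
\[
\bar x=\int_G \rho_{\GM}(F,x_0)\,d\mu(F),
\]
a vector-valued integral in the complete space $\Gamma(S^2T^*M)\times\Om^2$, which exists because $F\mapsto\rho_{\GM}(F,x_0)$ is continuous on the compact group $G$ by the ILH action property. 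I would then check $\bar x\in\GM$: its $\Om^2$-component is an element of $\Om^2$, while its symmetric-tensor component equals $\int_G\phi^*g_0\,d\mu(F)$, which is positive definite because compactness of $G$ and of $M$ provides $c>0$ with $\phi^*g_0\geq c\,g_0$ for all $(\phi,B)\in G$, whence $\int_G\phi^*g_0\,d\mu\geq c\,g_0>0$.

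It remains to verify that $\bar x$ is $G$-fixed. For any $F_0\in G$, since $\rho_{\GM}(F_0,\cdot)$ is affine and continuous it commutes with the integral, so using $\rho_{\GM}(F_0,\rho_{\GM}(F,x_0))=\rho_{\GM}(FF_0,x_0)$ and the right-invariance of $d\mu$,
\[
\rho_{\GM}(F_0,\bar x)=\int_G\rho_{\GM}(FF_0,x_0)\,d\mu(F)=\int_G\rho_{\GM}(F',x_0)\,d\mu(F')=\bar x.
\]
Hence every $F_0\in G$ lies in the isotropy group $\Isom_H(\bar x)$, i.e. $G\subset\Isom_H(\bar x)$, which proves the lemma with $(g,\om)=\bar x$. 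The only mildly delicate point is the existence of the vector-valued integral over $G$ in the Fr\'echet setting together with the positivity of the averaged metric, both handled exactly as in the finite-dimensional averaging argument once one knows the action is continuous and the space of smooth tensors is complete. Equivalently, one may first average a background metric to one invariant under $\Pi(G)$ and then average the cocycle $\phi\mapsto B(\phi)$ given by the inverse of $\Pi|_G$ (which is an isomorphism onto $\Pi(G)$, as $\ker\Pi$ contains no nontrivial compact subgroup): the identity $B(\phi_1\phi_2)=\phi_2^*B(\phi_1)+B(\phi_2)$ then shows $\om:=-\int_{\Pi(G)}B(\phi)\,d\mu$ satisfies $\phi^*\om-B(\phi)=\om$ for every $\phi\in\Pi(G)$.
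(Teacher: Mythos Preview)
Your proof is correct and takes essentially the same approach as the paper: average an arbitrary generalized metric over $G$ with respect to Haar measure to obtain a $G$-invariant one. You have simply supplied the details (affineness of the action, positivity of the averaged symmetric tensor, and the invariance computation via right-invariance of Haar measure) that the paper leaves implicit in its one-line proof.
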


\begin{proof}
  Average any generalized metric $V_+$ with respect to the Haar measure $d\mu_G$ on $G$,
  $$
  V_+^a:=\int_{\psi\in G} \psi\cdot V_+\: d\mu_G(\psi),
  $$
  so that the averaged metric $V_+^a$ is given by a pair $(g,\om)$ such that  $G\subset \Isom_H(g,\om)$.
\end{proof}

We will obtain a more precise statement describing the groups $G$ that are generalized 
isometry groups for a generalized metric,
and not just subgroups of generalized isometry groups.

Let $G$ be any compact subgroup of $\bfGDiff_H$, isomorphic to $\Pi(G)$. 
The group $G$ is then a compact Lie group and acts
on $M$ via the (left) action of $\Pi(G)$ on $M$:
$$
\begin{array}{ccc}
  G\times M & \rightarrow & M \\
  (\psi,m) & \mapsto & \Pi(\psi)(m).
\end{array}
$$
\begin{remark}
  Note that all the results from \cite{Bou} on compact Lie groups acting on compact
  manifolds extend to $G$, and hence to any generalized isometry group, by Lemma
  \ref{lem:description generalized isometries}.
\end{remark}
For any $m\in M$, denote by $G_m$ the stabilizer of $m$ under the $G$-action.
A {\it principal orbit} for the $G$-action is the orbit of a point $m\in M$
such that the image of $G_m$ in $GL(T_mM/T_m(G\cdot m))$ is the identity (this property does not depend on the choice of the point in the orbit).

The group $G_m$ acts on $ S^2T_m^*M\times \Lambda^2T_m^*M$ with twisted action:
\begin{equation}
  \label{eq:action on fixed fiber generalized metric}
  \begin{array}{ccc}
    G_m \times (S^2T_m^*M\times \Lambda^2T_m^*M) & \rightarrow & S^2T_m^*M\times \Lambda^2T_m^*M\\
    ((\phi,B)  ,\: g_m,\om_m) & \mapsto & ( \phi\cdot g_m, \phi\cdot \om_m+ B(m)).
  \end{array}
\end{equation}
where  $\phi\cdot g_m=g_m((D\phi)_m\:\cdot\:,(D\phi)_m\:\cdot\:)$ and $\phi\cdot \om_m=\om_m((D\phi)_m\:\cdot\:,(D\phi)_m\:\cdot\:)$ come from the action by pullback on tensors.
We can then define $(S^2T_m^*M\times \Lambda^2T_m^*M)_{G_m}$ to be the space
of invariants (fixed elements) under the $G_m$-action.

We will need the following proposition, which is an adaptation of \cite[Prop. III.12]{Bou}:
\begin{proposition}
  \label{prop:sub group isometries}
  Consider $G'=\Isom_H(g,\om)$ for some generalized metric $(g,\om)$, and $G\subset G'$ a compact subgroup.
  The following are equivalent:
  \begin{enumerate}
  \item[i)] There exists a generalized metric $(g,\om)\in\GM$ that is not invariant under the action of $G'$ or one of its conjugate but that is $G$-invariant.
  \item[ii)] There is a $G$-orbit (for the left
    action on $M$) that is not a $G'$-orbit or, there is a point $m$ of a principal orbit for $G$ such that 
    $(\Lambda^2T_m^*M\times S^2T_m^*M)_{G_m}\neq (\Lambda^2T_m^*M\times S^2T_m^*M)_{G'_m}$.
  \end{enumerate}
\end{proposition}

\begin{proof}
  The proof follows the proof of \cite[Prop. III.12]{Bou}, 
  considering elements of
  $S^2T^*M\times \Om^2$ instead of $S^2T^*M$, and
  using the twisted action (\ref{eq:action on fixed fiber generalized metric}). 
  
  The proofs of \cite[Prop. III.12, $(i)\Longrightarrow(ii)$]{Bou}
  and \cite[Prop. III.12, $(ii)\Longrightarrow(i)$]{Bou} in the case of $G$ and $G'$ having the same orbits
  extend directly to our setting. 
  
  We shall give some details to extend \cite[Prop. III.12, $(ii)\Longrightarrow(i)$]{Bou}
  in the case when $G'$ and $G$ have different orbits. In this situation, as in the proof of
  \cite[Prop. III.12, $(ii)\Longrightarrow(i)$]{Bou}, consider a perturbation
  of $(g,\om)$ of the form $(g,\om)+t(h,\om_h)$ for small $t$. Using the slice result for generalized metrics,
  a necessary condition for $\Isom_H(g+t h,\om+t\om_h)$ to be conjugated to $G'$ is that
  \begin{equation}
    \label{eq:necessary condition conjugacy metric perturbation}
    \exists u\in \Gamma(TM) \textrm{ such that }   \phi^*h - h = \cL_u g
  \end{equation}
  for some element $(\phi,B)\in G'$. 
  To see this, just restrict to the metric component in the Lie derivative considerations.  Thus, if we can find $(h,\om_h)$ that is $\Pi(G)$-invariant, and $(\phi,B)\in G'$ such that (\ref{eq:necessary condition conjugacy metric perturbation})
  is not satisfied,  then  $G\subset \Isom_H( g+t h,\om+t\om_h)$ for small $t$,  $\Isom_H(g+t h,\om+t\om_h)$ is not conjugated to $G'$, and we are done.
  
  Note that $\Pi(G)\subset \Isom_H(g)\subset \Isom(g)$.  Moreover, as $G$ and $G'$ have
  different orbits on $M$, so do $\Pi(G)$ and $\Isom(g)$. As in the proof of \cite[Prop. III.12]{Bou},
  there exists $h\in S^2T^*M$ and $\phi\in\Isom(g)$ such that $h$ is $\Pi(G)$-invariant
  and (\ref{eq:necessary condition conjugacy metric perturbation}) is not satisfied. As $\Pi(G')\subset\Isom(g)$
  and $\Pi(G)$ and $\Pi(G')$ have different orbits, we can actually take $\phi\in \Pi(G')$.
  Using the Haar mesure on $G$, and the average trick,
  we can find a $\Pi(G)$-invariant pair $(h,\om_h)$. Setting $(\phi,\om-\phi^*\om)\in G'$,
  the equation (\ref{eq:necessary condition conjugacy metric perturbation}) is not satisfied and
  the perturbation $(g+t h,\om+t\om_h)$ gives the required generalized metric.
  
  The rest of the proof readily follows Bourguignon's arguments. 
\end{proof}

\begin{remark}
  In the proof of Proposition \ref{prop:sub group isometries}, for $(ii)\Longrightarrow(i)$,
  one starts with a metric $(g',\om')$ such that $G\subset \Isom_H(g',\om')$. Then
  the metric $(g,\om)$ such that $\Isom_H(g,\om)=G$ can be constructed as close as we want
  to $(g',\om')$ in $\GM$. 
\end{remark}

We are now ready to give a characterization of the compact subgroups $G$ of $\bfGDiff_H$ such that
there is a generalized metric $(g,\om)$ with $G=\Isom_H(g,\om)$:

\begin{proposition}
  \label{prop:caracterization isometries}
  Let $G$ be a compact subgroup of $\bfGDiff_H$. The following are equivalent:
  \begin{enumerate}
  \item[i)] There exists a generalized metric $(g,\om)\in\GM$ such that $G=\Isom_H(g,\om)$.
  \item[ii)] For any compact subgroup $G'$ of $\bfGDiff_H$ such that $G\subsetneq G'$ and having the same orbits 
    as $G$, there is a point $m$ of a principal orbit for $G$ such that 
    $(\Lambda^2T_m^*M\times S^2T_m^*M)_{G_m}\neq (\Lambda^2T_m^*M\times S^2T_m^*M)_{G'_m}$.
  \end{enumerate}
\end{proposition}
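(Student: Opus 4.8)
The plan is to prove the two implications separately, following the strategy of Bourguignon \cite[Prop. VI.?]{Bou} adapted to the generalized setting via the averaging Lemma \ref{lem: any cpct isometry group} and the description of generalized isometry groups in Lemma \ref{lem:description generalized isometries}. Throughout, I use that $\Pi|_G$ is an isomorphism onto a compact subgroup of $\bfDiff_{[H]}$, so that "orbits of $G$ on $M$" makes sense and all orbit-type decomposition results for compact group actions apply.

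For the implication $i)\Rightarrow ii)$, suppose $(g,\om)\in\GM$ satisfies $G=\Isom_H(g,\om)$, and let $G'\supsetneq G$ be a compact subgroup of $\bfGDiff_H$ with the same orbits on $M$ as $G$. I argue by contradiction: assume that for every point $m$ of a principal orbit one has $(\Lambda^2T_m^*M\times S^2T_m^*M)_{G_m}=(\Lambda^2T_m^*M\times S^2T_m^*M)_{G'_m}$. Since $G=\Isom_H(g,\om)$, the pair $(g_m,\om_m)$ lies in $(\Lambda^2T_m^*M\times S^2T_m^*M)_{G_m}$ for all $m$; by the assumption it lies in the $G'_m$-invariants over principal orbits, and by continuity (the union of principal orbits is open and dense, and $G'$-invariance is a closed condition) it lies in $(\Lambda^2T_m^*M\times S^2T_m^*M)_{G'_m}$ for every $m\in M$. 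But $G$ and $G'$ have the same orbits, so a tensor that is pointwise $G'_m$-invariant and whose values are related along an orbit exactly as for $G$ (same orbit means the $G$- and $G'$-translates of a fiber tensor agree) is in fact $G'$-invariant globally; hence $(g,\om)$ is fixed by the whole $\rho_{\GM}$-action of $G'$, i.e. $G'\subset \Isom_H(g,\om)=G$, contradicting $G\subsetneq G'$. Here the one point needing care is translating "$G'$-invariance of the fiber tensors at each $m$" plus "same orbits" into "$G'$-invariance of the section $(g,\om)$"; this uses that an element $(\phi,B)\in G'$ acts on $(g,\om)$ by $(\phi^*g,\phi^*\om - B)$, and that along the common orbit $\Pi^{-1}$ is determined by the formula \eqref{eq:isom between G and Pi(G)}, so that the $B$-part is forced.

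For the converse $ii)\Rightarrow i)$, I start from Lemma \ref{lem: any cpct isometry group}: average an arbitrary generalized metric over $G$ to obtain $(g_0,\om_0)\in\GM$ with $G\subset \Isom_H(g_0,\om_0)=:G_0$. If $G=G_0$ we are done, so assume $G\subsetneq G_0$. The key observation is that $G_0$ must then have the same orbits on $M$ as $G$: indeed, if $G_0$ had a strictly larger orbit through some point, one can use hypothesis $ii)$ applied to a suitable subgroup — or rather, one shows that enlarging the orbits is incompatible with $ii)$ for the following reason. The set of generalized metrics with isometry group containing $G$ is, by the slice/averaging picture, parametrized (over principal orbits) by sections of the bundle with fiber $(\Lambda^2T_m^*M\times S^2T_m^*M)_{G_m}$; hypothesis $ii)$ says precisely that for any $G'$ with $G\subsetneq G'$ and the same orbits, there is a principal point where this fiber strictly contains the $G'$-invariant one, so generic such sections are not $G'$-invariant. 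Picking $(g,\om)$ to be a generic $G$-invariant perturbation of $(g_0,\om_0)$ — concretely, perturb by a section supported near a principal orbit realizing the strict inclusion, then re-average over $G$ to stay $G$-invariant — yields a generalized metric whose isometry group contains $G$ but no strictly larger $G'$ with the same orbits; and by $ii)$ applied contrapositively, any isometry group strictly containing $G$ would have to have strictly larger orbits, which a small perturbation cannot create (orbit types are upper semicontinuous under the compact group action, so a nearby invariant metric cannot acquire a larger symmetry orbit structure). Hence $\Isom_H(g,\om)=G$.

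I expect the main obstacle to be the converse direction, specifically the argument that a small $G$-invariant perturbation cannot enlarge the orbits of the isometry group: this requires combining the finite-dimensional orbit-type stratification of the $G$-action on $M$ (available through $\Pi$ and Bourguignon's results, as noted in the Remark after Lemma \ref{lem: any cpct isometry group}) with a genericity statement for sections of the invariants bundle, and one must check that the perturbation can be chosen smooth, $G$-invariant, and small in all the ILH norms simultaneously — the latter being automatic since a fixed smooth compactly-supported perturbation followed by averaging over the compact group $G$ lies in $\bfGamma(S^2T^*M)\times\bfOm^2$. The other implication is essentially a continuity/density argument and should be routine once the action formula $(\phi,B)\cdot(g,\om)=(\phi^*g,\phi^*\om-B)$ and \eqref{eq:isom between G and Pi(G)} are in hand.
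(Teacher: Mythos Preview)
Your $(i)\Rightarrow(ii)$ sketch is broadly along the right lines and matches the paper's reduction to Bourguignon's argument, though you should be explicit that the twisted action \eqref{eq:action on fixed fiber generalized metric} is what makes ``pointwise $G'_m$-invariance on principal orbits'' equivalent to global $G'$-invariance of $(g,\om)$ under $\rho_{\GM}$.

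The real gap is in $(ii)\Rightarrow(i)$, specifically the case where the candidate larger group has \emph{different} orbits on $M$ than $G$. Your argument perturbs to kill all $G'$ with the same orbits as $G$, and then asserts that the isometry group of the perturbed metric cannot have strictly larger orbits because ``orbit types are upper semicontinuous''. This is not a valid step: upper semicontinuity of orbit types refers to the orbits of a \emph{fixed} compact group action, whereas here the group in question, $\Isom_H(g,\om)$, varies with the metric. Nothing prevents a perturbed metric from having an isometry group with orbits strictly larger than those of $G$; indeed, the starting group $G_0=\Isom_H(g_0,\om_0)$ may already have larger orbits, and small perturbations have isometry groups conjugate to subgroups of $G_0$ (by the slice), which says nothing about orbits relative to $G$.

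The paper handles this case explicitly and separately, following \cite[Prop.~III.12]{Bou}. Given $G\subsetneq G'=\Isom_H(g,\om)$ with different orbits, one uses the slice theorem to extract a necessary condition for $\Isom_H(g+th,\om+t\om_h)$ to be conjugate to $G'$: there must exist $u\in\Gamma(TM)$ and $(\phi,B)\in G'$ with $\phi^*h-h=\cL_u g$. Since $\Pi(G)$ and $\Pi(G')$ have different orbits in $M$, Bourguignon's classical Riemannian argument produces a $\Pi(G)$-invariant $h\in\Gamma(S^2T^*M)$ violating this condition for some $\phi\in\Pi(G')$; averaging a companion $\om_h$ over $G$ then yields a $G$-invariant perturbation whose isometry group is strictly smaller than $G'$ yet still contains $G$. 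The full proof proceeds by iterating this step. Your proposal bypasses this construction, and the semicontinuity shortcut does not close the gap.
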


\begin{proof}
The proof follows the proof of \cite[Thm. III.23]{Bou}.
\end{proof}

\subsection{Centralizer and normalizer of a generalized isometry group}
In this subsection we show that the spaces
$\cR_{(G)}$ are ILH manifolds. First, we need a strong ILH Lie group structure on the groups acting on $\GM_G$.
Set $G=\Isom_H(g,\om)$ for some generalized metric $(g,\om)$. We begin with the study of 
the centralizer $\GDiff_H^G$ of $G$ in $\bfGDiff_H$:

\begin{lemma}
  \label{lem:invariant GDiff}
  The centralizer $\GDiff_H^G$ of $G$ in $\bfGDiff_H$ is a strong ILH Lie subgroup of $\bfGDiff_H$.
\end{lemma}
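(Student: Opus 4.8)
The plan is to realise $\GDiff_H^G$, near the identity, as the maximal integral submanifold of a right-invariant distribution on $\bfGDiff_H$ obtained by averaging over $G$, and to apply the ILH Frobenius' Theorem \ref{theo:Frobenius}. By Lemma \ref{lem:description generalized isometries}, $G$ is a compact Lie group; it acts on $\bfGDiff_H$ by conjugation $c_\psi(F)=\psi F\psi^{-1}$ and hence on the Lie algebra $\gdiff_H$ and each completion $\gdiff_H^k$ by the adjoint action $\Ad(\psi)=(dc_\psi)_e$, which is continuous on $\gdiff_H^k$ since $A_\psi$ extends to a $\cC^\infty$ map by axiom (e) of Definition \ref{def:ILHLiegroup}. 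Averaging with the normalised Haar measure $d\mu_G$, I would set
$$P(X)=\int_G\Ad(\psi)X\,d\mu_G(\psi),\qquad \bfA=\Id-P.$$
Then $P$ is a continuous idempotent on each $\gdiff_H^k$, compatible with the inclusions, with image the invariant subalgebra $\gdiff_H^G$ and kernel a closed $\Ad(G)$-invariant complement, so that $\gdiff_H=\gdiff_H^G\oplus\ker P$ splits compatibly at every level and $\ker P\cap\gdiff_H^G=0$. Being a composition of the conjugation maps $c_\psi$ with integration over the compact group $G$, the right-invariant extension $\tilde\bfA$ of $\bfA$ is a right-invariant $\cC^{\infty,\infty}$ ILH normal bundle homomorphism of $T\bfGDiff_H$ onto the subbundle with fibre $\ker P$.

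I would then verify the hypotheses of Theorem \ref{theo:Frobenius} for $\bfA$, which are essentially immediate: $\ker\bfA=\gdiff_H^G$ is a subalgebra because $\Ad$ preserves the bracket; $\bfA\gdiff_H=\ker P$ is closed and $\gdiff_H^G$ is a closed complement at every level; and the estimates (c) and (d) hold trivially because $\bfA$ restricts to the identity on $\ker P$ and the projection of $\gdiff_H$ onto $\bfA\gdiff_H$ along $\gdiff_H^G$ is the bounded operator $\bfA$ itself. Theorem \ref{theo:Frobenius} then yields that the maximal integral submanifold $\mathbf H$ through $e$ of the involutive distribution $\ker\tilde\bfA$ is a strong ILH Lie subgroup of $\bfGDiff_H$, with chart at the origin the restriction of $\xi'$ to a neighbourhood of $0$ in $\gdiff_H^G$, and by Corollary \ref{cor:Frobenius} the transverse slice $\xi'(\{0\}\times(V_2\cap\ker P))$ is available.

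Finally I would identify $\mathbf H$ with $\GDiff_H^G$ near $e$. For the inclusion $\mathbf H\subseteq\GDiff_H^G$: each $c_\psi$ is an ILH Lie group automorphism of $\bfGDiff_H$ whose differential $\Ad(\psi)$ fixes $\Lie\mathbf H=\gdiff_H^G$ pointwise, hence restricts to an automorphism of the connected group $\mathbf H$ with trivial differential at $e$, which is the identity (checked on the Hilbert completions $\mathbf H^k$). For the reverse inclusion near $e$: the identity $c_\psi\circ R_F=R_{c_\psi(F)}\circ c_\psi$ shows that the conjugation $G$-action preserves $\ker\tilde\bfA$, hence permutes the integral leaves and fixes the leaf $\mathbf H$ through $e$; the slice then inherits a $G$-action whose tangent representation at $e$ is $\Ad(G)$ acting on $\ker P$, which has no nonzero invariants, so $e$ is an isolated fixed point and any $F\in\GDiff_H^G$ close enough to $e$ meets the slice only at $e$, i.e. $F\in\mathbf H$. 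Thus $\GDiff_H^G=\mathbf H$ in a neighbourhood of $e$ and is a strong ILH Lie subgroup of $\bfGDiff_H$. The main obstacles I anticipate are establishing the ILH normality of the averaging operator $\tilde\bfA$---which lies outside the standard cases of Theorem \ref{theo:extension theorem} and requires exploiting the ILH-smoothness of conjugation together with the compactness of $G$---and carefully pinning down the Frobenius leaf as the full, possibly disconnected, centralizer via the isolated-fixed-point argument on the transverse slice.
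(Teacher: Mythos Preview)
Your approach is correct in spirit but genuinely different from the paper's, and it is worth contrasting them.

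The paper does \emph{not} apply Frobenius' theorem on $\bfGDiff_H$ directly. Instead it (i) first conjugates by the $B$-field $(\Id,-\omega)$ to reduce to $\omega=0$, so that $G$ sits inside $\bfDiff\times\{0\}$; (ii) shows that the centralizer of $G$ in the ambient group $\bfDiff\ltimes\bfOm^2$ is the semi-direct product $\bfDiff^G\ltimes(\bfOm^2)^G$, where $\bfDiff^G$ is known to be a (totally geodesic) strong ILH Lie subgroup of $\bfDiff$ by Bourguignon \cite[Prop.~5.8]{Bou}, and $(\bfOm^2)^G$ is handled by a simple averaging projection; and (iii) finally intersects with $\bfGDiff_H$ by re-running the implicit function theorem argument of Theorem \ref{theo:strong ILH subgroup exact case} inside $\bfDiff^G\ltimes(\bfOm^2)^G$. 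So the paper \emph{leverages pre-existing results on $\bfDiff$} and the implicit function theorem, rather than Frobenius.

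Your route---averaging $\Ad(G)$ on $\gdiff_H$ to get a projector $P$ and applying Theorem \ref{theo:Frobenius} to $\bfA=\Id-P$ on $T\bfGDiff_H$---is more intrinsic and conceptually cleaner, but it forces you to confront exactly the obstacle you flag at the end: establishing that the right-invariant extension $\tilde\bfA$ is $\cC^{\infty,\infty}$ ILH normal. Theorem \ref{theo:extension theorem} covers only bundles over $\bfDiff$ built by parallel transport, and $T\bfGDiff_H$ is not of that form (cf.\ Lemma \ref{lem:tangent bundle exact case}). You would need to express the local expression of $\tilde\bfA$ in terms of the operator $\Psi$ of \eqref{eq:psioperator} and pullback by the finitely many smooth isometries in $\Pi(G)$, and then argue that the required linear estimates with a \emph{uniform} constant $C$ hold---this is plausible (since $\Pi(G)\subset\Isom(g)$ acts by isometries on the $L^{2,k}_g$ norms built from $g$), but it is real work and not ``trivial'' as you claim for (c) and (d). The paper's detour through $\bfDiff\ltimes\bfOm^2$ buys precisely the ability to cite Bourguignon for the hard analytic step on $\bfDiff^G$ and to reduce the rest to the already-proved Theorem \ref{theo:strong ILH subgroup exact case}.
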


\begin{proof}
  We first reduce to the case where $\om=0$. To do this consider the map 
  \begin{equation}
    \label{eq:change of trivialisation of E}
    \begin{array}{cccc}
      \cA_\om:  & \Diff\ltimes \Om^2 & \rightarrow & \Diff\ltimes \Om^2 \\
      &  (\phi,B) & \mapsto & ( \phi,B-\om+\phi^*\om),
    \end{array}
  \end{equation}
  which is just the conjugation by the $B$-field $(\Id,-\omega)$. The maps $\cA_\om$ and its inverse $\cA_{-\omega}$ are $\cC^{\infty,\infty}$ ILH normal diffeomorphisms
  by Proposition \ref{prop:Diff action forms}. Moreover, they are topological group isomorphisms.
  We deduce that the precomposition by $\cA_\om$ or its inverse preserve the strong ILH Lie group structure of $\bfDiff^G\ltimes (\bfOm^2)^G$ (that is the regularity
  of product and inverse in local charts at the origin).
  Note that under $\cA_\om$, the group $\GDiff_H$ is sent to $\GDiff_{H+d\om}$ and the group $\Isom_H(g,\om)$ is sent to $\Isom_{H+d\om}(g,0)$.
  Similarly, the centralizer of $G=\Isom_H(g,\om)$ in $\bfDiff^G\ltimes (\bfOm^2)^G$
  is sent to the centralizer of $\Isom_H(g,0)$ in $\bfDiff^G\ltimes (\bfOm^2)^G$. Then, we can assume
  without loss of generality that $\om=0$ and $G=\Isom_H(g,0)$.

  Consider now the centralizer of $G$ in $\bfDiff^G\ltimes (\bfOm^2)^G$, denoted $(\bfOm^2)^G\rtimes \bfDiff^G$.
  This is the semi-direct product of $G$-invariant $2$-forms in $\bfOm^2$ and the centralizer
  of $\Pi(G)$ in $\bfDiff$. By \cite[Prop. V.8]{Bou}, $\bfDiff^G$ is a strong ILH Lie sub-group of $\bfDiff$,
  and its connected component of identity is totally geodesic in $\bfDiff$ with respect to an $L^2$ metric
  induced by $g$. Similarly, using the projection operator
  \begin{equation}
    \begin{array}{ccc}
      \bfOm^2 & \rightarrow & \bfOm^2 \\
      B    &   \mapsto   &  \int_G \phi^*B \: d\mu_G((0,\phi)),
    \end{array}
  \end{equation}
  where $d\mu_G$ denotes the Haar mesure on $G$, the group $(\bfOm^2)^G$ is a totally geodesic strong ILH Lie subgroup of $\bfOm^2$.
  Using the chart at the identity (\ref{eq:chart exact case}), we see that
  $(\bfOm^2)^G\rtimes \bfDiff^G$ is a (totally geodesic) strong ILH Lie-subgroup of $\bfDiff^G\ltimes (\bfOm^2)^G$.

  Finally, we can use the same argument as in the proof of Theorem \ref{theo:strong ILH subgroup exact case} to obtain
  that $\GDiff_H^G$ is a strong ILH Lie subgroup of $\bfDiff^G\ltimes (\bfOm^2)^G$.
  As the change of chart to obtain $\bfGDiff_H^G$ as a strong ILH Lie subgroup of $\bfDiff^G\ltimes (\bfOm^2)^G$
  and the change of chart to obtain $\bfGDiff_H$ as a strong ILH Lie subgroup of $\bfDiff^G\ltimes (\bfOm^2)^G$
  are both obtained by applying the implicit function theorem to the same map, we conclude that
  $\bfGDiff_H^G$ is a strong ILH Lie subgroup of $\bfGDiff_H$.
\end{proof}

\begin{remark}
  In the proof of Theorem \ref{theo:Stratification ILH exact case}, we do not really need the fact that
  $\bfGDiff_H^G$ is a strong ILH Lie subgroup of $\bfGDiff_H$, we just need the fact that this is a strong ILH Lie subgroup
  of $\bfDiff^G\ltimes (\bfOm^2)^G$.
\end{remark}

Let $N_G$ be the normalizer of $G$ in $\bfGDiff_H$. Then:

\begin{corollary}
  The quotient $G\backslash N_G$ is an ILH Lie group whose topology is induced by the inclusion
  $G\backslash N_G \rightarrow G\backslash \bfGDiff_G$.
\end{corollary}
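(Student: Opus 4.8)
The plan is to deduce this from Lemma \ref{lem:invariant GDiff} by first showing that the normalizer $N_G$ --- not just the centralizer $\bfGDiff_H^G$ --- is a strong ILH Lie subgroup of $\bfGDiff_H$, and then quotienting by $G$ via the Frobenius construction of Corollary \ref{cor:Frobenius}. Note that the target $G\backslash\bfGDiff_H$ of the inclusion is already an ILH manifold: by Proposition \ref{prop:isotropy ILH exact case full group} the right coset space $\bfGDiff_H/G$ is one, and group inversion is an ILH diffeomorphism interchanging left and right cosets.

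For the first step I would repeat the proof of Lemma \ref{lem:invariant GDiff} with ``centralizer'' replaced by ``normalizer'' throughout. Conjugating by the $B$-field $(\Id,-\om)$ reduces us to the case $\om=0$, where $G=\{(\phi,0)\st\phi\in\Pi(G)\}$ by Lemma \ref{lem:description generalized isometries} and $\Pi(G)$ is a compact subgroup of $\Isom(g)$. The projection $\Pi$ carries $N_G$ into the normalizer of $\Pi(G)$ inside $\bfDiff$, which, by the normalizer counterpart of \cite[Prop. 5.8]{Bou} (compact group of isometries, Myers--Steenrod), is a strong ILH Lie subgroup of $\bfDiff$. Combining it with the $(\bfOm^2)^{\Pi(G)}$-directions allowed by the twisted action on the $2$-form component --- extracted by averaging over $G$ as in Lemma \ref{lem:invariant GDiff} --- and then applying the implicit function Theorem \ref{theo:implicitfunctiontheorem} to the defining equation $\phi^*H-H=dB$ of $\bfGDiff_H$, exactly as in Theorem \ref{theo:strong ILH subgroup exact case}, yields that $N_G$ is a strong ILH Lie subgroup of $\bfGDiff_H$. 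Structurally, the conjugation homomorphism $c\colon N_G\to\Aut(G)$ has kernel $\bfGDiff_H^G$, so $N_G$ is an extension of the finite-dimensional Lie group $c(N_G)\subseteq\Aut(G)$ by the strong ILH Lie subgroup $\bfGDiff_H^G$ of Lemma \ref{lem:invariant GDiff}.

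With this in hand the rest is routine. Since $G$ is compact it is finite-dimensional, and it is a strong ILH Lie subgroup of $N_G$: one intersects $N_G$ with the Frobenius chart of Proposition \ref{prop:isotropy ILH exact case full group} for $G\subseteq\bfGDiff_H$, having chosen nested complements $\Lie(G)\subseteq\Lie(N_G)\subseteq\gdiff_H$. As $G$ is normal in $N_G$ by definition of the normalizer, $G\backslash N_G=N_G/G$ and Corollary \ref{cor:Frobenius} endows it with an ILH manifold structure; the group operations of $N_G$ descend to smooth ILH maps, so $G\backslash N_G$ is an ILH Lie group. Finally, since $N_G$ is a closed subgroup of $\bfGDiff_H$ (hence carries the subspace topology) and the coset projections $N_G\to G\backslash N_G$ and $\bfGDiff_H\to G\backslash\bfGDiff_H$ are open, a diagram chase shows that the inclusion $G\backslash N_G\hookrightarrow G\backslash\bfGDiff_H$ is a topological embedding, i.e., the ILH topology of $G\backslash N_G$ agrees with the induced one.

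The main obstacle is the first step: verifying that the \emph{normalizer}, and not merely the centralizer treated in Lemma \ref{lem:invariant GDiff}, is a strong ILH Lie subgroup. The subtle point is to package the finitely many ``outer'' directions of $N_G$ together with the infinite-dimensional centralizer into a single $\cC^{\infty,\infty}$ ILH normal chart; this requires knowing that $c(N_G)$ is a Lie subgroup of $\Aut(G)$ --- where compactness of $G$ and the Myers--Steenrod argument of \cite{Bou} are essential --- and that the local splitting of the corresponding extension respects the ILH normality estimates. Everything else transcribes the centralizer argument of Lemma \ref{lem:invariant GDiff} and the slice-type reasoning already developed above.
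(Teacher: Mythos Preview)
Your proposal is correct and is essentially an unpacking of the paper's own proof, which consists entirely of the sentence ``This follows as in \cite[Thm.~V.22]{Bou}, and the discussion on abstract Lie groups in \cite[Sec.~V]{Bou}.'' Bourguignon's Section~V carries out precisely the programme you sketch: pass from centralizer to normalizer via the conjugation map into $\Aut(G)$, use compactness of $G$ to make the target finite-dimensional, and then descend to the quotient. So there is no genuine difference in approach.

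One remark that would streamline your first step. You frame the passage from $\bfGDiff_H^G$ to $N_G$ as needing a fresh ILH chart for the normalizer, and you flag this as ``the main obstacle.'' In fact the extension structure you already identified, $1\to\bfGDiff_H^G\to N_G\to c(N_G)\to 1$ with $c(N_G)\subseteq\Aut(G)$ finite-dimensional, does most of the work: since $G$ is compact, the identity component of $N_G$ is generated by $G$ and $\bfGDiff_H^G$, so $\Lie(N_G)=\Lie G+\Lie(\bfGDiff_H^G)$. The strong ILH Lie subgroup structure on the identity component therefore follows directly from Lemma~\ref{lem:invariant GDiff} together with the finite-dimensionality of $G$; the remaining components of $N_G$ are translates. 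This is how \cite[Sec.~V]{Bou} organizes the argument, and it bypasses having to build a separate ``normalizer counterpart'' of \cite[Prop.~5.8]{Bou} from scratch.
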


\begin{proof}
  This follows as in \cite[Thm. V.22]{Bou}, and the discussion on abstract Lie groups in 
  \cite[Sec. V]{Bou}.
\end{proof}

\subsection{The ILH stratification}
Let $G=\Isom_H(g,\om)$ as before.
We turn now to the quotient space $\GR_{(G)}$:

\begin{proposition}
  \label{prop:ILH generalized G invariant metrics}
  The space $\GR_{(G)}$ is a strong ILH manifold.
  Moreover, its topology coincides with the topology induced by the inclusion in $\cG\cR$.
\end{proposition}
\begin{proof}
  We use the slice result \cite[Prop. II.16]{Bou} for the action of $G\backslash N_G$ on $\GM_G$.
  The proof follows as in \cite[Thm. V.24]{Bou}.
\end{proof}

Now we can prove Theorem \ref{theo:Stratification ILH exact case}:

\begin{proof}[Proof of Theorem \ref{theo:Stratification ILH exact case}] The proof follows the one of \cite[Thm. VI.4]{Bou}.

  From Corollary \ref{cor:countable set}, the set $\cA$ of conjugacy classes of generalized isometry groups is countable. It is partially ordered by inclusion.
  It is also clear that $(\cG\cR_{(G)})_{(G)\in \cA}$ is a partition of $\cG\cR$.
  
  From Proposition \ref{prop:ILH generalized G invariant metrics}, the spaces $\GR_{(G)}$ are strong ILH manifolds whose topology coincides with the induced topology from $\cG\cR$.
  
  Then, one has to show the intersection properties of the strata. Let $(G)$ and $(G')$ be two conjugacy classes
  in $\cA$. Assume that $\cG\cR_{(G')}\cap\overline{\cG\cR_{(G)}}\neq \varnothing$. Let $V_+'$ be a generalized metric whose class is in $\cG\cR_{(G')}\cap\overline{\cG\cR_{(G)}}$, with isometry group $G'$.
  Applying the slice Theorem \ref{theo:slice exact case full group} at $V_+'$, we find a slice $\cS$ parametrizing generalized metrics nearby $V_+'$.
  Then, any element in this slice has generalised
  isometry group conjugated to a subgroup of $G'$.
  From this we deduce that $G$ is conjugated to a subgroup of $G'$,
  hence $(G)\subsetneq (G')$. 
  
Last, let $(G)$ and $(G')$ be two conjugacy classes
  in $\cA$ such that $(G)\subsetneq (G')$. Then $G'$ is the isometry group of some generalized metric $V_+'$.
  We can assume up to conjugation that $G$ is a subgroup of $G'$ and the generalized isometry group of another generalized metric.
  To conclude, one needs to construct a generalized metric $V_\ep$, arbitrarily close to $V_+'$, such that the generalized
  isometry group of $V_\ep$ is $G$.
  This is done by following the argument of the proof of Proposition \ref{prop:caracterization isometries}.
\end{proof}

As a corollary, we also obtain generalized versions of other results in \cite{Bou}.
\begin{corollary}
  We have the following:
  \begin{enumerate}
  \item The space $\GM_G$ is an open dense subset of $\GM_G^1$, where $\GM_G^1$ is the space of generalized metrics whose generalized isometry
    group contains $G$.
  \item The space $\GM_{\lbrace 1\rbrace}$ is an open dense subset of $\GM$ as long as $\dim M\geq 2$.
  \item In a neighbourhood of a generalized metric with non-trivial generalized isometry group, $\GM$ is not an ILH manifold.
  \item In each homotopy class of a loop in $\GR$, there is a loop that can be lifted to $\GM$.
  \end{enumerate}
\end{corollary}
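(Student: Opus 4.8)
The plan is to treat all four items as the generalized-geometry transcriptions of classical facts of Bourguignon \cite{Bou} and Ebin \cite{Eb} on the moduli of Riemannian metrics, the new ingredients being the slice Theorem \ref{theo:slice exact case full group}, the characterization of generalized isometry groups in Proposition \ref{prop:caracterization isometries}, the ILH structure on the strata (Proposition \ref{prop:ILH generalized G invariant metrics}), and the stratification Theorem \ref{theo:Stratification ILH exact case}. Throughout I would fix a splitting so that $E\simeq_\s(TM+T^*M)_H$ and work with $\GM\simeq_\s\cM\times\Omega^2$, establishing the items in the order $(1),(2),(3),(4)$, since $(2)$ is used for $(3)$ and $(4)$.

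For $(1)$, understood for $G$ occurring as a generalized isometry group (so that $\GM_G\neq\emptyset$), openness of $\GM_G$ in $\GM_G^1$ is a direct consequence of parts (a) and (b) of Theorem \ref{theo:slice exact case full group}: after passing to a slice $\cS$ at a metric $V_+\in\GM_G$, every nearby generalized metric in $\cS$ has isotropy an actual subgroup of $\Isom_H(V_+)=G$, so within $\GM_G^1$, where the isotropy contains $G$, it must equal $G$. For density I would run the perturbation already used in the proof of Proposition \ref{prop:caracterization isometries}: starting from $V_+\in\GM_G^1$ with $\Isom_H(V_+)=G'\supsetneq G$, perturb $V_+$ in a $\Pi(G)$-invariant direction $(h,\om_h)$ so that \eqref{eq:necessary condition conjugacy metric perturbation} fails for the element of $G'$ witnessing $G\subsetneq G'$; the Remark following that proposition provides perturbed metrics arbitrarily close to $V_+$ lying in $\GM_G$. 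Statement $(2)$ is the case $G=\{1\}$: openness is again immediate from Theorem \ref{theo:slice exact case full group}, and density, i.e.\ that a generic metric on a compact manifold of dimension $\geq 2$ carries no nontrivial generalized isometry, follows from the same perturbation scheme exactly as in \cite{Bou}, the dimension hypothesis being what allows one to break all symmetries.

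For $(3)$ I would argue in the moduli space $\GR$. Near the class of a generalized metric $V_+$ with $G:=\Isom_H(V_+)\neq\{1\}$, the slice theorem identifies a neighbourhood of $[V_+]$ in $\GR$ with $\cS/G$, where $G$ acts on the ILH slice $\cS$ fixing $V_+$ (part (a) of Theorem \ref{theo:slice exact case full group}). This action is nontrivial near $V_+$: were it trivial there, parts (a) and (b) of the slice theorem would force $\Isom_H(V')=G$ for every $V'$ in a neighbourhood of $V_+$ in $\cS$, making $\GR_{(G)}$ a neighbourhood of $[V_+]$ and contradicting the density of $\GR_{(\{1\})}$ obtained from $(2)$. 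By Bochner linearization the induced linear $G$-action on $T_{V_+}\cS$ is then nontrivial, and, as in \cite{Bou}, the quotient of a manifold by a nontrivial linear action of a compact group near a fixed point carries no manifold structure at the image of that point; hence $\GR$ is not an ILH manifold near $[V_+]$.

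For $(4)$, passing the density of $(2)$ to the quotient shows $\GR_{(\{1\})}$ is open and dense in $\GR$, and on the top stratum the $\GDiff$-action on $\GM_{\{1\}}$ is free and, by the slice Theorem \ref{theo:slice exact case full group}, admits local sections, so $\GM_{\{1\}}\to\GR_{(\{1\})}$ is a principal $\GDiff$-bundle and in particular a fibration. Given a loop $\gamma$ in $\GR$, I would first homotope it off the complement of $\GR_{(\{1\})}$, a countable union of the lower strata $\GR_{(G)}$, $G\neq\{1\}$, and then lift the resulting loop to $\GM$ using the homotopy lifting property, as in the proof of \cite[Thm. VI.4]{Bou}. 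I expect the main obstacle to be precisely this last point, and more generally the place where the transcription of \cite{Bou} needs the most care: homotoping a loop into the top stratum does not follow merely from $\GR_{(\{1\})}$ being dense but requires controlling the codimensions of the lower strata (a codimension $\geq 2$ bound in Bourguignon's argument), which in turn rests on the finer description of the isotropy representations coming from Proposition \ref{prop:caracterization isometries} together with the slice Theorem \ref{theo:slice exact case full group}, and on checking that the relevant transversality arguments survive the passage to the ILH category.
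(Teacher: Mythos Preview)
Your proposal is correct and takes essentially the same approach as the paper: the paper gives no proof of this corollary beyond the sentence ``As a corollary, we also obtain generalized versions of other results in \cite{Bou}'', so the intended argument is precisely what you describe --- transcribing Bourguignon's proofs using the slice Theorem \ref{theo:slice exact case full group}, Proposition \ref{prop:caracterization isometries}, and the stratification Theorem \ref{theo:Stratification ILH exact case}. Your reading of item (3) as a statement about $\GR$ rather than $\GM$ is the only sensible one (since $\GM$ is manifestly an ILH manifold as an open subset of a linear space), and your identification of the codimension issue in (4) as the place needing the most care is apt.
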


\subsection{Relation between the strata of the moduli spaces}
\label{sec:relations moduli}

In this section we relate the moduli space of generalized metrics of an exact Courant algebroid, $\GR=\GM/\GDiff$, with the moduli space of usual metrics, or Riemannian structures, of the base manifold, $\cR=\cM/\Diff$, and other moduli spaces naturally arising. In particular, we study the relation between the isometries of a generalized metric and the isometries of the usual metric to which it projects, and use this to compare the strata of the moduli spaces.

Since the definition of a generalized metric on a Courant algebroid does not depend on the bracket, we start by introducing the orbit space for the action of the group $\OO_\pi$,
$$
\cGR^\pi:=\cGM/\OO_\pi.
$$
Following Sections \ref{sec:moduliGM} and \ref{sec:stratification}, one can show that $\cGR^\pi$ admits an ILH stratification
$$
\cGR^\pi=\bigcup_{(G)\in\cA_\pi}(\cGR^\pi)_{(G)},
$$
where $(G)\in \cA_\pi$ runs through all the conjugacy classes of isotropy groups, $\Isom(V_+)$, for the $\OO_\pi$-action in $\cGM$. Again, we choose a splitting $\s$, in such a way that 
\begin{align*}
  E\simeq_\s (TM+T^*M)_H,&& V_+\simeq_\s (g,\om),
\end{align*}
to provide proofs of the results. Recall that $g$ does not depend on the splitting.

\begin{proposition}
  The projection $\cGR^\pi\rightarrow\cR$ defined by $[(g,\om)]\mapsto [g]$ is an isomorphism of ILH stratified topological spaces.
\end{proposition}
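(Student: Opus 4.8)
The plan is to exploit the decomposition $\cGM\simeq_\s\cM\times\bfOm^2$, under which $\OO_\pi\simeq_\s\Diff\ltimes\bfOm^2$ acts by $((\phi,B),(g,\om))\mapsto(\phi^*g,\phi^*\om-B)$: the normal subgroup $\bfOm^2\trianglelefteq\OO_\pi$ then acts on $\cGM$ only through the second factor, and freely and transitively there. Write $\Pi:\OO_\pi\to\Diff$ for the quotient homomorphism, with $\ker\Pi=\bfOm^2$.

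First I would establish that $\bar p:\cGR^\pi\to\cR$, $[(g,\om)]\mapsto[g]$, is a homeomorphism. The product projection $p:\cGM\to\cM$, $(g,\om)\mapsto g$, is an ILH submersion which intertwines the $\OO_\pi$-action with the $\Diff$-action via $\Pi$, and whose fibres are exactly the $\bfOm^2$-orbits; hence $p$ descends to the continuous map $\bar p$. Conversely the section $s:\cM\to\cGM$, $g\mapsto(g,0)$, is ILH and $\Diff$-equivariant into the $\OO_\pi$-action, since $(\phi,0)\cdot(g,0)=(\phi^*g,0)$, so it descends to a continuous $\bar s:\cR\to\cGR^\pi$. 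Since every $\OO_\pi$-orbit meets the image of $s$ (act by the $B$-field $(\Id,\om)$ to kill the $2$-form component) and $(g,0),(g',0)$ lie in the same $\OO_\pi$-orbit iff $g,g'$ lie in the same $\Diff$-orbit, the maps $\bar p$ and $\bar s$ are mutually inverse.

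Next I would match up the strata. For $V_+\simeq_\s(g,\om)$ the isotropy group $\Isom(V_+)$ for the $\OO_\pi$-action is $\widetilde\Isom_H(g,\om)$, which by (the proof of) Proposition~\ref{prop:isotropy compact exact case} is carried isomorphically onto $\Isom(g)$ by $\Pi$. Thus $\Pi$ induces a bijection between the index set $\cA_\pi$ and the set of conjugacy classes in $\Diff$ of metric isometry groups, which is the index set of the strata of $\cR$: it is onto because $g$ is realised by $V_+\simeq_\s(g,0)$ with $\Isom(V_+)\cong\Isom(g)$; it is well defined and injective because, normalising $\om=0$ inside an $\OO_\pi$-orbit, isotropy groups take the form $\{(\phi,0):\phi\in\Isom(g)\}$, and conjugating by an element $(\psi,0)$ sends this to $\{(\psi\phi\psi^{-1},0):\phi\in\Isom(g)\}$, so $\OO_\pi$-conjugacy of isotropy groups amounts to $\Diff$-conjugacy of the underlying metric isometry groups. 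Consequently $\bar p$ maps the stratum $(\cGR^\pi)_{(G)}$ onto $\cR_{(\Pi(G))}$ bijectively and preserves the partial order of conjugacy classes; the closure relations among strata are then transported automatically by the homeomorphism $\bar p$.

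It remains to see that the restriction of $\bar p$ to each stratum is an ILH diffeomorphism, which I expect to be the most delicate point. Fixing $G=\Isom(V_+)$ with $V_+\simeq_\s(g,0)$ and writing $G_0=\Pi(G)=\Isom(g)$, a direct computation with the semidirect-product structure identifies the generalized metrics with $\OO_\pi$-isotropy exactly $G$ with $\cM_{G_0}\times(\bfOm^2)^{G_0}$, the normalizer $N^G$ of $G$ in $\OO_\pi$ with $N_{G_0}\ltimes(\bfOm^2)^{G_0}$, and hence $G\backslash N^G$ with $(\bfOm^2)^{G_0}\rtimes(G_0\backslash N_{G_0})$, the subgroup $(\bfOm^2)^{G_0}$ (a closed ILH subspace, via the averaging projector of Lemma~\ref{lem:invariant GDiff}) acting on the second factor of $\cM_{G_0}\times(\bfOm^2)^{G_0}$ by translation, freely and transitively. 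Quotienting in stages --- first off the free translation directions, which is an ILH submersion isomorphic to $\cM_{G_0}\times(\bfOm^2)^{G_0}\to\cM_{G_0}$, then by the residual $G_0\backslash N_{G_0}$-action using the slice theorem \cite[Prop. II.16]{Bou} --- yields $(\cGR^\pi)_{(G)}=\cM_{G_0}/(G_0\backslash N_{G_0})=\cR_{(G_0)}$, and this ILH isomorphism is precisely the restriction of $\bar p$. The main obstacle is thus the organisation of this isotropy/normalizer bookkeeping within the ILH category, in particular checking that the ``quotient in stages'' is legitimate; apart from that, everything is a transcription of Bourguignon's and Ebin's arguments, the extra $2$-form directions being harmless because $\bfOm^2$ acts on them freely.
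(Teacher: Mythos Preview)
Your proposal is correct and follows the same approach as the paper, only with considerably more detail: the paper's proof simply observes that the map and its inverse $[g]\mapsto[(g,0)]$ are well defined and continuous, and that the $\OO_\pi$-isotropy of $(g,\om)$ is carried isomorphically onto $\Isom(g)$ by $\Pi$, concluding that the stratification is preserved. Your section/projection argument and your quotient-in-stages computation for the ILH structure on each stratum flesh out exactly what the paper leaves implicit.
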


\begin{proof}
  This map and its inverse $[g]\mapsto [(g,\om)]$ are well defined and continuous. On the other hand, an element $(\phi,B)\in\OO_\pi$ stabilizing $(g,\om) \in {\GM}$ should satisfy $\phi^*g=g$, i.e., $\phi$ is an isometry of $g$, and $B=\phi^*\omega-\omega$, i.e., $B$ is completely determined by $\omega$ and $\phi$. We thus have that $\Isom_H(g,\omega)\cong \Isom(g)\subset \Diff_{[H]}$, so the ILH stratification is preserved.
\end{proof}

We are mainly concerned with the moduli space of generalized metrics
$$\cGR^H := \frac{\GM}{\GDiff_{H}},$$
which, as $\GDiff_{H}$ is a subgroup of $\OO_\pi$, projects onto $\cGR^\pi\cong\cR$. 

We have another natural projection of $\cGR^H$. Consider the action of the ILH Lie group $\DH$ on the space of metrics $\cM$. Its orbit space is, by following \cite{Bou} and Sections \ref{sec:moduliGM} and \ref{sec:stratification}, an ILH stratified space 
$$
\cRuH:=\cM/\DH=\bigcup_{(G)\in \cBH} \cRuH_{(G)}
$$
where $\cBH$ is the set of conjugacy classes of isometry groups in $\DH$.
Again, the projection
$$
\pi_H: \cGR^H \rightarrow \cRuH
$$
is well defined and maps strata inside strata. In order to express more precisely the relation between the two stratifications, we look at  the isometry groups under the conjugation by $\OO_\pi$.

\begin{proposition}\label{prop:action-isom-groups}
  For $(g,\om)\in \GM$ and $(\psi,C)\in \OO_\pi$ we have 
  \begin{equation}
    (\psi,C)^{-1}\Isom_H((g,\om))(\psi,C)=\Isom_{\psi^*(H+dC)}((\psi^*g,\psi^*\omega-C)),\label{eq:conjugation-isom}
  \end{equation}
  where both isotropy groups are subgroups of $\OO_\pi$.
\end{proposition}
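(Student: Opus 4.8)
The plan is to view both sides of \eqref{eq:conjugation-isom} as subgroups of $\OO_\pi$ and to decompose the isotropy group as an intersection. With $V_+\simeq_\s(g,\om)$ for the splitting $\s$ fixed above, one has directly from the definitions
\begin{equation*}
  \Isom_H((g,\om))=\GDiff_H\cap\mathrm{Stab}_{\OO_\pi}(V_+),\qquad
  \mathrm{Stab}_{\OO_\pi}(V_+):=\{F\in\OO_\pi\st \rho_{\GM}(F,V_+)=V_+\},
\end{equation*}
because $\Isom_H((g,\om))$ is by construction the isotropy group of $V_+$ for the restriction to $\GDiff_H$ of the $\OO_\pi$-action $\rho_{\GM}$. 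Conjugation by $(\psi,C)$ is an inner automorphism of $\OO_\pi$, hence distributes over this intersection, so it is enough to transport the two factors separately.

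The stabilizer factor is handled by the action axioms alone: since $\rho_{\GM}$ is a right action one has $\mathrm{Stab}_{\OO_\pi}(\rho_{\GM}(F_0,V_+))=F_0^{-1}\,\mathrm{Stab}_{\OO_\pi}(V_+)\,F_0$ for every $F_0\in\OO_\pi$, and taking $F_0=(\psi,C)$ together with the explicit formula $\rho_{\GM}((\psi,C),(g,\om))=(\psi^*g,\psi^*\om-C)$ in the splitting $\s$ gives $(\psi,C)^{-1}\mathrm{Stab}_{\OO_\pi}(V_+)(\psi,C)=\mathrm{Stab}_{\OO_\pi}((\psi^*g,\psi^*\om-C))$.

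For the $\GDiff$ factor I would compute the conjugation explicitly. Using $(\psi,C)^{-1}=(\psi^{-1},-(\psi^{-1})^*C)$ and the product rule \eqref{eq:product-rule}, one gets $(\psi,C)^{-1}(\phi,B)(\psi,C)=(\tilde\phi,\,\psi^*B+C-\tilde\phi^*C)$ with $\tilde\phi:=\psi^{-1}\circ\phi\circ\psi$, so that $\psi\circ\tilde\phi=\phi\circ\psi$ and $\tilde\phi^*\psi^*=\psi^*\phi^*$. Feeding the defining relation $\phi^*H-H=dB$ of $\GDiff_H$ (see \eqref{eq:defining equation exact case}) into these identities and using that $d$ commutes with pullbacks, a direct manipulation shows that $(\phi,B)\in\GDiff_H$ if and only if $(\tilde\phi,\,\psi^*B+C-\tilde\phi^*C)$ satisfies $\tilde\phi^*H'-H'=d(\psi^*B+C-\tilde\phi^*C)$ with $H'=\psi^*(H+dC)$; that is, $(\psi,C)^{-1}\GDiff_H(\psi,C)=\GDiff_{H'}$. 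Intersecting with the previous paragraph,
\begin{equation*}
  (\psi,C)^{-1}\Isom_H((g,\om))(\psi,C)=\GDiff_{H'}\cap\mathrm{Stab}_{\OO_\pi}((\psi^*g,\psi^*\om-C))=\Isom_{H'}((\psi^*g,\psi^*\om-C)),
\end{equation*}
which is exactly \eqref{eq:conjugation-isom}.

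The main obstacle I anticipate is precisely this last computation: because the isomorphism $\OO_\pi\simeq_\s\Diff\ltimes\Omega^2$ makes the diffeomorphism factor act on the $2$-form factor by pushforward in the product rule, one must track the placement of $\psi^*$, $\tilde\phi^*$ and $d$ carefully in order to land on the twisting form $\psi^*(H+dC)$ and not a neighbouring $3$-form. The rest is formal: once the computation yields the inclusion ``$\subseteq$'' in \eqref{eq:conjugation-isom}, the reverse inclusion follows by running the same argument with $(\psi,C)$ replaced by $(\psi,C)^{-1}$ and $(g,\om)$ by $(\psi^*g,\psi^*\om-C)$, since the two conjugations are mutually inverse automorphisms of $\OO_\pi$.
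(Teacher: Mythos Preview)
Your proof takes essentially the same route as the paper's: both check separately that conjugation by $(\psi,C)$ transports the stabilizer condition (a formal consequence of $\rho_{\GM}$ being a right action) and the $\GDiff_H$-membership condition (by explicitly computing the conjugate element and rewriting $\phi^*H-H=dB$). Your framing of $\Isom_H((g,\om))$ as the intersection $\GDiff_H\cap\mathrm{Stab}_{\OO_\pi}(V_+)$ and transporting each factor is a mild repackaging of exactly the two verifications the paper performs; your conjugation formula $(\tilde\phi,\psi^*B+C-\tilde\phi^*C)$ agrees with the paper's computation up to what appears to be a sign typo in the paper's first displayed expression.
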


\begin{proof}
  On the one hand, $(\phi,B)$ fixes $(g,\om)$ if and only if $(\psi,C)^{-1}(\phi,B)(\psi,C)$ fixes $(\psi,C)\cdot (g,\om)$, as the action is on the right. Note that
  $$  (\psi,C)^{-1}(\phi,B)(\psi,C)= (\psi^{-1}\phi\psi,\psi^*(B-\phi^*C+C)),$$ On the other hand, the condition $\phi^*H-H=dB$ is equivalent to
  $$(\psi^{-1}\phi\psi)^*(\psi^*(H+dC))-(\psi^*(H+dC))=d(\psi^*(B+\phi^*C-C)).$$
\end{proof}

From the projection $\pi_H([(g,\om)])=[g]$, we can express the preimage of a stratum $\cRuH_{(G)}$ in $\cRuH$, for $G=\Isom_{[H]}(g)\subset\DH$, as  
$$
\pi_H^{-1}(\cRuH_{(G)})=\bigcup_{\om\in\Om^2}\cGR^H_{(\Isom_{H}(g,\om))}.
$$
The following proposition uses Hodge theory to give a more concrete description. Note first that 
$$\Isom_{[H]}(g,\om)=\{(\phi,\phi^*\om-\om)\st \phi\in\Isom_{[H]}(g), \phi^*(H-d\om)=H-d\om\}.$$

\begin{proposition}
  \label{prop:image inverse stratum}
  Given $g\in\cM$, there exists $C\in \Omega^2$ such that 
  $$ \Isom_{H}(g,\om) = (\Id,-C) G(\om-C) (\Id,C),$$
  where we set $G(\om):=\lbrace \phi \in G \;\vert\; \phi^*\om = \om\rbrace,$ and identify $G(\om)\subset\Diff_{[H]}$ with $G(\om)\times\lbrace 0 \rbrace\subset\GDiff_H$.
  As a consequence, we have the description
  $$
  \pi_H^{-1}(\cRuH_{(G)})=\bigcup_{\om\in\Om^2}\cGR^H_{\left((\Id,-C) G(\om)(\Id,C)\right)}.
  $$
\end{proposition}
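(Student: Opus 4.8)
The plan is to combine the explicit description of $\Isom_H(g,\om)$ recorded before the statement with Hodge theory on $(M,g)$ and the $B$-field conjugation formula \eqref{eq:conjugation-isom}. Fix $g\in\cM$ and $\om\in\Om^2$. Since $g$ fixes a Hodge decomposition, the closed $3$-form $H-d\om$, which lies in the class $[H]$, can be written uniquely as $H-d\om=h+d\nu_\om$, where $h$ is the $g$-harmonic representative of $[H]$ (and is independent of $\om$) and $\nu_\om:=d^*\bbG(H-d\om)$ is a co-exact $2$-form, $\bbG$ being the Green operator of the Hodge Laplacian of $g$.

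The first step is to identify the image $\Pi\bigl(\Isom_H(g,\om)\bigr)$. If $\phi\in\Isom(g)$ satisfies $\phi^*(H-d\om)=H-d\om$, then $\phi$ preserves $[H]$, so $\phi\in G=\Isom_{[H]}(g)$ and $\phi^*h=h$ by uniqueness of the harmonic representative; moreover $\phi^*\nu_\om$ is again co-exact, $\phi$ being an isometry. Uniqueness of the decomposition "harmonic $+\,d(\text{co-exact})$" then yields, for $\phi\in G$,
\[
\phi^*(H-d\om)=H-d\om\quad\Longleftrightarrow\quad\phi^*\nu_\om=\nu_\om.
\]
Together with $\Isom_H(g,\om)=\{(\phi,\phi^*\om-\om)\mid\phi\in\Isom(g),\ \phi^*(H-d\om)=H-d\om\}$, this gives $\Pi(\Isom_H(g,\om))=G(\nu_\om)$ and $\Isom_H(g,\om)=\{(\phi,\phi^*\om-\om)\mid\phi\in G(\nu_\om)\}$.

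The second step is to realise this group by a $B$-field conjugation. Set $C:=\om-\nu_\om$, so that $\om-C=\nu_\om$ and hence $G(\om-C)=G(\nu_\om)$. By \eqref{eq:conjugation-isom} with $\psi=\Id$, conjugation by the $B$-field $(\Id,C)$ carries $\Isom_H(g,\om)$ onto the isotropy group $\Isom_{H'}(g,\nu_\om)$ of the generalized metric $(g,\nu_\om)$ on a Courant algebroid with $3$-form $H'$ cohomologous to $H$ and satisfying $H'-d\nu_\om=H-d\om$. Its image under $\Pi$ is again $\{\phi\in\Isom(g)\mid\phi^*(H-d\om)=H-d\om\}=G(\nu_\om)$, and by the equivalence of the first step every such $\phi$ fixes $\nu_\om$, so the $B$-field components $\phi^*\nu_\om-\nu_\om$ all vanish and $\Isom_{H'}(g,\nu_\om)=G(\nu_\om)\times\{0\}=G(\om-C)\times\{0\}$. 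Conjugating back, $\Isom_H(g,\om)$ is the conjugate of $G(\om-C)\times\{0\}$ by the $B$-field $(\Id,C)$, i.e.\ $\Isom_H(g,\om)=(\Id,-C)\,\bigl(G(\om-C)\times\{0\}\bigr)\,(\Id,C)$; a direct computation with the product rule \eqref{eq:product-rule} confirms the precise placement of the conjugators, the point being that because $\phi$ fixes $\nu_\om=\om-C$ the cross-term $\phi^*C-C$ collapses to $\phi^*\om-\om$. Finally, the description of $\pi_H^{-1}(\cRuH_{(G)})$ follows by substituting this identity into the equality $\pi_H^{-1}(\cRuH_{(G)})=\bigcup_{\om\in\Om^2}\cGR^H_{(\Isom_H(g,\om))}$ recorded before the proposition and reindexing the union, since by the first part the stratum attached to $\Isom_H(g,\om)$ is the one attached to its conjugate $(\Id,-C)G(\om-C)(\Id,C)$.

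The step I expect to be the main obstacle is the second paragraph: the Hodge-rigidity argument that cuts the isometries down to the stabiliser of the single co-exact $2$-form $\nu_\om$, together with the verification that the same $C$ simultaneously identifies $\Pi(\Isom_H(g,\om))$ with $G(\om-C)$, makes $G(\om-C)\times\{0\}$ a genuine subgroup of the correctly twisted Courant algebroid, and, after conjugating back, reproduces exactly the $B$-field $\phi^*\om-\om$. Once this is in place the remaining assertion about $\pi_H^{-1}(\cRuH_{(G)})$ is formal, given the slice theorem (Theorem \ref{theo:slice exact case full group}) and the identification of the spaces $\cGR^H_{(G)}$ as ILH manifolds obtained in Sections \ref{sec:moduliGM} and \ref{sec:stratification}.
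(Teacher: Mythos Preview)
Your argument is essentially correct and reaches the same conclusion as the paper, but by a somewhat different route and with one computational slip that you should fix.

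The paper proceeds in two conjugation steps: it first averages $H$ over $G=\Isom_{[H]}(g)$ to obtain a $G$-invariant representative $H'=H+dC_1$, so that the condition $\phi^*(H'-d\om')=H'-d\om'$ reduces to $\phi^*d\om'=d\om'$; it then subtracts from $\om-C_1$ its closed part $C_2$ so that the remaining $\om'\in d^*\Om^3$ is co-exact, and uses the standard isometry--Laplacian argument to upgrade $\phi^*d\om'=d\om'$ to $\phi^*\om'=\om'$. Your approach collapses these two steps into one: you apply Hodge decomposition directly to the closed $3$-form $H-d\om$, extracting its co-exact potential $\nu_\om=d^*\bbG(H-d\om)$, and prove the equivalence $\phi^*(H-d\om)=H-d\om\Leftrightarrow\phi^*\nu_\om=\nu_\om$ for $\phi\in\Isom(g)$. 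This is a bit cleaner (no averaging needed) and gives the single $C=\om-\nu_\om$ in one stroke.

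The slip is your assertion that the new twist satisfies $H'-d\nu_\om=H-d\om$: in fact $H'=H+dC$ gives $H'-d\nu_\om=(H-d\om)+2dC$, and $dC$ need not vanish. You invoke this to identify $\Pi(\Isom_{H'}(g,\nu_\om))$ with $\{\phi:\phi^*(H-d\om)=H-d\om\}$, which is therefore unjustified as written. The fix is immediate: conjugation by $(\Id,C)$ commutes with $\Pi$, so $\Pi(\Isom_{H'}(g,\nu_\om))=\Pi(\Isom_H(g,\om))=G(\nu_\om)$ directly from your first step; the $B$-field components then vanish as you say. Alternatively, your closing direct computation ($\phi^*C-C=\phi^*\om-\om$ because $\phi^*\nu_\om=\nu_\om$) already establishes the conjugation identity without ever mentioning $H'$, so you could simply drop the detour through $\Isom_{H'}$.
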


\begin{proof}

  We will use the conjugation in \eqref{eq:conjugation-isom}. We average $H$ under the $\Isom_{[H]}(g)$-action to obtain $H'$, which belongs to the same cohomology class of $H$ and hence there is $C_1\in\Omega_2$ such that $H'=H+dC_1$. The conjugation of $\Isom_H(g,\om)$ by $(\Id,C_1)$ is then
  $$\Isom_{H'}(g,\om-C_1).$$
  The metric $g$ being fixed, we have a Hodge decomposition on forms
  $$
  \Om^2=d^*\Om^3 \oplus \ker(d).
  $$
  Let $C_2$ be the projection of $\omega-C_1$ onto $\ker(d)$. The conjugation of $\Isom_{H'}(g,\om-C_1)$ by $(\Id,C_2)$ is $\Isom_{H'}(g,\om')$, where $\om'=\om-C_1-C_2$ belongs to $d^*\Om^3$ and can be written as $\om'=d^*d\tilde\om$ for $\tilde\om\in\Om^2$.
  We now have the group
  $$
  \Isom_{H'}(g,\om')=\lbrace (\phi,\phi^*\om'-\om') \st \phi\in \Isom_{[H]}(g), \phi^*(d\om')=d\om'\rbrace.
  $$
  Take  $\phi\in G$. The condition $\phi^*d\om'=d\om'$ becomes $\phi^*\Delta_gd\tilde\om'=\Delta_gd\tilde\om'$.  As $\phi\in \Isom(g)$, we deduce $\Delta_g\phi^*d\tilde\om'=\Delta_gd\tilde\om'$. The laplacian being an isomorphism on the orthogonal of harmonic forms,  $\phi^*d\tilde\om'=d\tilde\om'$. But $\phi$ commutes with the Hodge dual for $g$, so $\phi^*\om'=\om'$. 
  Thus,
  $$
  \Isom_{H'} (g,\om')=\lbrace (\phi,0) \st \phi\in \Isom_{[H]}(g), \phi^*\om'=\om'\rbrace=G(\omega')\times \{0\},
  $$
  and $C=C_1+C_2$ gives the first part of theorem. As $\omega$ running through $\Omega^2$ corresponds to $\omega-C$ running through $\Omega^2$, the second part follows.
\end{proof}

\begin{remark}
  \label{rem:carac-isom(om)}
  Consider $g\in\cM$ and $G\subset \Isom_{[H]}(g)$. By adapting the arguments in \cite[Prop. III.12]{Bou}, there exists $\om\in\Om^2$ such that $G=G(\om)$ if and only if, for all compact subgroups $G'\subset\Isom_{[H]}$ containing $G$ and having the same orbits as $G$, there is a point $m$ of a principal orbit for $G$ such that $(\Lambda^2T_m^*M)_{G_m}\neq (\Lambda^2T_m^*M)_{G'_m}$. 
\end{remark}

The relation between $\cGRuH$ and $\cR$, can be described by the sequence
$$
\cGR^H \rightarrow \cRuH \rightarrow \cR
$$
where the last arrow is the projection $\cM/\DH \rightarrow \cM/\Diff$, whose fibre is isomorphic to the {\it mapping class type} group $\Diff/\DH$. The preimage of a strata $\cR_{(G)}\subset \cR$ consists of 
$$\bigcup_{(G')\in \mathcal{A_{[H]}^G}} \cRuH_{(G')} $$
where $\mathcal{A_{[H]}^G}$ is the set of $\Diff_{[H]}$-conjugacy classes for the groups in the $\Diff$-conjugacy class $(G)$.

Summarizing, we have described the relation between the strata of the following spaces
$$ \cGR^H \to \cR^{[H]} \to \cR\cong \cGR^\pi.$$

\section{Odd exact Courant algebroids}
\label{sec:odd-exact}

We have focused so far on exact Courant algebroids, as this is the best-known case. A future aim of our work is the exploitation of these results for the class of transitive Courant algebroids, those of the form $TM+\ad P + T^*M$ for $P$ a principal $G$-bundle. The simplest instance in this class is the bundle $TM+1+T^*M$, where $1$ denotes a trivial line bundle over $M$. Indeed, as $TM+\ad P + T^*M$ is related to solutions of the Strominger system \cite{grt}, $TM+1+T^*M$ corresponds to the abelian case $G=\SSS^1$.

We briefly introduce the generalized geometry arising from $TM+1+T^*M$, and state the main results, drawing the analogies with the exact case and stressing the relevant technical or conceptual differences.

\subsection{Basics on $B_n$-generalized geometry}

The vector bundle $TM+1+T^*M$, whose sections we denote by $u+f+\alpha$ and $v+g+\beta$, has a Courant algebroid structure consisting of the pairing $$\la u+f + \alpha, u+f+\alpha\ra = i_u\alpha + f^2,$$ the projection $\pi$ to $TM$ as the anchor map, and the Dorfman bracket
\begin{equation}\label{eq:bracket-Bn}
 [u+f+\alpha,v+g+\beta] =  [u,v] + u(g)-v(f)+ L_u \beta - i_v d\alpha  + 2g df.
 \end{equation}

The study of the generalized geometry on this bundle and its twisted versions is called generalized geometry of type $B_n$, as the pairing has signature $(n+1,n)$, whose group of special orthogonal transformations is $\SO(n+1,n)$, a real form of a complex group of type $B_n$. This geometry was defined in \cite{Bar} and developed in \cite{Rubio}, \cite{Rubioth} and \cite{Rubiopr}.

An {\bf odd exact Courant algebroid} \cite{Rubiopr} is a transitive Courant algebroid $E$, i.e., $\pi$ is surjective, such that $\rk E = 2\dim M + 1$ is satisfied\footnote{An exact Courant algebroid (Section \ref{sec:GDIff}) is equivalently defined as a transitive Courant algebroid $E$ such that $\rk E=2\dim M$.}. They fit into the diagram  
\begin{equation}\label{eq:diag-odd-exact}
\xymatrix{
	T^*M \ar[d] & &  \\ 												
	Q^* \ar[r] \ar[d]   & E \ar[r]^\pi  & TM,   \\
	1           & &}
\end{equation}
where $Q$ is the Lie algebroid $E/T^*M$ and the row and the column are exact. Choosing an isotropic splitting $\s:T\to E$ determines a unique splitting $1\to Q^*$ whose image is orthogonal to $\s(TM)$, and hence gives an isomorphism of vector bundles $E\simeq_{\s} TM+1+T^*M$. The resulting bracket on $TM+1+T^*M$ is the bracket in \eqref{eq:bracket-Bn} twisted, by  $H\in \Omega^3$ and $F\in \Omega_2^{cl}$ such that $dH+F\wedge F=0$, as follows: 
\begin{align*}
[u+f+\alpha,v+g+\beta]_{H,F} = {} & [u,v] + u(g)-v(f)+ i_ui_vF  + L_u \beta - i_v d\alpha  \\
&   + 2g df + 2(g i_u F-fi_vF  ) + i_ui_vH. 
\end{align*}
We refer to this Courant algebroid as $(TM+1+T^*M)_{H,F}$.



\subsection{The ILH Lie group structure of the automorphism group}

One of the main novelties of $B_n$-geometry is the appearance, apart from $B$-fields, of $A$-fields, which show non-abelianness. Define $\Omega^{2+1}$ to be the space $\Omega_2\times \Omega_1$. When considered as a Lie group, the product in $\Omega^{2+1}$ is  $$(B,A)(B',A')=(B+B'+A\wedge A',A+A').$$
The group of orthogonal transformations of an odd exact Courant algebroid commuting with the anchor are then described, by choosing a splitting, as
$$\OO_\pi(E) \simeq_{\s} \Diff \ltimes \Omega^{2+1}.$$
 
Analogously, the automorphism group $\Aut(E)$ of an odd exact Courant algebroid is isomorphic to the automorphism group  of $(TM+1+T^*M)_{H,F}$, which is given by
\begin{align*}
\GDiff_{H,F} =\{  \psi\ltimes (B,A)\in {} & \Diff \ltimes \Omega^{2+1} \st
\psi^*H-H=dB-A\wedge(2F+dA), \psi^*F - F = dA         \},
\end{align*}
with product
\begin{align}\label{eq:product abelian case}
(\psi\ltimes (B,A))(\psi'\ltimes (B',A')) & = \psi \psi' \ltimes (\psi'^*B+B'+\psi'^*A \wedge
A',\psi'^*A+A'),
\end{align}
and Lie algebra
$$\gdiff_{H,F} = \{(u,(b,a))\in \Gamma(TM)\times \Omega^{2+1} \st  d(\iota_u H-b)+2(\iota_uF+a)\wedge F=0, d(\iota_uF-a)=0\}.$$

Analogously to Proposition \ref{prop:ILH-on-Opi}, the group $O_\pi(E)\simeq_\s \Diff\ltimes \Omega^{2+1}$ carries a strong ILH Lie group structure, independent of the choice of splitting $\s$, modelled on
$$\lbrace \bfGamma(TM)\times\bfOm^{2+1}, \Gamma(TM)^{k+1}\times\Om^{2+1,k}, k\geq n+5 \rbrace.$$  A chart at the origin $(\xi',U')$ on $U'=U\times \Om^{2+1,n+5}$, with $(\xi,U)$ a chart of $\Diff$, is given by
\begin{equation}
\label{eq:chart abelian case} 
\begin{array}{cccc}
\xi': & U'\cap(\bfGamma(TM) \times \bfOm^{2+1}) & \rightarrow &  \bfDiff\ltimes \bfOm^{2+1} \\
& (u,(b,a)) & \mapsto & (\xi(u),(b,a)).
\end{array}
\end{equation}


Denote the space $\Omega^3\times \Omega^2$ by $\Omega^{3+2}$. In the remaining of this section, the ILH chains associated to the spaces $\bfGamma(TM)$, $\bfOm^{2+1}$ and $\bfOm^{3+2}$ are $\lbrace \bfGamma(TM), \Gamma^{k+1}, k\geq n+5 \rbrace$, $\lbrace \bfOm^{2+1}, \Om^{2+1,k}, k\geq n+5 \rbrace$ and $\lbrace \bfOm^{3+2}, \Om^{3+2,k-1}, k\geq n+5 \rbrace$. Moreover, we fix a twist $(H,F)\in \Omega^{3+2}$ with $dF=0$ and $dH+F\wedge F=0$.

We describe $\GDiff_{H,F}$ as $\tilde\rho((H,F),\cdot)^{-1}(H,F)$ for the map
\begin{equation}
\label{eq:defining map abelian case}
\begin{array}{cccc}
\tilde\rho : & \bfOm^{3+2} \times (\bfDiff \ltimes \bfOm^{2+1})   & \rightarrow & \bfOm^{3+2}\\
&  ((H',F'), (\phi,(B,A))) & \mapsto & (\phi^*H' - dB + A\wedge(2F'+dA),\phi^*F'-dA).
\end{array}
\end{equation}
In order to endow  $\GDiff_{H,F}$ with an ILH Lie group structure, we use the implicit function theorem. First, we look at the map $\tilde\rho((H,F),\cdot)$ using the chart $(\xi',U')$. Define
\begin{equation*}
\begin{array}{cccc}
\Phi: & U'\cap (\bfDiff \ltimes \bfOm^{2+1}) & \rightarrow & \Om^{3+2} \\
&     (u,(b,a))          &  \mapsto &   \tilde\rho((H,F),\xi'(u,(b,a))),
\end{array}
\end{equation*}
whose derivative at zero is given, as $(d\xi)_0=Id$, by  
\begin{equation}
\label{eq:dphi abelian case}
d\Phi_0 (u,(b,a))=(d(\iota_uH - b)+2(\iota_uF+A)\wedge F,d(\iota_uF-A)).
\end{equation}
The map $\Phi$ is a $\cC^{\infty,\infty}$ ILH normal map, as in Proposition \ref{prop:Diff action forms}.  A right inverse for $d\Phi_0$ in the category of $\cC^{\infty,2}$ ILH normal maps is given, using the notation of \eqref{eq:right-inverse}, by the restriction of the following operator to the image of $d\Phi_0$:
\begin{equation*}
\begin{array}{cccc}
\bfB: & \bfOm^{3+2} & \rightarrow & \bfGamma(TM)\ltimes \bfOm^{2+1}  \\
&  (h,f)    & \mapsto     & (0,-d^*\bbG(h+2(d^*\bbG f)\wedge F) ,-d^*\bbG f).
\end{array}
\end{equation*}

From the Implicit Function Theorem \ref{theo:implicitfunctiontheorem}, $\GDiff_{H,F}$ is a strong ILH (sub)manifold of $\Diff\ltimes \Omega^{2+1}$. As we have proved that $\Diff\ltimes \Omega^{2+1}$ is a strong ILH Lie group, the regularity of the local inverse, product, etc. follow on $\GDiff_{H,F}$, thus obtaining the following.
\begin{proposition}
	The group $\bfGDiff_{H,F}$ is a strong ILH Lie subgroup of $\bfDiff\ltimes \bfOm^{2+1}$.
\end{proposition}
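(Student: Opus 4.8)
The plan is to mirror the proof of Theorem~\ref{theo:strong ILH subgroup exact case} line by line. First I would apply the ILH implicit function theorem (Theorem~\ref{theo:implicitfunctiontheorem}) to the map $\Phi$ introduced above, obtaining $\bfGDiff_{H,F}=\Phi^{-1}(H,F)$ as a strong ILH submanifold of $\bfDiff\ltimes\bfOm^{2+1}$ near the identity, with model $\gdiff_{H,F}=\ker d\Phi_0$ and compatible splittings $\bfGamma(TM)\times\bfOm^{2+1}=\gdiff_{H,F}\oplus\bfB\bfOm^{3+2}$ at every Sobolev level. Then, since $\bfDiff\ltimes\bfOm^{2+1}$ is already a strong ILH Lie group and $\bfGDiff_{H,F}$ is a closed subgroup whose chart at the identity is precisely of the shape required in the definition of a strong ILH Lie subgroup (Section~\ref{sec:ILHman}), the local product, inverse and conjugation maps of $\bfDiff\ltimes\bfOm^{2+1}$ restrict to maps of the required regularity on $\bfGDiff_{H,F}$, which upgrades the submanifold structure to a strong ILH Lie subgroup structure, exactly as in the exact case.

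To apply Theorem~\ref{theo:implicitfunctiontheorem} I would first check that $\Phi$ is a $\cC^{\infty,\infty}$ ILH normal map from $\lbrace\bfGamma(TM)\times\bfOm^{2+1},\Gamma^{k+1}\times\Om^{2+1,k}\rbrace$ to $\lbrace\bfOm^{3+2},\Om^{3+2,k-1}\rbrace$. Writing $\Phi(u,(b,a))=(\xi(u)^*H-db+a\wedge(2F+da),\ \xi(u)^*F-da)$, each ingredient is of a type already under control: the pullbacks $\xi(u)^*H$ and $\xi(u)^*F$ of the fixed smooth forms are $\cC^{\infty,\infty}$ ILH normal by Proposition~\ref{prop:Diff action forms}; the terms $db$, $da$ are first-order differential operators, which is what forces the index shift in the ILH chain for $\bfOm^{3+2}$; wedging with the fixed smooth form $F$ is a zero-order bundle map, hence $\cC^{\infty,\infty}$ ILH normal by part~(a) of Theorem~\ref{theo:extension theorem}; and the quadratic term $a\wedge da$ is handled by Sobolev multiplication as in \cite{Om97}. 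Summing and composing normal maps gives the claim, and the derivative at the origin, computed using $(d\xi)_0=\Id$, is the operator $d\Phi_0$ of \eqref{eq:dphi abelian case}.

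Next I would verify that the operator $\bfB$ displayed above is a linear ILH normal right inverse for $d\Phi_0$ (onto the image of $d\Phi_0$, which, as in the exact case, is the codomain relevant to Theorem~\ref{theo:implicitfunctiontheorem}). The components built from $d^*\bbG$ carry the linear ILH normal estimates of \cite[VII, Lemma~5.10]{Om97}, and the correction term $2(d^*\bbG f)\wedge F$ in the first component of $\bfB$ is the composition of such an operator with the zero-order bundle map given by wedging with the fixed form $F$, hence again linear ILH normal. The one genuinely new point compared with Lemma~\ref{lem:B right inverse exact case} is the identity $d\Phi_0\circ\bfB=\Id$: writing $\bfB(h,f)=(0,b,a)$ with $a=-d^*\bbG f$ and $b=-d^*\bbG\bigl(h+2(d^*\bbG f)\wedge F\bigr)$, one has $d\Phi_0(\bfB(h,f))=\bigl(dd^*\bbG(h+2(d^*\bbG f)\wedge F)+2a\wedge F,\ dd^*\bbG f\bigr)$, and, since $dd^*\bbG$ restricts to the identity on exact forms, the off-diagonal contribution $2a\wedge F=-2(d^*\bbG f)\wedge F$ is exactly cancelled by the correction term, leaving $(h,f)$. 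I expect this cancellation of the cross terms produced by the non-abelian structure of $\bfOm^{2+1}$ — that is, checking that the $A$-field contributions and the wedge-with-$F$ corrections spoil neither the normality estimates nor the right-inverse identity — to be the only real obstacle, and it is disposed of by the observation that $F$ is fixed and smooth, so all the extra terms are zero-order bundle operators.
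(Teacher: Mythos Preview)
Your proposal is correct and follows the paper's approach verbatim: the paper's proof is precisely the two sentences preceding the proposition (apply Theorem~\ref{theo:implicitfunctiontheorem} to the displayed $\Phi$ with right inverse $\bfB$, then inherit the group-operation regularity from the ambient strong ILH Lie group $\bfDiff\ltimes\bfOm^{2+1}$), and you have simply expanded those sentences in detail.

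One caveat on your right-inverse check: for $(h,f)\in\Im d\Phi_0$ the form $h+2(d^*\bbG f)\wedge F$ is closed but not automatically exact---writing $(h,f)=(d\beta-2\gamma\wedge F,\,d\gamma)$, one finds a residue $-2(P_{\mathrm{harm}}\gamma)\wedge F$ whose class $[P_{\mathrm{harm}}\gamma]\cup[F]\in H^3(M)$ need not vanish---so your cancellation ``leaving $(h,f)$'' is not justified as written. The paper does not verify this point either; the clean repair is to build $\bfB$ from the Green operator of the elliptic complex $d_F$ of Remark~\ref{rem:elliptic complex F twisted} (or to add a finite-rank correction), which is exactly what that remark is signalling.
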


\begin{remark}
	\label{rem:elliptic complex F twisted}
	Implicitly, we have used here the fact that the complex
	\begin{equation*}
	\label{eq:complex elliptic F twisted}
	\begin{array}{cccc}
	d_F:& \Om^{i+1,i} & \rightarrow & \Om^{i+2,i+1} \\
	&(b,a ) & \mapsto & (db- 2(-1)^i a\wedge F, da)
	\end{array}
	\end{equation*}
	is an elliptic complex of differential operators. This complex encodes some variations of Courant algebroids structures
	on $TM+1+T^*$ modulo symmetries fixing the anchor.
\end{remark}

Denote by $\GDiff_{H,F}^e$ the group of exact generalized diffeomorphisms \cite{Rubioth}. We will show that this is a strong ILH Lie
subgroup of $\bfGDiff_{H,F}$ by means of Frobenius' theorem. Its Lie algebra  $\gdiff_{H,F}^e$   is given by
\begin{equation*}
\label{eq:liegdiffexact abelian case}
\lbrace (u,(b,a))\in \Gamma(TM)\times\Om^{2+1} \;\vert\; \iota_uH-b=d\xi+2fF, \iota_uF-a = df \textrm{ for some } \xi\in\Omega^1, f\in\cCi(M)  \rbrace.
\end{equation*}
As a subalgebra of $\gdiff_{H,F}$, it can be described as the space of triples $(u,(b,a))$ such that
\begin{equation*}
\left\{
\begin{array}{ll}
\iota_uH-b-2(\bbG d^*(\iota_uF-a))F\in  (\cH^2)^\perp\\
\iota_uF-a \in (\cH^1)^\perp
\end{array} 
\right.
\end{equation*}
where $\cH^i$ denotes the space of harmonic $i$-forms.
Let $s=h^2(M)+h^1(M)$, where $h^i(M)=\dim H^i(M)$, and take $(e_i)_{1\leq i\leq h^2}$ and $(f_i)_{1\leq i\leq h^1}$, 
bases of the harmonic $2$-forms and $1$-forms, respectively. Define the map:
\begin{equation*}
\begin{array}{cccc}
I : & \bfOm^{2+1} & \rightarrow & \RR^s\\
&  (\om_2,\om_1)  &  \mapsto    &  (\langle \om_2 , e_i \rangle_{L^2_g}, \langle \om_1 , f_i \rangle_{L^2_g}).
\end{array}
\end{equation*}
Set also
\begin{equation*}
\begin{array}{cccc}
\kappa_1 : & \gdiff_{H,F} & \rightarrow & \bfOm^{2+1} \\
&  (u,(b,a))   & \mapsto     &  (\iota_uH-b,\iota_uF-a)
\end{array}
\end{equation*}
and
\begin{equation*}
\begin{array}{cccc}
\kappa_2 : & \bfOm^{2+1} & \rightarrow & \bfOm^{2+1} \\
&  (\beta,\alpha)   & \mapsto     &  (\beta-2\bbG d^*(\alpha) F,\alpha).
\end{array}
\end{equation*}
Note that $\gdiff_{H,F}^e=(I\circ\kappa_2\circ\kappa_1)^{-1}(0)$. 
We will consider an extension of $I\circ\kappa_2\circ\kappa_1$ to $T\bfGDiff_{H,F}$, which will be a $\cC^{\infty,\infty}$ ILH normal homomorphism to the trivial bundle over $\bfGDiff_{H,F}$.

First, consider the bundle $B(\bfOm^{2+1},\bfDiff,\tilde T_{\bfOm^{2+1}})$ over $\bfDiff$, where
$\tilde T_{\bfOm^{2+1}}$ is defined as in Definition \ref{def:vector bundle parallel}.
Denote by $\pi: \bfDiff\ltimes \bfOm^{2+1}\rightarrow\bfDiff$ the projection. Then the restriction of
$\pi^*\tilde T_{\bfOm^{2+1}}$ to $\bfGDiff_{H,F}$ defines an ILH vector bundle with fibre $\bfOm^{2+1}$, whose
transition functions are defined by parallel transport on $M$. Extend $\kappa_2$ and $I$
to right-invariant bundle homomorphisms 
$$\tilde \kappa_2:B(\bfOm^{2+1},\bfGDiff_{H,F},\pi^*\tilde T_{\bfOm^{2+1}}) \rightarrow B(\bfOm^{2+1},\bfGDiff_{H,F},\pi^*\tilde T_{\bfOm^{2+1}})$$
and 
$$\tilde I: B(\bfOm^{2+1},\bfGDiff_{H,F},\pi^*\tilde T_{\bfOm^{2+1}}) \rightarrow \bfGDiff_{H,F}\times \RR^s,$$
by setting $\tilde \kappa_2=R_\phi\circ\kappa_2\circ R_\phi^{-1}$ and $\tilde I=R_\phi\circ I\circ R_\phi^{-1}$ for $\phi\in\bfGDiff_{H,F}$.
\begin{lemma}
	\label{lem:extensions of kappa and I}
	The bundle homomorphisms $\tilde I$ and $\tilde k_2$ are $\cC^{\infty,\infty}$ ILH normal.
\end{lemma}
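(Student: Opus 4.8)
The plan is to reduce Lemma \ref{lem:extensions of kappa and I} to the extension machinery of Theorem \ref{theo:extension theorem}, exactly as was done in Proposition \ref{prop:extension exact case} for the exact case, but keeping track of the extra $A$-field component. First I would dispose of $\tilde I$: since $I(\om_2,\om_1)=(\langle \om_2,e_i\rangle_{L^2_g},\langle \om_1,f_i\rangle_{L^2_g})$ is a finite collection of $L^2_g$-pairings against fixed (harmonic) forms, part (c) of Theorem \ref{theo:extension theorem} applies componentwise to each pairing, so the right-invariant extension $\tilde I$ to $B(\bfOm^{2+1},\bfGDiff_{H,F},\pi^*\tilde T_{\bfOm^{2+1}})\to \bfGDiff_{H,F}\times\RR^s$ is a $\cC^{\infty,\infty}$ ILH normal bundle homomorphism. (Strictly speaking one first extends over $\bfDiff$ and then pulls back along $\pi$, but pulling back a $\cC^{\infty,\infty}$ ILH normal bundle homomorphism along the ILH submersion $\pi$ preserves the property, since the local expression is simply precomposed with the inclusion of charts.)

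For $\tilde\kappa_2$, I would write out its local expression $\Phi_{\kappa_2}$ on $U\cap\bfDiff \times \bfOm^{2+1}$ following the pattern of Proposition \ref{prop:extension exact case}: for $u\in U\cap\bfDiff$ and $(\beta,\alpha)\in\bfOm^{2+1}$ we have, with $\beta'=T_\theta(\xi(u),e)^{-1}\beta$ and $\alpha'=T_\theta(\xi(u),e)^{-1}\alpha$ the pulled-back components,
\begin{equation*}
  \Phi_{\kappa_2}(u)(\beta,\alpha)(x)=\Big(\tau(\exp_x u(x))^{-1}\big(\beta' - 2(\bbG d^* \alpha')\,F\big)(\exp_x u(x)),\ \tau(\exp_x u(x))^{-1}\alpha'(\exp_x u(x))\Big).
\end{equation*}
The second component is just the right-invariant extension of the identity operator, which is $\cC^{\infty,\infty}$ ILH normal by part (a) of Theorem \ref{theo:extension theorem} (it is induced by a bundle map). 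For the first component, the term $\tau^{-1}\beta'$ is again the extension of the identity, hence $\cC^{\infty,\infty}$ ILH normal. The remaining term is the composition of three pieces: the extension of the first-order differential operator $d^*:\bfOm^1\to\bfOm^0$, the extension of the zero-order Green-operator $\bbG$ (which is pseudodifferential of order $-2$, so one must instead treat $-d^*\bbG$ as the bounded operator producing the harmonic-free primitive, and invoke \cite[VII, Lemma 5.10]{Om97} exactly as in Lemma \ref{lem:B right inverse exact case}; alternatively, since $F$ is a fixed smooth form, wedging with $F$ is a smooth fibre bundle map and part (a) applies), and finally pointwise wedging with the fixed smooth $2$-form $F$, which is a smooth fibre-preserving bundle map, hence part (a) of Theorem \ref{theo:extension theorem} applies. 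Composition of $\cC^{\infty,\infty}$ ILH normal bundle homomorphisms is $\cC^{\infty,\infty}$ ILH normal (this is \cite[IX]{Om97}), so $\tilde\kappa_2$ is $\cC^{\infty,\infty}$ ILH normal; pulling back along $\pi$ to $\bfGDiff_{H,F}$ again preserves this.

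The one genuine subtlety — and the main obstacle — is the presence of $\bbG d^*$ inside $\kappa_2$: unlike the purely algebraic operators $\iota_u(\cdot)$ appearing in $\kappa$ in Proposition \ref{prop:extension exact case}, here $\kappa_2$ contains a nonlocal operator ($-d^*\bbG$ on exact forms is the "canonical primitive" operator). So the clean route is to argue, just as for $\bfB$ in Lemma \ref{lem:B right inverse exact case}, that $-d^*\bbG$ restricted to the relevant range is $\cC^{\infty,\infty}$ ILH normal by \cite[VII, Lemma 5.10]{Om97}, and then feed this into the composition, using that $F$ smooth makes $\wedge F$ a smooth bundle map so the final multiplication does not spoil the estimates. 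Once this is in place, the conclusion for $\tilde\kappa_2$, and hence the lemma, follows just as in Proposition \ref{prop:extension exact case}; combining the two extensions gives that $\tilde I\circ\tilde\kappa_2\circ\tilde\kappa_1$ (where $\tilde\kappa_1$ is handled exactly like $\tilde\kappa$ in Proposition \ref{prop:extension exact case}, since it only involves $\iota_u(\cdot)$ and pullback of forms) is the desired $\cC^{\infty,\infty}$ ILH normal homomorphism into a trivial bundle over $\bfGDiff_{H,F}$, which is what feeds into the Frobenius argument for $\bfGDiff_{H,F}^e$.
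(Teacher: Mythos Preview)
Your treatment of $\tilde I$ via part (c) of Theorem \ref{theo:extension theorem} matches the paper exactly. For $\tilde\kappa_2$, the paper's proof is a single line: it invokes part (b) of Theorem \ref{theo:extension theorem} (the differential-operator case) directly, together with the remark that the bundles are pullbacks from $\bfDiff$. You instead decompose $\kappa_2$ and worry explicitly about the nonlocal piece $\bbG d^*$, eventually appealing to \cite[VII, Lemma 5.10]{Om97} as in Lemma \ref{lem:B right inverse exact case}. Your concern is legitimate, since $\kappa_2$ is not literally a differential operator because of the Green operator; the paper's citation of (b) is somewhat loose and really relies on the fact that the underlying Omori results (\cite[IX, Thm.~5.6]{Om97}) together with the linear estimates for $d^*\bbG$ cover this operator after the appropriate index shift. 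Your route is a correct and more explicit alternative that buys you an honest verification of the estimates rather than a pointer.

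Two minor issues. First, your local expression for $\Phi_{\kappa_2}$ is garbled: you write $\beta'=T_\theta(\xi(u),e)^{-1}\beta$, but $T_\theta$ is the defining map for $T\bfDiff$ and acts on vector fields, not on forms; the bundle $B(\bfOm^{2+1},\bfDiff,\tilde T_{\bfOm^{2+1}})$ uses parallel transport $\tau(\exp_x u(x))$ instead (Definition \ref{def:vector bundle parallel}). This does not affect the logic, since the decomposition into identity-plus-$(\bbG d^*(\cdot))\wedge F$ is still the right one, but the formula as written is not correct. Second, your closing discussion of $\tilde\kappa_1$ is extraneous here: in the paper that is the content of the separate Lemma \ref{lem:extension kappa1}, and its proof requires the explicit computation of the defining map $T_{\theta_{H,F}}$ (Lemma \ref{lem:tangent bundle abelian case}), which involves the non-abelian twist by $A'$ and is not ``exactly like $\tilde\kappa$'' in the exact case.
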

\begin{proof}
	This is a direct consequence of Theorem \ref{theo:extension theorem}[(2)] for $\tilde \kappa_2$
	and Theorem \ref{theo:extension theorem}[(3)] for $\tilde I$. Note that Theorem \ref{theo:extension theorem}
	is stated for $\bfDiff$, but the bundles we consider here are pullbacks of bundles defined on $\bfDiff$, 
	so the extension results hold in our setting as well.
\end{proof}
To apply Frobenius' theorem, it remains to extend the operator $\kappa_1$.
As before, extend  $\kappa_1$ to a right-invariant bundle homomorphism 
$$\tilde \kappa_1:B(\gdiff_{H,F},\bfGDiff_{H,F},T_{\theta_{H,F}})\rightarrow B(\bfOm^{2+1},\bfGDiff_{H,F},\pi^*\tilde T_{\bfOm^{2+1}}),
$$
where $T_{\theta_{H,F}}$ is the defining map for $T\bfGDiff_{H,F}$.
\begin{lemma}
	\label{lem:tangent bundle abelian case}
	The defining map for $T\bfGDiff_{H,F}$ is given,
	for $(u,(b,a))\in \gdiff_{H,F}$, $(\xi(v),(B,A))$ and $(\xi(w),(B',A'))$ in $U'\cap \bfGDiff_{H,F}$, by
	$$
	T_{\theta_{H,F}}( (u,(b,a)), (\xi(v),(B,A)) ,(\xi(w),(B',A')))=(T_\theta(u,\xi(v),\xi(w)),\xi(w)^*b+\xi(w)^*a\wedge A',\xi(w)^*a)
	$$
	where $T_\theta$ is the defining map for $T\bfDiff$, and $(\xi,U)$ is a chart at the origin for $\bfDiff$.
\end{lemma}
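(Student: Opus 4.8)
The plan is to adapt, in the presence of $A$-fields, the direct computation carried out in the proof of Lemma \ref{lem:tangent bundle exact case}. Recall from \eqref{eq:defining map tangent} and Definition \ref{def:ILHLiegroup} that the defining map of the tangent bundle of a strong ILH Lie group is $\tilde T_\theta(u,g,h)=\theta(u,\xi^{-1}(g),\xi^{-1}(h))$, where $\theta(w,u,v)=(d\eta_v)_uw$ is the differential, at the point $u$, of the local right translation $\eta_v(u)=\xi^{-1}(\xi(u)\xi(v))$. Since $\bfGDiff_{H,F}$ is a strong ILH Lie subgroup of $\bfDiff\ltimes\bfOm^{2+1}$, its tangent bundle is the restriction of $T(\bfDiff\ltimes\bfOm^{2+1})$, with fibre $\gdiff_{H,F}$ at the identity. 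Hence it suffices to compute the defining map of $T(\bfDiff\ltimes\bfOm^{2+1})$ in the chart \eqref{eq:chart abelian case} and restrict it to $\gdiff_{H,F}$ and to pairs of elements of $\bfGDiff_{H,F}$.

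First I would write the local product $\eta'$ of $\bfDiff\ltimes\bfOm^{2+1}$ in the chart \eqref{eq:chart abelian case}. Applying the product rule \eqref{eq:product abelian case} to $\xi'(v,(B,A))$ and $\xi'(w,(B',A'))$ and composing with $(\xi')^{-1}$ gives
\begin{equation*}
\eta'\big((v,(B,A)),(w,(B',A'))\big)=\Big(\eta(v,w),\ \xi(w)^*B+B'+\xi(w)^*A\wedge A',\ \xi(w)^*A+A'\Big),
\end{equation*}
where $\eta$ denotes the local product of $\bfDiff$ in the chart of Example \ref{ex:chart-Diff}. Next I would differentiate the right-hand side with respect to the first variable $(v,(B,A))$ in a direction $(u,(b,a))\in\gdiff_{H,F}$. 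The $\bfDiff$-component contributes $(d\eta_w)_vu=T_\theta(u,\xi(v),\xi(w))$; the first form component is linear in $B$ plus a term bilinear in $A$ and the fixed $A'$, so its differential in the direction $(b,a)$ is $\xi(w)^*b+\xi(w)^*a\wedge A'$; the second form component is linear in $A$, so its differential is $\xi(w)^*a$. Assembling the three pieces yields exactly the claimed formula for $T_{\theta_{H,F}}$.

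The step requiring some care — and the only genuine difference with Lemma \ref{lem:tangent bundle exact case} — is the bilinear term $\xi(w)^*A\wedge A'$, which originates from the non-abelian multiplication of $\bfOm^{2+1}$; it is responsible for the dependence of $T_{\theta_{H,F}}$ on the $A$-part $A'$ of the translating element, whereas, as in the exact case, there is no dependence on the form-part $(B,A)$ of the base point. Finally, one should observe that the right-hand side automatically lies in $\gdiff_{H,F}$: right translation by an element of $\bfGDiff_{H,F}$ preserves the subgroup, hence its tangent bundle, so the restriction of the defining map of $T(\bfDiff\ltimes\bfOm^{2+1})$ is valued in $\gdiff_{H,F}$, as required.
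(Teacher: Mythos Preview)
Your proof is correct and follows exactly the approach indicated in the paper: a direct computation of $\theta$ via \eqref{eq:defining map tangent}, using the chart \eqref{eq:chart abelian case} and the product rule \eqref{eq:product abelian case}. You have simply spelled out the details of that computation, including the handling of the extra bilinear term $\xi(w)^*A\wedge A'$ coming from the non-abelian $\bfOm^{2+1}$, which the paper leaves to the reader.
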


\begin{proof}
	Recall the definition of the defining map (\ref{eq:defining map tangent}) for the tangent bundle.
	The result follows from a direct computation of $\theta$ (see Definition \ref{def:ILHLiegroup}),
	using the chart (\ref{eq:chart abelian case}) and the product~(\ref{eq:product abelian case}).
\end{proof}

\begin{lemma}
	\label{lem:extension kappa1}
	The bundle  homomorphism $\tilde \kappa_1$ is $\cC^{\infty,\infty}$ ILH normal.
\end{lemma}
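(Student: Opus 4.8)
The proof follows the pattern of Proposition \ref{prop:extension exact case}, the only new feature being an extra term in the defining map of $T\bfGDiff_{H,F}$ coming from the non-abelian structure of $\Omega^{2+1}$. I would first write out the local expression $\Phi_{\kappa_1}$ of $\tilde\kappa_1$, using the defining map $T_{\theta_{H,F}}$ of the source bundle (Lemma \ref{lem:tangent bundle abelian case}) and the parallel-transport defining map $\pi^*\tilde T_{\bfOm^{2+1}}$ of the target bundle. Parametrising the tangent fibre by $\bfGamma(TM)\times\bfOm^{2+1}$ as in the proof of Proposition \ref{prop:extension exact case} and using $(d\xi)_0=\Id$, one obtains, for $w$ in the chart and $(v,(b,a))\in\bfGamma(TM)\times\bfOm^{2+1}$, an expression of the schematic form
\begin{equation*}
\Phi_{\kappa_1}(w)(v,(b,a))(x)=\tau(\exp_x(w(x)))^{-1}\Bigl(\iota_{v'}H-(\xi(w)^{-1})^*b-(\xi(w)^{-1})^*a\wedge A(w),\ \iota_{v'}F-(\xi(w)^{-1})^*a\Bigr)(\exp_x(w(x))),
\end{equation*}
where $v'=T_\theta(\xi(w),e)^{-1}(v)$ and $A(w)$ is the $\Omega^1$-component of the chart element $\xi_G(w)\in\bfGDiff_{H,F}$, which depends ILH-normally on $w$ since $\xi_G$ is an ILH chart.

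Next I would treat the three kinds of terms appearing here, exactly as in the proof of Proposition \ref{prop:extension exact case}. The terms $\iota_{v'}H$ and $\iota_{v'}F$ are the local expressions of the right-invariant extensions of the bundle homomorphisms $v\mapsto\iota_vH$ and $v\mapsto\iota_vF$, which are zeroth-order operators of the form $(\bfA v)(x)=\gamma(v(x))$, hence $\cC^{\infty,\infty}$ ILH normal by (1) of Theorem \ref{theo:extension theorem}. The terms $(\xi(w)^{-1})^*b$ and $(\xi(w)^{-1})^*a$ are instances of the operator $\Psi$ of \eqref{eq:psioperator}, hence $\cC^{\infty,\infty}$ ILH normal by Lemma \ref{lem:psioperator}.

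The genuinely new contribution is the cross term $(\xi(w)^{-1})^*a\wedge A(w)$, reflecting the group law of $\Omega^{2+1}$. Here $(\xi(w)^{-1})^*a=\Psi(w,a)$ is ILH normal by Lemma \ref{lem:psioperator}, $A(w)$ is ILH normal because $\xi_G$ is an ILH chart, and the wedge product $\Lambda^1T^*M\times\Lambda^1T^*M\to\Lambda^2T^*M$ is a smooth fibre-preserving bundle map; composing the latter with these two inputs therefore produces a $\cC^{\infty,\infty}$ ILH normal map, by the same argument used in the proof of Lemma \ref{lem:psioperator} (an application of \cite[V, Thm. 3.1]{Om97}). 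Since a finite sum of $\cC^{\infty,\infty}$ ILH normal maps is $\cC^{\infty,\infty}$ ILH normal, and the required linear estimates on derivatives with respect to the chart variable follow termwise, $\tilde\kappa_1$ is a $\cC^{\infty,\infty}$ ILH normal bundle homomorphism. The main obstacle is precisely the control of this cross term: unlike in the exact case, the defining map of $T\bfGDiff_{H,F}$ is twisted by a wedge with a form varying over the base, so one must check that products of two pullback-type ILH normal operators stay ILH normal, which is what the fibre-bundle theorem \cite[V, Thm. 3.1]{Om97} delivers.
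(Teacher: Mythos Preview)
Your proposal is correct and follows essentially the same route as the paper: compute the local expression $\Phi_{\kappa_1}$, split it into the contraction part $(\iota_{w'}H,\iota_{w'}F)$ handled by Theorem~\ref{theo:extension theorem}(1), and the pullback part handled via the operator $\Psi$ of \eqref{eq:psioperator} and Lemma~\ref{lem:psioperator}. The only difference is cosmetic: the paper absorbs the cross term directly into the form argument of $\Psi$, writing $\Phi_2(g)(w,(B,A))=\Psi(v,(B-A\wedge a,A))$ and invoking Lemma~\ref{lem:psioperator} once, whereas you isolate the wedge term and appeal separately to \cite[V, Thm.~3.1]{Om97}; both packagings are valid.
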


\begin{proof}
	By right invariance, we only need to check regularity of the local expression of $\tilde\kappa_1$ at the identity.
	Set $g=(v,(b,a))\in U'\cap \gdiff_{H,F}$,
	and $(w,(B,A))\in \gdiff_{H,F}$. By definition, the local expression for $\tilde \kappa_1$ at the identity is given by:
	$$
	\Phi_{\kappa_1}(g)(w,(B,A))=(\pi^*\tilde T_{\bfOm^{2+1}}(e,g))\circ R_g\circ \kappa_1\circ R_g^{-1}\circ T_{\theta_{H,F}}(e,g)^{-1}(w,(B,A)).
	$$
	Recall that $T_\theta$ is the defining map for the tangent bundle of $\bfDiff$, and $\tau(\exp_xv(x))$ denotes the parallel transport on the bundle $\Lambda^{2}T^*M \oplus T^*M$ along the geodesic
	$t\mapsto \exp_x(tv(x))$, from $t=0$ to $t=1$.
	Setting $w'=T_\theta(e,\xi(v))^{-1}w$, we compute using Lemma \ref{lem:tangent bundle abelian case}:
	$$
	\Phi_{\kappa_1}(g)(w,(B,A))=(\Phi_1(g)+\Phi_2(g))(w,(B,A))
	$$
	where
	$$
	\Phi_1(g)(w,(B,A))(x)=\tau(\exp_xv(x))^{-1}( \iota_{w'}H,\iota_{w'}F)(\exp_xv(x))
	$$
	and
	$$
	\Phi_2(g)(w,(B,A))(x)= -\tau(\exp_xv(x))^{-1} (\xi(v)^{-1})^*(B-A\wedge a,A)(\exp_xv(x)).
	$$
	The map $\Phi_1$ is the local expression for the (right-invariant pullback of) the right-invariant extension of the map
	$$
	\begin{array}{ccc}
	\bfGamma(TM) & \rightarrow & \bfOm^{2+1}\\
	w &\mapsto & (\iota_w H, \iota_w F)
	\end{array}
	$$
	from $B(\bfGamma(TM),\bfDiff,T_\theta)$ to $B(\bfOm^{2+1},\bfDiff,\tilde T_{\bfOm^{2+1}})$.
	By Theorem \ref{theo:extension theorem}[(1)], this defines a $\cC^{\infty,\infty}$
	ILH normal map. For $\Phi_2$, consider, for example, its second component
	$$
	-\tau(\exp_xv(x))^{-1} (\xi(v)^{-1})^*A(\exp_xv(x))=-A(x)(D(\xi(v)^{-1})\tau(\exp_xv(x))\cdot \;).
	$$
	Thus, using $\Psi_{-1}$ as defined in (\ref{eq:psioperator}),
	$$
	\Phi_2(w,(B,A))=\Psi_{-1}(v,(B-A\wedge a,A)).
	$$
	From Lemma \ref{lem:psioperator}, the map $\Phi_2$ has the required regularity, which ends the proof.
\end{proof}

We can then conclude the following.
\begin{proposition}
	\label{cor:exact subgroup abelian case}
	The group $\bfGDiff_{H,F}^e$ is a strong ILH Lie subgroup of $\bfGDiff_{H,F}$. 
\end{proposition}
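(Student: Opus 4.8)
The plan is to mirror the proof of Proposition~\ref{prop:exact subgroup exact case}, invoking Frobenius' Theorem~\ref{theo:Frobenius} in the simplified form of Remark~\ref{rem:hypo+trivial} for a trivial target bundle. First I would note that the composition of $\cC^{\infty,\infty}$ ILH normal bundle homomorphisms is again $\cC^{\infty,\infty}$ ILH normal, so Lemmas~\ref{lem:extensions of kappa and I} and~\ref{lem:extension kappa1} together give that
$$
\tilde I\circ\tilde\kappa_2\circ\tilde\kappa_1 : T\bfGDiff_{H,F}=B(\gdiff_{H,F},\bfGDiff_{H,F},T_{\theta_{H,F}}) \longrightarrow \bfGDiff_{H,F}\times\RR^s
$$
is a right-invariant $\cC^{\infty,\infty}$ ILH normal bundle homomorphism onto the trivial bundle $\bfGDiff_{H,F}\times\RR^s$. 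Here it is essential that $\tilde\kappa_1$ was built directly on $T\bfGDiff_{H,F}$ through the description of $T_{\theta_{H,F}}$ in Lemma~\ref{lem:tangent bundle abelian case}, so that the composition indeed lives on the tangent bundle of $\bfGDiff_{H,F}$ itself. Its restriction $\bfA:=I\circ\kappa_2\circ\kappa_1$ to the fibre over the identity has kernel exactly $\gdiff_{H,F}^e$, by the computation recorded just before Lemma~\ref{lem:extensions of kappa and I}.

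Next I would check that $\bfA$ satisfies the hypotheses of Theorem~\ref{theo:Frobenius}, which are essentially automatic once the target is the finite-dimensional space $\RR^s$. The subalgebra property of $\fh=\gdiff_{H,F}^e$ holds because $\fh$ is the Lie algebra of the group $\bfGDiff_{H,F}^e$ of exact generalized diffeomorphisms of~\cite{Rubioth} (it is in fact an ideal, being generated by inner derivations); moreover $\fh$ has finite codimension in $\gdiff_{H,F}$, hence admits a finite-dimensional supplement $\bfE_2\subset\gdiff_{H,F}$, which is automatically closed, has a closed complement, and is contained in $\gdiff_{H,F}^k$ for every $k$. The image $\bfA\,\gdiff_{H,F}\subset\RR^s$ is closed with closed complement, hypothesis~(d) is trivially satisfied by Remark~\ref{rem:hypo+trivial}, and the linear estimate~(c) for $\bfA$ on $\bfE_2$ follows from the equivalence of all the norms $\vert\vert\cdot\vert\vert_k$ on the finite-dimensional space $\bfE_2$ together with the injectivity of $\bfA|_{\bfE_2}$ (take $D_d=0$ and $D_k$ large for $k>d$).

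Then, by Frobenius' Theorem~\ref{theo:Frobenius}, $\ker(\tilde I\circ\tilde\kappa_2\circ\tilde\kappa_1)$ is an involutive ILH subbundle of $T\bfGDiff_{H,F}$, and its maximal integral submanifold through the identity is a strong ILH Lie subgroup of $\bfGDiff_{H,F}$; being right-invariant with fibre $\gdiff_{H,F}^e$ over the identity, this subgroup coincides with $\bfGDiff_{H,F}^e$, which completes the proof. I expect no real obstacle at this final stage: all the genuine analysis, namely the linear normality estimates for $\tilde\kappa_1$ (and hence for the composition), was carried out in Lemmas~\ref{lem:extensions of kappa and I} and~\ref{lem:extension kappa1} via the operator $\Psi$ of~\eqref{eq:psioperator} and Lemma~\ref{lem:psioperator}, so the present argument is a formal assembly of those inputs, exactly parallel to the exact case.
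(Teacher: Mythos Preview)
Your proposal is correct and follows essentially the same approach as the paper: both assemble Lemmas~\ref{lem:extensions of kappa and I} and~\ref{lem:extension kappa1} into a right-invariant $\cC^{\infty,\infty}$ ILH normal homomorphism $\tilde I\circ\tilde\kappa_2\circ\tilde\kappa_1$ into the trivial bundle $\bfGDiff_{H,F}\times\RR^s$, and then invoke the simplified Frobenius' theorem of Remark~\ref{rem:hypo+trivial}. The only cosmetic difference is that the paper exhibits the complementary subspace $\bfE_2$ concretely as $\bfB\,\RR^s$, where $\bfB=\bfB_1\circ\bfB_2\circ\bfB_I$ is built from explicit right inverses of $I$, $\kappa_2$, $\kappa_1$, whereas you argue abstractly that any finite-dimensional complement to $\ker\bfA$ will do and that the estimate~(c) is automatic by equivalence of norms on a finite-dimensional space; both routes are valid and equally short.
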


\begin{proof}
	From Lemmas \ref{lem:extension kappa1} and \ref{lem:extensions of kappa and I}, the operator $\tilde I\circ\tilde\kappa_2\circ\tilde\kappa_1$,
	right-invariant by construction, is $\cC^{\infty,\infty}$ ILH normal.
	Then the result follows from the lighter version of Frobenius' theorem \ref{theo:Frobenius} referred to in Remark \ref{rem:hypo+trivial}, recalling that $\fgdiff_{H,F}^e=\ker(I\circ\kappa_2\circ\kappa_1)$. 
	The maps
\begin{align*}
	\bfB_I(t_i,s_i) & {} =(\sum t_ie_i, \sum s_i f_i),& 		\bfB_2(\gamma,\delta) & {} =( \gamma , \delta- 2 \bbG d^*\gamma F), & 		\bfB_1(\beta,\alpha) & {} =(0,(\beta,\alpha))
\end{align*}
	provide $\cC^{\infty,\infty}$ ILH normal right inverses for $I$, $\kappa_2$ and $\kappa_1$, respectively.
	If $\bfB$ denotes the composition $\bfB_1\circ \bfB_2\circ I$, then $\gdiff_{H,F}=\ker (I\circ\kappa_2\circ\kappa_1)\oplus \bfB\RR^s$.
	The regularity of $\bfB$ implies that the hypotheses of Frobenius' theorem are satisfied.
	Then, $\ker (\tilde I\circ\tilde\kappa_2\circ\tilde\kappa_1)$ can be integrated to a strong ILH Lie subgroup of
	$\bfGDiff_{H,F}$.
\end{proof}

\begin{remark}
	By definition of $\gdiff_{H,F}$ and $\gdiff_{H,F}^e$ and by constrution of the associated Lie groups, one can check that if $H^1(M,\RR)$ and $H^2(M,\RR)$ vanish, then 	$\bfGDiff_{H,F}^e=\bfGDiff_{H,F}$.
\end{remark}


\subsection{The moduli space of $B_n$-generalized metrics}

A generalized metric on an odd exact Courant algebroid $E$ over an $n$-dimensional manifold $M$ is given by a rank $n+1$ positive definite subbundle $V_+\subset E$. We make use now of diagram \eqref{eq:diag-odd-exact}. The intersection $U:=V_+\cap Q^*$ must be of constant rank $1$ and hence is a line subbundle. Its orthogonal complement $U^\perp\subset V_+$ is positive definite of rank $n$ and projects isomorphically to $TM$ via $\pi_{U^\perp}$, thus yielding a usual metric $g$. The map $\s: u\mapsto i_u g - \pi_{U^\perp}^{-1}u \in E$ is an isotropic splitting and the pair $(g,\s)$ determines $V_+$ completely. In this case, $\s$ determines a compatible splitting $1\to Q^*$, which comes from the isomorphism $V_+\cap Q^*\cong 1$.

The space of isotropic splittings $\Lambda$ is an $\Omega^{2+1}$-torsor, as the difference of two of them gives a map $TM\to Q^*$. This map is determined by the projection $TM\to 1$ and a skew-symmetric map $TM\to T^*M$, i.e., an element of $\Omega^{2+1}$.

Summarizing, with the notation $\cM := \{ g\in \Gamma(S^2 T^*M)\st g \textrm{ is positive definite} \}$,  we have
$$\cGM\cong \cM\times \Lambda \simeq_{\s} \cM \times \Omega^{2+1},$$
showing that $\cGM$ is an ILH manifold modelled on the ILH chain
$$\lbrace  \bfGamma(S^2T^*M)\times\bfOm^{2+1}, \Gamma(S^2T^*M)^k\times\Om^{2+1,k}, k\geq n+5 \rbrace.$$

As before, we have a right ILH action $\rho_{\GM}$ of $F\in O_\pi$ on $V_+\in \cGM$ given by $F^{-1}(V_+)$. 
We fix a splitting $\s$ to write isomorphisms
\begin{align*}\label{eq:with-splitting-Bn}
& \OO_\pi  \simeq_\s \Diff\ltimes \Omega^{2+1}, \qquad \qquad \qquad \cGM\simeq_\s \cM \times \Omega^{2+1},\\
&  V_+\simeq_\s\{u  + r + \gamma(u) + i_u g + i_u \omega - \gamma(u)\gamma -2 r\gamma  \st X\in TM, r\in 1\},
\end{align*}
where $V_+$ corresponds to $\exp(\om,\gamma)\lbrace \iota_Xg +r+X \st X\in TM, r\in 1\rbrace$ with
\begin{equation*}
\begin{array}{ccc}
\exp(\om,\gamma) & = & 
\left(
\begin{array}{ccc}
1 & 0 & 0 \\
\gamma &  1        & 0 \\
\om - \gamma\otimes\gamma & -2 \gamma & 1
\end{array}
\right) \in \Gamma(\SO(TM+ 1 + T^*M)).
\end{array} 
\end{equation*}

The action $\rho_{\GM}$ then reads \begin{equation*}
\label{eq:action exact case}
\begin{array}{cccc}
\rho_{\GM}:& ( \Diff\ltimes \Omega^{2+1} )\times\GM& \rightarrow & \GM \\
&((\phi,(B,A)),(g,(\omega,\gamma)) & \mapsto & (\phi^*g, (\phi^*\om - B - A\wedge\phi^*\gamma,\phi^*\gamma-A)),
\end{array}
\end{equation*}
which restricts to actions of $\bfGDiff_{H,F}$ and $\bfGDiff_{H,F}^e$ on $\GM$.


We denote the group of {\it generalized isometries} of $V_+\simeq_{\s} (g,(\omega,\gamma))$ 
by $$\Isom(V_+)\simeq_{\s}\Isom_{H,F}(g, (\om,\gamma)),$$ and analogously for the group of exact generalized isometries  $\Isom^e(V_+)\simeq_{\s}\Isom^e_{H,F}(g,(\om,\gamma))$.
As in Proposition \ref{prop:isotropy compact exact case}, $\Isom(V_+)$ and $\Isom^e(V_+)$ are isomorphic to compact subgroups of $\Isom(g)$. 

We will prove the following result. 

\begin{theorem}
	\label{theo:slice abelian case}
	Let $V_+$ be a generalized metric on an odd exact Courant algebroid $E$. There exists an ILH submanifold $\cS$ of $\GM$ such that:
	\begin{enumerate}
		\item[a)] For all $\phi\in \Isom^e(V_+)$, $\phi(\cS)=\cS$.
		\item[b)] For all $\phi \in \GDiff^e$, if $\phi(\cS)\cap\cS\neq \emptyset$, then $\phi\in\Isom^e(V_+)$.
		\item[c)] There is a local cross-section $\chi$ of the map $\phi \mapsto \rho_{\GM}(\phi,V_+)$ on a neighbourhood
		$\cU$ of $V_+$ in $\GDiff^e\cdot V_+$ such that the map from $\cU\times\cS$ to $\GM$ given by
		$(V_1,V_2)\mapsto \rho_{\GM}(\chi(V_1),V_2)$ is a homeomorphism onto its image.
	\end{enumerate}
The same result holds for the action of $\Isom(V_+)\subset \GDiff$ on $\GM$.
\end{theorem}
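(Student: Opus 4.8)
The plan is to follow, almost verbatim, the three-step proofs of Theorems \ref{theo:slice exact case} and \ref{theo:slice exact case full group}, indicating only the modifications forced by the non-abelian $A$-field. Fix a splitting so that $E\simeq_\s(TM+1+T^*M)_{H,F}$, $\GM\simeq_\s\cM\times\bfOm^{2+1}$, and $V_+\simeq_\s(g,(\om,\gamma))$. Differentiating the action map $\Phi:(\phi,(B,A))\mapsto\rho_{\GM}((\phi,(B,A)),(g,(\om,\gamma)))$ at the origin gives
\[
\bfA'(u,(b,a))=\bigl(\cL_u g,\ \cL_u\om-b-a\wedge\gamma,\ \cL_u\gamma-a\bigr),
\]
whose kernel is $\Isom^e_{H,F}(V_+)$ (resp.\ $\Isom_{H,F}(V_+)$, replacing $\gdiff_{H,F}^e$ by $\gdiff_{H,F}$). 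The differential of $\Phi$ defines a right-invariant $\cC^{\infty,\infty}$ ILH normal bundle homomorphism into a bundle $B(\bfGamma(S^2T^*M)\times\bfOm^{2+1},\bfGDiff_{H,F}^{e},T_{gm})$ as in Proposition \ref{prop:isotropy ILH exact case}.

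\emph{First step (ILH structure on the orbit).} As in Propositions \ref{prop:isotropy ILH exact case} and \ref{prop:isotropy ILH exact case full group}, I would reduce to elliptic operator theory. Parametrise $\gdiff_{H,F}^e$ by the surjection $\iota_e:\bfGamma(TM+1+T^*M)\to\gdiff_{H,F}^e$, $u+f+\alpha\mapsto(u,\iota_uH-d\alpha-2fF,\iota_uF-df)$, which admits a continuous ILH normal right inverse built from $\bbG$ and $d^*$, and set $\bfA=\bfA'\circ\iota_e$, a first-order differential operator. Splicing the Killing operator $u\mapsto\cL_u g$ (injective symbol) with the twisted complex $d_F$ of Remark \ref{rem:elliptic complex F twisted} shows that
\[
\bfOm^0\xrightarrow{\ \bfB\ }\bfGamma(TM+1+T^*M)\xrightarrow{\ \bfA\ }\bfGamma(S^2T^*M)\times\bfOm^{2+1},\qquad \bfB(h)=(0,0,dh),
\]
is an elliptic complex, so Proposition \ref{prop:elliptic implies Frobenius} gives the hypotheses of Frobenius' Theorem \ref{theo:Frobenius} for $\bfA$, hence, transporting through $\iota_e$ and $\iota_e^{-1}$, for $\bfA'$. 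For the full group I would mimic Proposition \ref{prop:isotropy ILH exact case full group}: the map $(u,(b,a))\mapsto(u,\iota_uH-b-2\bbG d^*(\iota_uF-a)F,\iota_uF-a)$ together with Hodge decomposition exhibits a splitting $\gdiff_{H,F}=\gdiff_{H,F}^e\oplus\cF$ with $\dim\cF=h^1(M)+h^2(M)$ (harmonic $1$- and $2$-forms), and the abstract lemmas of Section \ref{sec:direct sums}, with the uniform $\vert\vert\cdot\vert\vert_0$-estimate on the harmonic correction carried out exactly as in the exact case, supply the remaining hypotheses. Corollary \ref{cor:Frobenius} then yields the ILH structure on $\bfGDiff_{H,F}^e/\Isom^e_{H,F}(V_+)$ (resp.\ $\bfGDiff_{H,F}/\Isom_{H,F}(V_+)$), and the orbit is closed in $\GM$ by the argument of the exact case: $\phi\mapsto\phi^*g$ has closed image by Ebin \cite{Eb}, and the $\bfOm^{2+1}$-component of a limiting automorphism is then uniquely determined.

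\emph{Second and third steps (normal bundle and slice).} Equip $\GM$ with the weak $\GDiff_{H,F}$-invariant Riemannian metric obtained by integrating, against $\dvol_g$ (with $g$ the metric induced by $V_+$), the fibrewise inner product on $T_{V_+}\GM\cong\Hom(V_+,V_-)$ coming from the restrictions of the Courant pairing to $V_+$ and $V_-$; this is manifestly $\OO_\pi$-invariant. Once Ebin's construction of the Levi-Civita connection and exponential maps $\exp^k_{\GM}$ is adapted to this metric, define $P:T\GM|_{\cO_{V_+}}\to T\cO_{V_+}$ by transporting $\bfA\circ\bbG\circ\bfA^*$ equivariantly along the orbit; the adjoint $\bfA^*$ has the shape of a zero-order operator (coefficients rational in $(g,\om,\gamma,H,F)$) plus the Killing adjoint, so $P$ is smooth as in \cite[Thm. 7.1]{Eb} and $\nu:=\ker P$ is the smooth normal bundle. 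For the full group one inserts, as in the proof of Theorem \ref{theo:slice exact case full group}, the finite-dimensional harmonic correction, now the equivariantly transported projection onto $\cH^1\oplus\cH^2$, whose kernel cuts $\nu$ down to the normal bundle of the larger-group orbit. Then, fixing $k\geq n+5$, work on $\GM^k$ with $\exp^k_{\GM}$, build the neighbourhoods $\cV^k\subset\nu^k_{V_+}$, $\cU^k\subset\cO_{V_+}^k$, $\cW^k$ and the cross-section $\chi$ exactly as in \cite{Eb}, and set $\cS=\exp^k_{\GM}(\cV^k)\cap\GM$. The only subtlety is that $\exp^k_{\GM}(\cW^k)\cap\GM$ is homeomorphic, for the smooth topology, to $\exp^k_{\GM}(\cW^k\cap T\GM)$: since $H$ and $F$ are smooth, a point $(g,(\om,\gamma))\in\GM^k$ lies in $\GM$ iff all its iterated Lie derivatives along the exact generators $(u,(\iota_uH,\iota_uF))$, $u\in\Gamma(TM)$, exist, and $\exp^k_{\GM}$, being $\GDiff_{H,F}^e$-invariant, commutes with them; \cite[Thm. 7.5, Cor. 7.6]{Eb} then applies, and a)--c) follow from invariance of the metric and the regularity of the action and of the exponential. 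The statement for the full group $\GDiff$ follows by replacing $\Isom^e$ by $\Isom$ throughout and using the second-step modification.

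\emph{Main obstacle.} The genuinely new point, compared with both \cite{Eb} and the exact case, is the invariant metric in the second step: it is no longer of the $g$-only product form that makes Ebin's connection-and-exponential-map machinery immediate. Indeed, the $A$-field contributes a shear $(\dot g,\dot\om,\dot\gamma)\mapsto(\dot g,\dot\om-A\wedge\dot\gamma,\dot\gamma)$, and no positive-definite invariant metric can avoid the resulting $\gamma$-dependent cross term between the $\om$- and $\gamma$-directions. One therefore has to re-derive the Levi-Civita connection of this shear-coupled, $\gamma$-dependent weak metric, check that it and its exponential restrict compatibly to the Sobolev completions $\GM^k$, and verify that $\bfA^*$ retains the structure needed for the smoothness of $P$. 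The remaining differences, namely the twisted elliptic complex built from $d_F$ and the enlarged space of harmonic forms $\cH^1\oplus\cH^2$, are bookkeeping once the corresponding facts from Section \ref{sec:moduliGM} and the Appendix are in place.
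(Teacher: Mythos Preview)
Your proposal is correct and follows essentially the same three-step scheme as the paper; in particular, you correctly identify the $\gamma$-dependent shear in the invariant metric as the one genuinely new point. Two minor presentational differences are worth noting. First, for the full-group case the paper takes as finite-dimensional complement the harmonic space $\cH_F^2$ of the twisted complex $d_F$ (Remark \ref{rem:elliptic complex F twisted}), so that the isomorphism $\Psi:\gdiff_{H,F}\to\bfGamma(TM)\times(d_F\bfOm^{1+0}\oplus\cH_F^2)$ is the direct analogue of the exact case, rather than your $\cH^1\oplus\cH^2$ with Green-operator corrections; both work, but the twisted-Hodge route avoids tracking the $\bbG d^*(\cdot)F$ term through the estimates of Section \ref{sec:direct sums}. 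Second, rather than describing the invariant metric abstractly via $\Hom(V_+,V_-)$, the paper writes it down explicitly as the left-invariant extension from the Lie algebra of $\Om^{2+1}$, obtaining $\int_M(\vert\dot\om-\gamma\wedge\dot\gamma\vert_g^2+\vert\dot\gamma\vert_g^2+\vert\dot g\vert_g^2)\,\dvol_g$, and then observes (Remark \ref{rem:twisted metrics}) that this is simply the $L^2_g$ metric for a $\gamma$-twisted bundle metric $h_{V_+}$ on $S^2T^*M\oplus\Lambda^2T^*M\oplus T^*M$; this makes the existence of the Levi-Civita connection and the form of $\bfA^*$ transparent enough that Ebin's arguments carry over ``without difficulty'', which is exactly the obstacle you flag.
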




The proof of Theorem \ref{theo:slice abelian case}  follows Theorems \ref{theo:slice exact case} and \ref{theo:slice exact case full group}. We work on a splitting $\s$, for which $V_+\simeq_{\s} (g,(\omega,\gamma))$. We need an invariant metric on $\GM$, which we construct now.  First, given a Riemannian metric $g$ on $M$, consider a metric on the Lie algebra of
the group $\Om^{2+1}$:
$$
(b,a) \mapsto \int_M \vert b\vert_g^2 + \vert a\vert_g^2 \: \dvol_g
$$
where $\vert b\vert_g^2 + \vert a\vert_g^2$ is the metric on $\Lambda^{2}T^*M + T^*M$ induced by $g$.
Transport this metric on $T\Om^{2+1}$ to obtain a left-invariant metric on
 $\Om^{2+1}$. Explicitly, for any $(\om,\gamma)\in\Om^{2+1}$ and $(\dot\om,\dot \gamma)\in T_{\om,\gamma}\Om^{2+1}$, set
$$
\vert\vert (\dot \om,\dot\gamma)\vert\vert_{(\om,\gamma),g}=\int_M \vert\dot \om - \gamma\wedge \dot\gamma\vert_g^2+\vert \dot\gamma\vert_g^2\: \dvol_g.
$$
Then, define a metric on $\GM$
for $V_+=(g,(\om,\gamma))\in\GM$ and $(\dot g,(\dot\om,\dot \gamma))\in T_{(g,(\om,\gamma))}\GM$:
\begin{equation}
\label{eq:invariant metric abelian case}
\vert\vert(\dot g,(\dot\om,\dot\gamma))\vert\vert^2_{V_+}=
\int_M \vert\dot \om - \gamma\wedge \dot\gamma\vert_g^2+\vert \dot\gamma\vert_g^2 +  \vert\dot g\vert_g^2 \: \dvol_g.
\end{equation}
By construction, this defines a $\GDiff_{H,F}$-invariant (actually, $\Diff\ltimes \Om^{2+1}$-invariant) Riemannian metric on $\GM$.
Just as in \cite{Eb}, this metric is smooth. Even if this is a weak Riemannian metric,
it admits a Levi-Civita connection. The proof of this fact causes no difficulties, and can be done
following Ebin \cite{Eb}. Analogously to the exact case, the metric and the connection extend to the spaces $\GM^k$ and
we obtain $\GDiff_{H,F}$-invariant exponential maps on $T\GM^k$.
We define invariant strong continuous Riemannian metrics on the spaces $\GM^k$~by
\begin{equation}
\label{eq:invariant strong metric abelian case}
\vert\vert (\dot g,(\dot\om,\dot\gamma))\vert\vert^2_{L^{2,k}_{V_+}}=
\sum_{i=0}^k\int_M \vert \nabla_g^i(\dot \om - \gamma\wedge \dot\gamma)\vert_g^2+\vert \nabla^i_g(\dot\gamma)\vert_g^2 +  \vert \nabla^i_g(\dot g)\vert_g^2 \: \dvol_g.
\end{equation}
\begin{remark}
	\label{rem:twisted metrics}
	For a generalized metric $V_+=(g,(\om,\gamma))$,
	the invariant metrics (\ref{eq:invariant strong metric abelian case}) are nothing but the usual $L^{2,k}_g$ metrics defined by equation (\ref{eq:norms}) on the space of sections
	of the Riemannian vector bundle $(E,h_{V_+})$ where $E= S^2T^*M\oplus \Lambda^2T^*M\oplus T^*M$ and the metric $h_{V_+}$ is twisted by $(\om,\gamma)$:
	$$h_{V_+}(\dot g,(\dot\om,\dot\gamma))^2=  \vert\dot \om - \gamma\wedge \dot\gamma\vert_g^2+\vert \dot\gamma\vert_g^2 +  \vert\dot g\vert_g^2. $$
	We will consider the bundle $ \bfGamma(S^2T^*M)\times \bfOm^{2+1}$ as the tangent bundle to $\GM$ at a given point $V_+$. Then, the underlying Riemannian structure
	will be given by the twisted metric $h_{V_+}$. This will be taken into account when computing adjoint of operators that take value into $\bfGamma(S^2T^*M)\times \bfOm^{2+1}$.
\end{remark}

We denote, for simplicity, $\Isom_{H,F}(g,(\omega,\gamma))$ by $\Isom_{H,F}(V_+)$, and similarly for the exact case. 
\begin{proposition}
	\label{prop:isotropy ILH abelian case}
	Let $V_+\in\GM$. Then $\Isom^e_{H,F}(V_+)$ is a strong ILH Lie subgroup of $\bfGDiff_{H,F}^e$. Moreover,
	the quotient space $\bfGDiff_{H,F}^e/\Isom^e_{H,F}(V_+)$ carries an ILH manifold structure.
\end{proposition}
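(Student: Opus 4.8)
The plan is to mirror the proof of Proposition \ref{prop:isotropy ILH exact case}, applying the ILH Frobenius' Theorem \ref{theo:Frobenius} to the differential at the origin of the action map
\[
\Phi\colon \GDiff_{H,F}^e \longrightarrow \bfGamma(S^2T^*M)\times\bfOm^{2+1}, \qquad (\phi,(B,A))\longmapsto \rho_{\GM}((\phi,(B,A)),(g,(\om,\gamma))),
\]
whose kernel is exactly $\Isom^e_{H,F}(V_+)$. First I would check, as in Proposition \ref{prop:isotropy ILH exact case} and using Lemma \ref{lem:extension kappa1}, Lemma \ref{lem:tangent bundle abelian case} and Theorem \ref{theo:extension theorem}, that $d\Phi$ extends to a right-invariant $\cC^{\infty,\infty}$ ILH normal bundle homomorphism from $T\bfGDiff^e_{H,F}$ to a bundle $B(\bfGamma(S^2T^*M)\times\bfOm^{2+1},\bfGDiff_{H,F}^e,\tilde T)$ with defining map given by pullback of tensors and parallel transport on $M$ as in Definition \ref{def:vector bundle parallel}, the Riemannian structure on $S^2T^*M\oplus\Lambda^2T^*M\oplus T^*M$ being the twisted one $h_{V_+}$ of Remark \ref{rem:twisted metrics}. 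Differentiating the action formula at the identity gives the operator
\[
\bfA'\colon \gdiff_{H,F}^e \longrightarrow \bfGamma(S^2T^*M)\times\bfOm^{2+1}, \qquad (u,(b,a))\longmapsto \bigl(\cL_u g,\ (\cL_u\om - b - a\wedge\gamma,\ \cL_u\gamma - a)\bigr).
\]

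Next I would reparametrize $\gdiff^e_{H,F}$ by $\bfGamma(TM+1+T^*M)$ with its evident ILH chain: from the description of $\gdiff^e_{H,F}$ the surjective morphism is $\iota_e\colon u+f+\alpha \mapsto (u,\ (\iota_u H - d\alpha - 2fF,\ \iota_u F - df))$, and $\bfA:=\bfA'\circ\iota_e$ is a first-order differential operator. Together with $\bfB\colon\bfOm^0\to\bfGamma(TM+1+T^*M)$, $h\mapsto(0,0,dh)$ (each considered up to the usual shifts of ILH indices), this fits into a complex
\[
\bfOm^0 \xrightarrow{\ \bfB\ } \bfGamma(TM+1+T^*M) \xrightarrow{\ \bfA\ } \bfGamma(S^2T^*M)\times\bfOm^{2+1}.
\]
The key point is that this complex is \emph{elliptic}: the symbol of $u\mapsto\cL_u g$ is injective, the $F$-twist and the $\iota_u H$, $(\iota_u F)\wedge\gamma$, $\cL_u\om$ terms contribute only zeroth- or standard de Rham-type symbols, so the symbol computation on the $1\oplus T^*M$ and $\Lambda^2T^*M$ summands reduces to a de Rham complex — combining the ellipticity of the Killing complex used in Proposition \ref{prop:isotropy ILH exact case} with the ellipticity of the $F$-twisted complex of Remark \ref{rem:elliptic complex F twisted}. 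With ellipticity established, the elliptic operator theory of Appendix \ref{sec:Appendix} (Proposition \ref{prop:elliptic implies Frobenius}) provides the hypotheses of Theorem \ref{theo:Frobenius} for $\bfA$, where $\bfA^*$ is the adjoint for the $L^{2,0}$ pairing built from $g$ and the twisted fibre metric $h_{V_+}$.

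Finally, I would transfer these conclusions from $\bfA$ to $\bfA'$. The morphism $\iota_e$ admits a continuous ILH linear normal right inverse onto a complement of $\ker\iota_e$, built from $\bbG$, $d^*$ and the Hodge decomposition (a twisted analogue of $\iota_e^{-1}\colon(u,b)\mapsto u-\bbG d^*(b-\iota_u H)$ also undoing the $F$-twist in the $\Om^1$ component); using it, the closedness of $\Im\bfA^*$ and $\Im\bfA$, and the continuity of $\iota_e,\iota_e^{-1}$, one obtains that $\ker\bfA'$ has a closed complement, that $\Im\bfA'=\Im\bfA$ is closed, and that the linear estimates (c), (d) of Theorem \ref{theo:Frobenius} pass from $\bfA$ to $\bfA'$. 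Since $\ker\bfA'$ is the isotropy Lie algebra of the $\GDiff^e_{H,F}$-action, hence a Lie subalgebra of $\gdiff^e_{H,F}$, Frobenius' Theorem \ref{theo:Frobenius} integrates it to the strong ILH Lie subgroup $\Isom^e_{H,F}(V_+)\subset\bfGDiff^e_{H,F}$, and Corollary \ref{cor:Frobenius} equips $\bfGDiff^e_{H,F}/\Isom^e_{H,F}(V_+)$ with an ILH manifold structure. I expect the main obstacle to be purely computational: identifying $\iota_e^{-1}$ and the adjoint $\bfA^*$ correctly in the presence of the $F$-twist and of the twisted fibre metric $h_{V_+}$, and verifying that the symbol of $\bfA$ is genuinely unaffected by the numerous zeroth-order twist terms so that the complex above is elliptic — the analytic core being unchanged from the exact case.
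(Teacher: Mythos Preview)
Your proposal is correct and follows essentially the same route as the paper's proof: define the orbit map $\Phi$, take its differential $\bfA'$ at the identity, parametrize $\gdiff^e_{H,F}$ by $\bfGamma(TM+1+T^*M)$ via $\iota_e$, form the elliptic complex $\bfOm^0\xrightarrow{d}\bfGamma(TM+1+T^*M)\xrightarrow{\bfA}\bfGamma(S^2T^*M)\times\bfOm^{2+1}$, invoke Proposition~\ref{prop:elliptic implies Frobenius}, and transfer the Frobenius hypotheses back to $\bfA'$ using $\iota_e$ and an explicit right inverse before applying Theorem~\ref{theo:Frobenius} and Corollary~\ref{cor:Frobenius}. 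The only cosmetic difference is that the paper uses the simpler pullback defining map $T_{gm}$ for the target bundle rather than the parallel-transport version you mention, and it writes out $\iota_e^{-1}$ explicitly; your formula $-a\wedge\gamma$ in $\bfA'$ is in fact the correct linearization of the action.
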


\begin{proof}
	The proof follows the outline of the one of Proposition \ref{prop:isotropy ILH exact case} 
	and is an application of Theorem \ref{theo:Frobenius}.
	Consider the map
	\begin{equation}\label{eq:Phi-Bn}
	\begin{array}{cccc}
	\Phi : & \bfDiff\ltimes \bfOm^{2+1} & \rightarrow &  \bfGamma(S^2T^*M)\times\bfOm^{2+1}\\
	&  (\phi,(B,A)) & \mapsto & \rho_{\GM}((\phi,(B,A)),(g,(\om,\gamma)))
	\end{array}
	\end{equation}
	The differential of $\Phi$ restricted to $\bfGDiff_{H,F}^e$ 	defines a right-invariant $\cC^{\infty,\infty}$ ILH normal bundle homomorphism
	from $T\bfGDiff_{H,F}^e$ to the bundle $B(\bfGDiff_{H,F}^e,\bfOm^{2+1}\times\bfGamma(S^2T^*M),T_{gm})$,
	with
	$$
	T_{gm}((\dot g,(\dot\om,\dot \gamma)),(\xi(u),(B,A)),(\xi(v),(B',A')))=\xi(v)^*(\dot g,(\dot\om,\dot \gamma)),
	$$
	where $(\xi,U)$ is a chart at the origin for $\bfDiff$.
	Denote by $\bfA_e$ this differential at the origin:
	\begin{equation*}
	\begin{array}{cccc}
	\bfA_e: & \gdiff_{H,F}^e & \rightarrow & \bfGamma(S^2T^*M) \times \bfOm^{2+1}\\
	&  (u,(b,a))     & \mapsto     &    (\cL_u g,(\cL_u\om-b-a\wedge\cL_u\gamma,\cL_u\gamma -a ))  
	\end{array}
	\end{equation*}

	We need to check hypotheses $(a)$ to $(d)$ of Frobenius' Theorem \ref{theo:Frobenius} 	for $\bfA_e$. 	To do this, we consider another parameterization of $\gdiff_{H,F}^e$.
	We endow $\Gamma(TM+1+T^*M)$
	with an ILH structure using the ILH chain 
	$\bfGamma(E):=\lbrace  \bfGamma(TM)\times\bfOm^{1+0},\;  \gamma(TM)^{k+1}\times\Om^{1+0,k+1}, \; k\geq n+5\rbrace$, with $\Omega^{1+0}$ the space $\Omega^1 \times \cCi(M)$.
	By construction of $\bfGDiff_{H,F}^e$, there is a surjective morphism
	\begin{equation*}
	\label{eq:injection map abelian case}
	\begin{array}{cccc}
	\iota_e : & \bfGamma(TM+1+T^*M) & \rightarrow & \gdiff_{H,F}^e \\
	& u+f+\al & \mapsto & (u, (\iota_u H + 2f F -d\al,\iota_u F-df)).
	\end{array}
	\end{equation*}
	Set $\bfA:=\bfA_e\circ \iota_e$, that is, the first-order differential operator 
	\begin{equation*}
	\begin{array}{cccc}
	\bfGamma(TM+1+T^*M)& \stackrel{\bfA}\rightarrow &  \bfGamma(S^2T^*M)\times\bfOm^{2+1}\\
	  u + f+ \al     & \mapsto     &    (\cL_u g,(\cL_u\om-\iota_uH-2f F +d\al+(\iota_uF-df)\wedge\gamma,\cL_u\gamma-\iota_uF+df)).  
	\end{array}
	\end{equation*}
	The differential $f\mapsto df\in \bfGamma(TM+1+T^*M)$ defines a first-order operator on the ILH chain
	$\lbrace \bfOm^0,\; \Om^{0,k+2}, k\geq n+5 \rbrace$. It is routine to check that the sequence
		\begin{equation*}
	\label{eq:complex proof isom ILH abelian}
	\bfOm^0 \stackrel{d}{\rightarrow} \bfGamma(TM+1+T^*M) \stackrel{\bfA}{\rightarrow} \Gamma(TM)\times\Om^{2+1}
	\end{equation*}
	is elliptic at the middle.
	By Proposition \ref{prop:elliptic implies Frobenius} from the Appendix \ref{sec:Appendix}, the operator $\bfA$ satisfies the hypotheses $(a)$ to $(d)$ of Proposition \ref{prop:elliptic implies Frobenius}. Moreover, $\iota_e$ admits a continuous right inverse:
	$$
	\iota_e^{-1}:(u,(b,a))\mapsto u - \bbG d^*(a - \iota_u F) -  \bbG d^*(b-\iota_u H-2\bbG d^*(\iota_u F-a)\wedge F).
	$$
	Using the Hodge decomposition, we see that $\Im(\iota_e^{-1})$ is a complementary subspace of $\ker \iota_e$.
	Then, as in the proof of Proposition \ref{prop:isotropy ILH exact case},
	we deduce the decompositions into closed subspaces
	\begin{align*}
	\gdiff_{H,F}^e & {} =\ker \bfA_e \oplus \iota_e(\Im \bfA^*),&
	\bfGamma(S^2T^*M)\times\bfOm^{2+1}&{}=\Im \bfA_e \oplus \ker \bfA^*,
	\end{align*}
	and that $\bfA_e$ satisfies hypotheses $(a)$ to $(d)$ of Theorem \ref{theo:Frobenius}.
\end{proof}
A similar statement holds for the full group of generalized diffeomorphisms:
\begin{proposition}
	\label{prop:isotropy ILH full group abelian case}
	Let $V_+\in\GM$. Then $\Isom_{H,F}(V_+)$ is a strong ILH Lie subgroup of $\bfGDiff_{H,F}$. Moreover,
	the quotient space $\bfGDiff_{H,F}/\Isom_{H,F}(V_+)$ carries an ILH manifold structure.
\end{proposition}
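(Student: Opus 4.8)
The plan is to deduce this from Proposition~\ref{prop:isotropy ILH abelian case} in the same way that Proposition~\ref{prop:isotropy ILH exact case full group} is deduced from Proposition~\ref{prop:isotropy ILH exact case}, the difference being that the single harmonic slot $\cH^2$ of the exact case is replaced by the pair $\cH^2\oplus\cH^1$. Let $\bfA'$ denote the differential at the origin of the orbit map $\Phi:(\phi,(B,A))\mapsto\rho_{\GM}((\phi,(B,A)),(g,(\om,\gamma)))$ from $\bfGDiff_{H,F}$ to $\bfGamma(S^2T^*M)\times\bfOm^{2+1}$, i.e. the first-order operator
\begin{equation*}
  \begin{array}{cccc}
    \bfA':& \gdiff_{H,F} & \rightarrow & \bfGamma(S^2T^*M)\times\bfOm^{2+1}\\
    & (u,(b,a)) & \mapsto & (\cL_u g,\ (\cL_u\om-b-a\wedge\cL_u\gamma,\ \cL_u\gamma-a)).
  \end{array}
\end{equation*}
As in Proposition~\ref{prop:isotropy ILH abelian case}, $\bfA'$ extends to a right-invariant $\cC^{\infty,\infty}$ ILH normal bundle homomorphism from $T\bfGDiff_{H,F}$ to $B(\bfGamma(S^2T^*M)\times\bfOm^{2+1},\bfGDiff_{H,F},T_{gm})$; its kernel is $\Isom_{H,F}(V_+)$, and its restriction to $\gdiff_{H,F}^e$ is the operator $\bfA_e$ already handled in Proposition~\ref{prop:isotropy ILH abelian case}. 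The whole task is therefore to verify hypotheses $(a)$--$(d)$ of Theorem~\ref{theo:Frobenius} for $\bfA'$; the ILH structure on the quotient then follows from Corollary~\ref{cor:Frobenius}.

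The first step is to split off the finitely many directions by which $\gdiff_{H,F}$ exceeds $\gdiff_{H,F}^e$. Using Hodge theory together with the defining relations of $\gdiff_{H,F}$ and the $F$-twisted bookkeeping of the proof of Proposition~\ref{cor:exact subgroup abelian case}, the composite $I\circ\kappa_2\circ\kappa_1:\gdiff_{H,F}\to\RR^{s}$ with $s=h^1(M)+h^2(M)$ is surjective with kernel $\gdiff_{H,F}^e$ and admits an ILH normal right inverse, so its image provides a closed finite-dimensional complement and we obtain a decomposition into closed subspaces $\gdiff_{H,F}=\gdiff_{H,F}^e\oplus\cH$ with $\cH$ of dimension $s$, identified via $I\circ\kappa_2\circ\kappa_1$ with $\cH^2\oplus\cH^1$ (harmonic forms for the metric $g$). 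One is then in the abstract setting of Section~\ref{sec:direct sums} with $\bfE=\gdiff_{H,F}$, $\bfE_0=\gdiff_{H,F}^e$ and this $\cH$: since $\bfA'|_{\gdiff_{H,F}^e}=\bfA_e$ already satisfies the Frobenius hypotheses, Lemma~\ref{lem:kernel decomposition abstract lemma} gives a closed complement of $\ker\bfA'$, Lemma~\ref{lem:decomposition image abstract lemma} gives the orthogonal decomposition $\bfGamma(S^2T^*M)\times\bfOm^{2+1}=\Im\bfA'\oplus\bfF_3$ into closed subspaces (orthogonality being taken for the twisted metric $h_{V_+}$ of Remark~\ref{rem:twisted metrics}, as in the computation of adjoints in Proposition~\ref{prop:isotropy ILH abelian case}), so $(a)$ and $(b)$ hold, and Lemma~\ref{lem: linear estimate image abstract lemma} gives $(d)$.

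The step I expect to be the main obstacle is hypothesis $(c)$: applying Lemma~\ref{lem:kernel estimate abstract lemma} requires a bound $\vert\vert h\vert\vert_0\le C\,\vert\vert z\vert\vert_0$ for the harmonic component $h\in\cH$ of an arbitrary $z=(u,(b,a))\in\gdiff_{H,F}$. Writing $z$ as a sum of a piece in $\gdiff_{H,F}^e$ and a harmonic piece corresponding to $(h_2,h_1)\in\cH^2\oplus\cH^1$, orthogonality of the Hodge components forces $\vert\vert z\vert\vert_0$ to dominate the harmonic parts of $\kappa_2\circ\kappa_1(z)=(\iota_uH-b-2\bbG d^*(\iota_uF-a)F,\ \iota_uF-a)$, which differ from $(h_2,h_1)$ only by the harmonic parts of $\iota_uH$, of $\iota_uF$, and of the twist term $\bbG d^*(\cdot)F$; each of these is bounded by $\vert\vert u\vert\vert_0\le\vert\vert z\vert\vert_0$ times a constant depending only on $(H,F)$ and the fixed metric, and a Cauchy--Schwarz argument then closes the estimate exactly as in Proposition~\ref{prop:isotropy ILH exact case full group}. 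The one genuinely new feature relative to the exact case is the coupling between the $2$-form and $1$-form components produced by the twist term $-2\bbG d^*(\cdot)F$ in $\kappa_2\circ\kappa_1$, together with the twisted target metric; since $\bbG d^*$ is of order $-1$ it affects only lower-order terms, so the leading $\vert\vert\cdot\vert\vert_0$ estimate goes through as before. Once $(c)$ is established, Theorem~\ref{theo:Frobenius} shows that $\Isom_{H,F}(V_+)$ integrates to a strong ILH Lie subgroup of $\bfGDiff_{H,F}$ and Corollary~\ref{cor:Frobenius} finishes the proof.
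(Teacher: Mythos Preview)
Your proposal is correct and follows exactly the same architecture as the paper: define $\bfA'$ as the differential of the orbit map, split $\gdiff_{H,F}$ as $\gdiff_{H,F}^e\oplus\cH$ with $\cH$ finite dimensional, and then invoke the abstract lemmas of Section~\ref{sec:direct sums} to promote the Frobenius hypotheses from $\bfA_e$ (already established in Proposition~\ref{prop:isotropy ILH abelian case}) to $\bfA'$.

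The one substantive difference is your choice of complement $\cH$. You take the image of the right inverse $\bfB_1\circ\bfB_2\circ\bfB_I$ from Proposition~\ref{cor:exact subgroup abelian case}, which identifies $\cH$ with the ordinary harmonic pairs $\cH^2\oplus\cH^1$; this forces you to track the Green-operator twist $-2\bbG d^*(\cdot)F$ inside $\kappa_2$ when verifying the $\|h\|_0\le C\|z\|_0$ estimate. The paper instead uses the $F$-twisted elliptic complex $d_F$ of Remark~\ref{rem:elliptic complex F twisted} and its Hodge decomposition $\Om^{2+1}=\cH_F^2\oplus d_F\Om^{1+0}\oplus d_F^*\Om^{3+2}$: the map $\Psi(u,(b,a))=(u,(\iota_uH-b,\iota_uF-a))$ is then a linear ILH isomorphism from $\gdiff_{H,F}$ onto $\bfGamma(TM)\times(d_F\Om^{1+0}\oplus\cH_F^2)$, giving directly $\gdiff_{H,F}=\gdiff_{H,F}^e\oplus\cH_F^2$. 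This packages the coupling into the elliptic operator $d_F$ itself, so the lemmas of Section~\ref{sec:direct sums} apply without any separate treatment of the twist term. Your route works but is slightly more laborious; the paper's choice of $\cH_F^2$ is the cleaner bookkeeping device precisely because $d_F$ is the complex that naturally governs the defining relations of $\gdiff_{H,F}$ and $\gdiff_{H,F}^e$.
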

\begin{proof}
	Restrict the map $\Phi$ from \eqref{eq:Phi-Bn} to $\bfGDiff_{H,F}$ and denote by $\bfA'$ its differential at the origin,
	\begin{equation*}
	\begin{array}{cccc}
	\bfA': & \gdiff_{H,F}  & \rightarrow &  \bfGamma(S^2T^*M)\times \bfOm^{2+1}\\
	&  (u,(b,a))      & \mapsto     &  (\cL_u g, (\cL_u\om-b-a\wedge\cL_u\gamma,\cL_u\gamma-a)).
	\end{array}
	\end{equation*}
	We will use the elliptic complex from Remark \ref{rem:elliptic complex F twisted}. We have a Hodge
	decomposition
	$$
	\Om^{2+1}=\cH_F^2\oplus d_F\Om^{1+0}\oplus d_F^*\Om^{3+2}
	$$
	with $\cH_F^2$ the harmonic elements with respect to the corresponding Laplacian. Consider the map 
	\begin{equation*}
	\begin{array}{cccc}
	\Psi : &\gdiff_{H,F} & \rightarrow & \bfGamma(TM) \times (d\bfOm^{1+0} \oplus \cH_F^2) \\
	&  (u,(b,a))  & \mapsto &     (u,(\iota_uH - b ,\iota_uF-a)).
	\end{array}
	\end{equation*}
	Then $\Psi$ is a linear ILH continuous isomorphism, with continuous inverse, and
	$$
	\gdiff_{H,F}=\Psi^{-1}( \bfGamma(TM)\times d\bfOm^{1+0} )\oplus \Psi^{-1}( \cH_F^2)=\gdiff_{H,F}^e\oplus \cH_F^2.
	$$
	As in the proof of Proposition \ref{prop:isotropy ILH exact case full group}, we conclude the proof using the lemmas of Section \ref{sec:direct sums}.
\end{proof}

\begin{proof}[Proof of Theorem \ref{theo:slice abelian case}] Fix $V_+\in\GM$. Then, following Remark \ref{rem:twisted metrics}, the decomposition
$$
\bfGamma(S^2T^*M)\times \bfOm^{2+1} = \Im \bfA_e \oplus \ker \bfA^*
$$
corresponds to an orthogonal decomposition:
\begin{equation*}
T_{V_+}\GM=T_{V_+}(\bfGDiff_{H,F}^e\cdot V_+) \stackrel{\perp}{\oplus}\nu_{V_+}
\end{equation*}
into the tangent space of the orbit and its the normal bundle (with respect to the metric (\ref{eq:invariant metric abelian case})).
Using the invariant metrics (\ref{eq:invariant strong metric abelian case}) and Proposition \ref{prop:isotropy ILH abelian case}, the proof follows as in Section \ref{sec:moduliGM}.
\end{proof}

A corollary of Theorem \ref{theo:slice abelian case}, whose proof follows as in \cite[Corollary 8.2]{Eb}, is the following.

\begin{corollary}
	The space of generalized metrics with trivial isometry group is open in $\GM$.
\end{corollary}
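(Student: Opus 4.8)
The plan is to deduce openness directly from the slice theorem, exactly as Ebin does in \cite[Cor. 8.2]{Eb}. Fix a generalized metric $V_+\in\GM$ with $\Isom(V_+)=\{\Id\}$ and apply Theorem \ref{theo:slice abelian case}, in its version for the full group $\GDiff$, to obtain an ILH slice $\cS\subset\GM$ through $V_+$ satisfying properties (a)--(c), together with the local cross-section $\chi$ defined on a neighbourhood $\cU$ of $V_+$ in the orbit $\GDiff\cdot V_+$. The goal is then to show that the piece of $\GM$ swept out by $\cS$ via $\chi$ is an open neighbourhood of $V_+$ on which the generalized isometry group stays trivial.

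First I would observe that property (b) of the slice forces \emph{every} $V\in\cS$ to have trivial generalized isometry group: if $\phi\in\Isom(V)$ then $\rho_{\GM}(\phi,V)=V$, so $V\in\rho_{\GM}(\phi,\cS)\cap\cS\neq\emptyset$, whence $\phi\in\Isom(V_+)=\{\Id\}$. Next I would record the elementary fact that generalized isometry groups are conjugate along $\GDiff$-orbits: from $\rho_{\GM}(F,W)=F^{-1}(W)$ one checks immediately, using that elements of $\GDiff$ are bundle isomorphisms, that $\Isom(\rho_{\GM}(F,W))=F^{-1}\Isom(W)F$. Consequently every point of the form $\rho_{\GM}(\chi(V_1),V_2)$ with $V_1\in\cU$ and $V_2\in\cS$ has generalized isometry group conjugate to $\Isom(V_2)=\{\Id\}$, hence trivial.

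The remaining step is to see that the set $\{\rho_{\GM}(\chi(V_1),V_2)\st V_1\in\cU,\ V_2\in\cS\}$ is genuinely an open neighbourhood of $V_+$ in $\GM$, and this is the only point requiring some care: property (c) only asserts that this set is homeomorphic to $\cU\times\cS$, while openness in $\GM$ comes from the transversality of $\cS$ to the orbit $\GDiff\cdot V_+$ at $V_+$, which is built into the construction of $\cS$ as an exponential image of a small piece of the normal bundle to the orbit (and into the compatibility of the inverse-limit procedure, as in the third step of the proof of Theorem \ref{theo:slice exact case}). This is where the argument really leans on the analytic content of Theorem \ref{theo:slice abelian case} rather than on its purely formal consequences; everything else is bookkeeping. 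Granting it, $V_+$ has a neighbourhood contained in the set of generalized metrics with trivial generalized isometry group, so that set is open in $\GM$.
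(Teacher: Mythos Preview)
Your argument is correct and is precisely the approach the paper has in mind: the paper gives no proof of its own but simply records that the corollary ``follows as in \cite[Corollary 8.2]{Eb}'', and what you have written is exactly Ebin's argument specialized to the generalized setting via Theorem \ref{theo:slice abelian case}. Your caveat about the openness of the tube $\{\rho_{\GM}(\chi(V_1),V_2)\}$ is well placed and correctly resolved by the construction of $\cS$ in the third step of the proof of the slice theorem.
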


With the results above, a generalization of the main result of Section \ref{sec:stratification} is straightforward.
Define the moduli space of generalized metrics $\GR$ of an odd exact Courant algebroid by
$$
\GR:=\GM/\GDiff \simeq_{\s} (\cM \times \Omega^{2+1})/ \GDiff_{H,F}.
$$

\begin{theorem}
The space $\GR$ admits an ILH stratification $(\GR_{(G)})_{(G)\in \cA}$
where $\cA$ is the set of conjugacy classes of generalized isometry groups of an odd exact Courant algebroid.
Whenever $(G)\subsetneq (G')$, we have $\GR_{(G')}\subset \overline{\GR_{(G)}}$, i.e., the strata intersect as much as possible. Moreover,
\begin{itemize}
	\item The space $\GM_G$ is an open dense subset of $\GM_G^1$, where $\GM_G^1$ is the space of generalized metrics whose generalized isometry
group contains $G$.
\item The space $\GM_{\lbrace 1\rbrace}$ is an open dense subset of $\GM$ as long as $\dim M\geq 2$.
\item In a neighbourhood of a generalized metric with non-trivial generalized isometry group, $\GM$ is not an ILH manifold.
\item In each homotopy class of a loop in $\GR$, there is a loop that can be lifted to $\GM$.
\end{itemize}
\end{theorem}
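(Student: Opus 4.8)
The plan is to retrace, step by step, the proof of Theorem \ref{theo:Stratification ILH exact case} for exact Courant algebroids, replacing each ingredient by its $B_n$-analogue. The slice theorem in this category is already available (Theorem \ref{theo:slice abelian case}), and the isotropy subgroups of the $\bfGDiff_{H,F}$-action on $\GM$ are known to be strong ILH Lie subgroups with ILH quotients (Proposition \ref{prop:isotropy ILH abelian case}). So what remains is to: (i) describe the generalized isometry groups well enough to see that the set $\cA$ of their conjugacy classes is countable; (ii) equip each stratum $\GR_{(G)}$ with an ILH manifold structure; and (iii) produce, given $G\subsetneq G'$ with both realized as generalized isometry groups, a perturbation whose generalized isometry group is exactly $G$, which then yields the closure relation between strata. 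The four displayed density/closure/loop-lifting statements are the $B_n$-counterparts of the corollary to Theorem \ref{theo:Stratification ILH exact case} and are obtained by the same arguments once (i)--(iii) are in place.

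First I would establish the $B_n$-analogue of Lemma \ref{lem:description generalized isometries}: writing $V_+\simeq_\s(g,(\om,\gamma))$ and $G=\Isom_{H,F}(V_+)$, the projection $\Pi\colon\bfDiff\ltimes\bfOm^{2+1}\to\bfDiff$ restricts to an isomorphism of $G$ onto a compact subgroup $\Pi(G)\subset\bfDiff_{[H]}$, hence a compact Lie group by Myers--Steenrod, with inverse $\phi\mapsto\phi\ltimes(B,A)$ where $(B,A)$ is the unique pair making $\phi\ltimes(B,A)$ fix $(g,(\om,\gamma))$; the action formula forces $A=\phi^*\gamma-\gamma$ and then determines $B$. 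Countability of $\cA$ follows from the classification of compact group actions on the fixed compact $M$, exactly as in \cite[Lemme VI.3]{Bou}. Next I would prove the $B_n$-version of the characterization in Proposition \ref{prop:caracterization isometries}: the only substantive change is the fibre model, where the stabilizer $G_m$ now acts on $S^2T_m^*M\times(\Lambda^2T_m^*M\times T_m^*M)$ by the twisted action induced from the product \eqref{eq:product abelian case}, namely $(\phi,(B,A))\cdot(g_m,(\om_m,\gamma_m))=(\phi\cdot g_m,(\phi\cdot\om_m+B(m)+A(m)\wedge\phi\cdot\gamma_m,\phi\cdot\gamma_m+A(m)))$, which is affine but no longer a translation in the second factor. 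One computes the $G_m$-fixed subspaces for this action, uses the $\bfOm^{2+1}$-torsor structure of the space of splittings $\Lambda$ to run Bourguignon's fibrewise invariant-theory argument, and concludes with the averaging trick over the Haar measure of $G$, which still produces, for any compact $G\subset\bfGDiff_{H,F}$, a generalized metric with $G\subset\Isom_{H,F}$.

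For the ILH structure on the strata I would reproduce Lemma \ref{lem:invariant GDiff}: conjugating by the $B$-field $(\Id,(-\om,-\gamma))$ reduces to $(\om,\gamma)=0$; there the centralizer of $G$ is the semidirect product of $\bfDiff^G$ (strong ILH by \cite[Prop. 5.8]{Bou}) with the totally geodesic subgroup $(\bfOm^{2+1})^G$ of $G$-invariant elements, singled out by a $G$-averaging projector onto invariants; and the implicit function theorem applied to \eqref{eq:defining map abelian case} cuts out $\bfGDiff_{H,F}^G$ as a strong ILH Lie subgroup of $\bfDiff^G\ltimes(\bfOm^{2+1})^G$, which is all that is needed. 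Hence $G\backslash N_G$ is an ILH Lie group acting on the space $\GM_G$ of generalized metrics with generalized isometry group $G$, and Bourguignon's slice result \cite[Prop. II.16]{Bou} together with Proposition \ref{prop:isotropy ILH abelian case} gives $\GR_{(G)}=\GM_G/(G\backslash N_G)$ the structure of an ILH manifold, as in Proposition \ref{prop:ILH generalized G invariant metrics}. With the slice theorem, the ILH strata, the countability of $\cA$, and the perturbation argument in hand, the stratification and the relation $(G)\subsetneq(G')\Rightarrow\GR_{(G')}\subset\overline{\GR_{(G)}}$ follow formally from \cite[Thm. VI.4]{Bou}, exactly as Theorem \ref{theo:Stratification ILH exact case} does.

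The hard part, and the only place that genuinely departs from the exact case, is the non-abelianness brought in by the $A$-fields: the twisted $G_m$-action on $S^2T_m^*M\times(\Lambda^2T_m^*M\times T_m^*M)$ is affine rather than a fibrewise translation, so Bourguignon's invariant-theoretic perturbation lemmas (the backbone of the implications in Proposition \ref{prop:caracterization isometries}) must be re-derived for this action, and the averaging operators on $\bfOm^{2+1}$ and on $\GM$ must be checked to be equivariant and totally geodesic for the twisted $L^2$-metric \eqref{eq:invariant metric abelian case} rather than the flat one. Once these points are settled, the remaining elliptic estimates and the inverse-limit bookkeeping are identical to those of Sections \ref{sec:moduliGM} and \ref{sec:stratification}, and the four additional bullet statements follow verbatim from the corresponding corollary in the exact case.
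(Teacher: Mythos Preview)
Your proposal is correct and follows essentially the same approach as the paper: the paper itself gives no explicit proof for this theorem, stating only that ``With the results above, a generalization of the main result of Section \ref{sec:stratification} is straightforward,'' and your outline is precisely the unpacking of that sentence---you invoke the $B_n$-slice theorem (Theorem \ref{theo:slice abelian case}), the ILH structure on isotropy quotients (Propositions \ref{prop:isotropy ILH abelian case} and \ref{prop:isotropy ILH full group abelian case}), and then rerun the arguments of Section \ref{sec:stratification} and \cite{Bou} with the twisted $G_m$-action on $S^2T_m^*M\times(\Lambda^2T_m^*M\times T_m^*M)$ in place of the exact-case action. Your identification of the non-abelianness of the $A$-fields as the one point requiring care in the fibrewise invariant-theory and averaging steps is apt, and in fact your write-up is more detailed than what the paper provides.
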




\begin{appendices}

  \section{Appendix}
  \label{sec:Appendix}

  We give some analytical results on elliptic operator theory and ILH chains which are used to prove Propositions \ref{prop:isotropy ILH exact case} and \ref{prop:isotropy ILH exact case full group}, leading to the proof of the slice theorem (Theorems \ref{theo:slice exact case} and \ref{theo:slice exact case full group}).
  \subsection{Some results on elliptic complexes}
  Let $(E_i,h_i), i\in\lbrace 0,1,2\rbrace$ be smooth Riemannian vector bundles over a compact Riemannian manifold $(M,g)$, and let
  \begin{equation}\label{eq:elliptic-complex-app}
    \Gamma(E_0) \stackrel{\bfB}{\rightarrow} \Gamma(E_1) \stackrel{\bfA}{\rightarrow} \Gamma(E_2)
  \end{equation}
  be a complex of first-order differential operators. We assume that this complex is elliptic in the middle, that is, at the level of principal symbols, the range of $\bfB$ equals the kernel of $\bfA$ (see, e.g., \cite[Sec. VII.5]{Om97}).  Using the metrics on $E_i$, and the metric $g$, we can endow the spaces $\Gamma(E_i)$ with $L^{2,k}$ Hilbert norms
  $\vert\vert\cdot\vert\vert_k$ as in  (\ref{eq:norms}). The completion of $\Gamma(E_i)$ for this $L^{2,k}$ norm will be denoted by $\Gamma(E_i)^k$. We then consider the continuous extensions of $\bfB$ and $\bfA$ on these spaces,
  as well as the ILH chains $\lbrace \bfGamma(E_i), \Gamma(E_i)^{k+2-i}\st k\in \NN(d) \rbrace$
  for some $d\in \NN$, $i\in\lbrace 0,1,2\rbrace$. Note the shift in the norms in the definition of the ILH chains, so that
  $\bfA$ and $\bfB$ define $\cC^\infty$ ILH maps. We denote by $\bfA^*$ and $\bfB^*$ the adjoint operators of $\bfA$ and $\bfB$, 
  defined on smooth sections by the $L^{2,0}$ pairing: for $f_i\in \Gamma(E_i)$,
  $$
  \int_M h_2(\bfA f_1,f_2) \; \dvol_g=\int_M h_1 (f_1,\bfA^*f_2) \; \dvol_g,
  $$
  and similarly for $\bfB$.  Let $\Delta=\bfA^*\bfA+\bfB\bf\bfB^*$ be the elliptic operator of the complex \eqref{eq:elliptic-complex-app} and denote by $\bbG$ the associated Green operator. The following lemma follows from the proof of \cite[VII, Lemma 5.10]{Om97}.

  \begin{lemma}
    \label{lem:estimates left inverse}
    The operator $\bbG\bfA^*: \bfA\bfGamma(E_1)\rightarrow \bfGamma(E_1)$ is an ILH linear normal left inverse for 
    $\bfA:\bfGamma(E_1)\rightarrow \bfA\bfGamma(E_1)$ on $\Im\bfA^*$,
    and there are constants $C$ and $D_k$ such that, for $w\in \bfGamma(E_2)$ and $k\in\NN(d)$,
    $$
    \vert\vert \bbG\bfA^* w\vert\vert_{k+1}\leq C \vert\vert w\vert\vert_k +D_k \vert\vert w\vert\vert_{k-1}.
    $$
  \end{lemma}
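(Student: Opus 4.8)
The plan is to read everything off the Hodge theory of the elliptic complex \eqref{eq:elliptic-complex-app}, combined with the refined elliptic a priori estimates for its Laplacian, keeping careful track of the two-index shift built into the ILH chains $\lbrace \bfGamma(E_i), \Gamma(E_i)^{k+2-i}\rbrace$. Write $\Delta=\bfA^*\bfA+\bfB\bfB^*$ for the Laplacian of the complex; on the compact $(M,g)$ it is elliptic and formally self-adjoint, so one has the $L^{2,0}$-orthogonal decomposition $\bfGamma(E_1)=\cH_1\oplus\Im\bfA^*\oplus\Im\bfB$, where $\cH_1=\ker\Delta=\ker\bfA\cap\ker\bfB^*$, together with a Green operator $\bbG$ satisfying $\bbG\Delta=\Delta\bbG=\Id-\cH$, with $\cH$ the $L^{2,0}$-orthogonal projection onto $\cH_1$ and $\bbG|_{\cH_1}=0$. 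First I would observe that, since $\bfA^*$ is of first order and $\bbG$ regains two derivatives, $\bbG\bfA^*$ maps $\Gamma(E_2)^{k}$ continuously into $\Gamma(E_1)^{k+1}$, so that at the level of the ILH chains it is a $\cC^\infty$ ILH map of order zero; this is precisely what the shift in the chains is designed for.

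For the left inverse statement, I would take $u\in\Im\bfA^*$, say $u=\bfA^*v$. The complex relation $\bfA\bfB=0$ gives $\bfB^*\bfA^*=0$, hence $\bfB^*u=0$, and therefore $\Delta u=\bfA^*\bfA u$; moreover $\Im\bfA^*$ is $L^{2,0}$-orthogonal to $\cH_1$, so $\cH u=0$. Consequently
$$\bbG\bfA^*(\bfA u)=\bbG(\Delta u)=(\Id-\cH)u=u,$$
so $\bbG\bfA^*$ is a left inverse of $\bfA:\bfGamma(E_1)\to\bfA\bfGamma(E_1)$ restricted to $\Im\bfA^*$. (On all of $\bfGamma(E_1)$ the operator $\bbG\bfA^*\bfA$ is the projector onto $\Im\bfA^*$, since $\bfA$ annihilates $\cH_1\oplus\Im\bfB$.)

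The remaining point, the estimate and the consequent ILH linear normality, is where the argument leans most heavily on \cite{Om97}. The key input is the refined elliptic estimate for $\Delta$: there exist a constant $C$ independent of the Sobolev order and constants $D_m$ such that $\vert\vert\bbG\phi\vert\vert_{m+2}\leq C\vert\vert\phi\vert\vert_m+D_m\vert\vert\phi\vert\vert_{m-1}$ for all $\phi\in\Gamma(E_1)$, which is precisely the content behind \cite[VII, Lemma 5.10]{Om97} and the Sobolev machinery of \cite[Ch. V]{Om97}. Applying this with $\phi=\bfA^*w$ and $m=k-1$, and bounding $\vert\vert\bfA^*w\vert\vert_{k-1}$ and $\vert\vert\bfA^*w\vert\vert_{k-2}$ by $\vert\vert w\vert\vert_k$ and $\vert\vert w\vert\vert_{k-1}$ respectively, with uniform constant since $\bfA^*$ is a first-order operator with smooth coefficients on a compact manifold, one obtains $\vert\vert\bbG\bfA^*w\vert\vert_{k+1}\leq C\vert\vert w\vert\vert_k+D_k\vert\vert w\vert\vert_{k-1}$ after renaming constants. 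This is exactly the linear estimate \eqref{eq:linear estimates def linear} required of a linear ILH normal map, applied to $\bbG\bfA^*$ on the relevant chains.

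The main obstacle is thus not the Hodge-theoretic identities, which are routine, but the uniformity of the top-order constant in the elliptic estimate and the consistent bookkeeping of the index shifts (the $k+2-i$ in the chains, and the single derivative gained by $\bbG\bfA^*$). Since both are settled in \cite{Om97}, in the writeup I would simply invoke \cite[VII, Lemma 5.10]{Om97} for the uniform estimate and make explicit only the complex-specific facts $\bfB^*\bfA^*=0$ and $\cH|_{\Im\bfA^*}=0$ that underlie the left inverse claim.
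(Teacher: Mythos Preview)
Your proposal is correct and follows essentially the same route as the paper: both establish the left-inverse identity from $\bbG\Delta=\Id-\cH$ together with $\Delta u=\bfA^*\bfA u$ on $\Im\bfA^*$, and both obtain the estimate by combining the uniform elliptic estimate for $\bbG$ with the linear ILH normality of $\bfA^*$. The paper phrases the elliptic step via G\aa{}rding's inequality (\cite[VII, Lemma 5.2]{Om97}) applied to $\bbG\bfA^*w$ and then the continuity of $\bbG$, while you invoke the ready-made linear estimate $\|\bbG\phi\|_{m+2}\leq C\|\phi\|_m+D_m\|\phi\|_{m-1}$ directly; these are equivalent formulations of the same content.
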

  \begin{proof}
    The fact that $\bbG\bfA^*$ is a left inverse follows from definition of $\Delta$ and $\bbG$.
    For the estimate, first apply the uniform G\aa{}rding's inequality \cite[VII, Lemma 5.2]{Om97} to
    $\bbG\bfA^*w$ to obtain
    $$
    \vert\vert \bbG\bfA^* w\vert\vert_{k+1}\leq C \vert\vert \bfA^*w\vert\vert_{k-1} +D_k \vert\vert \bbG\bfA^*w\vert\vert_{k}.
    $$
    Recall that $\bbG_k: \bfA^*\Gamma(E_2)^{k+1} \rightarrow \bfA^*\Gamma(E_2)^{k+3}$ is continuous.
    Thus,
    $$
    \vert\vert \bbG\bfA^* w\vert\vert_{k+1}\leq C \vert\vert \bfA^*w\vert\vert_{k-1} +D_k' \vert\vert \bfA^*w\vert\vert_{k-2},
    $$
    and the result follows, as $\bfA^*$ is a linear ILH normal operator.
  \end{proof}

  We now gather all the results that enable us to use Frobenius' theorem in the ILH category.
  \begin{proposition}
    \label{prop:elliptic implies Frobenius}
    The following statements hold:
    \begin{enumerate}
    \item[a)] The image $\Im \bfA^*$ is a closed subspace of $\Gamma(E_1)$ such that $\Gamma(E_1)=\ker \bfA \oplus \Im \bfA^*$. 
    \item[b)] The image $\Im \bfA$ is a closed subspace of $\Gamma(E_2)$ such that $\Gamma(E_2)=\Im \bfA \oplus \ker \bfA^*$.
    \item[c)] There are constants  $C$ and $D_k$, for $k\in\NN(d)$ with $D_d=0$, such that, for $u\in \Im(\bfA^*)$,
      $$\vert\vert \bfA u\vert\vert_k\geq C\vert\vert u\vert\vert_{k+1} - D_k \vert\vert u \vert\vert_{k}.$$
    \item[d)] There are constants  $C'$ and $D'_k$, for $k\in\NN(d)$ with $D'_d=0$, such that, the projection $p:\Gamma(E_2) \rightarrow \Im \bfA$ with respect to the decomposition in b) satisfies, for all $v\in\Gamma(E_2)$,
      $$\vert\vert pv\vert\vert_k\leq C' \vert\vert v\vert\vert_k + D_k'\vert\vert v\vert\vert_{k-1}. $$
    \end{enumerate}
  \end{proposition}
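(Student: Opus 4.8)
The statement is a uniform (in the Sobolev degree) repackaging of Hodge theory for the elliptic complex \eqref{eq:elliptic-complex-app}, and the plan is to run the standard Hodge arguments while keeping track of the norms; throughout, $\|\cdot\|_k$ denotes the $L^{2,k}$ norm. I would first dispose of (a) and (b). Ellipticity of the complex means the symbol sequence is exact off the zero section, so the Laplacians $\Delta=\bfA^*\bfA+\bfB\bfB^*$ on $\Gamma(E_1)$ and $\Delta_2=\bfA\bfA^*$ on $\Gamma(E_2)$ are elliptic self-adjoint operators on the compact manifold $M$. Standard elliptic theory then yields finite-dimensional spaces of smooth harmonic sections $\mathcal{H}_1=\ker\Delta=\ker\bfA\cap\ker\bfB^*$ and $\mathcal{H}_2=\ker\Delta_2=\ker\bfA^*$, the Green operators, and the $L^{2,0}$-orthogonal Hodge decompositions $\Gamma(E_1)=\mathcal{H}_1\oplus\bfB\Gamma(E_0)\oplus\bfA^*\Gamma(E_2)$ and $\Gamma(E_2)=\mathcal{H}_2\oplus\bfA\Gamma(E_1)$, each summand being closed since it is the image of a continuous projector assembled from $\bbG$ and differential operators. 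Using $\bfA\bfB=0$, the decomposition, and the identity $\langle\bfA\bfA^*\beta,\beta\rangle=\|\bfA^*\beta\|^2$, one checks $\ker\bfA=\mathcal{H}_1\oplus\bfB\Gamma(E_0)$, whence $\Gamma(E_1)=\ker\bfA\oplus\Im\bfA^*$ with $\Im\bfA^*$ closed; that is (a). Assertion (b) is the Hodge decomposition at $E_2$ together with $\ker(\bfA\bfA^*)=\ker\bfA^*$.

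For (c), the crucial point is that $\bbG\bfA^*$ genuinely inverts $\bfA$ on $\Im\bfA^*$: if $u=\bfA^*v$ then $\bfB^*u=(\bfA\bfB)^*v=0$, so $\Delta u=\bfA^*\bfA u$, and since $u$ is $L^{2,0}$-orthogonal to $\mathcal{H}_1$ we get $\bbG\bfA^*(\bfA u)=\bbG\Delta u=u$. Feeding $w=\bfA u$ into the linear normal estimate of Lemma~\ref{lem:estimates left inverse}, and bounding $\|\bfA u\|_{k-1}$ by the operator norm of the first-order operator $\bfA\colon\Gamma(E_1)^k\to\Gamma(E_2)^{k-1}$ times $\|u\|_k$, yields $\|u\|_{k+1}\le C\,\|\bfA u\|_k+D_k\,\|u\|_k$, which rearranges into the inequality of (c). To obtain $D_d=0$ at the bottom of the chain I would argue separately at $k=d$: by (a)--(b) the Hodge splittings persist in the Hilbert completions, and $\bfA$ restricts to a continuous bijection from the closure of $\Im\bfA^*$ in $\Gamma(E_1)^{d+1}$ onto the closed subspace $\bfA\Gamma(E_1)^{d+1}$ of $\Gamma(E_2)^d$ (injective because $\ker\bfA\cap\Im\bfA^*=0$, surjective by the Hodge splitting at level $d+1$); the open mapping theorem then forces $\|\bfA u\|_d\ge c\,\|u\|_{d+1}$.

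Finally, for (d), the decomposition in (b) is $L^{2,0}$-orthogonal, so $p=\Id-H_2$ with $H_2$ the $L^2$-projection onto the finite-dimensional $\mathcal{H}_2$; writing $H_2 v=\sum_i\langle v,e_i\rangle_{L^2_g}e_i$ for an $L^2$-orthonormal basis $(e_i)$ of smooth harmonic sections and applying Cauchy--Schwarz gives $\|H_2 v\|_k\le C_k\|v\|_0$, hence $\|pv\|_k\le\|v\|_k+C_k\|v\|_0$. For $k>d$ this is (d) with $D'_k=C_k$, and for $k=d$ one absorbs $C_d\|v\|_0\le C_d\|v\|_d$ into the leading constant, giving $D'_d=0$. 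The one genuinely delicate step is (c): it requires pairing the Green-operator estimate of Lemma~\ref{lem:estimates left inverse} with the Hodge identity $\bbG\bfA^*\bfA=\Id$ on $\Im\bfA^*$, and then invoking closed range and the open mapping theorem at the base level to remove the residual lower-order term there.
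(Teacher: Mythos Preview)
Your treatment of (a) and (c) is essentially the paper's: both rely on the identity $\bbG\bfA^*\bfA=\Id$ on $\Im\bfA^*$ together with Lemma~\ref{lem:estimates left inverse}, and your extra open-mapping argument at $k=d$ is a reasonable way to secure $D_d=0$.

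The genuine gap is in (b) and especially (d). You assert that $\Delta_2=\bfA\bfA^*$ is elliptic on $\Gamma(E_2)$ and hence that $\ker\bfA^*=\mathcal{H}_2$ is finite-dimensional. This is false in the setting of the paper: the complex \eqref{eq:elliptic-complex-app} is only assumed elliptic at $E_0$ and $E_1$, with no exactness hypothesis at $E_2$, so the symbol of $\bfA$ need not be surjective. In the actual application (Proposition~\ref{prop:isotropy ILH exact case}) one has $\bfA(u+\alpha)=(\cL_u g,\,\cL_u\omega-\iota_uH+d\alpha)$ into $\Gamma(S^2T^*M)\times\Omega^2$, whose symbol is visibly not onto (the $S^2T^*M$-component hits only tensors of the form $\xi\vee v$), so $\bfA\bfA^*$ is not elliptic and $\ker\bfA^*$ is infinite-dimensional. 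Your formula $p=\Id-H_2$ with $H_2$ a finite-rank projector therefore collapses, and the estimate for (d) no longer follows.

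The paper avoids this by never invoking ellipticity at $E_2$. It writes the projector as $p=\bfA\bbG\bfA^*$, where $\bbG$ is the Green operator of the genuinely elliptic $\Delta=\bfA^*\bfA+\bfB\bfB^*$ on $\Gamma(E_1)$. The estimate for $p$ then comes from the linear-normal bound on $\bfA$ composed with Lemma~\ref{lem:estimates left inverse}, and the closedness of $\Im\bfA$ in (b) comes from $\bfA\bbG\bfA^*=\Id$ on $\Im\bfA$ together with the same estimates. You should rewrite (b) and (d) using $p=\bfA\bbG\bfA^*$ rather than the complementary projector onto $\ker\bfA^*$.
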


  \begin{proof}
    The proof follow from considerations in \cite[VII.5]{Om97} (note the switch of $\bfA$ and $\bfB$ in our notation). 
    First-order linear operators are linear ILH normal maps, as follows from  \cite[V, Thm. 3.1]{Om97}, the fact that the $1$-jet map is
    an ILH linear normal operator, and the stability of ILH linear normal operators by composition.
    In particular, $\bfA$ and $\bfA^*$ are linear normal. From \cite[VII.5]{Om97}, we also have linear estimates for $\bbG$: for all $v\in \bfGamma(E_1)$,
    $$
    \vert\vert \bbG v\vert\vert_{k+2} \leq c\vert\vert v\vert\vert_k+d_k\vert\vert v\vert\vert_{k-1}.
    $$

    Let $\bfA_k^*$ and $\bfB_k^*$ be the extensions of $\bfA^*$ and $\bfB^*$
    to the Hilbert completions $\Gamma(E_i)^k$, $i\in\lbrace 1,2\rbrace$. The extensions of $\Delta$ and $\bbG$ are denoted by $\Delta_k$ and $\bbG_k$.
    Then,  for $k\in\NN(d)$, we have $\Gamma(E_1)^k=\ker \bfA \oplus \Im \bfA^*_{k+1}$. 
    Taking the inverse limit, we obtain the decomposition
    $\Gamma(E_1)=\ker \bfA \oplus \Im \bfA^*$.
    From $\bfA^*\bfA \bbG=1$ on $\Im \bfA^*$, the fact that $\bfA$ and $\bfA^*$ are linear ILH normal and linear estimates for $\bbG$,
    we deduce the estimate
    $$
    \vert\vert v\vert\vert_k \leq \vert\vert \bfA^*\bfA \bbG v\vert\vert_k\leq c\vert\vert v\vert\vert_k+d_k\vert\vert v\vert\vert_{k-1}
    $$
    for all $v\in \Im(\bfA^*)$ and for a uniform constant $c$. Using the fact that
    $\Im \bfA^*_{k+1}= \Im \bfA^*_{k+1}\bfA \bbG_k$, we see that $\Im \bfA^*$ is closed and a) follows.

    For b), the decomposition $w=\bfA \bbG\bfA^*w+(1-\bfA \bbG\bfA^*)w$, for $w\in \Gamma(E_2)$, gives the decomposition
    $\Gamma(E_2)=\Im \bfA \oplus \ker \bfA^*$. To show that $\Im \bfA$ is closed, we will use the estimate
    $$
    \vert\vert w\vert\vert_k \leq \vert\vert \bfA \bbG\bfA^*w\vert\vert_k\leq c\vert\vert w\vert\vert_k+d_k\vert\vert w\vert\vert_{k-1}
    $$
    on $\Im \bfA$, where the first inequality follows from $\bfA \bbG\bfA^*=1$ on $\Im \bfA$, 
    and the second one from Lemma \ref{lem:estimates left inverse} and the fact that $\bfA$ is an ILH linear normal map.

    For c),  we use  $\bbG\bfA^*\bfA=1$ on $\Im \bfA^*$, together with Lemma \ref{lem:estimates left inverse}, to obtain
    $$
    \vert\vert u\vert\vert_{k+1}=\vert\vert \bbG\bfA^*\bfA u\vert\vert_{k+1}\leq C\vert\vert \bfA u\vert\vert_k + D_k \vert\vert \bfA u \vert\vert_{k-1}.
    $$
    As $\bfA$ is linear normal, the estimate in c) follows.

    Finally,  d) follows from $p=\bfA \bbG\bfA^*$, the fact that $\bfA$  is linear normal, and Lemma \ref{lem:estimates left inverse}.
  \end{proof}

  \subsection{On direct sum decompositions of ILH chains}
  \label{sec:direct sums}
  Let $\bfA$ be a linear normal operator between ILH chains. We assume that the restriction of $\bfA$ to a closed subspace of its domain
  satisfies the hypotheses of Theorem \ref{theo:Frobenius}. We consider in this section a special setting in which the hypotheses of Frobenius' Theorem (Theorem \ref{theo:Frobenius}) extend to $\bfA$.

  Let $\lbrace \bfE, E^k\st k\in\NN\rbrace$ and $\lbrace \bfF, F^k, k\in\NN \rbrace$ be two ILH chains. We assume in this section that, for each $k$ and all $x\in \bfE$,
  \begin{equation}
    \label{eq:ILH norm comparison}
    \vert\vert x\vert\vert_0 \leq \vert\vert x \vert\vert_k
  \end{equation}
  Since in this paper we only consider {\it graded Fr\'echet spaces}, satisfying $\vert\vert \cdot\vert\vert_k\leq\vert\vert\cdot\vert\vert_{k+1}$,
  equation (\ref{eq:ILH norm comparison}) is always satisfied.
  Let
  $$
  \bfA : \bfE \rightarrow \bfF
  $$
  be a linear continuous ILH operator. We assume that
  $$
  \bfE=\bfE_0\oplus \cH
  $$
  where $\cH$ and $\bfE_0$ are closed ILH subspaces of $\bfE$ with $\cH$ finite dimensional. Denote by $\bfA_0$ the restriction of $\bfA$ to $\bfE_0$.
  Assume that we have direct sum decompositions into closed subspaces in the ILH category:
  \begin{align*}
    \bfE_0=\ker \bfA_0 \oplus \bfE_2,&& \bfF=\Im \bfA_0 \oplus \bfF_2.
  \end{align*}
  We assume, moreover, that $\ker \bfA$ is finite dimensional. We then have the following.
  \begin{lemma}
    \label{lem:kernel decomposition abstract lemma}
    There exists a subspace $\cH'\subset\cH$ such that the following decomposition into closed subspaces holds in the ILH category:
    $$
    \bfE=\ker \bfA \oplus \bfE_2 \oplus \cH'.
    $$
    In particular, $\bfE_2\oplus \cH'$ is a closed complementary subspace of $\ker \bfA$ in $\bfE$.
  \end{lemma}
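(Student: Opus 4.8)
The statement is purely a matter of linear algebra carried out compatibly with the ILH structure: the plan is to move the part of $\ker\bfA$ lying outside $\ker\bfA_0$ into the finite-dimensional complement $\cH$, keeping the (possibly infinite-dimensional) summand $\bfE_2$ untouched, so that topological directness of the new decomposition will be inherited from that of the given splitting $\bfE=\bfE_0\oplus\cH=\ker\bfA_0\oplus\bfE_2\oplus\cH$.

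First I would record the elementary inclusions. Since $\bfA$ restricts to $\bfA_0$ on $\bfE_0$, we have $\ker\bfA_0=\ker\bfA\cap\bfE_0\subseteq\ker\bfA$; as $\ker\bfA$ is finite dimensional, choose an algebraic complement $\cK$, so $\ker\bfA=\ker\bfA_0\oplus\cK$ with $\dim\cK<\infty$. If $v\in\cK\cap\bfE_0$ then $v\in\ker\bfA\cap\bfE_0=\ker\bfA_0$, hence $v=0$; thus the ILH projection $q\colon\bfE=\bfE_0\oplus\cH\to\cH$ along $\bfE_0$ restricts to a linear isomorphism of $\cK$ onto $\cH'':=q(\cK)\subseteq\cH$. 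Because $\cH$ is finite dimensional, pick any complement $\cH'$ of $\cH''$ in $\cH$. I claim $\bfE=\ker\bfA\oplus\bfE_2\oplus\cH'$ algebraically. For directness, apply $q$ to a relation $x+y+z=0$ with $x\in\ker\bfA$, $y\in\bfE_2$, $z\in\cH'$: writing $x=x_0+x_1$ in $\ker\bfA_0\oplus\cK$ and using $q(x_0)=q(y)=0$ gives $q(x_1)+z=0$, so $q(x_1)=z=0$ by $\cH''\cap\cH'=0$, then $x_1=0$ by injectivity of $q|_{\cK}$, and finally $x_0+y=0$ in $\bfE_0=\ker\bfA_0\oplus\bfE_2$ forces $x_0=y=0$. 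For spanning, given $v\in\bfE$ decompose $v=v_0+v_2+v_{\cH}$ in $\ker\bfA_0\oplus\bfE_2\oplus\cH$ and $v_{\cH}=v''+v'$ in $\cH''\oplus\cH'$; setting $w:=(q|_{\cK})^{-1}(v'')\in\cK$ and substituting its own decomposition $w=P_0w+P_2w+v''$ (where $P_0,P_2$ are the ILH projections of $\bfE$ onto $\ker\bfA_0$ and $\bfE_2$ obtained from the given splittings) yields $v=\big((v_0-P_0w)+w\big)+(v_2-P_2w)+v'\in\ker\bfA+\bfE_2+\cH'$.

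Finally I would promote this to a decomposition into closed ILH subspaces. The summands $\ker\bfA$ and $\cH'$ are finite dimensional, hence closed in $\bfE$ and in every $E^k$. Letting $P_{\cH}$ be the ILH projection onto $\cH$ and $r\colon\cH\to\cK$ the finite-dimensional linear map equal to $(q|_{\cK})^{-1}$ on $\cH''$ and $0$ on $\cH'$, the computation above shows that the projector onto $\ker\bfA$ along $\bfE_2\oplus\cH'$ is $v\mapsto P_0v-P_0(rP_{\cH}v)+rP_{\cH}v$, with analogous formulas for the projectors onto $\bfE_2$ and onto $\cH'$; each is a composite of ILH maps with finite-dimensional linear maps, hence ILH, and the very same formulas read on the Hilbert completions give $E^k=\ker\bfA\oplus E_2^k\oplus\cH'$ for every $k$. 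In particular $\bfE_2\oplus\cH'$ is a closed ILH subspace complementary to $\ker\bfA$. The only delicate point is that the new splitting be topological, not merely algebraic, at the ILH level; this is exactly why it matters that $\bfE_2$ is transported over unchanged and that all the rearrangement takes place among the finite-dimensional spaces $\cK,\cH,\cH',\cH''$, for which continuity and the inverse-limit compatibility are automatic.
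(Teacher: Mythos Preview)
Your proof is correct and follows essentially the same route as the paper's: both complete $\ker\bfA_0$ to $\ker\bfA$ by a finite-dimensional piece (your $\cK$, the paper's span of the $f_i$), observe that its projection to $\cH$ is injective (your $q|_{\cK}$, the paper's argument that the $h_i$ are independent), and take $\cH'$ to be a complement in $\cH$ of that image. You are more explicit than the paper in verifying both directness and spanning, and in checking that the projectors onto the new summands are ILH maps (the paper simply asserts the decomposition holds ``by construction''), but the underlying argument is the same.
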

  \begin{proof}
    By hypothesis, we have a decomposition into closed ILH subspaces:
    $$
    \bfE=\ker \bfA_0 \oplus \bfE_2\oplus \cH.
    $$
    As $\ker \bfA$ is finite dimensional, we can complete a basis of $\ker \bfA_0$ into a basis of $\ker \bfA$ with elements $(f_i)_{1\leq i \leq r}$ of $\bfE_2\oplus \cH$.
    Write these elements as $f_i=e_i+h_i$ in the decomposition $\bfE_2\oplus \cH$ for $1\leq i\leq r$. We must have $r\leq \dim \cH$, as, otherwise, there would be a non-trivial linear combination $\sum_i \lambda_i h_i =0$. In that case, $\sum_i \lambda_i f_i=\sum_i \lambda_i e_i \in \ker \bfA\cap \bfE_2$ and by the decomposition of $\bfE_0$, we would have
    $\sum_i \lambda_i f_i=0$, which is a contradiction. We can then set $\cH'$ to be a complementary subspace of the span of $\lbrace h_i\rbrace_{1\leq i \leq r}$ in $\cH$. By construction, $\bfE=\ker \bfA \oplus \bfE_2 \oplus \cH'$ and the proof follows.
  \end{proof}
  Assume now that $\bfA_0$ satisfies the following linear estimate: there are constants $C$ and $D_k$, for $k\in\NN$ with $D_0=0$, such that, for $k\in\NN$ and $x\in \bfE_2$, 
  \begin{equation}
    \label{eq:linear estimate A0}
    \vert\vert x\vert\vert_k \leq C \vert\vert \bfA_0 x\vert\vert_k + D_k \vert\vert x\vert\vert_{k-1}.
  \end{equation}

  \begin{lemma}
    \label{lem:kernel estimate abstract lemma}
    With the previous notation, assume that, for each $k\in\NN$ and all $h\in\cH$, 
    \begin{equation}
      \label{eq:hypothesis 1 lemma kernel estimate}
      \vert\vert \bfA h \vert\vert_k=\vert\vert h \vert\vert_k.
    \end{equation}
    Assume, moreover, that there is a constant $C$
    such that for any element $x\in \bfE$ written as $x=x_0+h$ in the decomposition $\bfE_0\oplus \cH$, 
    \begin{equation}
      \label{eq:hypothesis 2 lemma kernel estimate}
      \vert\vert h\vert \vert_0\leq C \vert\vert x\vert\vert_0.
    \end{equation}
    Then $\bfA$ satisfies the following linear estimates: there are constants $C'$ and $D'_k$, for $k\in \NN$ and $D'_0=0$, such that, for $k\in\NN$ and $x\in \bfE_2\oplus\cH'$,
    $$
    \vert\vert x\vert\vert_k \leq C'\vert\vert \bfA x\vert\vert_k + D'_k \vert\vert x\vert\vert_{k-1}.
    $$
  \end{lemma}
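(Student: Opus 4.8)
The goal is to transfer the linear estimate \eqref{eq:linear estimate A0} from $\bfA_0$ to $\bfA$ on the complementary subspace $\bfE_2\oplus\cH'$ produced by Lemma \ref{lem:kernel decomposition abstract lemma}. The natural strategy is to decompose an arbitrary $x\in\bfE_2\oplus\cH'$ according to $\bfE_0\oplus\cH$, write $x=x_0+h$ with $x_0\in\bfE_0$ and $h\in\cH$, and estimate each piece separately. For $x_0$ we further split $x_0=k_0+x_2$ with $k_0\in\ker\bfA_0$, $x_2\in\bfE_2$, so that $\bfA x = \bfA_0 x_2 + \bfA h$. The term $\bfA_0 x_2$ is controlled by \eqref{eq:linear estimate A0}, and $\bfA h$ has norm exactly $\Vert h\Vert_k$ by hypothesis \eqref{eq:hypothesis 1 lemma kernel estimate}; the main work is to see that these two contributions combine into a bound on $\Vert x\Vert_k$ without creating an uncontrollable constant in front of $\Vert x\Vert_k$ itself (as opposed to $\Vert x\Vert_{k-1}$).

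First I would handle the top-order estimate. Since $\cH$ is finite dimensional, all norms $\Vert\cdot\Vert_k$ restricted to $\cH$ are equivalent to $\Vert\cdot\Vert_0$, so in particular $\Vert h\Vert_k\le c_{\cH,k}\Vert h\Vert_0$ for some constant. Combining this with \eqref{eq:hypothesis 2 lemma kernel estimate} and \eqref{eq:ILH norm comparison} gives $\Vert h\Vert_k\le c_{\cH,k}C\Vert x\Vert_0\le c_{\cH,k}C\Vert x\Vert_k$, which is still a $\Vert x\Vert_k$ bound on the $\cH$-part but at order $0$ on the right. Crucially, because $\cH$ is finite dimensional, $\Vert h\Vert_0$ is comparable to $\Vert\bfA h\Vert_0$, and since $\bfA h$ is one of the two orthogonal-type components of $\bfA x$ (the decomposition $\Im\bfA_0\oplus\bfF_2$ need not be orthogonal, but $\bfA\cH$ lands in a fixed finite-dimensional space transverse to $\Im\bfA_0$, using that $\bfF_2$ contains $\bfA\cH'$ by construction of $\cH'$ in Lemma \ref{lem:kernel decomposition abstract lemma}), one gets $\Vert h\Vert_0\le c\Vert\bfA x\Vert_0\le c\Vert\bfA x\Vert_k$. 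Then $\Vert h\Vert_k\le c_{\cH,k}\Vert h\Vert_0\le c\,c_{\cH,k}\Vert\bfA x\Vert_k$, so the $\cH$-part of $x$ is estimated by $\Vert\bfA x\Vert_k$ with a genuine constant.

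Next, for $x_2\in\bfE_2$, I would use \eqref{eq:linear estimate A0}: $\Vert x_2\Vert_k\le C\Vert\bfA_0 x_2\Vert_k+D_k\Vert x_2\Vert_{k-1}$. Write $\bfA_0 x_2=\bfA x-\bfA h$, so $\Vert\bfA_0 x_2\Vert_k\le\Vert\bfA x\Vert_k+\Vert\bfA h\Vert_k=\Vert\bfA x\Vert_k+\Vert h\Vert_k$, and the latter is already bounded by $\Vert\bfA x\Vert_k$ by the previous step; hence $\Vert x_2\Vert_k\le C'\Vert\bfA x\Vert_k+D_k\Vert x_2\Vert_{k-1}$. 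Since $x\mapsto x_2$ and $x\mapsto h$ are continuous ILH projections (composites of the two given direct-sum projections), $\Vert x_2\Vert_{k-1}$ and $\Vert h\Vert_{k-1}$ are bounded by a constant times $\Vert x\Vert_{k-1}$. Finally $\Vert x\Vert_k\le\Vert x_2\Vert_k+\Vert h\Vert_k$ (note $k_0=0$ here because $x\in\bfE_2\oplus\cH'$ and the decomposition $\bfE=\ker\bfA\oplus\bfE_2\oplus\cH'$ is direct, so the $\ker\bfA_0$-component of $x_0$ is zero), and assembling the three estimates yields
$$
\Vert x\Vert_k\le C'\Vert\bfA x\Vert_k+D'_k\Vert x\Vert_{k-1}
$$
with $D'_0=0$ inherited from $D_0=0$ and the fact that at level $k=0$ the lower-order terms are absent.

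The step I expect to be the main obstacle is verifying that the constant in front of $\Vert\bfA x\Vert_k$ in the estimate for $\Vert h\Vert_k$ is uniform in $k$ in the sense required — more precisely, the subtlety is that $c_{\cH,k}$ (the norm-equivalence constant on the finite-dimensional space $\cH$) a priori depends on $k$, and one must be careful that in the final inequality this $k$-dependent constant multiplies only $\Vert\bfA x\Vert_k$ and not anything that would spoil the structure of a linear estimate. Since $\cH$ is fixed and finite dimensional, these constants are harmless — they enter as legitimate $k$-dependent coefficients, exactly as $D_k$ does — but one should state this cleanly. A secondary point to check is that the transversality of $\bfA\cH$ to $\Im\bfA_0$ (needed to bound $\Vert h\Vert_0$ by $\Vert\bfA x\Vert_0$) follows from the construction of $\cH'$: by Lemma \ref{lem:kernel decomposition abstract lemma} the $h_i$ were chosen so that $\bfA f_i=0$, hence $\bfA e_i=-\bfA h_i$, and $\cH'$ is complementary to $\mathrm{span}\{h_i\}$, so $\bfA$ is injective on $\cH'$ with image transverse to $\Im\bfA_0$; this makes $\Vert h\Vert_0\simeq\Vert\bfA h\Vert_0$ and, since $\bfA x$ decomposes with $\bfA h$ as its component along $\bfA\cH'\subset\bfF_2$, gives the desired bound.
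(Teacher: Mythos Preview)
Your overall strategy---decompose $x=x_0+h$ and estimate the two pieces---is correct and is exactly what the paper does. However, there is a genuine gap in how you handle the $\cH$-component, and it stems from a misunderstanding of what a linear estimate requires.

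The constant $C'$ in the conclusion must be \emph{uniform in $k$}; only $D'_k$ is allowed to depend on $k$. Your bound $\|h\|_k \le c\,c_{\cH,k}\|\bfA x\|_k$ places the $k$-dependent constant $c_{\cH,k}$ in front of $\|\bfA x\|_k$, and when you assemble the pieces this produces
\[
\|x\|_k \le (C' + c\,c_{\cH,k})\|\bfA x\|_k + D'_k\|x\|_{k-1},
\]
which is \emph{not} of the required form. Your remark that $c_{\cH,k}$ ``enters as a legitimate $k$-dependent coefficient, exactly as $D_k$ does'' is where the argument breaks: $D'_k$ multiplies the lower-order term $\|x\|_{k-1}$, not $\|\bfA x\|_k$.

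The fix is simple and is exactly the point of hypothesis \eqref{eq:hypothesis 2 lemma kernel estimate}, which you never use. Rather than trying to absorb $\|h\|_k$ into the $\|\bfA x\|_k$ term, send it to the lower-order term: finite dimensionality of $\cH$ gives $\|h\|_k \le D_k\|h\|_0$, hypothesis \eqref{eq:hypothesis 2 lemma kernel estimate} gives $\|h\|_0 \le C\|x\|_0$, and the graded property \eqref{eq:ILH norm comparison} gives $\|x\|_0 \le \|x\|_{k-1}$, so $\|h\|_k \le D_k\|x\|_{k-1}$. The term $C\|\bfA h\|_k = C\|h\|_k$ (by hypothesis \eqref{eq:hypothesis 1 lemma kernel estimate}) is handled the same way. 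The only contribution to the $\|\bfA x\|_k$ coefficient then comes from the triangle inequality $\|\bfA x_0\|_k \le \|\bfA x\|_k + \|\bfA h\|_k$, and you inherit the uniform constant $C$ from \eqref{eq:linear estimate A0}.

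Two further remarks. First, since $x\in\bfE_2\oplus\cH'$ with $\bfE_2\subset\bfE_0$ and $\cH'\subset\cH$, the decomposition $x=x_0+h$ already has $x_0\in\bfE_2$; the further split $x_0=k_0+x_2$ is superfluous (as you eventually note). Second, your transversality argument---that $\bfA\cH'\cap\Im\bfA_0=0$, hence $\|h\|_0\le c\|\bfA x\|_0$---is correct and interesting, but it is not needed: it essentially rederives a variant of hypothesis \eqref{eq:hypothesis 2 lemma kernel estimate}, which is given for free. The paper's proof is a direct chain of triangle inequalities using only the stated hypotheses and finite dimensionality of $\cH$.
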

  \begin{proof}
    Note that $\cH$ is finite dimensional, so there are constants $D_k$ such that for all $h\in \cH$,
    \begin{equation}
      \label{eq:H finite dimensional}
      \vert\vert h\vert \vert_k\leq D_k \vert\vert h\vert\vert_0.
    \end{equation}
    
    Let $x=x_0+h\in \bfE_2\oplus\cH'$. For the sake of simplicity, we denote by $C$ and $D_k$ positive constants that may vary along the estimates, but always with $C$ independent of $k$. For each $k\in\NN$,
    \begin{equation*}
      \begin{array}{cccl}
        \vert\vert x \vert\vert_k & \leq & \vert\vert x_0 \vert\vert_k + \vert\vert h\vert\vert_k & \\
        & \leq &  C\vert\vert \bfA x_0\vert\vert_k + D_k \vert\vert x_0\vert\vert_{k-1}+ \vert\vert h\vert\vert_k& \mathrm{ by  }\:(\ref{eq:linear estimate A0}) \\
        & \leq & C\vert\vert \bfA x - \bfA h\vert\vert_k + D_k \vert\vert x-h\vert\vert_{k-1}+ \vert\vert h\vert\vert_k& \\
        & \leq &  C\vert\vert \bfA x\vert\vert_k + C\vert\vert \bfA h\vert\vert_k + D_k \vert\vert x\vert\vert_{k-1}+ D_k\vert\vert h\vert\vert_0& \mathrm{ by  }\:(\ref{eq:H finite dimensional})\\
        & \leq & C\vert\vert \bfA x\vert\vert_k +  D_k \vert\vert x\vert\vert_{k-1}+  D_k\vert\vert h\vert\vert_0 & \mathrm{ by  }\:(\ref{eq:hypothesis 1 lemma kernel estimate})\: \mathrm{ and  }\:(\ref{eq:H finite dimensional})\\
        & \leq & C\vert\vert \bfA x\vert\vert_k +  D_k \vert\vert x\vert\vert_{k-1} & \mathrm{ by  }\:(\ref{eq:hypothesis 2 lemma kernel estimate})\: \mathrm{ and  }\:(\ref{eq:ILH norm comparison}).

      \end{array}
    \end{equation*}
  \end{proof}
  Recall the decomposition:
  $$
  \bfF=\Im \bfA_0 \oplus \bfF_2.
  $$
  We assume that this decomposition is orthogonal with respect to the inner product in $F^0$ (this is the case in the applications we consider in this paper, as the decompositions are of the form $\bfF=\Im \bfA \oplus \ker \bfA^*$
  where the adjoint is computed with respect to an $L^{2,0}$ inner product).
  \begin{lemma}
    \label{lem:decomposition image abstract lemma}
    There exist closed subspaces $\cF$ and $\bfF_3$ of $\bfF_2$, with $\cF$ finite dimensional, such that the following decompositions hold in the ILH category:
    \begin{align*}
      \rm F_2= \cF \oplus \bfF_3,&& \Im \bf A= \Im\bfA_0 \oplus \cF.
    \end{align*}
    In particular, the following is a decomposition into closed subspaces:
    $$
    \bfF=\Im \bfA \oplus \bfF_3.
    $$
    More precisely, $\bfF_3=\ker (p_0 : F_2^0 \rightarrow \cF)\cap \bfF_2$ where $p_0$ denote the orthogonal projection.
  \end{lemma}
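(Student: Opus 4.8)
We are given $\bfE=\bfE_0\oplus\cH$ with $\cH$ finite dimensional, $\bfA_0=\bfA|_{\bfE_0}$, and ILH direct-sum decompositions $\bfE_0=\ker\bfA_0\oplus\bfE_2$, $\bfF=\Im\bfA_0\oplus\bfF_2$, the latter being $L^{2,0}$-orthogonal. The goal is to split $\bfF_2$ as $\cF\oplus\bfF_3$ with $\cF$ finite dimensional and $\Im\bfA=\Im\bfA_0\oplus\cF$, so that $\bfF=\Im\bfA\oplus\bfF_3$. The key observation is that $\Im\bfA=\bfA(\bfE_0)+\bfA(\cH)=\Im\bfA_0+\bfA(\cH)$, so $\Im\bfA$ differs from $\Im\bfA_0$ only by the finite-dimensional piece $\bfA(\cH)$. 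The plan is therefore: (1) project $\bfA(\cH)$ onto $\bfF_2$ along $\Im\bfA_0$, call the projection $q$, and set $\cF:=q(\bfA(\cH))\subset\bfF_2$, which is finite dimensional; (2) check that $\Im\bfA=\Im\bfA_0\oplus\cF$; (3) take $\bfF_3$ to be an ILH-closed complement of $\cF$ inside $\bfF_2$ — the natural choice being the orthogonal complement, i.e. $\bfF_3=(\ker p_0)\cap\bfF_2$ where $p_0:F_2^0\to\cF$ is the $L^{2,0}$-orthogonal projection onto the finite-dimensional $\cF$; (4) conclude $\bfF=\Im\bfA\oplus\bfF_3$.

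For step (2): since $\Im\bfA=\Im\bfA_0+\bfA(\cH)$ and every element $\bfA h$ ($h\in\cH$) decomposes as $\bfA h=(\text{something in }\Im\bfA_0)+q(\bfA h)$ with $q(\bfA h)\in\cF$, we get $\Im\bfA\subseteq\Im\bfA_0+\cF$; conversely $\cF=q(\bfA\cH)\subseteq\Im\bfA_0+\bfA\cH=\Im\bfA$, giving the reverse inclusion. The sum $\Im\bfA_0+\cF$ is direct because $\cF\subseteq\bfF_2$ and $\bfF=\Im\bfA_0\oplus\bfF_2$. One should also note that $\cF$, being finite dimensional, is automatically closed in every $F^k$ and forms a sub-ILH-chain, and that $\Im\bfA_0$ is ILH-closed by hypothesis, so $\Im\bfA=\Im\bfA_0\oplus\cF$ is an ILH direct sum of closed subspaces.

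For step (3): $\cF$ is finite dimensional and sits inside each Hilbert space $F_2^0\subset F^0$; the $L^{2,0}$-orthogonal projection $p_0$ onto $\cF$ is a bounded finite-rank operator, in particular a continuous linear ILH normal map in the trivial sense (finite-rank), so $\bfF_3:=\ker p_0\cap\bfF_2$ is ILH-closed and we get the ILH decomposition $\bfF_2=\cF\oplus\bfF_3$. Finally, combining $\bfF=\Im\bfA_0\oplus\bfF_2=\Im\bfA_0\oplus\cF\oplus\bfF_3=\Im\bfA\oplus\bfF_3$ gives the last assertion, with all summands closed in the ILH category. This mirrors exactly the bookkeeping in Lemma~\ref{lem:kernel decomposition abstract lemma}, where a finite-dimensional correction was extracted from a closed complement; the present lemma is the "image side" of that argument.

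**Main obstacle.** The only genuinely nontrivial point — and the reason the orthogonality hypothesis on $\bfF=\Im\bfA_0\oplus\bfF_2$ is invoked — is to make sure the complement $\bfF_3$ of $\cF$ in $\bfF_2$ is simultaneously closed at every Sobolev level $F^k$ and stable under the inverse-limit procedure, so that it is a legitimate ILH subspace rather than merely a Banach-space complement at one level. Using the $L^{2,0}$-orthogonal projection onto the finite-dimensional $\cF$ handles this: it is a single smoothing (finite-rank) operator independent of $k$, hence restricts compatibly to all $F^k$ and to $\bfF$. A secondary, purely cosmetic point is that one must verify $\cF$ is genuinely finite dimensional and does not accidentally collapse — but $\dim\cF\le\dim\bfA(\cH)\le\dim\cH<\infty$ settles this immediately. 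Beyond these observations the proof is routine linear algebra together with the closedness hypotheses already in hand.
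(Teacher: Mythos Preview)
Your proof is correct and follows essentially the same approach as the paper: define $\cF$ as the projection of $\bfA(\cH)$ onto $\bfF_2$ along $\Im\bfA_0$ (the paper does this by choosing a basis $(h_i)$ of $\cH$ and setting $\cF=\mathrm{span}\{f_i\}$ where $\bfA h_i=\bfA_0 y_i+f_i$), and then take $\bfF_3=\ker p_0\cap\bfF_2$ for the $L^{2,0}$-orthogonal projection $p_0$ onto $\cF$. Your write-up is in fact more careful than the paper's, which leaves the verification of $\Im\bfA=\Im\bfA_0\oplus\cF$ and the ILH-closedness of $\bfF_3$ as ``routine''.
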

  \begin{proof}
    Recall that
    $$
    \bfE=\bfE_0\oplus \cH
    $$
    with $\cH$ finite dimensional. For $(h_i)$ a basis of $\cH$, decompose $\bfA h_i= \bfA_0 y_i + f_i$ in the direct sum $\Im \bfA_0 \oplus \bfF_2$.
    Set $\cF$ to be the span of the elements $f_i$. Then $\cF$ is a finite dimensional subspace of $\bfF_2\cap \Im \bfA$.
    Then $F_2^0$ admits the orthogonal decomposition
    $$
    F_2^0=\cF\oplus \ker (p_0 : F_2^0 \rightarrow \cF)
    $$
    where $p_0$ denotes the orthogonal projection onto $\cF$. It is routine to verify that $\bfF_3=\ker (p_0 : F_2^0 \rightarrow \cF)\cap \bfF_2$
    satisfies the required properties.
  \end{proof}
  We conclude with linear estimates for the projection onto $\Im \bfA$. Assume that there are constants  $C$ and $D_k$, for $k\in\NN$ and $D_0=0$, such that the projection $\pi_0: \bfF \rightarrow \Im \bfA_0$ with respect to the decomposition $\Im \bfA_0 \oplus \bfF_2$ satisfies, for $k\in\NN$ and $x\in \bfF$,
  \begin{equation}
    \label{eq:linear estimate image A0}
    \vert\vert \pi_0x\vert\vert_k \leq C \vert\vert  x\vert\vert_k + D_k \vert\vert x\vert\vert_{k-1}.
  \end{equation}
  We then have the following lemma.
  \begin{lemma}
    \label{lem: linear estimate image abstract lemma}
    Denote by $\pi:\bfF \rightarrow \Im \bfA$ the projection with respect to the decomposition $\bfF=\Im \bfA \oplus \bfF_3$.
    Then there exists $C'$ and $D'_k$, for $k\in \NN$ with $D'_0=0$, such that, for $k\in\NN$ and $x\in \bfF$,
    $$
    \vert\vert \pi x\vert\vert_k \leq C' \vert\vert  x\vert\vert_k + D'_k \vert\vert x\vert\vert_{k-1}.
    $$
  \end{lemma}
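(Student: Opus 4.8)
The plan is to deduce the estimate for $\pi$ from the estimate \eqref{eq:linear estimate image A0} for $\pi_0$, using the explicit description of the decomposition $\bfF = \Im \bfA \oplus \bfF_3$ obtained in Lemma \ref{lem:decomposition image abstract lemma}. Recall from that lemma that $\bfF = \Im\bfA_0 \oplus \bfF_2$ and $\bfF_2 = \cF \oplus \bfF_3$ with $\cF$ finite-dimensional, so $\Im\bfA = \Im\bfA_0 \oplus \cF$. Thus for $x \in \bfF$ we have, writing $\pi_0 x \in \Im\bfA_0$ and $(1-\pi_0)x \in \bfF_2$, and then decomposing $(1-\pi_0)x = p_0((1-\pi_0)x) + ((1-\pi_0)x - p_0((1-\pi_0)x))$ along $\cF \oplus \bfF_3$,
\begin{equation*}
\pi x = \pi_0 x + p_0\big((1-\pi_0)x\big),
\end{equation*}
where $p_0 : \bfF_2 \to \cF$ is the orthogonal projection appearing in Lemma \ref{lem:decomposition image abstract lemma}.

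\textbf{Key steps.} First I would record the identity $\pi x = \pi_0 x + p_0((1-\pi_0)x)$ and check it is correct: both $\pi_0 x$ and $p_0((1-\pi_0)x)$ lie in $\Im\bfA = \Im\bfA_0 \oplus \cF$, while $x - \pi x = (1-\pi_0)x - p_0((1-\pi_0)x) \in \bfF_3$ by definition of $p_0$, so this is indeed the decomposition along $\bfF = \Im\bfA \oplus \bfF_3$. Second, I would bound $p_0$: since $\cF$ is finite-dimensional with, say, orthonormal basis $(f_i)$ in $F^0_2$, we have $p_0 v = \sum_i \langle v, f_i\rangle_0 f_i$ for $v \in \bfF_2$, and by Cauchy--Schwarz together with \eqref{eq:ILH norm comparison} (so that $\|v\|_0 \le \|v\|_k$) and the finite-dimensionality estimate $\|f_i\|_k \le D_k\|f_i\|_0 = D_k$, one gets $\|p_0 v\|_k \le C''\|v\|_0 \le C''\|v\|_k$ for a constant $C''$ independent of $k$. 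Third, I would combine: applying the triangle inequality to $\pi x = \pi_0 x + p_0((1-\pi_0)x)$, then \eqref{eq:linear estimate image A0} for $\|\pi_0 x\|_k$, then the bound on $p_0$ and $\|(1-\pi_0)x\|_k \le \|x\|_k + \|\pi_0 x\|_k \le (1+C)\|x\|_k + D_k\|x\|_{k-1}$, yields
\begin{equation*}
\|\pi x\|_k \le C'\|x\|_k + D'_k\|x\|_{k-1}
\end{equation*}
with $C'$ independent of $k$ and $D'_0 = 0$ (the degree-zero term survives only through $\pi_0$ and $p_0$, both of which are controlled by the $\|\cdot\|_0$-norm at level $k=0$, forcing $D'_0=0$ since $D_0 = 0$ in \eqref{eq:linear estimate image A0}). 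This completes the proof.

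\textbf{Main obstacle.} There is no serious analytic difficulty here; the content is bookkeeping with the finitely many correction directions in $\cF$. The one point requiring care is keeping the constant $C'$ genuinely independent of $k$: this works precisely because the only $k$-dependence in bounding $p_0$ comes through the finite-dimensional comparison $\|f_i\|_k \le D_k$, which can be absorbed into the lower-order term $D'_k\|x\|_{k-1}$ rather than the leading term — exactly the same mechanism already used in the proof of Lemma \ref{lem:kernel estimate abstract lemma}. One should also double-check that $p_0$, a priori defined only on $F^0_2$, restricts to an ILH map on $\bfF_2$, but this is immediate since $\cF \subset \bfF$ consists of smooth elements, so the pairings $\langle \cdot, f_i\rangle_0$ are continuous on every $F^k_2$.
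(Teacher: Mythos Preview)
Your approach is the same as the paper's: write $\pi = \pi_0 + p_0$ and absorb the finite-rank correction $p_0$ into the lower-order term. Two cleanups worth noting. First, since $\Im\bfA_0 \perp \cF$ in $F^0$, you have $p_0((1-\pi_0)x)=p_0 x$, so there is no need to estimate $\|(1-\pi_0)x\|_k$; the paper bounds $\|p_0 x\|_k \le \big(\sum_i\|f_i\|_k\big)\|x\|_0 \le D_k\|x\|_{k-1}$ directly for $k\ge 1$. Second, your claim in step two that $\|p_0 v\|_k \le C''\|v\|_k$ with $C''$ independent of $k$ is false---the constant is $\sum_i\|f_i\|_k$, which depends on $k$---and combining it as written in step three would give a $k$-dependent leading coefficient; you correctly diagnose this in your final paragraph, and the fix is precisely the direct bound above, which routes the entire $p_0$-contribution through $\|x\|_0$ rather than $\|v\|_k$.
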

  \begin{proof}
    Let $x\in \bfF$. From Lemma \ref{lem:decomposition image abstract lemma}, we have $\Im \bf A= \Im\bfA_0 \oplus \cF$ and $\bfF=\Im\bfA_0 \oplus \cF\oplus \bfF_3$.  As this decomposition is orthogonal with respect to the inner product on $F^0$, we have $\pi x= \pi_0 x+ p_0 x$ where $p_0$ denotes the orthogonal projection from $F^0$ to $\cH$. Thus,
    $$
    \vert\vert \pi x \vert\vert_k\leq   \vert\vert \pi_0 x \vert\vert_k +\vert\vert p_0 x \vert\vert_k.
    $$
    But $\cH$ is finite dimensional, so, if $(e_i)$ denotes an orthonormal basis of $\cH$ with respect to $\vert\vert\cdot\vert\vert_0$, 
    $$
    \vert\vert p_0 x \vert\vert_k\leq D_k \vert\vert x \vert\vert_0
    $$
    with $D_k = \sum_i \vert\vert e_i \vert\vert _k$.
    Using hypothesis (\ref{eq:linear estimate image A0}), the result follows.
  \end{proof}

\end{appendices}


\begin{thebibliography}\frenchspacing\smallbreak

\bibitem{Bar} D. Baraglia, \emph{Leibniz algebroids, twistings and exceptional generalized
geometry},  J. Geom. Phys. {\bf 62} (2012), no. 5, 903--934.

\bibitem{Bou} J. P. Bourguignon, \emph{Une stratification de l'espace des structures riemanniennes}, Compositio Math. {\bf 30} (1975), 1--41.

\bibitem{Cou} T. Courant, \emph{Dirac manifolds}, Trans. Amer. Math. Soc., 
  {\bf 319} (1990), 631--661.

\bibitem{Eb} D. Ebin \emph{The manifold of Riemannian metrics.} In 1970 Global Analysis (Proc. Sympos. Pure Math., Vol. XV, Berkeley, Calif., 1968) pp. 11--40. Amer. Math. Soc., Providence, R.I.

\bibitem{EbMa} D. Ebin  and J. Marsden, \emph{Groups of Diffeomorphisms and the Motion of an Incompressible Fluid}, Ann. Math. {\bf 92} (1970), 102--163.

\bibitem{Fi} A. Fischer . \emph{The Theory of Superspace}. In Relativity, (eds. Carmeli, M., Fickler, S. and Witten, L.), Plenum Press, New York (1967).

\bibitem{GF} M. Garcia-Fernandez, \emph{Generalized connections and
    heterotic supergravity}, Comm. Math. Phys. {\bf 332} (2014)
  89--115.

\bibitem{GF2} M. Garcia-Fernandez, \emph{Ricci flow, Killing spinors, and T-duality in generalized geometry},  Adv. Math. 350 (2019), 1059--1108.  
  
\bibitem{grt} M. Garcia-Fernandez, R. Rubio and C. Tipler, {\it Infinitesimal moduli for the Strominger system and generalized Killing spinors},   Math. Ann. {\bf 369} (2017), no. 1-2, 539?595.
  
\bibitem{G1} M. Gualtieri, \emph{Generalized Complex Geometry}, Oxford
  University DPhil Thesis (2004), arXiv:0401221.

\bibitem{G1annals}
  M. Gualtieri, \emph{Generalized Complex Geometry}, Ann. Math. {\bf   174} (2011) 75--123.

\bibitem{Gua14} M. Gualtieri, \emph{Generalized {K}\"ahler geometry},  Comm. Math. Phys. {\bf 331} (2014), no. 1, 297--331.



\bibitem{Ha}
  R. Hamilton, \emph{The inverse function theorem of Nash and Moser}, Bull. Amer. Math. soc. {\bf 7} (1982), 65--222.
  
\bibitem{Hit1} N. Hitchin, \emph{Generalized Calabi-Yau manifolds},   Q. J. Math {\bf 54} (2003) 281--308.

\bibitem{koi78} N. Koiso, \emph{A decomposition of the space ${\mathcal M}$ of Riemannian metrics on a manifold}, Osaka J. Math. {\bf 16} (1979), no. 2, 423--429. 
  
\bibitem{KrMi} A. Kriegl, P. Michor, \emph{The convenient setting of global analysis},    Math. Surveys and Monographs {\bf 53}, Amer. Math. Soc. 1997. 
  
\bibitem{Les} J.A. Leslie, \emph{On a differential structure for the group of diffeomorphisms}, Topology {\bf 6} (1967) 263--271.



\bibitem{LWX} Zhang-Ju Liu, Alan Weinstein, and Ping Xu, \emph{Manin triples for {L}ie bialgebroids},  J. Differential Geom. {\bf 45} (1997), no. 3, 547--574.

\bibitem{MaEbFi} J. Marsden, D. Ebin, and A. Fischer. \emph{Diffeomorphism Groups, Hydrodynamics and Relativity}. In Proceedings of The Thirteenth Biennial Seminar of the Canadian Mathematical Congress, (ed. J.R. Vanstone), Canadian Mathematical Congress, Montreal (1972), 135--272.

\bibitem{Ne} K. -H. Neeb \emph{Infinite-Dimensional Lie Groups} 3eme cycle. Monastir (Tunisie), 2005, pp.76 <cel--00391789>. 
  

\bibitem{Om68} H. Omori, \emph{On the group of diffeomorphisms on a compact manifold.} In 1970 Global Analysis (Proc. Sympos. Pure Math., Vol. XV, Berkeley, Calif., 1968) pp. 167--183. Amer. Math. Soc., Providence, R.I.
  
\bibitem{Om73} H. Omori, \emph{Groups of diffeomorphisms and their subgroups.} Trans. Amer. Math. Soc. {\bf 179} (1973), 85--122.


\bibitem{Om78} H. Omori, \emph{On Banach-Lie groups acting on finite dimensional manifolds},  Tôhoku Math. J. (2) {\bf 30} (1978), no. 2, 223--250.
  
\bibitem{Om97} H. Omori, \emph{Infinite dimensional Lie groups}, Translations of Math. Monographs \emph{158}, Amer. Math. Soc., 1997.
  
\bibitem{Pal} R. S. Palais, \emph{Equivalence of nearby differentiable actions of a compact group.} Bull. A.M.S. {\bf 67} (1961) 362--364.  
  
\bibitem{Rubio} R.~Rubio, \emph{$B_n$-generalized geometry and
    $G_2^2$-structures}, J. Geom. Phys. {\bf 73} (2013) 150--156.
  
\bibitem{Rubioth} \bysame, \emph{Generalized geometry of type $B_n$}, Oxford University DPhil Thesis (2014).
  
  \bibitem{Rubiopr} \bysame, \emph{Dirac structures in odd rank and $B_n$-generalized geometry}, preprint.

\bibitem{Uchino} K. Uchino, \emph{Remarks on the Definition of a Courant Algebroid},  Lett. Math. Phys. {\bf 60} (2002), no. 2, 171--175.

\bibitem{Yau} S.-T. Yau, \emph{Complex geometry: Its brief history and its future}, Sci. China Ser. A Math. {\bf 48} (2005), 47--60.

\end{thebibliography}
\end{document}